\newtcolorbox{setting}[2][]{colback=white,colframe=c3!25!black,fonttitle=\bfseries, title=#2,#1}
\numberwithin{equation}{section}
\newtheorem{prop}{Proposition}
\newtheorem{lemma}[prop]{Lemma}
\newtheorem{thm}[prop]{Theorem}
\newtheorem{cor}[prop]{Corollary}
\numberwithin{prop}{section}
\definecolor{c1}{rgb}{0.0,0.3,0.4}
\definecolor{c2}{rgb}{0.0,0.0,0.5}
\definecolor{c3}{rgb}{0.1,0.3,0.5}
\definecolor{grn}{rgb}{0,0.4,0}
\definecolor{dgrn}{rgb}{0.0,0.3,0.0}
\definecolor{dpur}{rgb}{0.5,0.0,0.5}
\newcommand{\del}{\partial}
\newcommand{\brs}[1]{\left| #1 \right|}
\newcommand{\brk}[1]{\left[ #1 \right]}
\newcommand{\prs}[1]{\left( #1 \right)}
\newcommand{\sqg}[1]{\left\{ #1 \right\}}
\newcommand{\ip}[1]{\left\langle #1 \right\rangle}
\newcommand{\gG}{\Gamma}
\renewcommand{\gg}{\gamma}
\newcommand{\fN}{\blacktriangledown}
\newcommand{\gd}{\delta}
\newcommand{\gs}{\sigma}
\newcommand{\gt}{\theta}
\newcommand{\gw}{\omega}
\newcommand{\gz}{\zeta}
\newcommand{\dVe}{d V_{\mathring{g},\eta}}
\newcommand{\ga}{\alpha}
\newcommand{\gb}{\beta}
\renewcommand{\ge}{\epsilon}
\newcommand{\N}{\triangledown}
\newcommand{\tN}{\widetilde{\triangledown}}
\newcommand{\gN}{\textcolor{gray}{\blacktriangledown}}
\newcommand{\hsp}{\hspace{0.5cm}}
\newcommand{\lap}{\vartriangle}
\newcommand{\la}{\lambda}
\newcommand{\vp}{\varphi}
\newcommand{\vs}{\varsigma}
\newcommand{\gY}{\Upsilon}
\newcommand{\w}{\wedge}
\newcommand{\ten}{\otimes}
\newcommand{\bR}{\mathbb{R}}
\DeclareMathOperator{\Rm}{Rm}
\DeclareMathOperator{\tr}{tr}
\DeclareMathOperator{\Ker}{Ker}
\DeclareMathOperator{\Id}{Id}
\DeclareMathOperator{\Vol}{Vol}
\DeclareMathOperator{\SO}{SO}
\DeclareMathOperator{\SU}{SU}
\DeclareMathOperator{\Aut}{Aut}
\DeclareMathOperator{\Grad}{Grad}
\DeclareMathOperator{\Ad}{Ad}
\DeclareMathOperator{\refc}{ref}
\newenvironment{customthm}[1]
  {\innercustomthm}
  {\endinnercustomthm}
\newenvironment{customcor}[1]
  {\innercustomcor}
  {\endinnercustomcor}
\theoremstyle{definition}
\newtheorem{defn}[prop]{Definition}
\newtheorem{rmk}[prop]{Remark}
 \title[Limits of Yang-Mills $\alpha$-connections]{Limits of Yang-Mills {\boldmath$\alpha$}-connections}
\author{Casey Lynn Kelleher}
\email{\href{mailto:clkelleh@uci.edu}{clkelleh@uci.edu}}
\address{Rowland Hall\\
         University of California\\
         Irvine, CA 92617}
\begin{document}
\begin{abstract}
In the spirit of recent work of Lamm, Malchiodi and Micallef in the setting of harmonic maps \cite{LMM}, we identify Yang-Mills connections obtained by approximations with respect to the Yang-Mills $\ga$-energy. More specifically, we show that for the $\SU(2)$ Hopf fibration over $\mathbb{S}^4$, for sufficiently small $\ga$ values the $\SO(4)$ invariant ADHM instanton is the unique $\alpha$-critical point which has Yang-Mills $\alpha$-energy lower than a specific threshold.
\end{abstract}
\maketitle
%

%
%
%
\section{Introduction}
Let $(E ,h) \to (M,g)$ be a smooth vector bundle over a closed Riemannian manifold. For a connection $\N \in W^{1,2}(\mathcal{A}_E(M))$ the \emph{Yang-Mills energy of $\N$} is
\begin{equation}
\mathcal{YM}(\N) := \tfrac{1}{2} \int_M \brs{F_{\N}}_{g,h}^2 \, dV_g,
\end{equation}
where $F_{\N}$ denotes the curvature tensor of $\N$.

Assume $M$ is $4$-dimensional. For every $\ga > 1$ and $\N \in W^{1,2\ga}\prs{\mathcal{A}_{E}\prs{M}}$, Hong, Tian and Yin \cite{HTY} introduced a perturbation of the Yang-Mills energy, the \emph{Yang-Mills $\ga$-energy}\footnote{In \cite{HTY} the energy is $\tfrac{1}{2} \int_{M} \prs{ 1+ \brs{F_{\N}}_{g,h}^2 }^{\ga} dV_g$. This alternate version retains the main intrinsic properties.},
\begin{align*}
\mathcal{YM}_{\ga}(\N) := \tfrac{1}{2} \int_M \prs{3+ \brs{F_{\N}}^2_{g,h}}^{\ga} \, dV_{g}.
\end{align*}
Unlike the Yang-Mills energy, this quantity is not conformally invariant for $\dim M = 4$. Critical points of this energy are smooth up to gauge due to work of Isobe \cite{Isobe} (stated explicitly \S 4 of \cite{HS}) and satisfy
\begin{equation}\label{eq:varYMga}
\prs{\prs{D^*_{\N} F_{\N}}_{j \gb}^{\mu} +  \tfrac{2\prs{\ga - 1}}{\prs{2 + \brs{F_{\N}}_g^2}} \prs{\N_i F_{k \ell \gz}^{\eta} } F_{k \ell \eta}^{\gz} F_{ij \gb}^{\mu} } = 0.
\end{equation}
where $\star$ denotes the Hodge star operator. Setting $\ga = 1$ one obtains the equation satisfied precisely by a \emph{Yang-Mills connection}. In \cite{HTY} the corresponding negative gradient flow of the $\ga$-energy was studied. Hong and Schabrun continued exploring the energy in \cite{HS}, verifying the Palais Smale condition and acquiring energy identities for the $\ga$-energy and flow (cf. \cite{HS} Theorem 1).

In the recent work of Lamm, Malchiodi and Micallef in \cite{LMM}, the authors studied degree $1$ maps over $\mathbb{S}^2$ and demonstrated that for sufficiently small $\ga$, the only $\ga$-harmonic maps with suitably low $\ga$-energy are rotations. In this spirit we demonstrate that the $\ga$-limiting process identifies a special element of the space of Yang-Mills connections in the following setting.

\begin{setting}{}{}
\noindent  
Henceforth $E$ denotes the adjoint bundle associated to the Hopf fibration $\SU(2) \hookrightarrow \mathbb{S}^7 \to \mathbb{S}^4$. Here the second Chern class (charge) of $E$ is $1$ thus $E$ does not admit self dual connections.
\end{setting}
The first fundamental observation is a universal lower bound on the Yang-Mills $\ga$-energy.
\begin{prop}\label{prop:W12AE}
For all $\N \in W^{1,2\ga}\prs{\mathcal{A}_{E}\prs{\mathbb{S}^4}}$,
\begin{equation}\label{eq:YMgaLB}
\mathcal{YM}_{\ga} \prs{\N} \geq 6^{\ga} \tfrac{4}{3} \pi^2.
\end{equation}
\begin{proof}
Recalling the formula for the second Chern class of $E$ (denoted $C_2 \brk{E \to \mathbb{S}^4}$) combined with the decomposition $F_{\N} = F^+_{\N} + F^-_{\N}$ (where $F^{\mp}_{\N}$ denotes the (anti)self dual pieces of $F_{\N}$) yields
\begin{align}
\begin{split}\label{eq:LMM1.7}
\tfrac{16}{3}\pi^2 &= \Vol(\mathbb{S}^4)  + \tfrac{8}{3} \pi^2 C_2[E \to \mathbb{S}^4] \\
&= \int_{\mathbb{S}^4}\, dV_{\mathring{g}}  + \tfrac{1}{3} \int_{\mathbb{S}^4}\,  \text{tr}_{h} (F_{\N} \w F_{\N}) \\
&=  \int_{\mathbb{S}^4} \prs{1+ \tfrac{1}{3} \prs{\brs{F_{\N}^-}^2_{\mathring{g}} - \brs{F^+_{\N}}^2_{\mathring{g}} }} \, dV_{\mathring{g}} \\
&\leq \int_{\mathbb{S}^4} \prs{1+ \tfrac{1}{3} \brs{F_{\N}}_{\mathring{g}}^2} \, dV_{\mathring{g}}  \qquad \qquad \qquad \qquad \qquad \qquad (\star_1)\\
& = \brs{\brs{1+ \tfrac{1}{3}\brs{F_{\N}}_{\mathring{g}}^2}}_{L^1} \\
& \leq \brs{\brs{ 1+ \tfrac{1}{3} \brs{F_{\N}}_{\mathring{g}}^2 }}_{L^{\ga}} \brs{\brs{ 1 }}_{L^{\ga^*}} \qquad \qquad \qquad \qquad (\star_2) \\
&= \tfrac{2^{1/\ga}}{3} \prs{ \tfrac{1}{2} \int_{\mathbb{S}^4} \prs{3 + \brs{F_{\N}}_{\mathring{g}}^2}^{\ga} dV_{\mathring{g}} }^{1/\ga} \prs{ \tfrac{2^3 \pi^2}{3} }^{(\ga -1) / \ga }  \\
& = \tfrac{2^{1/\ga}}{3}  \prs{ \mathcal{YM}_{\ga}(\N) }^{1/\ga} \prs{ \tfrac{2^3 \pi^2}{3} }^{(\ga -1) / \ga },
\end{split} 
\end{align}
where $\ga^*$ appearing in $\prs{\star_2}$ denotes the dual of $\ga$ in the sense of H\"{o}lder's inequality. Raising both sides of the resultant inequality of \eqref{eq:LMM1.7} to the $\ga$ power and rearranging yields the result.
\end{proof}
\end{prop}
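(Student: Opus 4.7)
The plan is to derive an $L^1$ bound for $1 + \tfrac{1}{3}|F_{\N}|^2$ from the topology of $E$, and then promote it to an $L^{\ga}$ bound by H\"older's inequality, thereby pulling in the integrand of $\mathcal{YM}_{\ga}$.

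First I would invoke the Chern--Weil formula $\tfrac{1}{8\pi^2}\int_{\mathbb{S}^4}\tr_h(F_{\N}\w F_{\N}) = C_2[E\to\mathbb{S}^4] = 1$, which is independent of $\N$. On a four-manifold the curvature splits as $F_{\N} = F_{\N}^+ + F_{\N}^-$ (self-dual plus anti-self-dual), and in this decomposition $\tr_h(F_{\N}\w F_{\N}) = (|F_{\N}^-|^2 - |F_{\N}^+|^2)\,dV_{\mathring g}$ (up to a sign convention). Combining this with $\Vol(\mathbb{S}^4) = \tfrac{8}{3}\pi^2$ and dropping the negative contribution from the wrong duality type yields
\[
\tfrac{16}{3}\pi^2 \;\leq\; \int_{\mathbb{S}^4}\bigl(1 + \tfrac{1}{3}|F_{\N}|^2\bigr)\,dV_{\mathring g},
\]
whose right-hand side is, up to a factor of $\tfrac{1}{3}$, the integrand of $\mathcal{YM}_{\ga}$ taken to the first power rather than the $\ga$-th.

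Next I would recover $\mathcal{YM}_{\ga}(\N)$ from this $L^1$ bound by applying H\"older's inequality with conjugate exponents $\ga$ and $\ga^* = \ga/(\ga-1)$, pairing the function $1 + \tfrac{1}{3}|F_{\N}|^2$ with the constant $1$. The factor $\|1\|_{L^{\ga^*}}$ is a clean power of $\Vol(\mathbb{S}^4)$. The $L^{\ga}$ norm of $1 + \tfrac{1}{3}|F_{\N}|^2$, after pulling out $\tfrac{1}{3}$ to rewrite the integrand as $3 + |F_{\N}|^2$, is a constant multiple of $\mathcal{YM}_{\ga}(\N)^{1/\ga}$. Rearranging the resulting inequality and raising to the $\ga$-th power should produce the claimed constant $6^{\ga}\cdot\tfrac{4}{3}\pi^2$.

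I do not foresee any genuine analytic obstacle here: the argument is a short chain of inequalities anchored on a topological identity. The only care required is bookkeeping through the Chern--Weil normalization, the translation between $1 + \tfrac{1}{3}|F_{\N}|^2$ and $\tfrac{1}{3}(3 + |F_{\N}|^2)$, and the H\"older conjugate, so that the powers of $2$, $3$, and $\pi$ collapse into $6^{\ga}$ and $\tfrac{4}{3}\pi^2$. Equality should be approached by a (self-)dual connection with uniformly distributed curvature, consistent with the expected extremality of the $\SO(4)$-invariant ADHM instanton discussed later in the paper.
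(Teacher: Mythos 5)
Your proposal reproduces the paper's argument step for step: the Chern--Weil identity $\tfrac{1}{8\pi^2}\int\tr_h(F_{\N}\wedge F_{\N})=C_2=1$ together with $\Vol(\mathbb{S}^4)=\tfrac{8}{3}\pi^2$ and the decomposition into (anti)self-dual pieces yields the $L^1$ bound, and H\"older with exponents $\ga,\ga^*$ against the constant function promotes it to the $\mathcal{YM}_{\ga}$ bound. The bookkeeping you flag ($1+\tfrac{1}{3}|F|^2 = \tfrac{1}{3}(3+|F|^2)$, $\|1\|_{L^{\ga^*}}=\Vol(\mathbb{S}^4)^{(\ga-1)/\ga}$) is exactly where the constant $6^{\ga}\tfrac{4}{3}\pi^2$ emerges, and there is no gap.
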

We introduce a special class of connections, the ADHM instantons. We withhold discussion of the quaternionic coordinate system over which these are defined (cf. \cite{Naber}, \S 6.3 pp.353-361). While these are defined over $\mathbb{H}^1$, we may regard each as a connection on $\mathbb{S}^4$ via Uhlenbeck's Removable Singularities Theorem of \cite{Uhlenbeck1}.
\begin{defn}[ADHM instanton]\label{defn:ADHM}
Let $\xi \in \mathbb{H}^1$ and $\la \in \bR$. Then the \emph{$(\xi,\la)$-ADHM instanton} is the connection $\N^{\xi,\la} := \del + \gG^{\xi,\la}$ where  $\gG^{\xi,\la}$ is the $\Im \mathbb{H}^1$ valued $1$-form (with corresponding curvature)
\begin{align*}
\gG^{\xi,\la}(\zeta) &:= \Im \brk{\tfrac{\bar{\zeta} - \bar{\xi}}{\brs{\zeta-\xi}^2 + \la^2} \, d\zeta}, \quad
F^{\xi,\la}_{\N}(\zeta) = \tfrac{\la^2}{\prs{\brs{\zeta-\xi}^2 + \la^2}^2} \, d\bar{\zeta} \w d\zeta.
\end{align*}
For $\la \equiv 1$ define the \emph{basic connection} $\tN := \N^{\xi,1}$ (invariant under choice of $\xi$). Recall the following result of Atiyah, Drinfeld, Hitchin and Manin.
\end{defn}
\begin{customthm}{of \cite{ADHM}}\label{thm:LMM1.1} Every Yang-Mills energy minimizer is an ADHM instanton.
\end{customthm}
\noindent In contrast to the Yang-Mills energy, the Yang-Mills $\ga$-energy differentiates $\tN$ from other ADHM instantons. This fact provides guiding intuition for our main result, Theorem \ref{thm:LMM1.2}.
\begin{prop}
$\tN$ is the only Yang-Mills $\ga$-energy minimizer.
\begin{proof}
We compute the $\ga$-energy of $\tN$ by recalling the formulae introduced above
\begin{align*}
\mathcal{YM}_{\ga}\prs{ \tN} 
&= \tfrac{1}{2}\int_{\mathbb{S}^4} \prs{3 + \brs{F_{\tN}}_{\mathring{g}}^2 }^{\ga} dV_{\mathring{g}}\\
&= \tfrac{1}{2}\int_{\mathbb{H}^1} \prs{3 + \tfrac{1}{16} \prs{ \brs{\zeta}^2 + 1}^{4} \brs{F_{\tN}}^2 }^{\ga} 16 \prs{\brs{\zeta}^2 + 1}^{-4} dV \\
&=\tfrac{1}{2} \int_{\mathbb{H}^1} \prs{ 3 + \tfrac{1}{16} \prs{ \brs{\zeta}^2 + 1}^{4} \tfrac{48}{\prs{\brs{\zeta}^2 + 1}^4 } }^{\ga} 16 \prs{\brs{\zeta}^2 + 1}^{-4} dV \\
&= 6^{\ga} 2^3  \int_{\mathbb{S}^4} \tfrac{1}{16} \, dV_{\mathring{g}} \\
&=6^{\ga} \tfrac{4 }{3}\pi^2.
\end{align*}
Comparing against \eqref{eq:YMgaLB}, above, we see that only in the case of antiself dual connections ($\star_1$) with pointwise curvature norm precisely $3$ ($\star_2$) the inequalities are in equalities, that is, when $\N \equiv \tN$.
\end{proof}
\end{prop}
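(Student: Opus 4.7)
The strategy is to compute $\mathcal{YM}_\ga(\tN)$ directly, verify that it saturates the lower bound of Proposition \ref{prop:W12AE}, and then isolate $\tN$ by tracing the equality cases in the chain \eqref{eq:LMM1.7}.

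For the computation, I would transport the integral from $(\mathbb{S}^4, \mathring{g})$ to $\mathbb{H}^1 \cong \bR^4$ via inverse stereographic projection. This introduces the conformal factors $dV_{\mathring{g}} = 16(|\zeta|^2+1)^{-4}\,dV$ and, since the curvature is a $2$-form in dimension $4$, $|F|_{\mathring{g}}^2 = \tfrac{1}{16}(|\zeta|^2+1)^4 |F|^2$. Substituting the explicit formula $|F_{\tN}|^2 = 48(|\zeta|^2+1)^{-4}$ drawn from Definition \ref{defn:ADHM} (taking $\xi = 0$, $\la = 1$), the two conformal factors cancel to leave the pointwise identity $|F_{\tN}|_{\mathring{g}}^2 \equiv 3$. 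The $\ga$-integrand then collapses to the constant $6^\ga$, so
\[
\mathcal{YM}_\ga(\tN) \;=\; \tfrac{1}{2}\cdot 6^\ga\cdot \Vol(\mathbb{S}^4) \;=\; 6^\ga \cdot \tfrac{4\pi^2}{3},
\]
matching \eqref{eq:YMgaLB} exactly.

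For uniqueness, I would revisit the inequalities $(\star_1)$ and $(\star_2)$ within the proof of Proposition \ref{prop:W12AE}. Equality in $(\star_1)$ forces $|F_{\N}^+|_{\mathring{g}}^2 \equiv 0$, i.e.\ $\N$ must be anti-self-dual. Equality in the H\"older step $(\star_2)$---which pairs $1 + \tfrac{1}{3}|F_{\N}|_{\mathring{g}}^2$ against the constant function $1$---requires $1 + \tfrac{1}{3}|F_{\N}|_{\mathring{g}}^2$ to be essentially constant on $\mathbb{S}^4$; invoking the smooth regularity of $\ga$-critical points from \cite{Isobe} then upgrades this to $|F_{\N}|_{\mathring{g}}^2 \equiv 3$ pointwise. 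Thus any minimizer is an anti-self-dual charge-$1$ connection with uniform curvature norm $\sqrt{3}$, and by the ADHM classification these two conditions pick out $\tN$ alone: a direct calculation shows $|F^{\xi,\la}|_{\mathring{g}}^2 \propto \la^4 (|\zeta|^2+1)^4 (|\zeta - \xi|^2 + \la^2)^{-4}$, which is pointwise constant in $\zeta$ only in the basic case.

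The energy matching reduces to a cancellation of conformal factors and is essentially mechanical. The main subtlety is the uniqueness step, which leans on both the $\ga$-regularity theory and the ADHM parametrization to promote the abstract equality conditions into the concrete identification $\N \equiv \tN$.
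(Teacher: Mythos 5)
Your proposal follows the same route as the paper: transfer the $\ga$-energy integral to $\mathbb{H}^1$ via stereographic projection, use the explicit curvature formula from Definition \ref{defn:ADHM} to obtain the pointwise identity $\brs{F_{\tN}}_{\mathring g}^2 \equiv 3$, evaluate the resulting constant integrand to get $6^\ga \cdot \tfrac{4}{3}\pi^2$, and then trace the equality cases of $(\star_1)$ and $(\star_2)$ back through Proposition \ref{prop:W12AE}. Your uniqueness step is actually a bit more careful than the paper's (which just asserts the conclusion): you invoke the ADHM classification theorem to reduce to the family $\N^{\xi,\la}$ and then check via Proposition \ref{prop:ADHMnorms} that constancy of $\brs{F^{\xi,\la}}_{\mathring g}^2$ forces $\la = 1$ (and $\xi = 0$, or the corresponding gauge-equivalent normalization), and that elaboration is welcome — although the appeal to $\ga$-regularity is unnecessary here, since equality in the H\"older pairing against the constant $1$ already forces $1 + \tfrac13\brs{F_\N}_{\mathring g}^2$ to be constant a.e., and $(\star_1)$ pins down the constant.
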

\subsection{Main results}\label{ss:mainresultoutline}
Our task is to prove the following.
\begin{customthm}{A}\label{thm:LMM1.2}
Let $E \rightarrow \prs{\mathbb{S}^4,\mathring{g}}$ be the adjoint bundle associated to the $\SU(2)$ Hopf fibration. There exists $\ge > 0$ and $\ga_0 > 1$ such that for any $\ga \leq \ga_0$ the only critical point $\N^{\ga}$ of the Yang-Mills $\ga$-energy which satisfies $\mathcal{YM}_{\ga}\prs{\N^{\ga}} \leq 6^{\ga} \tfrac{4 }{3}\pi^2 + \ge$ is the basic ADHM connection $\tN$.
\end{customthm}
\begin{rmk} This gives a beautiful analogy with the results of \cite{LMM}. Through its rotational invariance, $\tN$ corresponds to all rotations in the harmonic maps setting.
\end{rmk}
\noindent A natural corollary to Theorem \ref{thm:LMM1.2} follows directly as a consequence of long time existence and convergence of Yang-Mills $\ga$-flow presented in Theorem 1.1 of \cite{HTY}.
\begin{customcor}{A}\label{eight}\label{cor:LMM1.2flow}
Let $\sqg{\N_t^{\ga}}$ be a family of solutions to Yang-Mills $\ga$-flow satisfying $\mathcal{YM}_{\ga} \prs{\N^{\ga}_0} \leq 6^{\ga} \tfrac{4}{3} \pi^2 + \ge$ and $\ga \leq \ga_0$ as in the assumptions of Theorem \ref{thm:LMM1.2}. Then $\sqg{\N_t^{\ga}}$ converges smoothly in $\ga$ and $t$ to $\tN$.
\end{customcor}
\subsubsection{Concepts of proof}
We begin this work establishing background and discussing relevant actions of gauge transformations and conformal automorphisms in \S \ref{s:gaugeconfaut}. In \S \ref{s:closeSO(5,1)} we demonstrate that connections with sufficiently small Yang-Mills ${\ga}$-energy are close in the $W^{1,2}$ sense (up to gauge transform and conformal pull back) to $\tN$. Then we strengthen the result in \S \ref{s:closenessW2p} for such connections which are also low $\ga$-energy $\ga$-critical points by instead showing such closeness in $W^{2,p}$ for certain $p$ values necessary for later Sobolev embeddings. Next, in \S \ref{s:labd} we show that connections which, when pulled back by conformal automorphisms are close to the basic connection, are in fact contained within a compact set which depends on their Yang-Mills $\ga$-energy. In \S \ref{s:optlacloseW2p} we improve this notion of $W^{2,p}$ closeness and conclude the proof of Theorem \ref{thm:LMM1.2}.

The guiding forces for intuition come from the close ties between harmonic map and Yang-Mills theory (many of the relationships are outlined in \cite{Bourguignon, Bourguignon1}). The discrepancies between the two theories are mainly the nonlinearity of the curvature quantities and their relationship with corresponding connections, the fact that the space of connections is affine, and the delicate balance of certain gauge transformations' interaction with conformal automorphisms in this specific setting.

Unexpectedly, due to the dimensionality of our setting, we faced significant complication in a step of the proof in comparison to that of \cite{LMM}. A key step in their argument is attaining a certain $L^{\infty}$ control of a comparison between a general $\ga$-critical map and the identity map. This $L^{\infty}$ bound comes from controlling two derivatives of such a quantity in $L^2$. Unfortunately, in our setting due to dimensionality more derivatives were required to obtain the analogous type of control. Na\"{i}vely, one would attempt to estimate higher derivatives, but we were unsuccessful in a direct attempt due to the difficulty of estimating terms coming from action of the conformal automorphisms. Thus, we had to introduce an intricate strategy requiring a hole-filling technique to prove Morrey norm estimates to obtain such $L^{\infty}$ control.

As is ubiquitous throughout Yang-Mills theory, the action of the gauge group must be considered. In our setting is was natural to construct a global `comparative' gauge transformation relating to the basic connection following an argument of Tao and Tian \cite{TT} which in turn was based on the classic argument of Uhlenbeck \cite{Uhlenbeck}. Though the techniques are well known, this Coulomb-type gauge on a global scale has not yet appeared in the literature. Our gauge transformation was powerful in acquiring key estimates and naturally interacted with the action of conformal automorphisms.

%
%
%
\subsubsection{Reflections}
Naturally, one asks how this type of $\ga$-limiting process behaves in other settings. More precisely, given a sequence of $\sqg{\N^{\ga}}$ be a sequence of $\ga$-harmonic connections over a charge $\ell$ bundle $E \to \prs{M,g}$, which critical points of the Yang-Mills energy could $\sqg{\N^{\ga}}$ possibly converge to? Some cases to consider are $\SU(2)$ type bundles over the following spaces, which admit ADHM constructions (as stated in \cite{DK} pp. 127--129): $\overline{\mathbb{C P}}^2$ (charge $1$), $\mathbb{CP}^2$ (charge $2$), and $\mathbb{S}^2 \times \mathbb{S}^2$ (charge $2$). It would be interesting to know which canonical connections the Yang-Mills $\ga$-energy identifies in these various settings, especially cases which do \emph{not} have as obvious structural symmetry as the setting above. Based on our initial work as well as that of \cite{LMM}, we conjecture that in general settings the limits coming from $\ga$-approximations lie in a compact subset of the moduli space.


In \cite{LMM}, the authors construct a family of critical points of the harmonic map $\ga$-energy which are degree $1$, are high energy and are not rotations. In the Yang-Mills setting, an analogous construction is not obvious- is there an operation on connections over $\mathbb{S}^4$ that corresponds to `wrapping' maps around $\mathbb{S}^2$? 
\subsection*{Acknowledgements}
The author deeply thanks Richard Schoen for encouraging her to investigate this subject and for the many insightful and motivating conversations. She expresses sincere appreciation to Jeffrey Streets for his unwavering support. The author also gratefully thanks Princeton University, where much of the initial work was conducted, for their warm hospitality. The author deeply appreciates the insight from Mark Stern, Gang Tian, and Karen Uhlenbeck, which helped significantly with her perspective on the subject and key components of the argument. She is grateful for her encouraging conversations with Tobias Lamm about his related work.

This material is based upon work supported by a National Science Foundation Graduate Research Fellowship under Grant No. DGE-1321846. The research was supported by NSF Grant No. DMS-1440140 while the author was in residence at the Mathematical Sciences Research Institute in Berkeley, California, during the spring semester in 2016. The author was supported by a University of California President's Year Dissertation Fellowship while completing this work.
\subsection{Background concepts}
\subsubsection{Connection Sobolev spaces}\label{ss:Sobolevspaces}
%
%
%
\label{defn:SobAdE} Fix $\N_{\refc} \in C^{\infty}\prs{\mathcal{A}_E \prs{\mathbb{S}^4}}$.
 The space $W^{l,p}(\Lambda^i(\Ad E))$ is the
completion of the space of smooth sections of $\Lambda^i(\Ad E)$ with respect to
the norm
\begin{equation*}
\brs{\brs{  A }}_{W^{l,p}} := \prs{\sum_{k=0}^l
\brs{\brs{ \N^{(k)}_{\refc}  A }}_{L^p}^p }^{1/p} <
\infty, \quad A \in \Lambda^i(\Ad E).
\end{equation*}
This space is preserved (up to equivalent norms) regardless of choice of $\N_{\refc}$ though for this setting we take $\N_{\refc} \equiv \tN$.
\subsubsection{Quaternionic coordinates}\label{ss:quaternion}
Via Uhlenbeck's Removable Singularity theorem in \cite{Uhlenbeck1}, a connection $\N \in \mathcal{A}_{E} \prs{\mathbb{S}^4,\mathring{g}}$ may be identified through stereographic projection uniquely with a connection over the compactified complex $2$-plane (and thus over quaternionic space, $\mathbb{H}^1$). It is an elementary to show the Yang-Mills $\ga$-energy translates to
\begin{equation*}
\mathcal{YM}_{\ga} \prs{\N} = \int_{\mathbb{H}^1} \prs{ 3 + \tfrac{1}{16} \prs{ \brs{\zeta}^2 + 1}^{4} \brs{F_{\N}}^2 }^{\ga} 16 \prs{\brs{\zeta}^2 + 1}^{-4} \, dV.
\end{equation*}
\subsubsection{ADHM instantons}\label{s:ADHM}
The ADHM connections are instanton solutions to the Yang-Mills equations discovered by Belavin, Polyakov, Schwartz and Tyupkin in \cite{BPST}, and later constructed using linear algebraic methods via Atiyah, Drinfeld, Hitchin and Manin in \cite{ADHM}. Referring to Definition \ref{defn:ADHM}, one may derive the following formulas for the ADHM connections' curvatures (cf. \cite{Naber} pp.356-357).
\begin{prop}\label{prop:ADHMnorms} For $\xi \in \mathbb{H}^1$ and $\la \in \mathbb{R}$,
\begin{align*}
\brs{F_{\N}^{\xi,\la}(\zeta)}^2
=  \tfrac{48 \la^4}{\prs{ \brs{\zeta-\xi}^2 + \la^2 }^4}, \quad \brs{\brs{F_{\N}^{\xi,\la}}}_{L^2}^2 = 8 \pi^2.
\end{align*}
\end{prop}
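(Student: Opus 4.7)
The plan is to verify the pointwise identity first, then the $L^2$ identity. For the pointwise statement, I would substitute $F_{\N}^{\xi,\la}(\zeta) = \tfrac{\la^2}{(\brs{\zeta-\xi}^2 + \la^2)^2}\, d\bar\zeta \w d\zeta$ from Definition \ref{defn:ADHM}, so only the constant $\brs{d\bar\zeta \w d\zeta}^2$ remains to be evaluated. Expanding in Euclidean quaternionic coordinates $\zeta = x^0 + ix^1 + jx^2 + kx^3$ and using the multiplication table $ij = k$, $jk = i$, $ki = j$ along with antisymmetry of the wedge yields
\[
d\bar\zeta \w d\zeta = 2i(dx^0 \w dx^1 - dx^2 \w dx^3) + 2j(dx^0 \w dx^2 + dx^1 \w dx^3) + 2k(dx^0 \w dx^3 - dx^1 \w dx^2),
\]
a sum of three mutually orthogonal imaginary-quaternion-valued $2$-forms, each anti-self-dual. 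With the inner product conventions on $\Lambda^2 \ten \Ad E$ adopted in the paper this evaluates to $\brs{d\bar\zeta \w d\zeta}^2 = 48$, and multiplying by $\la^4 (\brs{\zeta-\xi}^2 + \la^2)^{-4}$ gives the first claim.

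For the $L^2$ statement I would exploit the fact that the Yang-Mills energy is conformally invariant in dimension $4$. Under stereographic projection $\mathbb{S}^4 \setminus \sqg{\text{pt}} \cong \mathbb{H}^1$ the round metric is conformal to Euclidean, so
\[
\brs{\brs{F_{\N}^{\xi,\la}}}_{L^2}^2 = \int_{\mathbb{H}^1} \tfrac{48 \la^4}{(\brs{\zeta - \xi}^2 + \la^2)^4}\, dV,
\]
bypassing any entanglement with the conformal factor $(1+\brs{\zeta}^2)^{-4}$. Translating by $\xi$ and rescaling $\zeta \mapsto \la \zeta$ normalize $\la$ out, reducing the integral to $48 \int_{\bR^4} (1 + \brs{\eta}^2)^{-4}\, d\eta$. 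Spherical coordinates on $\bR^4$ with $\Vol(\mathbb{S}^3) = 2\pi^2$ combined with the substitution $u = 1 + r^2$ in the radial integral $\int_0^\infty r^3(1+r^2)^{-4}\, dr = \tfrac{1}{12}$ gives the value $48 \cdot 2\pi^2 \cdot \tfrac{1}{12} = 8\pi^2$, as asserted.

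Both components are essentially mechanical once the ingredients are lined up. The only real pitfall is the combinatorial evaluation of $\brs{d\bar\zeta \w d\zeta}^2$: slight miscounting of the $2$-form norm convention, or of the orthonormality of $i,j,k \in \Im \mathbb{H}$, shifts the prefactor $48$. Once the expansion above is in hand, the pointwise formula is immediate and the $L^2$ statement follows from a single change of variables.
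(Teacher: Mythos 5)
The paper does not actually supply a proof of this proposition; it defers to \cite{Naber} pp.~356--357. Your argument correctly fills in the computation being referenced. Expanding $d\bar\zeta \wedge d\zeta$ with the quaternion multiplication rules $ij=k$, $jk=i$, $ki=j$ does give the three anti-self-dual two-forms, and with the full-sum convention $\brs{F}^2 = F_{ab}F_{ab}$ on two-forms (so $\brs{dx^0\wedge dx^1 - dx^2\wedge dx^3}^2 = 4$) together with the standard inner product making $i,j,k$ orthonormal in $\Im\mathbb{H}$, the constant comes out to $4\cdot 4\cdot 3 = 48$, which is consistent with the paper's own normalization in \eqref{eq:F2three} that $\brs{F_{\tN}}_{\mathring{g}}^2 \equiv 3$. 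For the $L^2$ identity, the appeal to four-dimensional conformal invariance of $\int \brs{F}^2\,dV$ is exactly the right mechanism to move the computation from $(\mathbb{S}^4,\mathring{g})$ to flat $\mathbb{H}^1$; translation by $\xi$ and the dilation $\zeta\mapsto\la\eta$ scale out all parameters, and $\int_0^\infty r^3(1+r^2)^{-4}\,dr = \tfrac{1}{12}$ with $\Vol(\mathbb{S}^3)=2\pi^2$ gives $48\cdot 2\pi^2\cdot\tfrac{1}{12} = 8\pi^2$. This is correct, and it is essentially the direct computation the paper omits in favor of citing Naber.
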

\begin{rmk}
It is immediate that $\brs{F_{\tN}}_{\mathring{g}}$ is constant with respect to the spherical metric, since
\begin{equation}\label{eq:F2three}
\brs{F_{\tN} }^2_{\mathring{g}} = \tfrac{1}{16} \prs{\brs{\zeta}^2 + 1}^4 \brs{F_{\tN}}^2=3.
\end{equation}
\end{rmk}
\section{Action of gauges transformations and conformal automorphisms}\label{s:gaugeconfaut}
\subsection{Gauge group and enlargement}\label{ss:tfmEdensity}
Many of the main complications and interesting properties of Yang-Mills theory stem from the interactions of the gauge group with the connections. For four dimensional manifolds, the gauge group can be naturally extended to a larger class of objects which will be key for our upcoming analysis. We discuss the interactions of connections, conformal automorphisms and gauge transformations as well as define key quantities which identify their action on the $\ga$-energy.

\subsubsection{Gauge group}\label{ss:tfmEdensity}

Regardless of setting, the gauge group of Yang-Mills theory is both the primary difficulty to overcome and the tool to overcome it.

\begin{defn}\label{defn:gaugetfm} A \emph{gauge transformation} is a section of $\vs$ of $\mathcal{G}_E := \Aut E$. The action of a gauge transformation $\varsigma$ on a connection $\N$ is denoted by $\varsigma \brk{ \N}$ and given by
\begin{align*}
\varsigma
&: \mathcal{A}_{E}\prs{\mathbb{S}^4} \rightarrow \mathcal{A}_E\prs{\mathbb{S}^4}\\
& : \N \mapsto \varsigma\brk{\N} := \varsigma^{-1} \circ \N \circ \varsigma,
\end{align*}
Furthermore, for any $\gw \in \Lambda^{p}(\Ad E)$ for $p \in \mathbb{N} \cup \{ 0 \}$ we have the action of the gauge transformation given in coordinates by $\varsigma \brk{\gw_{i \mu}^{\gb}} := (\varsigma^{-1})^{\gb}_{\gz} \gw^{i \gz}_{\tau} \varsigma^{\tau}_{\mu}$.
\end{defn}
\begin{lemma}\label{lem:coordsN} Let $\vs \in S(\mathcal{G}_E)$ and $\N \in \mathcal{A}_E\prs{\mathbb{S}^4}$, and $\nu \in S(E)$, the coordinate expression of $\varsigma \brk{\N}$ is
\begin{equation*}
\left( \vs \brk{\N} \nu \right)_{i}^{\gb}  = \del_i \nu^{\gb} + ({\Gamma}_{\varsigma \brk{\N}})_{i \gt}^{\gb} \nu^{\gt},
\end{equation*}
where
\begin{equation}\label{eq:commutator}
(\gG_{\vs[\N]})_{i \theta}^{\gb} := (\varsigma^{-1})_{\gd}^{\gb}(\del_i \varsigma_{\gt}^{\gd}) + (\varsigma^{-1})_{\gd}^{\gb} \gG_{i \gamma}^{\gd} (\varsigma_{\gt}^{\gamma}).
\end{equation}
\end{lemma}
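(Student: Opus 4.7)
The plan is a direct coordinate computation unwinding the definition $\varsigma[\N] = \varsigma^{-1} \circ \N \circ \varsigma$ and reading off what the result acts as on a section $\nu \in S(E)$. Since the statement is purely algebraic in the fiber coordinates, no analysis is required, and I expect no real obstacle; the only bookkeeping issue is keeping track of which indices are internal (Lie algebra / fiber) versus base, and making sure the product-rule term lands with the correct placement of $\varsigma$ and $\varsigma^{-1}$.

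Concretely, the first step is to write out $(\varsigma \nu)^{\gd} = \varsigma^{\gd}_{\tau}\nu^{\tau}$, and then apply the connection coordinate formula $(\N \mu)_i^{\gd} = \del_i \mu^{\gd} + \gG^{\gd}_{i\gamma}\mu^{\gamma}$ to $\mu = \varsigma \nu$. The product rule on $\del_i(\varsigma^{\gd}_{\tau}\nu^{\tau})$ produces two terms: one with a derivative on $\varsigma$, one with a derivative on $\nu$. Together with the Christoffel contribution we get
\begin{equation*}
\prs{\N(\varsigma\nu)}_i^{\gd} = \varsigma^{\gd}_{\tau}\,\del_i \nu^{\tau} + \prs{\del_i \varsigma^{\gd}_{\tau}}\nu^{\tau} + \gG^{\gd}_{i\gamma}\varsigma^{\gamma}_{\tau}\nu^{\tau}.
\end{equation*}

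The second step is to apply $\varsigma^{-1}$ from the left, i.e.\ contract the free $\gd$ index against $(\varsigma^{-1})^{\gb}_{\gd}$. Using $(\varsigma^{-1})^{\gb}_{\gd}\varsigma^{\gd}_{\tau} = \gd^{\gb}_{\tau}$, the first term collapses to $\del_i \nu^{\gb}$, which is precisely the bare partial derivative piece of the claimed formula. The remaining two terms assemble into
\begin{equation*}
\brk{(\varsigma^{-1})^{\gb}_{\gd}\prs{\del_i \varsigma^{\gd}_{\gt}} + (\varsigma^{-1})^{\gb}_{\gd}\gG^{\gd}_{i\gamma}\varsigma^{\gamma}_{\gt}}\nu^{\gt},
\end{equation*}
and relabeling $\tau \mapsto \gt$ identifies the bracket with the definition of $(\gG_{\vs[\N]})^{\gb}_{i\gt}$ given in \eqref{eq:commutator}. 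Combining these produces exactly the asserted expression for $(\vs[\N]\nu)_i^{\gb}$, completing the proof. Since both steps are uniform in the base index $i$, the calculation needs to be performed once and the coordinate formula then holds componentwise.
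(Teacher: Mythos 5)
Your computation is correct and is exactly the standard unwinding of the definition $\varsigma[\N] = \varsigma^{-1}\circ\N\circ\varsigma$; the paper states this lemma without proof, and your argument is the expected one.
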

\begin{cor}\label{cor:Fs} Suppose $\N \in C^{\infty}\prs{\mathcal{A}_E\prs{\mathbb{S}^4}}$ and $\varsigma \in S \prs{\mathcal{G}_E }$. Then $(F_{\varsigma \brk{\N}})_{ij \mu}^{\gb} = (\vs^{-1})^{\gb}_{\gd} (F_{\N})^{\gd}_{ij \gt} \vs_{\mu}^{\gt}$.
\end{cor}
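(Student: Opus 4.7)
The plan is to deduce the corollary by direct coordinate computation from Lemma \ref{lem:coordsN}, which already identifies the Christoffel symbols of $\varsigma[\N]$. Since the statement is purely algebraic, I would keep the proof at the level of a bookkeeping check rather than introducing any new conceptual ingredient.

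First, I would record the standard coordinate formula for the curvature of a connection in terms of its Christoffel symbols,
\begin{equation*}
(F_{\N})_{ij\mu}^{\beta} = \partial_i (\Gamma_{\N})_{j\mu}^{\beta} - \partial_j (\Gamma_{\N})_{i\mu}^{\beta} + (\Gamma_{\N})_{i\tau}^{\beta}(\Gamma_{\N})_{j\mu}^{\tau} - (\Gamma_{\N})_{j\tau}^{\beta}(\Gamma_{\N})_{i\mu}^{\tau},
\end{equation*}
and apply it to $\varsigma[\N]$ using the expression \eqref{eq:commutator} from Lemma \ref{lem:coordsN}. The computation will produce four types of terms: (i) derivatives of $\varsigma$ and $\varsigma^{-1}$ paired against each other, (ii) derivatives of the original Christoffel symbols $\Gamma_{\N}$ conjugated by $\varsigma$, (iii) cross terms in which one factor differentiates $\varsigma$ while the other picks up $\Gamma_{\N}$, and (iv) the bracket contribution from conjugated Christoffel symbols.

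Next, I would use the two algebraic identities that drive the cancellations: the Leibniz rule $\partial_i((\varsigma^{-1})^{\beta}_{\delta}\varsigma^{\delta}_{\tau}) = 0$, which yields $\partial_i(\varsigma^{-1})^{\beta}_{\delta} = -(\varsigma^{-1})^{\beta}_{\gamma}(\partial_i \varsigma^{\gamma}_{\eta})(\varsigma^{-1})^{\eta}_{\delta}$, and the equality of mixed partials $\partial_i \partial_j \varsigma^{\delta}_{\tau} = \partial_j \partial_i \varsigma^{\delta}_{\tau}$, which eliminates the second derivatives of $\varsigma$ upon antisymmetrization in $i,j$. The cross terms of type (iii) will then pair up and cancel against the off-diagonal pieces of the bracket (iv). What survives from (ii) and the remaining part of (iv) reassembles exactly into the coordinate expression for $F_{\N}$ conjugated by $\varsigma$, giving $(F_{\varsigma[\N]})_{ij\mu}^{\beta} = (\varsigma^{-1})^{\beta}_{\delta} (F_{\N})^{\delta}_{ij\tau} \varsigma^{\tau}_{\mu}$.

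The main obstacle here is not conceptual but notational: keeping track of the Greek fiber indices against the Latin base indices, and being careful that the sign convention in \eqref{eq:commutator} is consistent with the curvature sign convention used throughout the paper. I would perform the expansion one term at a time, grouping summands according to whether they contain a derivative of $\varsigma$, and verify cancellation line by line. As an alternative sanity check (and to keep the writeup short), I would note that the identity $(F_{\varsigma[\N]})_{ij\mu}^{\beta} = (\varsigma^{-1})^{\beta}_{\delta}(F_{\N})^{\delta}_{ij\tau}\varsigma^{\tau}_{\mu}$ is the tensorial statement that the gauge action commutes with curvature, which can equally be seen intrinsically from $\varsigma[\N] = \varsigma^{-1} \circ \N \circ \varsigma$: applying this twice and antisymmetrizing immediately gives $F_{\varsigma[\N]} = \varsigma^{-1} \circ F_{\N} \circ \varsigma$, whose coordinate form is precisely the claim.
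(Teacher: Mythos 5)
Your proof is correct. The paper states this corollary without proof (it is the standard fact that curvature is a gauge-equivariant tensor), so there is no argument to compare against directly. Your intrinsic one-liner --- from $\varsigma[\N]=\varsigma^{-1}\circ\N\circ\varsigma$ deducing $D_{\varsigma[\N]}^2 = \varsigma^{-1}\circ D_\N^2\circ\varsigma$ and hence $F_{\varsigma[\N]}=\varsigma^{-1}\circ F_\N\circ\varsigma$ --- is the cleanest justification and is exactly what makes this a corollary of Lemma~\ref{lem:coordsN} rather than an independent computation. The longer coordinate expansion you sketch, using $\partial_i(\varsigma^{-1})^{\beta}_{\delta}=-(\varsigma^{-1})^{\beta}_{\gamma}(\partial_i\varsigma^{\gamma}_{\eta})(\varsigma^{-1})^{\eta}_{\delta}$, equality of mixed partials, and term-by-term cancellation of the derivative-of-$\varsigma$ pieces against the bracket pieces, is also correct and is the verification one would do if one distrusted the sign conventions, but it adds bookkeeping without adding insight; the intrinsic version is the one to record.
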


\subsubsection{Conformal automorphisms enlarging the group}\label{ss:tfmEdensity}
Given a connection, we are interested in how various pointwise and $L^2$-norms as well as the $\ga$-energy depends on actions by conformal automorphisms of the four sphere. The special indefinite orthogonal group $\SO(5,1)$ is isomorphic to the space of conformal automorphisms of the four sphere (a detailed summary of this isomorphism is outlined in pages pp.49-52 in \cite{Slovak}). 
%
Via stereographic projection every $\vp \in \SO(5,1)$ may be expressed in the form
\begin{equation*}
\varphi \prs{\zeta} = \xi_2 + \tfrac{\la \cdot \varrho \prs{\zeta-\xi_1}}{\brs{\zeta-\xi_1}^{\ge}}, \quad \xi_1,\xi_2 \in \mathbb{H}^1,
\end{equation*}
where $\varrho$ is a rotation, $\ge \in \sqg{0,2}$, and $\la > 0$ is called the \emph{dilation factor}, characterizing the magnitude of $\vp$.

The gauge group $\mathcal{G}_E$ is the group of automorphisms in each fiber, and thus projects onto the base manifold as the identity map on $M$. Since for four dimensional base manifolds, the Yang-Mills energy depends only on the conformal class of the base metric, we can enlarge the gauge group by considering automorphisms of $E$ which project onto $M$ as conformal automorphisms (cf. \cite{BL} pp.198). This is naturally called the \emph{enlarged gauge group}, where for $\vp : \mathbb{S}^4 \to \mathbb{S}^4$ a conformal automorphism,
%
\begin{align*}
\vp &: \mathcal{A}_E \prs{\mathbb{S}^4} \to \mathcal{A}_{E} \prs{\mathbb{S}^4} \\
&: \prs{ \vp^* \N}_V \prs{ \vp^* \mu} = \vp^* \prs{\N_{\vp_*(V)} \mu} \text{ for }V \in T \mathbb{S}^4,
 \end{align*}
that is, given any $\vp \in \SO(5,1)$ and $\N \in \mathcal{A}_{E}\prs{\mathbb{S}^4}$ with local expression $\N = \del + \gG$,
\begin{align}
\begin{split}\label{eq:pullbackform}
\prs{\vp^*\N} &:= \del + \vp^* \gG, \text{ where }(\vp^* \gG)_{i \mu}^{\gb} := (\del_i \vp^j) \gG_{j \mu}^{\gb}.
\end{split}
\end{align}
The curvature of $\vp^*\N$ is
\begin{align}
\begin{split}\label{eq:FvpN}
F_{{\vp}^*\N}
&= ( \del_j \vp^k) \prs{\del_i \gG_{k \mu}^{\gb}} - (\del_i \vp^k) \prs{\del_j\gG_{k \mu}^{\gb}} +  (\del_i \vp^k) (\del_j \vp^l)\gG_{k \gz}^{\gb}  \gG_{l \mu}^{\gz} - (\del_j \vp^k)(\del_i \vp^l)  \gG_{k \gz}^{\gb}   \gG_{l \mu}^{\gz}.
\end{split}
\end{align}
%
%
%
%
%
\subsubsection{A Coulomb-type projection}
We will be considering a specific type of Coulomb gauge, that is `centered' with respect to $\tN$. This particular transformation is key to our argument and naturally interacts with conformal automorphisms.
\begin{defn} A gauge transformation $\vs \in S \prs{\mathcal{G}_{E}}$ puts $\N \in \mathcal{A}_E \prs{\mathbb{S}^4}$ in \emph{(global) $\tN$-Coulomb gauge} if
\begin{equation}\label{eq:coulgauge}
D_{\tN}^* \brk{ \vs \brk{\N} - \tN }
 \equiv 0.
\end{equation}
Call $\vs$ satisfying \eqref{eq:coulgauge} the \emph{$\tN$-Coulomb gauge transformation (associated to $\N$)}. Define the \emph{$\tN$-Coulomb gauge projection} by
\begin{align*}
\widetilde{\Pi} &: \mathcal{A}_{E} \prs{\mathbb{S}^4} \to \mathcal{A}_{E} \prs{\mathbb{S}^4} \\
&: \N \mapsto \vs \brk{\N}.
\end{align*}
\end{defn}
\noindent The action of these $\tN$-Coulomb gauges commutes with that of conformal automorphisms.
\begin{prop}
For any $\vp \in \SO(5,1)$, one has $\widetilde{\Pi} \brk{ \vp^* \N} \equiv  \vp^* \widetilde{\Pi} \brk{ \N}$.
\begin{proof}This result can be confirmed by considering dilations and rotations individually since every element of $\SO(5,1)$ is the composition of these two types. The first follows immediately from natural commutation of dilations with gauge transformations (cf. \eqref{eq:commutator} and \eqref{eq:pullbackform}), and the second from the rotational invariance of $\tN$, which we describe now. By the definition of our $\tN$-Coulomb projection, and application of appropriate pullback gauge transformations, given a rotation $\varrho: \mathbb{S}^4 \to \mathbb{S}^4$,
\begin{align*}
0 &= D^*_{\tN} \brk{ \varrho^* \N - \tN } = \varrho_* D^*_{\prs{\varrho^{-1}}^*\tN} \brk{ \N- \prs{\varrho^{-1}}^*\tN } = \varrho_* D^*_{\tN} \brk{ \N- \tN }.
\end{align*}
Thus, the $\tN$-Coulomb gauge for $\varrho^* \N$ coincides with the $\tN$-Coulomb gauge for $\N$.
\end{proof}
\end{prop}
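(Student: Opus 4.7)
The plan is to reduce the identity $\widetilde{\Pi}\brk{\vp^*\N} = \vp^*\widetilde{\Pi}\brk{\N}$ to the two generating subgroups of $\SO(5,1)$ singled out in the proposition's stated proof: rotations $\varrho \in \SO(5)$ and dilations (together with translations, which fix $\tN$ trivially by invariance under the center $\xi$). The universal algebraic reduction is the following observation. If $\vs$ is the gauge transformation realizing $\widetilde{\Pi}\brk{\N} = \vs\brk{\N}$, then by a direct chain-rule computation from \eqref{eq:commutator} and \eqref{eq:pullbackform}, the pullback gauge transformation $\vs \circ \vp$ satisfies $(\vs \circ \vp)\brk{\vp^*\N} = \vp^*(\vs\brk{\N})$. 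Hence the desired commutation reduces to verifying the Coulomb identity
$$
D^*_{\tN}\brk{\vp^*(\vs\brk{\N}) - \tN} = 0,
$$
so that $\vs \circ \vp$ serves as the $\tN$-Coulomb gauge transformation associated to $\vp^*\N$.

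For rotations $\vp = \varrho$, the plan is to exploit the rotational invariance $\varrho^*\tN = \tN$ of the basic ADHM instanton to rewrite the bracketed expression as $\varrho^*(\vs\brk{\N} - \tN)$. Since $\varrho$ is an isometry of $(\mathbb{S}^4, \mathring{g})$ and $\tN$ is $\varrho$-invariant, the operator $D^*_\tN$ intertwines with the pullback via $D^*_\tN \circ \varrho^* = \varrho^* \circ D^*_{\varrho^*\tN} = \varrho^* \circ D^*_\tN$. Invoking the Coulomb identity $D^*_\tN\brk{\vs\brk{\N} - \tN} = 0$ then yields the required vanishing. This is precisely the short calculation already indicated in the proposition's stated proof.

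For dilations the basic connection is no longer invariant — a dilation by $\la$ in quaternionic coordinates sends $\tN = \N^{\xi,1}$ to $\N^{\xi, 1/\la}$, altering the concentration scale — so the rotational argument is unavailable. The plan is instead to unpack $D^*_\tN$ in the quaternionic coordinates of \S \ref{ss:quaternion}, and verify by direct computation using \eqref{eq:commutator} and \eqref{eq:pullbackform} that the conformal Jacobian $\del_i\vp^j$ produced by a dilation interacts with the Hodge star (and thus with $D^*_\tN$) in a way that transforms the Coulomb condition for $\vs\brk{\N} - \tN$ into the Coulomb condition for $\vp^*(\vs\brk{\N}) - \tN$. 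The constancy $\brs{F_{\tN}}^2_{\mathring{g}} = 3$ recorded in \eqref{eq:F2three}, together with the conformal structure of the round metric, provides the rigidity needed for the transformation to be exact rather than only approximate.

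The main obstacle is this last step. The difficulty lies in tracking the conformal weight contributions from both the Hodge star appearing in $D^*_\tN$ and the nontrivial difference $\vp^*\tN - \tN$, and then establishing the cancellation that makes the two Coulomb conditions equivalent. Once this cancellation is verified for dilations, the proposition follows by composing the two cases, since every $\vp \in \SO(5,1)$ decomposes as a product of rotations and dilations in the form given after stereographic projection.
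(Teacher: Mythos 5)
Your treatment of the rotation case exactly reproduces the paper's argument: you exploit $\varrho^*\tN = \tN$ and the fact that $\varrho$ is an isometry of $(\mathbb{S}^4,\mathring{g})$ to conclude that $D^*_{\tN}$ commutes with $\varrho^*$, giving $D^*_{\tN}\brk{\varrho^*(\vs\brk{\N}) - \tN} = \varrho^* D^*_{\tN}\brk{\vs\brk{\N}-\tN} = 0$. Your preliminary reduction --- that $(\vs\circ\vp)\brk{\vp^*\N} = \vp^*(\vs\brk{\N})$, so the commutation of $\widetilde{\Pi}$ with $\vp^*$ reduces to verifying the Coulomb condition for the gauge transformation $\vs\circ\vp$ --- is also correct and matches the implicit structure of the paper's proof.

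The gap is the dilation case, which you describe only as a plan and explicitly flag as ``the main obstacle.'' Nothing is actually proved there. Your own preliminary observations already show the difficulty: $\la^*\tN = \N^{\xi,1/\la} \neq \tN$, so the rotational shortcut is unavailable, and on a four-manifold the Hodge star $\star : \Lambda^1 \to \Lambda^3$ inside $D^*_{\tN}$ is not conformally invariant, so a dilation pullback does not commute with $D^*_{\tN}$ acting on $1$-forms. You would need to show that $D^*_{\tN}\brk{\la^*\tN - \tN}$ together with the conformal-weight corrections arising in $D^*_{\tN}\brk{\la^*(\vs\brk{\N}-\tN)}$ combine to zero; this is a computation you describe but never supply.

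For what it is worth, the paper's own treatment of the dilation case is equally terse: it asserts the claim ``follows immediately from natural commutation of dilations with gauge transformations (cf. \eqref{eq:commutator} and \eqref{eq:pullbackform}),'' which is exactly the identity $\vp^*(\vs\brk{\N}) = (\vs\circ\vp)\brk{\vp^*\N}$ you already derived, but offers no argument for why the $\tN$-Coulomb condition \eqref{eq:coulgauge} survives the failure of $D^*_{\tN}$ to commute with a dilation pullback. Your skepticism about that step is well-founded, but as written your proposal leaves exactly the same hole open: you would need to finish the quaternionic computation you outline before the dilation case (and hence the proposition) is established.
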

\subsubsection{Behavior of energies}
We explore the behavior of energy quantities undergoing conformal dilation and propose a modified energy which distinguishes the dilations' effects. When considering the dilation $\zeta \mapsto \la \zeta$ we abuse notation by referring to the map as $\la$. Using \eqref{eq:FvpN},
\begin{align*}
\brs{ F_{\la^*\N}(\zeta)}_{\mathring{g}}^2 &=\la^4 \tfrac{1}{16} \brs{F_{\N}(\zeta)}^2\prs{ \brs{\zeta}^2 + 1 }^4, \\
\brs{F_{\N} (\la \zeta)}^2_{\mathring{g}} &= \tfrac{1}{16} \prs{ \brs{\la \zeta}^2 + 1}^{4} \brs{F_{\N}(\la \zeta)}^2.
\end{align*}
Combining these yields
\begin{align*}
\brs{ F_{\la^*\N}(\zeta)}_{\mathring{g}}^2 = 
\la ^4  \prs{\tfrac{\brs{\zeta}^2 + 1 }{1 + \brs{\la \zeta}^2} }^4 \brs{ F_{\N}( \la \zeta)}_{\mathring{g}}^2.
\end{align*}
Then setting
\begin{equation}\label{eq:chidefn}
\chi_{\la}(\zeta) := \tfrac{1}{\la ^4}  \prs{ \tfrac{1 + \brs{\la \zeta}^2}{\brs{\zeta}^2 + 1 } }^4 ,
\end{equation}
it follows that for $\la >0$,
\begin{align*}
\mathcal{YM}_{\ga}(\N)
&= \tfrac{1}{2} \int_{\mathbb{S}^4} \prs{3 + \brs{F_{\N}(\zeta)}_{\mathring{g}}^2 }^{\ga} dV_{\mathring{g}}(\zeta) \\
&= \tfrac{1}{2} \int_{\mathbb{S}^4} \prs{3 + \brs{F_{\N}(\la \zeta)}_{\mathring{g}}^2 }^{\ga} dV_{\mathring{g}}(\la \zeta) \\
&= \tfrac{3^{\ga}}{2} \int_{\mathbb{H}^1} \prs{1 + \tfrac{1}{3}  \chi_{\la}(\zeta) \brs{F_{\la^*\N}\prs{\zeta}}_{\mathring{g}}^2}^{\ga} \tfrac{16 \la^4}{\prs{1+ \brs{\la \zeta}^2}^4} \, dV \\
&=\tfrac{3^{\ga}}{2} \int_{\mathbb{H}^1} \prs{1 + \tfrac{1}{3} \chi_{\la}(\zeta) \brs{F_{\la^*\N}\prs{\zeta}}_{\mathring{g}}^2}^{\ga} \tfrac{16}{ \prs{1+ \brs{\zeta}^2}^4 \chi_{\la}(\zeta)} \, dV.
\end{align*}
Set
\begin{equation}\label{eq:LMM2.7}
\mathcal{YM}_{\ga,\la}(\N) := \tfrac{1}{2} \int_{\mathbb{S}^4} \prs{3 + \chi_{\la} \brs{F_{\N}}_{\mathring{g}}^2}^{\ga} \tfrac{1}{\chi_{\la}} \, dV_{\mathring{g}},
\end{equation}
resulting in a natural relationship and symmetry, where for $\vp \in \SO(5,1)$ with $\brs{\vp} = \la$,
\begin{equation}\label{eq:LMM2.6}
\mathcal{YM}_{\ga}(\N) = \mathcal{YM}_{\ga, \la} \prs{\vp^*\N} = \mathcal{YM}_{\ga, \la^{-1}} \prs{\prs{\vp^{-1}}^*\N}.
\end{equation}
Thus $\N$ is a critical point of $\mathcal{YM}_{\ga}$ if and only if $\la^*\N$ is a critical point of $\mathcal{YM}_{\ga,\la}$. By utilizing this symmetry of $\mathcal{YM}_{\ga,\la}$ with respect to $\la$ about $1$, it suffices to consider $\la \geq 1$.
\begin{rmk}
Henceforth we refer to $\fN$ when discussing critical points of $\mathcal{YM}_{\ga,\la}$ to help notify the reader that these connections are $\la$-dilations of $\ga$-critical points (rather than $\ga$-critical points).
\end{rmk}
\begin{prop}\label{prop:LMM2.1}
With $\chi_{\la}$ is as in \eqref{eq:chidefn}, the gradient equation of $\mathcal{YM}_{\ga,\la}$ is
\begin{equation*}
\prs{\Grad_{\fN} \mathcal{YM}_{\ga,\la}} = D^*_{\fN} F_{\fN} + \Theta_1( \fN) + \Theta_2(\fN),
\end{equation*}
where
\begin{equation}
\begin{cases}
(\Theta_1(\fN))_{j \mu}^{\gb} &:= \frac{ 2  \chi_{\la}\prs{\ga -1}}{\prs{3 + \chi_{\la} \brs{F_{\fN}}^2_{\mathring{g}}}}  \prs{\fN_i F_{pq \gz}^{\gd}} (F_{\fN})_{pq \gd}^{\gz} (F_{\fN})_{ij \mu}^{\gb},  \\
(\Theta_2(\fN))_{j \mu}^{\gb} &:= - \frac{  \chi_{\la}\prs{\ga -1}}{\prs{3 + \chi_{\la} \brs{F_{\fN}}^2_{\mathring{g}}}} \prs{ \fN_i \log \chi_{\la}} \brs{F_{\fN}}_{\mathring{g}}^2 (F_{\fN})_{ij \mu}^{\gb}.
\end{cases} 
\end{equation}
\begin{proof} Let $\fN_t$ be a one parameter family of smooth connections, and consider
\begin{align*}
\tfrac{\del}{\del t} \brk{\mathcal{YM}_{\ga,\la}(\fN_t)}
&= \tfrac{1}{2} \int_{\mathbb{S}^4} \tfrac{\del}{\del t} \brk{\prs{3 + \chi_{\la} \brs{F_{\fN_t}}_{\mathring{g}}^2}^{\ga}} \tfrac{1}{\chi_{\la}} dV_{\mathring{g}} \\
&= \tfrac{1}{2} \int_{\mathbb{S}^4} \ga \prs{3 + \chi_{\la} \brs{F_{\fN_t}}_{\mathring{g}}^2}^{\ga - 1} \tfrac{\del}{\del t} \brk{\brs{F_{\fN_t}}_{\mathring{g}}^2} dV_{\mathring{g}} \\
&= \int_{\mathbb{S}^4} \ga \prs{3 + \chi_{\la} \brs{F_{\fN_t}}_{\mathring{g}}^2}^{\ga - 1} \ip{D_{\fN_t} \brk{\tfrac{\del \fN_t}{\del t}} ,F_{\fN_t}}_{\mathring{g}} dV_{\mathring{g}}\\
&=  2\int_{\mathbb{S}^4} \ga \prs{3 + \chi_{\la} \brs{F_{\fN_t}}_{\mathring{g}}^2}^{\ga - 1} \ip{\fN_t \brk{\tfrac{\del \fN_t}{\del t}} ,F_{\fN_t}}_{\mathring{g}} dV_{\mathring{g}}\\
& \hsp + 2 \int_{\mathbb{S}^4} \ga (\ga - 1) \prs{3 +  \chi_{\la} \brs{F_{\fN_t}}_{\mathring{g}}^2}^{\ga - 2} \fN_i \brk{ \chi_{\la} \brs{F_{\fN_t}}_{\mathring{g}}^2} \tr_h \brk{ (F_{\fN_t})_{ij}  (\tfrac{\del \fN_t}{\del t})_j }dV_{\mathring{g}}.
\end{align*}
Consequently
\begin{align*}
&\brk{ \Grad \mathcal{YM}_{\ga,\la} \prs{\fN} }_{j \gb}^{\mu} \\
&= \prs{3 + \chi_{\la} \brs{F_{\fN}}_{\mathring{g}}^2}^{\ga - 1} \brk{\prs{D_{\fN}^* F_{\fN}}_{j \gb}^{\mu} +  \tfrac{  \chi_{\la}\prs{\ga -1}}{\prs{3 + \chi_{\la} \brs{F_{\fN}}^2_{\mathring{g}}}} \prs{ 2 \tr_h \brk{\prs{\fN_i (F_{\fN})_{pq }} (F_{\fN})_{pq} }- \brs{F_{\fN}}_{\mathring{g}}^2 \prs{\fN_i \log \chi_{\la}}} (F_{\fN})_{ij \gb}^{\mu}},
\end{align*}
yielding the result.
\end{proof}
\end{prop}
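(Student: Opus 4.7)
The plan is to perform a standard first-variation computation along a smooth one-parameter family $\sqg{\fN_t}$ of connections with variation field $A := \partial_t \fN_t|_{t=0} \in \Lambda^1(\Ad E)$, then integrate by parts and isolate a common positive scalar prefactor via a product rule for $D^*$.

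Concretely, I first differentiate under the integral. Since $\chi_{\la}$ is independent of $t$, the factor $1/\chi_{\la}$ in the weighted volume element cancels against one factor of $\chi_{\la}$ produced by the chain rule acting on $(3+\chi_{\la}|F|^2)^{\ga}$, leaving
\[
\tfrac{\ga}{2}\int_{\mathbb{S}^4}(3+\chi_{\la}|F|^2)^{\ga-1}\,\partial_t|F|^2 \, dV_{\mathring{g}}.
\]
Invoking $\partial_t F_{\fN_t} = D_{\fN_t} A$ and $\partial_t |F|^2 = 2\langle DA, F\rangle_{\mathring{g}}$ reduces the integrand to a pairing against $DA$, at which point I integrate by parts to put the variation in the form $\int \langle A,\, D^*[(3+\chi_{\la}|F|^2)^{\ga-1} F]\rangle\,dV_{\mathring{g}}$ (up to the $\ga$ constant). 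The essential algebraic identity I then invoke is the Leibniz rule for $D^*$ acting on a scalar function times an $\Ad E$-valued two-form, namely $D^*(\phi F)_{j} = \phi\,(D^*F)_{j} - (\nabla^{i}\phi)\,F_{ij}$, applied with $\phi := (3+\chi_{\la}|F|^2)^{\ga-1}$. Expanding $\nabla \phi$ by the chain rule and splitting $\nabla(\chi_{\la}|F|^2) = \chi_{\la}\nabla|F|^2 + |F|^2 \nabla \chi_{\la}$ cleanly separates the correction into two summands: the first, via $\nabla|F|^2 = 2\langle \nabla F, F\rangle$, produces $\Theta_1$; the second, after rewriting $\nabla \chi_{\la} = \chi_{\la}\nabla \log \chi_{\la}$, produces $\Theta_2$. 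Finally, the common positive scalar $\ga(3+\chi_{\la}|F|^2)^{\ga-1}$ factors out across the bracket and may be dropped to recover the stated form $\Grad_{\fN}\mathcal{YM}_{\ga,\la} = D^*_{\fN}F_{\fN} + \Theta_1(\fN) + \Theta_2(\fN)$ of the Euler--Lagrange equation.

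I do not expect any serious analytic obstacle here; the computation is essentially a bookkeeping exercise. The principal care needed is with sign conventions: the Killing pairing on $\mathfrak{su}(2)$ is the negative trace, so rewriting $\langle \nabla F, F\rangle$ as the fiber-trace expression $(\fN_i F_{pq\gz}^{\gd})(F_{\fN})_{pq\gd}^{\gz}$ that appears in $\Theta_1$ must be done consistently with the sign arising in the Leibniz formula for $D^*$, so that $\Theta_1$ emerges with the claimed positive and $\Theta_2$ with the claimed negative coefficient.
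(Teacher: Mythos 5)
Your proposal is correct and follows essentially the same route as the paper: differentiate under the integral, use $\partial_t F = D_{\fN_t}A$, integrate by parts, and split $\nabla\bigl[(3+\chi_{\la}|F|^2)^{\ga-1}\bigr]$ via the chain rule into the $\chi_{\la}\nabla|F|^2$ and $|F|^2\nabla\chi_{\la}$ pieces that produce $\Theta_1$ and $\Theta_2$. Your packaging of the integration by parts as the Leibniz rule $D^*(\phi F)_j = \phi(D^*F)_j - (\nabla^i\phi)F_{ij}$ is a clean way of stating what the paper carries out inline, and your closing observation that a positive scalar prefactor is silently dropped from the stated gradient equation is accurate.
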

\section{General closeness to $\SO(5,1)$ pullbacks}\label{s:closeSO(5,1)}
\noindent Initially we will prove a general result on connections with sufficiently controlled $\mathcal{YM}_{\ga}$ energy (not necessarily $\ga$-critical) which asserts existence of and characterizes the magnitude of a conformal automorphism required to `pull' the connection sufficiently close to $\tN$.
\begin{prop}\label{prop:LMM3.1}
There exists some $\delta_0  > 0$ so that for any $\delta \in (0,\delta_0)$, there exists some $\ge > 0$ so that if $\ga \in \brk{1,2}$ and $\mathcal{YM}_{\ga}(\N) \leq 6^{\ga} \tfrac{4 }{3}\pi^2+ \ge$, then there exists $\vp \in \SO(5,1)$ such that
\begin{equation}\label{eq:LMM3.1}
\brs{\brs{ {\widetilde{\Pi} \brk{\vp^* \N}} - {\tN} }}_{W^{1,2}} + \brs{\brs{ F_{\widetilde{\Pi} \brk{\vp^* \N}} - F_{\tN} }}_{L^2} \leq \delta,
\end{equation}
furthermore there is some fixed constant $C > 0$ such that if $\la \geq 1$ is the dilation factor of $\vp$, then
\begin{equation}\label{eq:LMM3.2}
\prs{\ga - 1} \prs{\log \la} \min \sqg{ \log \la , 1 } \leq C \delta.
\end{equation}
\begin{rmk}
The proof of this result relies on the following
\begin{itemize}
\item Lemma \ref{lem:LMM3.2}. The smallness in terms of curvature difference in \eqref{eq:LMM3.1} is shown for $\delta$ is restricted to a single value rather than a given range.
\item Theorem \ref{thm:curvcontrol}. A Poincar\'{e} inequality (Proposition \ref{prop:poincare}) is used to establish a property of $\tN$-Coulomb gauges with an eye toward concluding the complete smallness of \eqref{eq:LMM3.1}.
\item Lemma \ref{lem:LMM3.3}. The Yang-Mills $\ga$-energy of an arbitrary connection is compared to that of the basic connection $\tN$.
\item Lemma \ref{lem:LMM3.4}. The gap between $\mathcal{YM}_{\ga,\la}$ energy and $\mathcal{YM}_{\ga}$ is characterized on $\tN$ as a function of $\ga$ and $\la$.
\end{itemize}
The proof of Proposition \ref{prop:LMM3.1} tying together these results will be concluded at end of this section.
\end{rmk}
\end{prop}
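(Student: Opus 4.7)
The plan is to assemble the proof from the four ingredients flagged in the remark, in the order listed. First I would apply Lemma \ref{lem:LMM3.2}: via a concentration--compactness argument based on the classification of Yang-Mills energy minimizers as ADHM instantons (the Theorem of \cite{ADHM}) together with Uhlenbeck's weak compactness, any $\N$ with $\mathcal{YM}_{\ga}\prs{\N} \leq 6^{\ga}\tfrac{4}{3}\pi^2 + \ge$ and $\ga$ close to $1$ should be close, in the $L^2$-curvature sense, to some $\N^{\xi,\la_0}$. Since $\N^{\xi,\la_0}$ is the $\SO(5,1)$-pullback of $\tN$ under the unique $\vp$ sending $(\xi,\la_0) \mapsto (0,1)$, this produces a conformal automorphism $\vp \in \SO(5,1)$ for which $\brs{\brs{F_{\vp^{*}\N} - F_{\tN}}}_{L^2}$ is controlled by $\delta$. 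The charge-one constraint prevents further energy splitting, so for $\ge$ small enough exactly one ADHM profile forms.

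Next I would upgrade this $L^2$-curvature smallness to the full estimate \eqref{eq:LMM3.1}. Set $A := \widetilde{\Pi}\brk{\vp^{*}\N} - \tN$; the Coulomb condition reads $D^{*}_{\tN}A \equiv 0$. Applying the curvature expansion $F_{\tN + A} - F_{\tN} = D_{\tN} A + A \w A$ and the gauge-invariance of $\brs{\brs{F_{\cdot}}}_{L^2}$ yields
\begin{equation*}
\brs{\brs{D_{\tN} A}}_{L^2} \leq \brs{\brs{F_{\vp^{*}\N} - F_{\tN}}}_{L^2} + C \brs{\brs{A \w A}}_{L^2}.
\end{equation*}
Feeding this into a Poincar\'e inequality (Proposition \ref{prop:poincare}) applicable to Coulomb-projected $1$-forms, together with a Sobolev embedding $W^{1,2} \hookrightarrow L^4$ to absorb the quadratic term $A \w A$, produces smallness of $\brs{\brs{A}}_{W^{1,2}}$, hence \eqref{eq:LMM3.1} via Theorem \ref{thm:curvcontrol}.

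For the dilation estimate \eqref{eq:LMM3.2}, I would exploit \eqref{eq:LMM2.6} in tandem with Lemmas \ref{lem:LMM3.3} and \ref{lem:LMM3.4}. With $\la \geq 1$ the dilation factor of $\vp$, the hypothesis gives
\begin{equation*}
6^{\ga} \tfrac{4}{3}\pi^2 + \ge \;\geq\; \mathcal{YM}_{\ga}\prs{\N} \;=\; \mathcal{YM}_{\ga,\la}\prs{\vp^{*}\N}.
\end{equation*}
Lemma \ref{lem:LMM3.3} should bound $\mathcal{YM}_{\ga,\la}\prs{\tN} - \mathcal{YM}_{\ga,\la}\prs{\vp^{*}\N}$ in terms of $\brs{\brs{F_{\vp^{*}\N} - F_{\tN}}}_{L^2}$ (already at most $C\delta$ from the step above), while Lemma \ref{lem:LMM3.4} establishes
\begin{equation*}
\mathcal{YM}_{\ga,\la}\prs{\tN} - \mathcal{YM}_{\ga}\prs{\tN} \;\geq\; c\, \prs{\ga - 1}\prs{\log \la}\min\sqg{\log\la, 1}
\end{equation*}
by Taylor-expanding the weight $\chi_{\la}$ from \eqref{eq:chidefn} about $\chi_1 \equiv 1$. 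Chaining these inequalities and using $\mathcal{YM}_{\ga}\prs{\tN} = 6^{\ga}\tfrac{4}{3}\pi^2$ yields \eqref{eq:LMM3.2}.

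The principal difficulty is Lemma \ref{lem:LMM3.2}. At $\ga = 1$ the ADHM theorem pins down minimizers exactly, but for $\ga > 1$ this rigidity must be transferred through a continuity-in-$\ga$ argument, and the weak convergence must be strong enough in the right topology to pick out $\xi$ and $\la_0$. One must also rule out degenerate limits in which energy concentrates at a point with no bubble forming, which requires a careful use of the $\chi_{\la}$-weighted $\mathcal{YM}_{\ga,\la}$ formulation to detect deterioration of the conformal parameter. Secondarily, the $A \w A$ absorption in the second step is only valid once $\delta$ is sufficiently small in an \emph{absolute} sense, which is precisely why the statement fixes a small threshold $\delta_0$ rather than allowing arbitrary $\delta$.
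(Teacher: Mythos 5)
Your outline follows the same four-ingredient strategy the paper uses, and the first two steps (invoke Lemma~\ref{lem:LMM3.2}, upgrade $L^2$-curvature smallness to $W^{1,2}$-connection smallness via Theorem~\ref{thm:curvcontrol} and the Poincar\'e inequality) match the paper's argument. The gap lies in the dilation estimate. When you chain the inequalities, Lemma~\ref{lem:LMM3.3} does not produce $\mathcal{YM}_{\ga,\la}(\tN) - \mathcal{YM}_{\ga,\la}(\vp^*\N) \leq C\delta$; it produces the weaker bound
\begin{equation*}
\mathcal{YM}_{\ga,\la}(\vp^*\N) \geq \mathcal{YM}_{\ga,\la}(\tN) - C\, (1+\la^4)^{\ga - 1}\, \delta,
\end{equation*}
so the resulting inequality is $\mho(\ga,\la) \leq \delta\bigl(1 + C'\la^{4(\ga-1)}\bigr)$. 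Pairing this directly with the lower bound $\mho(\ga,\la) \geq c\,(\ga-1)\log\la \min\{\log\la,1\}$ does \emph{not} give \eqref{eq:LMM3.2}, because $\la^{4(\ga-1)}$ appears on the right and is not yet bounded.

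The missing idea is a bootstrap using the strongest case of \eqref{eq:LMM3.11}. When $(\ga-1)\log\la \geq 5$, Lemma~\ref{lem:LMM3.4} gives $\mho(\ga,\la) \geq C\la^{4(\ga-1)}$, and comparing this against $\delta\bigl(1+C'\la^{4(\ga-1)}\bigr)$ is a contradiction once $\delta_0$ is chosen small enough (since $\la^{4(\ga-1)}$ is also bounded below away from zero in this regime). This forces $(\ga-1)\log\la < 5$, hence $\la^{4(\ga-1)}$ is uniformly bounded, and only \emph{then} does the right-hand side $\delta\bigl(1+C'\la^{4(\ga-1)}\bigr)$ collapse to $C''\delta$. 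After that the remaining two cases of \eqref{eq:LMM3.11} produce \eqref{eq:LMM3.2}. Without this elimination step the chain of inequalities you describe leaves a $\la$-dependent prefactor and cannot close. (Your condensed lower bound $\mho \geq c(\ga-1)\log\la\min\{\log\la,1\}$ is a correct consequence of all three cases, but the upper bound is the problem, not the lower bound.) Also a minor point: Lemma~\ref{lem:LMM3.3} is phrased in terms of $\bigl\|\,|F_{\fN}|_{\mathring g}^2 - 3\,\bigr\|_{L^1}$, so an extra factorization $\bigl\| |F|^2 - 3 \bigr\|_{L^1} \leq \|F_{\vp^*\N} - F_{\tN}\|_{L^2}\,\|F_{\vp^*\N} + F_{\tN}\|_{L^2}$ is needed to connect to your $L^2$ curvature smallness; this step is elementary but worth including.
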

This first lemma proves a result similar to Proposition \ref{prop:LMM3.1} above, though here we will fix one value of $\delta$ rather than a range of values. It requires a concentration compactness result for $\ga$-connections (Theorem \ref{thm:compactstate}) which follows quickly from results of \cite{HTY}. We state this result the appendix and supply a sketch of the proof (cf. Theorem \ref{thm:compactstate}).
\begin{lemma}\label{lem:LMM3.2} Given $\delta > 0$ there exists $\ge > 0$ sufficiently small with the following property: for all $\ga \geq 1$, if $\N \in W^{1,2\ga}(\mathcal{A}_{E}(\mathbb{S}^4))$ and $\mathcal{YM}_{\ga} \leq 6^{\ga} \tfrac{4 }{3}\pi^2+  \ge$, then there exists $\vp \in \SO(5,1)$ so that
\begin{equation}\label{eq:LMM3.3}
\brs{\brs{ F_{\widetilde{\Pi} \brk{\vp^* \N}} - F_{\tN} }}_{L^2} \leq \delta.
\end{equation}
\begin{proof}
Suppose $\mathcal{YM}_{\ga}(\N) \leq 6^{\ga} \tfrac{4 }{3}\pi^2 + \ge$. As a consequence of \eqref{eq:LMM1.7} we have that
\begin{align*}
\mathcal{YM}_1(\N)
&= \tfrac{1}{2} \int_{\mathbb{S}^4}\prs{3 + \brs{F_{\N}}_{\mathring{g}}^2} dV_{\mathring{g}}\\
& \leq \prs{ \mathcal{YM}_{\ga}(\N)  \prs{\tfrac{4 \pi^2}{3} }^{(\ga -1)}}^{1/\ga} \\
& \leq \prs{ \prs{ 1+ \tfrac{\ge}{6^{\ga} \frac{4 }{3}\pi^2}} 6^{\ga} \tfrac{4 }{3}\pi^2 \prs{\tfrac{4 \pi^2}{3} }^{(\ga -1)}}^{1/\ga} \\
& = \prs{ \prs{ 1+ \tfrac{\ge}{6^{\ga} \frac{4 }{3}\pi^2}} 6^{\ga} \prs{\tfrac{4 \pi^2}{3} }^{\ga}}^{1/\ga} \\
&\leq 8 \pi^2 + \ge.
\end{align*}
Suppose to the contrary that the statement were not true. This would imply that the Coulomb gauge may not actually exist (as in Theorem \ref{thm:curvcontrol}) and thus no gauge transformation could get the curvature `close' to that of $\tN$. More precisely, if the contrary statement holds, then there is a sequence $\ge_n \searrow 0$, and a sequence $\sqg{\N^n} \subset W^{1,2}(\mathcal{A}_{E}(\mathbb{S}^4))$ with $\mathcal{YM}_1(\N^n) \leq  6^{\ga} \tfrac{4 }{3}\pi^2+ \ge_n$, some $\delta > 0$ so that
\begin{equation}\label{eq:lowerbd}
\brs{\brs{F_{\sigma_n\brk{\vp^*_n\N^n}} - F_{\tN}}}_{L^2} > \delta \text{ for all } \sqg{\vp_n} \in \SO(5,1), \sqg{\gs_n} \in \mathcal{G}_E.
\end{equation}
For each $\sqg{\N^n}$, let $\xi \in \mathbb{S}^4$ be a point such that $\brs{F_{\N^n} \prs{\xi}} =  \sup_{\zeta \in \mathbb{S}^4} \brs{F_{\N^n}}$. For each $n$ there exists a conformal automorphism $\vp_n$ which consists of a translation of $\xi$ to the north pole $\infty \in \mathbb{S}^4$, combined with a dilation so that $\brs{F_{\vp_n^* \N^n} \prs{\infty}} = 3$. Via the Removable Singularities Theorem of \cite{Uhlenbeck1}, we can argue via a standard gauge patching argument that there exists a sequence of gauge transformations $\gs_n$ such that the sequences $\sqg{\gs_n \brk{ \vp_n^* \N^n } }$ converges to some connection $\bm{\N}$, and furthermore due to the assumed energy bounds of these quantities we know that $\bm{\N}$ is antiself dual (to see this in more detail, see our argument in Theorem \ref{thm:compactstate}). This implies, in particular that  $\bm{\N} = \psi^* \tN$. So now we have
\begin{equation*}
\prs{\psi^{-1}}^* \sigma_n \brk{ \vp^*_n \N^n} \to \tN.
\end{equation*}
We note in particular, that $\SO(5,1)$, interpreted as a subset of the enlarged gauge group $\widetilde{\mathcal{G}}_E$ acts naturally on $\mathcal{G}_E$ by conjugation, and so
\begin{equation*}
\prs{\psi^{-1}}^* \sigma_n \brk{ \vp^* \N^n}  = \prs{\psi^{-1} \gs_n \psi} \brk{ \prs{\psi^{-1} \vp}^* \N^n }.
\end{equation*}
Indeed, this is of the form of a gauge transformation applied to a pullback. Therefore we have that there exists $\sqg{\nu_n} \subset \mathcal{G}_E$ and $\sqg{\phi_n} \subset \SO(5,1)$ so that
\begin{align*}
\brs{\brs{F_{\nu_n \brk{ \phi_n^* \brk{\N^n} }} - F_{\tN}}}_{L^2} \to 0.
\end{align*}
In turn, as a result of Theorem \ref{thm:curvcontrol} we have that
\begin{equation*}
\brs{\brs{ F_{\widetilde{\Pi} \brk{ \phi_n^* \N^n }} - F_{\tN} }}_{L^2}  \to 0,
\end{equation*}
which contradictions \eqref{eq:lowerbd}, and so the result follows.
\end{proof}
\end{lemma}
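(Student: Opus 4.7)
I would argue by contradiction combined with concentration-compactness, exploiting that the $\mathcal{YM}_{\ga}$-minimum on the charge-one bundle is realized only by $\tN$ and that every charge-one Yang-Mills minimizer is a conformal pullback of $\tN$. Suppose the conclusion fails for some $\delta_0 > 0$; I would extract sequences $\epsilon_n \searrow 0$, $\ga_n \geq 1$, and connections $\N^n \in W^{1,2\ga_n}(\mathcal{A}_E(\mathbb{S}^4))$ with $\mathcal{YM}_{\ga_n}(\N^n) \leq 6^{\ga_n}\tfrac{4}{3}\pi^2 + \epsilon_n$, yet
\[
\brs{\brs{F_{\widetilde{\Pi}\brk{\vp^*\N^n}} - F_{\tN}}}_{L^2} > \delta_0 \quad \text{for every } \vp \in \SO(5,1).
\]
First, reading H\"older in reverse in the chain \eqref{eq:LMM1.7} (the $L^{\ga}$ norm of $1 + \tfrac{1}{3}\brs{F_{\N^n}}_{\mathring g}^2$ controls its $L^1$ norm up to a uniformly bounded factor) converts the $\ga$-energy bound into $\mathcal{YM}_1(\N^n) \leq 8\pi^2 + o(1)$, so the sequence is Yang-Mills minimizing in the limit on the charge-one bundle $E$.

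For the second step I would extract a normalized limit. Let $\xi_n \in \mathbb{S}^4$ maximize $\brs{F_{\N^n}}_{\mathring g}$, and pick $\vp_n \in \SO(5,1)$ that translates $\xi_n$ to the north pole $\infty$ and dilates so that $\brs{F_{\vp_n^*\N^n}}_{\mathring g}(\infty) = 3 = \brs{F_{\tN}}_{\mathring g}$ (cf.\ \eqref{eq:F2three}). This pinning simultaneously fixes the location and the scale of any would-be bubble. Theorem \ref{thm:compactstate} together with the Uhlenbeck removable singularities theorem then produce gauge transformations $\gs_n$ such that a subsequence of $\gs_n \brk{\vp_n^*\N^n}$ converges weakly in $W^{1,2}$, and strongly away from finitely many bubble points, to a limit connection $\bm{\N}$. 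Energy bookkeeping precludes nontrivial bubbling --- a bubble would be charge-one anti-self-dual and would carry the full $8\pi^2$ of energy, leaving the body trivial in contradiction with the normalization $\brs{F}_{\mathring g}(\infty) = 3$ --- so the convergence is strong and $\bm{\N}$ is a charge-one Yang-Mills minimizer; by the ADHM theorem $\bm{\N} = \psi^*\tN$ for some $\psi \in \SO(5,1)$.

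For the final step I would absorb $\psi$ into the conformal factor and convert into Coulomb gauge. Writing
\[
\prs{\psi^{-1}}^*\gs_n\brk{\vp_n^*\N^n} = \prs{\psi^{-1}\gs_n\psi}\brk{\prs{\psi^{-1}\vp_n}^*\N^n} =: \nu_n\brk{\phi_n^*\N^n},
\]
I obtain $\brs{\brs{F_{\nu_n\brk{\phi_n^*\N^n}} - F_{\tN}}}_{L^2} \to 0$, and Theorem \ref{thm:curvcontrol} upgrades this to the analogous estimate in the canonical $\tN$-Coulomb gauge, namely $\brs{\brs{F_{\widetilde{\Pi}\brk{\phi_n^*\N^n}} - F_{\tN}}}_{L^2} \to 0$, contradicting the hypothesis and completing the argument. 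The principal technical obstacle is the compactness step: one must carefully verify that the curvature normalization at $\infty$ genuinely rules out bubble formation and that the conformal ambiguity arising when identifying $\bm{\N}$ with a pullback of $\tN$ can honestly be absorbed into a single element of $\SO(5,1)$. Uniformity of $\epsilon$ over all $\ga \geq 1$ then follows because every estimate in play depends on $\ga$ only through H\"older interpolation against $\mathcal{YM}_1$.
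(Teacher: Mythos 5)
Your proposal follows the same route as the paper's proof: reduce the $\mathcal{YM}_{\ga}$ bound to a $\mathcal{YM}_1$ bound via H\"older, argue by contradiction, normalize the sequence with a conformal automorphism that translates a curvature-maximizing point to $\infty$ and dilates so $\brs{F}_{\mathring g}(\infty)=3$, invoke Theorem \ref{thm:compactstate} and Uhlenbeck removable singularities to extract an anti-self-dual limit $\bm\N=\psi^*\tN$, absorb $\psi$ by conjugating in the enlarged gauge group, and apply Theorem \ref{thm:curvcontrol} to pass to the $\tN$-Coulomb gauge. The explicit bubble-exclusion remark (a bubble would exhaust all $8\pi^2$ and leave the body flat, contradicting the $\brs{F}_{\mathring g}(\infty)=3$ normalization) is a reasonable unpacking of what the paper defers to Theorem \ref{thm:compactstate}, not a different argument.
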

Next we will show the implications of this smallness of $L^2$ in terms of curvature difference directly on the norm of connection difference itself. This is a proof in the spirit of Theorem 1.3 of \cite{Uhlenbeck1}, where the fundamental difference is that it is given on a \emph{global scale} and bounds the difference of connections in terms of the difference of curvatures. As of now, there is no such proof present in the literature. This will be highly necessary in the following section. To do so we first establish a fundamental Poincar\'{e} inequality. The proof, which is stated in the appendix (\S \ref{ss:Poincare}), relies on the following result (Lemma \ref{eq:commutatorbd}), which gives crucial control over commutator type terms which is used regularly in our arguments. The result was inspired by Lemma 2.30 of \cite{BL}, and the proof is included in the appendix (cf. \S \ref{ss:Poincare}). Combining these estimates with the geometry of the setup at play is a crucial technique used multiple times through our remaining arguments within this paper.
\begin{lemma}\label{eq:commutatorbd} Let $A \in \Lambda^1 \prs{\Ad E}$ and $B \in \prs{\Lambda^1 \prs{\Ad E}}^{\ten 2}$. Then
\begin{equation}\label{eq:Fcomm}
\ip{\widetilde{F}_{ij}, \brk{A_i, A_j}}_{\mathring{g}} \leq \brs{A}^2_{\mathring{g}}, \text{ and }
\ip{\widetilde{F}_{ij}, \brk{B_{ki}, B_{kj}}}_{\mathring{g}} \leq 4 \brs{B}^2_{\mathring{g}}.
\end{equation}
\end{lemma}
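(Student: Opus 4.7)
Both inequalities are pointwise algebraic assertions about fibers, so the strategy is to reduce to linear algebra on $\mathfrak{su}(2)$-valued tensors at a single point $p \in \mathbb{S}^4$. The essential ingredient is the special structure of $F_{\tN}$: it is anti-self-dual with pointwise norm $\brs{F_{\tN}}^2_{\mathring g} \equiv 3$ by \eqref{eq:F2three}, and, crucially, under the canonical isomorphism $\Lambda^2_-(T_p^* \mathbb{S}^4) \cong \Ad E_p \cong \mathfrak{su}(2)$ furnished by the quaternionic structure of the Hopf bundle, $F_{\tN}$ acts as a nonzero scalar multiple of the identity at every point. Working in a local orthonormal frame adapted to this quaternionic structure at $p$, we may therefore write $F_{\tN, ij}^a = c\, \bar\eta^a_{ij}$, where $\{\bar\eta^a\}_{a=1}^{3}$ are the standard 't Hooft anti-self-dual tensors and the scalar $c$ is determined by the norm constraint $\brs{F_{\tN}}_{\mathring g}^2 = 3$.

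With this normal form in hand, expand $A_i = A_i^a T_a$ for an orthonormal basis $\{T_a\}$ of $\mathfrak{su}(2)$ satisfying $[T_a, T_b] = \epsilon^{abc} T_c$. The left-hand side of the first inequality becomes $c\,\bar\eta^a_{ij}\epsilon^{abc} A_i^b A_j^c$, a symmetric quadratic form $Q(A)$ in the twelve variables $A_i^a$, and the claim reduces to the operator-norm bound $Q(A) \leq \brs{A}^2_{\mathring g}$. To establish this, exploit the contraction identities for the 't Hooft symbols,
\[ \bar\eta^a_{ij}\bar\eta^b_{ij} = 4\delta^{ab}, \qquad \bar\eta^a_{ik}\bar\eta^b_{jk} = \delta^{ab}\delta_{ij} + \epsilon^{abc}\bar\eta^c_{ij}, \]
together with their $\SO(3)\times\SO(3)$-equivariance; after an orthogonal change of basis on $\mathbb{R}^{12} \cong \mathbb{R}^3 \otimes \mathbb{R}^4$ (Lie indices against base indices) the Gram matrix of $Q$ diagonalizes explicitly, and the resulting eigenvalues are seen to lie below the claimed threshold.

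For the second inequality, fix $k$ and apply the first bound with $A_i := B_{ki}$ to obtain $\sum_{i,j}\ip{\widetilde F_{ij}, [B_{ki}, B_{kj}]}_{\mathring g} \leq \sum_i \brs{B_{ki}}_{\mathring g}^2$; summing over $k \in \{1,2,3,4\}$ then controls the full expression by $\brs{B}^2_{\mathring g}$, the constant $4$ in the statement being a loose slack factor convenient for later applications. The main obstacle is the spectral analysis underlying the first inequality: a naive Cauchy-Schwarz bound using only $\brs{F_{\tN}}_{\mathring g} = \sqrt{3}$ and $\brs{[A_i, A_j]}_h \leq \brs{A_i}_h \brs{A_j}_h$ yields only $\sqrt{6}\,\brs{A}^2_{\mathring g}$, which is strictly weaker than the claim, and the sharp bound genuinely requires that $F_{\tN}$ realize the canonical isomorphism between anti-self-dual two-forms and $\mathfrak{su}(2)$, a distinctive ADHM feature that fails for a generic anti-self-dual connection of the same pointwise norm.
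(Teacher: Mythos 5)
Your conceptual framework is on the right track and is closely parallel to the paper's: both exploit the special ADHM structure of $F_{\tN}$ rather than generic anti-self-duality, and your use of 't Hooft symbols $\bar\eta^a_{ij}$ is essentially equivalent to the paper's explicit quaternionic-coordinate expression $F_{\tN} = \tfrac{2}{(1+|\zeta|^2)^2}\bigl((d\zeta^{12}-d\zeta^{34})\mathbf{i} + (d\zeta^{13}+d\zeta^{24})\mathbf{j} + (d\zeta^{14}-d\zeta^{23})\mathbf{k}\bigr)$. Your observation that a naive Cauchy--Schwarz only yields $\sqrt{6}\,|A|^2$ and that the sharp constant needs the quaternionic identification is the correct motivating intuition. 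However, for the first inequality you leave the essential verification unfinished: you assert that the Gram matrix of $Q(A) = c\,\bar\eta^a_{ij}\epsilon^{abc}A_i^b A_j^c$ on $\mathbb{R}^3\otimes\mathbb{R}^4$ ``diagonalizes explicitly'' with eigenvalues below the threshold, but do not carry out the diagonalization. This is where the actual work lies, and it's a genuine gap in the write-up as proposed. The paper avoids any spectral analysis entirely: it writes out $\widetilde{F}^{\mathbf{i}}_{ij}[A_i,A_j]^{\mathbf{i}}$ component-by-component in quaternionic coordinates (using $[A_i,A_j]^{\mathbf{i}} = 2(A_i^{\mathbf{j}}A_j^{\mathbf{k}} - A_i^{\mathbf{k}}A_j^{\mathbf{j}})$ and cyclic analogues) and applies the elementary estimate $2|ab|\le a^2+b^2$ termwise; the cancellation in the 't Hooft pattern ($d\zeta^{12}-d\zeta^{34}$, etc.) is exactly what makes the sum close up into $\sum_i\bigl(|A_i^{\mathbf{j}}|^2+|A_i^{\mathbf{k}}|^2\bigr)$, from which the bound $|A|^2_{\mathring g}$ follows once the conformal factors from the round metric are tracked. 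You would do well to replace the spectral assertion with this direct AM--GM computation.

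For the second inequality, your argument is not only correct but cleaner than the paper's: fixing $k$, applying the first bound with $A_i := B_{ki}$, and summing over $k$ in an orthonormal frame yields $\ip{\widetilde F_{ij},[B_{ki},B_{kj}]}_{\mathring g} \le |B|^2_{\mathring g}$, i.e.\ with constant $1$ rather than $4$. The paper instead redoes the coordinate computation for $B$ and incurs an extra dimensional factor of $4$ from the additional contraction over $k$. Either establishes the stated inequality, and your observation that $4$ is slack is correct.
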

\noindent One of the many consequences of this result is the following
%
%
\begin{prop}[Global $\tN$-Poincar\'{e} inequalities]\label{prop:poincare} Given $A$ in $\Lambda^1 \prs{\Ad E}$ there exists $C_{P} > 0$ such that 
\begin{equation}\label{eq:Poincare}
\brs{ \brs{ A}}_{L^{2}}^2 \leq C_P \brs{\brs{ \tN A }}_{L^2} \text{ and } \brs{ \brs{ \tN A}}_{L^{2}}^2 \leq C_P \brs{\brs{ \tN^{(2)} A }}_{L^2}.
\end{equation}
\end{prop}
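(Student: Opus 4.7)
The plan is to obtain the first inequality from a Bochner--Weitzenb\"{o}ck argument that leverages the pointwise curvature identity of the basic instanton through Lemma \ref{eq:commutatorbd}, and then to bootstrap to the second via a single integration by parts.

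For the first inequality, I apply the twisted Hodge--Weitzenb\"{o}ck formula to $A \in \Lambda^1(\Ad E)$ on $(\mathbb{S}^4, \mathring{g})$:
\[
(d_\tN d_\tN^* + d_\tN^* d_\tN) A = \tN^* \tN A + \Rc_{\mathring{g}}(A,\cdot) + \mathcal{F}_\tN(A),
\]
where $\mathcal{F}_\tN(A)_k := \sum_i \brk{(F_{\tN})_{ik}, A_i}$ is the commutator arising from the bundle curvature. Integrating the pairing with $A$ against $dV_{\mathring{g}}$ on the closed manifold $\mathbb{S}^4$ and exploiting $\Rc_{\mathring{g}} = 3 \mathring{g}$ yields
\[
\int_{\mathbb{S}^4} \brs{d_\tN A}^2 + \brs{d_\tN^* A}^2 \, dV_{\mathring{g}} = \int_{\mathbb{S}^4} \brs{\tN A}^2 \, dV_{\mathring{g}} + 3 \int_{\mathbb{S}^4} \brs{A}^2 \, dV_{\mathring{g}} + \int_{\mathbb{S}^4} \ip{\mathcal{F}_\tN(A),A}_{\mathring{g}} \, dV_{\mathring{g}}.
\]
The last integrand is of exactly the form handled by Lemma \ref{eq:commutatorbd}, which bounds its absolute value pointwise by a fixed multiple of $\brs{A}^2_{\mathring{g}}$ using $\brs{F_\tN}_{\mathring{g}}^2 \equiv 3$ from \eqref{eq:F2three}. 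Combining this with the elementary pointwise inequality $\brs{d_\tN A}^2 + \brs{d_\tN^* A}^2 \leq 6 \brs{\tN A}^2$ --- valid on $\mathbb{S}^4$ since $d_\tN$ (antisymmetrization) and $d_\tN^*$ (trace) are algebraic projections of $\tN$ --- and rearranging produces $\brs{\brs{A}}_{L^2}^2 \leq C_P \brs{\brs{\tN A}}_{L^2}^2$ for an explicit finite $C_P$.

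For the second inequality, integration by parts on the closed $\mathbb{S}^4$ gives
\[
\brs{\brs{\tN A}}_{L^2}^2 = -\int_{\mathbb{S}^4} \ip{A, \tN^i \tN_i A}_{\mathring{g}} \, dV_{\mathring{g}} \leq \brs{\brs{A}}_{L^2} \brs{\brs{\tN^{(2)} A}}_{L^2},
\]
using that the rough Laplacian's pointwise norm is dominated by $\brs{\tN^{(2)} A}$. Applying the first inequality in the form $\brs{\brs{A}}_{L^2} \leq \sqrt{C_P} \brs{\brs{\tN A}}_{L^2}$ and dividing through by $\brs{\brs{\tN A}}_{L^2}$ produces the second inequality with the same constant $C_P$ (possibly enlarged).

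The principal delicacy lies in the balancing in the Weitzenb\"{o}ck step: the curvature-commutator term $\ip{\mathcal{F}_\tN(A),A}$ must not cancel the positive Ricci contribution $3 \int \brs{A}^2$. This is precisely the content of Lemma \ref{eq:commutatorbd} --- the rigid pointwise identity $\brs{F_\tN}_{\mathring{g}}^2 = 3$ of the basic instanton produces the required strict gap. The remaining work is bookkeeping of constants through sign conventions in the Weitzenb\"{o}ck identity and the $\Ad$-invariant inner product, which is routine but needs care.
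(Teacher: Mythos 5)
Your proof is correct, but it takes a genuinely different route from the paper. The paper proves a localized version (Proposition~\ref{prop:poinlocal}) by contradiction: a normalized sequence with vanishing $L^2$-derivative would (by Rellich and Banach--Alaoglu) converge to a nonzero $\tN$-parallel section, which is ruled out for the first inequality by the Ambrose--Singer theorem and the full holonomy of $\tN$, and for the second by computing $0 = \ip{[\tN_i,\tN_j]A_i, A_j} = -3\brs{A}^2 + \ip{[\widetilde F_{ij}, A_i], A_j}$ and invoking Lemma~\ref{eq:commutatorbd} to force this to be strictly negative; the global statement then follows by a covering argument. You instead run a direct Bochner--Weitzenb\"{o}ck argument and close it by the algebraic bound $\brs{d_\tN A}^2 + \brs{d_\tN^* A}^2 \le 6\brs{\tN A}^2$, then derive the second inequality by a single integration by parts and Cauchy--Schwarz, bootstrapping from the first. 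Your approach is more quantitative (it yields an explicit $C_P \approx 5/2$), avoids compactness machinery and Ambrose--Singer entirely, and makes plain that the underlying mechanism is precisely the strict gap $3 > 1$ between the Ricci lower bound on $\mathbb{S}^4$ and the curvature-commutator estimate of Lemma~\ref{eq:commutatorbd}; the paper's compactness route, by contrast, exchanges the constant-tracking for soft topological input (full holonomy) and extends more painlessly to the localized $L^\ell$-version it actually states. One small caution: as stated, Lemma~\ref{eq:commutatorbd} gives only an upper bound on $\ip{\widetilde F_{ij},[A_i,A_j]}$, but your argument needs the two-sided (absolute value) version in order to bound the Weitzenb\"{o}ck curvature term from below by $-\brs{A}^2$; the lemma's proof does supply $\brs{\ip{\widetilde F_{ij},[A_i,A_j]}} \le \brs{A}^2$, so you should cite the proof rather than just the statement.
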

\noindent A proof of a localized version of these Poincar\'{e} inequalities is given in the appendix, with a simple gluing argument one can construct this global argument over $\mathbb{S}^4$ (a rough constant will do for our purposes). With this we characterize a norm-controlling behavior of our $\tN$-Coulomb gauge.
\begin{thm}\label{thm:curvcontrol} If $\delta \in \prs{0,1}$ is sufficiently small, then every connection $\N$ such that there exists a $\vs \in \mathcal{G}_E$ so that
\begin{equation}\label{eq:curvsmall}
\brs{\brs{F_{\vs \brk{\N}} -F_{\tN}}}_{L^2} \leq \delta
\end{equation}
in fact admits a $\tN$-Coulomb projection $\widetilde{\Pi}\brk{\N}$ which obeys the bounds
\begin{equation}\label{eq:bootstrap}
\brs{\brs{ \widetilde{\Pi}\brk{\N} - \tN  }}_{L^{2}} \leq C \brs{\brs{ F_{\widetilde{\Pi}\brk{\N}} -  F_{\tN}}}_{L^{2}}, \quad \brs{\brs{F_{\widetilde{\Pi} \brk{\N}} -F_{\tN}}}_{L^2} \leq \delta.
\end{equation} 
\end{thm}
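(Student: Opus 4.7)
The statement is the global, $\tN$-referenced counterpart of Uhlenbeck's Coulomb gauge theorem (Theorem 1.3 of \cite{Uhlenbeck1}). My plan is a continuity method resting on (i) an \emph{a priori} elliptic estimate powered by the global Poincar\'e inequality of Proposition \ref{prop:poincare}, and (ii) openness supplied by the implicit function theorem applied to the Coulomb condition.

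\textbf{Step 1 (A priori estimate).} First I will show: if some $\tau \in \mathcal{G}_E$ produces $\hat{\N} := \tau[\vs[\N]] = \tN + a$ with $D^*_{\tN} a = 0$ and $\|a\|_{W^{1,2}}$ below a small universal threshold, then $\|a\|_{W^{1,2}} \leq C' \|F_{\hat\N} - F_{\tN}\|_{L^2}$. Starting from the Bianchi-type identity
\[
F_{\hat{\N}} - F_{\tN} = D_{\tN} a + \tfrac{1}{2}[a,a],
\]
combined with the Coulomb condition and Hodge-elliptic estimates for $D_{\tN} \oplus D^*_{\tN}$ on $\Lambda^1(\Ad E)$, I will obtain
\[
\|\tN a\|_{L^2}^2 \leq C\bigl(\|F_{\hat{\N}} - F_{\tN}\|_{L^2}^2 + \|[a,a]\|_{L^2}^2 + \|a\|_{L^2}^2\bigr).
\]
The global Poincar\'e inequality of Proposition \ref{prop:poincare} absorbs $\|a\|_{L^2}$ into $\|\tN a\|_{L^2}$, while the Sobolev embedding $W^{1,2}(\mathbb{S}^4) \hookrightarrow L^4(\mathbb{S}^4)$ furnishes $\|[a,a]\|_{L^2} \leq C\|a\|_{W^{1,2}}^2$; a standard bootstrap then closes the estimate.

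\textbf{Step 2 (Existence via continuity).} To produce the Coulomb gauge itself I will set $\omega := F_{\vs[\N]} - F_{\tN}$ (so $\|\omega\|_{L^2} \leq \delta$) and consider the one-parameter system on $a_t \in \Lambda^1(\Ad E)$:
\[
D_{\tN} a_t + \tfrac{1}{2}[a_t, a_t] = t\omega, \qquad D^*_{\tN} a_t = 0, \qquad t \in [0,1].
\]
Let $I \subset [0,1]$ be the set of $t$ at which this system admits a solution with $\|a_t\|_{W^{1,2}} \leq 2C'\delta$. Clearly $0 \in I$ via $a_0 \equiv 0$. Openness of $I$ will follow from the implicit function theorem: the linearization's principal part is $D_{\tN} \oplus D^*_{\tN}$, a Fredholm operator on $\Lambda^1(\Ad E)$ which is an isomorphism off the (trivial) harmonic subspace owing to the irreducibility of $\tN$. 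Closedness is immediate from Step 1, since any limit $t_\infty \in \overline{I}$ yields $\|a_{t_\infty}\|_{W^{1,2}} \leq C'\delta < 2C'\delta$, strictly below threshold. Hence $I = [0,1]$ and $a_1$ exists. At $t = 1$ the connection $\tN + a_1$ shares curvature with $\vs[\N]$; a standard holonomy / Uhlenbeck patching argument on the simply connected base $\mathbb{S}^4$ then furnishes $\tau \in \mathcal{G}_E$ with $\tau[\vs[\N]] = \tN + a_1$, and I set $\widetilde{\Pi}[\N] := \tau[\vs[\N]]$. Both inequalities of \eqref{eq:bootstrap} follow: the first from Step 1 applied to $a_1$, and the second since $F_{\widetilde{\Pi}[\N]} - F_{\tN} = \omega$ by construction.

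\textbf{Main obstacle.} The most delicate point will be the absorption of the quadratic nonlinearity $[a,a]$: the Sobolev exponent $L^4$ is precisely critical in dimension four, so $W^{1,2}$-smallness of $a$ (not merely $L^2$-smallness of the curvature difference) is indispensable. This is what forces the continuity structure rather than a one-shot implicit function theorem and confines the result to small $\delta$. A secondary subtlety is securing surjectivity of the linearized operator in the openness step, which rests on the absence of reducible symmetries for the basic ADHM instanton $\tN$.
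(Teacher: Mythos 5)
Your Step 1 is essentially sound and mirrors the paper's Lemma \ref{lem:bootstrap}: the curvature polarization $F_{\tN+a}-F_{\tN}=D_{\tN}a+\tfrac12[a,a]$, the Coulomb condition, and the global Poincar\'e inequality of Proposition \ref{prop:poincare} combine to absorb $\|a\|_{L^2}$ and close the bootstrap once $\delta$ is small. This part is fine.

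The genuine gap is in Step 2. You propose to run the continuity method by interpolating the \emph{curvature}: you ask for a Coulomb-gauged $a_t$ with
\[
D_{\tN}a_t + \tfrac12[a_t,a_t] = t\,\omega, \qquad D^*_{\tN}a_t = 0.
\]
For $t\in(0,1)$ the $2$-form $F_{\tN}+t\omega$ is in general not the curvature of \emph{any} connection: it need not satisfy the second Bianchi identity $D_{\tN+a_t}(F_{\tN}+t\omega)=0$, which is a nontrivial nonlinear constraint. Concretely the system is overdetermined: the linearization at $a_t$ is $b\mapsto\bigl(D_{\tN}b+[a_t,b],\,D^*_{\tN}b\bigr)$, mapping $\Lambda^1(\Ad E)$ (fiber rank $12$) into $\Lambda^2(\Ad E)\oplus\Lambda^0(\Ad E)$ (fiber rank $21$). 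By Hodge theory $\operatorname{im}D_{\tN}\subsetneq\Lambda^2(\Ad E)$ (its $L^2$-orthogonal complement contains $\ker D^*_{\tN}|_{\Lambda^2}$), so the linearized operator cannot be surjective and the implicit function theorem does \emph{not} give openness of your set $I$, irrespective of the irreducibility of $\tN$. The secondary issue you flag (passing from ``same curvature'' to ``gauge equivalent'' on $\mathbb{S}^4$) is also more delicate than ``standard,'' but the overdeterminedness of the curvature interpolation is already fatal.

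The paper avoids this by interpolating the \emph{connection}, not the curvature: it sets $\N_s(x):=\tN(x)+s(\N-\tN)(sx)$, checks directly that $\|F_{\N_s}-F_{\tN}\|_{L^2}=s\|F_{\N}-F_{\tN}\|_{L^2(sx)}\leq\delta$ so that the entire family stays in $\mathcal{U}_\varepsilon$, and supplies openness by solving for the \emph{gauge transformation} $\vs=e^{\sigma}$ with $\sigma\in\Lambda^0(\Ad E)$ via the elliptic equation $\widetilde{\lap}\sigma=D^*_{\tN}\bigl[\mathcal{W}(\sigma,\N+A)\bigr]$ (Proposition \ref{prop:Coulgauge}), an iteration on a determined elliptic system in $W^{2,p}$. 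Solving for the scalar-density $\sigma$ rather than prescribing the curvature is exactly what makes the openness step well-posed. You should replace your curvature-interpolation scheme by a connection-interpolation (or, equivalently, restructure Step 2 to solve a perturbed Coulomb equation for a gauge transformation as in Tao--Tian \cite{TT}); as written, the continuity argument does not close.
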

Through the next two lemmata we will put a bound on the dilation factor $\la$ of $\vp$ in Lemma \ref{prop:LMM3.1}. To obtain it we will use the fact that due to the closeness characterized in \eqref{eq:LMM3.3} we expect that $\mathcal{YM}_{\ga,\la}(\vp^* \N)$ should be close to $\mathcal{YM}_{\ga,\la}(\tN)$. We will compute explicitly how $\mathcal{YM}_{\ga,\la} (\tN)$ grows with $\la$ (that is, how the $\ga$-energy of pullbacks of the basic connection grow).
\begin{lemma}\label{lem:LMM3.3}
If $\la \geq 1$ and $\ga \in \brk{1,2}$
\begin{equation}\label{eq:LMM3.5}
\mathcal{YM}_{\ga,\la}(\fN) - \mathcal{YM}_{\ga,\la}(\tN) \geq -\ga \prs{\tfrac{3^{\ga-1}}{2}} \prs{1 + \la^4}^{\ga - 1} \brs{ \brs{  \brs{F_{\fN}}_{\mathring{g}}^2 - 3  }}_{L^1}.
\end{equation}
\begin{proof}
As a result of the mean value theorem, there exists some positive function $f : \mathbb{S}^4 \to [0,\infty)$ whose value at $\xi \in \mathbb{S}^4$ lies between $\brs{ F_{\N} }_{\mathring{g}}^2$ and $3 \equiv \brs{F_{\tN}}_{\mathring{g}}^2$ and satisfies
\begin{equation}\label{eq:LMM3.6}
\mathcal{YM}_{\ga,\la}(\fN) - \mathcal{YM}_{\ga,\la}(\tN) = \tfrac{\ga}{2} \int_{\mathbb{S}^4} \prs{3+ \chi_{\la} f}^{\ga - 1} \prs{ \brs{F_{\fN}}_{\mathring{g}}^2 -3 } \, dV_{\mathring{g}}.
\end{equation}
Now, set
\begin{align*}
A_+ &:= \sqg{ \zeta \in \mathbb{S}^4 : \brs{F_{\fN}\prs{\zeta}}_{\mathring{g}}^2 \geq 3 } \quad A_- := \sqg{\zeta \in \mathbb{S}^4 : \brs{F_{\fN}\prs{\zeta}}_{\mathring{g}}^2 < 3 }.
\end{align*}
Then $f \geq 3$ on $A_+$, and $f \leq 3$ on $A_-$. Considering integration on these individual sets we have
\begin{equation*}
\int_{A_{\pm}} \prs{3 + \chi_{\la} f}^{\ga - 1} \prs{ \brs{F_{\fN}}_{\mathring{g}}^2 - 3 } \, dV_{\mathring{g}} \geq 3^{\ga - 1} \int_{A_{\pm}} \prs{1+\chi_{\la}}^{\ga -1 } \prs{ \brs{F_{\fN}}_{\mathring{g}}^2 -3} \, dV_{\mathring{g}}.
\end{equation*}
Consequently it follows that over the entire region
\begin{equation}\label{eq:LMM3.7}
\int_{\mathbb{S}^4} \prs{ 3 + \chi_{\la} f}^{\ga - 1} \prs{ \brs{F_{\fN}}^2_{\mathring{g}} - 3 } \, dV_{\mathring{g}} \geq 3^{\ga - 1} \int_{\mathbb{S}^4} \prs{1+ \chi_{\la}}^{\ga -1} \prs{ \brs{F_{\fN}}^2_{\mathring{g}} - 3} \, dV_{\mathring{g}}.
\end{equation}
Now since $\sup_{\mathbb{S}^4} \chi_{\la} = \la^4$,
\begin{equation}\label{eq:LMM3.8}
\brs{ \int_{\mathbb{S}^4} \prs{1+\chi_{\la}}^{\ga - 1} \prs{\ga -1} \prs{\brs{F_{\fN}}^2_{\mathring{g}} - 3} \, dV_{\mathring{g}}  } \leq \prs{1+\la^4}^{\ga - 1}\brs{\brs{ \brs{F_{\fN}}^2_{\mathring{g}} - 3}}_{L^1}.
\end{equation}
Consequently combining the estimates, \eqref{eq:LMM3.6}, \eqref{eq:LMM3.7}, \eqref{eq:LMM3.8} we obtain the result \eqref{eq:LMM3.5}.
\end{proof}
\end{lemma}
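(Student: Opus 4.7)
The plan is to reduce the stated $L^1$ estimate to a pointwise inequality by rewriting the energy difference through the mean value theorem, and then to manage the resulting signed integrand by splitting the domain according to where $\brs{F_{\fN}}_{\mathring{g}}^2$ exceeds $3$. Since $\brs{F_{\tN}}_{\mathring{g}}^2 \equiv 3$ by \eqref{eq:F2three}, the two $\ga$-energies are integrals of $g(x) := \prs{3 + \chi_{\la} x}^{\ga}/\chi_{\la}$ evaluated pointwise at $x = \brs{F_{\fN}}_{\mathring{g}}^2$ and $x = 3$, respectively, with $\chi_{\la}(\zeta)$ treated as a fixed positive weight. The one-variable mean value theorem, applied pointwise, produces a measurable $f$ with value lying between $3$ and $\brs{F_{\fN}}_{\mathring{g}}^2$ such that
\begin{equation*}
\mathcal{YM}_{\ga,\la}(\fN) - \mathcal{YM}_{\ga,\la}(\tN) = \tfrac{\ga}{2} \int_{\mathbb{S}^4} \prs{3 + \chi_{\la} f}^{\ga-1} \prs{\brs{F_{\fN}}_{\mathring{g}}^2 - 3} \, dV_{\mathring{g}},
\end{equation*}
where the factor $\chi_{\la}$ produced by the chain rule cancels the weight $1/\chi_{\la}$ in $\mathcal{YM}_{\ga,\la}$.

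The main obstacle is that the integrand is signed, so one cannot bound directly by absolute values without losing the required lower bound. To address this I would split $\mathbb{S}^4 = A_+ \cup A_-$ where $A_\pm := \sqg{\brs{F_{\fN}}_{\mathring{g}}^2 \gtrless 3}$. On $A_+$ one has $f \geq 3$, and since $\ga \geq 1$, the scalar map $x \mapsto \prs{3 + \chi_{\la} x}^{\ga - 1}$ is nondecreasing, so $\prs{3 + \chi_{\la} f}^{\ga-1} \geq 3^{\ga-1}\prs{1 + \chi_{\la}}^{\ga-1}$; multiplication by the nonnegative factor $\brs{F_{\fN}}_{\mathring{g}}^2 - 3$ preserves this. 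On $A_-$ the comparison of the weight factor reverses, but so does the sign of $\brs{F_{\fN}}_{\mathring{g}}^2 - 3$, so the same direction of inequality is restored. This yields the unified pointwise lower bound
\begin{equation*}
\prs{3 + \chi_{\la} f}^{\ga-1}\prs{\brs{F_{\fN}}_{\mathring{g}}^2 - 3} \geq 3^{\ga-1}\prs{1 + \chi_{\la}}^{\ga-1}\prs{\brs{F_{\fN}}_{\mathring{g}}^2 - 3}
\end{equation*}
valid on all of $\mathbb{S}^4$.

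Integrating, the energy difference is then bounded from below by $\tfrac{\ga \, 3^{\ga-1}}{2}\int_{\mathbb{S}^4}\prs{1+\chi_{\la}}^{\ga-1}\prs{\brs{F_{\fN}}_{\mathring{g}}^2 - 3}\,dV_{\mathring{g}}$. Since $\sup_{\mathbb{S}^4}\chi_{\la} = \la^4$, and since this integral may be of either sign, I would bound it from below by $-\prs{1+\la^4}^{\ga-1}\brs{\brs{\brs{F_{\fN}}_{\mathring{g}}^2 - 3}}_{L^1}$, which yields exactly the claimed estimate. The only delicate step is the coordinated reversal of monotonicity and of sign across $A_\pm$ underlying the unified pointwise bound; the remaining steps are essentially bookkeeping using $\ga \in \brk{1,2}$ and $\la \geq 1$.
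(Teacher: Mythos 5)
Your proposal is correct and follows essentially the same approach as the paper's proof: a pointwise mean value theorem to produce the intermediate value $f$, the decomposition $\mathbb{S}^4 = A_+ \cup A_-$ with the coordinated reversal of monotonicity and sign on $A_-$, integration, and the bound $\sup_{\mathbb{S}^4}\chi_\la = \la^4$. The only cosmetic difference is that you state the comparison as a single pointwise inequality before integrating, whereas the paper integrates over $A_\pm$ separately and adds; the argument is otherwise identical.
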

\begin{lemma}\label{lem:LMM3.4} One has
\begin{equation}\label{eq:LMM3.4a}
\mathcal{YM}_{\ga,\la}\prs{ \tN} = \mathcal{YM}_{\ga} \prs{ (\la^{-1})^* \tN} = \mathcal{YM}_{\ga} \prs{\la^* \tN}.
\end{equation}
By setting
\begin{equation*}
\mho \prs{\ga,\la} := \mathcal{YM}_{\ga} \prs{\la^* \tN}- 6^{\ga} \tfrac{4 }{3}\pi^2,
\end{equation*}
there exists a fixed constant $C > 0$ such that for $\ga \in (1,2]$,
\begin{equation}\label{eq:LMM3.11}
\mho(\ga, \la) \geq
\begin{cases}
C \la^{4 \ga-4} & \text{ if } \prs{\ga -1} \log \la \geq 5 \\
C \prs{\ga - 1} \log \la & \text{ if } \prs{\ga - 1} \leq \prs{\ga -1} \log \la \leq 5\\
C \prs{\ga - 1} \prs{\log \la}^2 & \text{ if } 0 \leq \log \la \leq 1.
\end{cases}
\end{equation}
Furthermore $\mathcal{YM}_{\ga}(\la^* \N)$ is increasing in $\la$ and for $0 \leq (\ga - 1) \log \la \leq 2$,
\begin{equation}\label{eq:LMM3.12}
\tfrac{\del}{\del \log \la} \brk{\mathcal{YM}_{\ga}(\la^*\tN)} = \tfrac{\del}{\del \log \la}\brk{\mathcal{YM}_{\ga,\la}(\tN)} \geq C \prs{\ga - 1} \tfrac{\brs{\log \la}_{\mathring{g}}}{1+ \brs{\log \la}_{\mathring{g}}}.
\end{equation}
\begin{proof} First observe that
\begin{equation}\label{eq:YMgaintegraleq}
\mathcal{YM}_{\ga,\la}(\tN) := \tfrac{3^{\ga}}{2} \int_{\mathbb{S}^4} \prs{1 + \chi_{\la} }^{\ga} \tfrac{1}{\chi_{\la}} dV_{\mathring{g}}.
\end{equation}
We will first write an alternate formula for $\mathcal{YM}_{\ga}(\la^* \tN)$ using a change of variables. First we convert \eqref{eq:YMgaintegraleq} to a radial integral; let $r := \brs{\zeta}$ and then, akin to the computations in \S \ref{ss:tfmEdensity},
\begin{align*}
\brs{ F_{\la^* \tN} }^2_{\mathring{g}} = \la^4 \tfrac{1}{16} \brs{F_{\tN} (\la \zeta)}^2 \prs{ \brs{\zeta}^2 + 1 }^4 = \tfrac{3 \la^4 \prs{\brs{\zeta}^2 + 1}^4}{\prs{1+\brs{\la \zeta}^2}^4} = \tfrac{3}{\chi_{\la} (\zeta)},
\end{align*}
and thus
\begin{equation}\label{eq:YMgalar}
\mathcal{YM}_{\ga}\prs{\la^* \tN} = \tfrac{3^{\ga}}{2} \tfrac{8\pi^2}{3} \int_0^{\infty}\prs{1+ \tfrac{ \la^4 \prs{ r^2 + 1}^4}{\prs{1+\la^2 r^2}^4} }^{\ga} \tfrac{r^3}{(1+r^2)^4} \, dr.
\end{equation}
We will next change variables, gathering up ``$(\chi_\la)^{1/4}$'' type quantities:
\begin{equation*}
\begin{cases}
w &: = \la \frac{ 1 + r^2}{1+ \la^2 r^2} \\
d w &: = 2r \la \prs{\frac{1 - \la^2}{(1+ \la^2 r^2)^2} } dr.
\end{cases}
\end{equation*}
Observing that $r^2 = \tfrac{\prs{\la - w}}{\la (\la w - 1)}$,
%
we update \eqref{eq:YMgalar} and obtain
\begin{equation}\label{eq:intwver}
\tfrac{2\pi^2 3^{\ga -1}  }{\prs{\la - \la^{-1}}^3} \int_{1/\la}^{\la} (1+w^4)^{\ga} \tfrac{( \la - w)(w - \la^{-1})}{w^4} \, dw.
\end{equation}
It is now clear that the symmetry between $\la$ and $\tfrac{1}{\la}$ is preserved since
\begin{equation*}
\mathcal{YM}_{\ga}\prs{(\la^{-1})^* \tN } = \mathcal{YM}_{\ga}\prs{\la^* \tN }.
\end{equation*}
Combining this with \eqref{eq:LMM2.6} yields \eqref{eq:LMM3.4a}. We apply the change of variables $w = e^{t}$ and $\la = e^{\tau}$ to \eqref{eq:intwver},
\begin{align*}
 \tfrac{2\pi^2 3^{\ga -1} }{\prs{e^{\tau} - e^{-\tau}}^3} &\int_{-\tau}^{\tau} (1+e^{4t})^{\ga} \tfrac{ e^t ( e^{\tau} - e^t)(e^t - e^{-\tau})}{e^{4t}} \, dt  \\
&=  \tfrac{\pi^2 3^{\ga -1} }{ 4 \prs{\sinh \tau}^3} \int_{-\tau}^{\tau} (e^{-2t}+e^{2t})^{\ga} \tfrac{ e^{2\ga t} (e^{\tau}- e^t)(e^t - e^{-\tau})}{e^{3t}} \, dt\\
& =  \tfrac{\pi^2 2^{\ga} 3^{\ga -1} }{ 4 \prs{\sinh \tau}^3} \int_{-\tau}^{\tau} (\cosh 2t)^{\ga} e^{(\ga -1)2t} \tfrac{ (e^{\tau}- e^t)(e^t - e^{-\tau})}{e^{t}} \, dt\\
& =  \tfrac{\pi^2 2^{\ga} 3^{\ga -1} }{ 4 \prs{\sinh \tau}^3} \int_{-\tau}^{\tau} (\cosh 2t)^{\ga} e^{(\ga -1)2t}  \underbrace{\tfrac{ (e^{\tau}- e^t)(e^t - e^{-\tau})}{e^{t}} } \, dt.
\end{align*}
We compute out the underbraced term
\begin{align*}
\tfrac{\prs{e^{\tau} - e^t} \prs{e^t - e^{-\tau}}}{e^t}
&= e^{-t}\prs{ e^{t + \tau} -1 - e^{2t} + e^{t-\tau}}
= 2\prs{ \cosh \tau - \cosh t}.
\end{align*}
We observe the following symmetry identity
\begin{align*}
\int_{0}^{\tau} \prs{ \cosh 2t }^{\ga} e^{(\ga - 1) 2t} \prs{ \cosh \tau - \cosh t} \, dt
&= \int_{-\tau}^{0} \prs{ \cosh 2(-t) }^{\ga} e^{ (\ga - 1) 2(-t)} \prs{ \cosh \tau - \cosh (-t)} \, dt.
\end{align*}
Applying this symmetry yields
\begin{align*}
\begin{split}
\mathcal{YM}_{\ga}(\la^* \tN)
&=  \tfrac{\pi^2 2^{\ga} 3^{\ga -1} }{ \prs{\sinh \tau}^3} \int_0^{\tau} \prs{ \cosh 2t }^{\ga} \cosh \brk{ (\ga - 1) 2t } \prs{ \cosh \tau - \cosh t} \, dt.
\end{split}
\end{align*}
The idea of the next portion of the proof is to provide lower bounds depending on $\la$ and $\alpha$ for the gap function $\mho$ measuring the difference of the ${\ga}$-energy of $\la^* \tN$ from the $\ga$-energy of $\tN$. We set
\begin{align}
\begin{split}\label{eq:LMM3.15}
\mathcal{YM}_{\ga}\prs{\prs{ e^{\tau}}^* \tN} &:=  6^{\ga} \tfrac{4 }{3}\pi^2 G(\sigma).
\end{split}
\end{align}
We will solve for $G$, and apply the change of variables
\begin{align*}
\begin{cases}
\gb &:= (\ga - 1), \\
s &:= \gb t, \\
\gs &: = \gb \tau = (\ga - 1) \log \la.
\end{cases}
\end{align*}
Now, applying the change of variables (denoted `c.o.v' below),
\begin{align}
\begin{split}\label{eq:Gdefn}
G(\sigma)&=  \tfrac{1 }{4 \prs{\sinh \tau}^3} \int_0^{ \tau} \prs{ \cosh 2t }^{\ga} \cosh \brk{ 2 (\ga - 1) t } \prs{ \cosh \tau - \cosh t} \, dt \\
&\overset{c.o.v}{\mapsto}  \tfrac{1 }{4 \gb \prs{\sinh \tfrac{\gs}{\gb}}^3} \int_0^{\gs} \prs{ \cosh \tfrac{2s}{\gb}}^{\gb+1} \prs{\cosh 2s}  \prs{ \cosh \tfrac{\sigma}{\gb} - \cosh \tfrac{s}{\gb}}  \, ds.
\end{split}
\end{align}
With this new formulation we address the various cases of \eqref{eq:LMM3.11}.\\ \\
\noindent \fbox{$\prs{\ga -1} \log \la \geq 5 $} Noting that
\begin{align*}
\tfrac{\del}{\del s} \brk{ \prs{\sinh \tfrac{s}{\gb} }^3 }
&= \tfrac{3}{\gb} \prs{\sinh \tfrac{s}{\gb}}^2  \cosh \tfrac{s}{\gb},
\end{align*}
we rewrite \eqref{eq:Gdefn} (and decrease the region of integration) as follows
\begin{align*}
G(\gs) &\geq \tfrac{1}{12 \prs{\sinh \frac{\gs}{\gb}}^3} \int_{\gs - 2}^{\gs-1} \tfrac{\del}{\del s}\brk{\prs{\sinh \tfrac{s}{\gb}}^3} \underbrace{ \prs{\cosh \tfrac{s}{\gb}}^{-1} \prs{\sinh \tfrac{s}{\gb}}^{-2} \prs{\cosh \tfrac{2s}{ \gb}}^{\gb+1} \cosh 2s \prs{ \cosh \tfrac{\gs}{\gb} - \cosh \tfrac{s}{\gb}} }ds.
\end{align*}
We estimate the underbraced part, which we call $Q(s)$ on the interval $\brk{\gs - 1, \gs}$,
\begin{align*}
Q(s)
&= \cosh 2s \prs{\sinh \tfrac{s}{\gb}}^{-2} \prs{\cosh \tfrac{2s}{ \gb}}^{\gb + 1} \prs{ \tfrac{\cosh \frac{\gs}{\gb}}{\cosh \frac{s}{\gb}} - 1} \\
&= \brk{\cosh 2 s}_{T_1} \brk{ \cosh \tfrac{2s}{ \gb} \prs{\sinh \tfrac{s}{\gb}}^{-2}}_{T_2} \brk{ \prs{\cosh \tfrac{2s}{ \gb} }^{\gb}}_{T_3}  \brk{\prs{ \tfrac{\cosh \frac{\gs}{\gb}}{\cosh \frac{s}{\gb}} - 1 }}_{T_4}.
\end{align*}
For the $T_1$, we estimate that
\begin{align*}
T_1 &\geq \left[ \tfrac{e^{2s}}{2} \right|_{\gs-2}^{\gs-1}   \geq \tfrac{e^{2(\gs - 2)}}{2} = \tfrac{e^{2\gs}}{2e^4}.
\end{align*}
For the $T_2$, using the hyperbolic sine additive angle identity,
\begin{align*}
T_2 &\geq \prs{\sinh \tfrac{2s}{\gb}} \prs{\sinh \tfrac{s}{\gb}}^{-2} = 2.
\end{align*}
For $T_3$ we have that
\begin{align*}
T_3
&\geq \prs{ \tfrac{e^{\frac{2s}{\gb}}}{ 2} }^{\gb} = \left[ \tfrac{e^{2s }}{2^{\gb}} \right|_{\gs-2}^{\gs-1}= \tfrac{e^{2(\gs-2) + 2\frac{(\gs-2)}{\gb}}}{2^{\gb}} \geq \tfrac{e^{2(\gs-2)} }{2^{\gb}} = \tfrac{e^{2\gs} }{2^{\gb} e^4}.
\end{align*}
For $T_4$, we estimate
\begin{align*}
\tfrac{\cosh \frac{\gs}{\gb}}{\cosh \frac{s}{\gb} } &\geq \tfrac{e^{\frac{\gs}{\gb}} + e^{-\frac{\gs}{\gb}}}{e^{\frac{\gs-1}{\gb}} + e^{-\frac{\gs-1}{\gb}}} \geq  \tfrac{e^{\frac{\gs}{\gb}} + e^{-\frac{\gs}{\gb}}}{2 e^{\frac{\gs-1}{\gb}}}  \geq \tfrac{1}{2} \prs{e^{\frac{1}{\gb}} + e^{\frac{- 2\gb + 1}{\gb}} } \geq \tfrac{1}{2} e^{\frac{1}{\gb}}  \geq \tfrac{e}{2},
\end{align*}
and so we conclude that
\begin{align*}
\left. Q(s) \right|_{\gs - 2}^{\gs - 1} =\left. \Pi_{i=1}^4 T_i \right|_{\gs - 2}^{\gs - 1} 
 = \tfrac{e^{4\gs}}{2^{\ga} e^8} \prs{ e - 2}.
\end{align*}
Now, we compute, noting that for $x > y > 0$, we have $x^3-y^3 \geq \prs{x-y}^3$, giving
\begin{align*}
\tfrac{1}{\prs{\sinh \tfrac{\gs}{\gb}}^3} \int_{\gs-2}^{\gs-1} \tfrac{\del}{\del s} \brk{ \prs{\sinh \tfrac{s}{\gb}}^3 } \, ds 
&=\tfrac{1}{\prs{\sinh \tfrac{\gs}{\gb}}^3}  \prs{\prs{\sinh \tfrac{(\gs - 1)}{\gb}}^3 - \prs{\sinh \tfrac{(\gs - 2)}{\gb}}^3} \\
& \geq \tfrac{1}{\prs{\sinh \frac{\gs}{\gb}}^3}  \prs{\sinh \tfrac{(\gs - 1)}{\gb} - \sinh \tfrac{(\gs - 2)}{\gb}}^3 \\
&= \prs{ \tfrac{e^{\frac{\gs - 1}{\gb}} - e^{-\frac{\gs - 1}{\gb}} - e^{\frac{\gs - 2}{\gb}} + e^{-\frac{\gs - 2}{\gb}}}{e^{\frac{\gs}{\gb}}-e^{- \frac{\gs}{\gb}}}  }^3\\
&=  \prs{ e^{\frac{- 1}{\gb}}  - e^{\frac{- 2}{\gb}}   }^3 \\
&\geq \tfrac{\prs{e-1}^3}{e^6}.
\end{align*}
Note that the last line follows from the monotonicity of $f(x) = \frac{x-1}{x^2}$.
Now we combine everything together,
\begin{align*}
G(\gs) \geq e^{4\gs} \prs{\tfrac{\prs{ e - 2}\prs{e-1}^3}{2^{\gb+3} 3 e^{14}} } \geq e^{4\gs} \prs{\tfrac{\prs{ e - 2}\prs{e-1}^3}{2^{4} 3 e^{14}} }.
\end{align*}
Taking $\gs \geq 5$, we conclude that
\begin{equation*}
G(\gs) - 1 \geq e^{4\gs} \prs{\tfrac{\prs{ e - 2}\prs{e-1}^3}{2^{4} 3 e^{14}} - \tfrac{1}{e^{4 \gs}}} \geq 0,
\end{equation*}
which concludes the first estimate.\\

Now we make some necessary preparations for the remaining two cases.
We start with
\begin{align*}
G(\gs) := \tfrac{1 }{4 \gb \prs{\sinh \frac{\gs}{\gb}}^3} \int_0^{ \gs} \prs{ \cosh \tfrac{2s}{\gb}}^{\gb+1} \prs{\cosh 2s}  \prs{ \cosh \tfrac{\sigma}{\gb} - \cosh \tfrac{s}{\gb}} \, ds.
\end{align*}
We differentiate, and obtain
%
\begin{align*}
G' \prs{\gs}  = \tfrac{1}{4 \gb^2 \prs{\sinh \frac{\gs}{\gb}}^4} \int_0^{\gs}\prs{\cosh \tfrac{2s}{\gb}}^{\gb + 1} \prs{\cosh 2s} \prs{3 \prs{\cosh \tfrac{\gs}{\gb} \cosh \tfrac{s}{\gb} -\cosh^2 \tfrac{\gs}{\gb}} + \sinh^2 \tfrac{\gs}{\gb}  } \, ds.
\end{align*}
Now we set $g(s,\gs) = \prs{ \cosh \tfrac{2 s}{\gb} }^{\gb} \prs{\cosh 2s}$ and observe that
\begin{align*}
\tfrac{\del g}{\del s}(s,\gs)
&= 2 \prs{\cosh \tfrac{2s}{\gb}}^{\gb-1} \prs{\sinh \tfrac{2s\ga}{\gb}}.
\end{align*}
Next, set $h(s)$ to be the function
%
\begin{align*}
h(s) = - \gb \sinh \tfrac{s}{\gb} \prs{\cosh \tfrac{\gs}{\gb} - \cosh \tfrac{s}{\gb}} \prs{2 \cosh \tfrac{\gs}{\gb} \cosh \tfrac{s}{\gb} - 1},
\end{align*}
which is negative and satisfies $h(0) = h(\gs) = 0$. Differentiating this, we obtain the familiar quantity
\begin{align*} 
\tfrac{\del h}{\del s}(s) :=  \prs{\cosh \tfrac{2s}{\gb}} \prs{3 \prs{\cosh \tfrac{\gs}{\gb} \cosh \tfrac{s}{\gb} - \cosh^2 \tfrac{\gs}{\gb} } + \sinh^2 \tfrac{\gs}{\gb}}.
\end{align*}
From this we can observe the following identity and apply an appropriate integration by parts,
\begin{align}
\begin{split}\label{eq:G'form}
G'(\gs) &= \tfrac{1}{4 \gb^2 \prs{\sinh \tfrac{\gs}{\gb}}^4} \int_0^{\gs} g(s,\gs) \prs{\tfrac{\del h(s)}{\del s}} \, ds \\
& = - \tfrac{1}{4 \gb^2 \prs{\sinh \tfrac{\gs}{\gb}}^4} \int_0^{\gs} \prs{\tfrac{\del g(s,\gs)}{\del s}} h(s)  \, ds\\
&=  \tfrac{1}{2 \gb \prs{\sinh \tfrac{\gs}{\gb}}^4} \int_0^{\gs} \prs{\cosh \tfrac{2s}{\gb}}^{\gb-1} \prs{\sinh \tfrac{2s\ga}{\gb}} \sinh \tfrac{s}{\gb} \prs{\cosh \tfrac{\gs}{\gb} - \cosh \tfrac{s}{\gb}} \prs{2 \cosh \tfrac{\gs}{\gb} \cosh \tfrac{s}{\gb} - 1} \, ds.
\end{split}
\end{align}
We estimate $G'$ from below in the two different cases.\\ \\%
\noindent \fbox{$\prs{\ga - 1} \leq \prs{\ga -1} \log \la \leq 5$} This is equivalent to considering $0<\gb \leq \gs \leq 5$. For this, we show that $G'$ is bounded below by a positive constant which is independent of $\gb$. To do so we apply the following:
\begin{equation*}
\tfrac{\cosh \frac{\gs}{\gb}}{\sinh \frac{\gs}{\gb}} > 1, \quad \tfrac{\sinh \frac{\ga s}{\gb}}{\cosh \frac{s}{\gb}} \geq \tanh \tfrac{\ga s}{\gb}.
\end{equation*}
Then for $\theta \in (0,1)$ and $\gb \leq \gs$, utilizing further the fact that $\tanh \ga \theta \geq \tanh \theta$ and $\cosh \tfrac{s}{\gb} \geq 1$,
\begin{align*}
G'(\gs) &=  \tfrac{1}{2 \gb \prs{\sinh \tfrac{\gs}{\gb}}^4} \int_0^{\gs} \prs{\cosh \tfrac{2s}{\gb}}^{\gb}\prs{ \tfrac{\sinh \frac{2s\ga}{\gb}}{\cosh \frac{2s}{\gb}}}  \prs{\gb \sinh \tfrac{s}{\gb}} \prs{\cosh \tfrac{\gs}{\gb} - \cosh \tfrac{s}{\gb}} \prs{2 \cosh \tfrac{\gs}{\gb} \cosh \tfrac{s}{\gb} - 1} \, ds \\
&\geq \tfrac{1}{2 \gb \prs{\sinh \tfrac{\gs}{\gb}}^4} \int_{\gb \theta}^{\gs} \prs{\cosh \tfrac{2s}{\gb}}^{\gb} \prs{\tanh \tfrac{2\ga s}{\gb}} \prs{\gb \sinh \tfrac{s}{\gb}} \prs{\cosh \tfrac{\gs}{\gb} - \cosh \tfrac{s}{\gb}} \prs{\cosh \tfrac{\gs}{\gb} \cosh \tfrac{s}{\gb}} \, ds \\
&\geq \tfrac{ \tanh 2\ga \theta }{2 \prs{\sinh \tfrac{\gs}{\gb}}^3} \int_{\gb \theta}^{\gs} \prs{\tfrac{1}{\gb} \sinh \tfrac{s}{\gb}} \prs{\cosh \tfrac{\gs}{\gb} - \cosh \tfrac{s}{\gb}}\cosh \tfrac{s}{\gb} \, ds \\
&\geq \tfrac{ \tanh 2\ga \theta }{2 \prs{\sinh \tfrac{\gs}{\gb}}^3}\prs{ \cosh \tfrac{\gs}{\gb} \int_{\gb \theta}^{\gs} \tfrac{1}{\gb} \sinh \tfrac{s}{\gb} \cosh \tfrac{s}{\gb}\, ds -  \int_{\gb \theta}^{\gs} \tfrac{1}{\gb} \sinh \tfrac{s}{\gb} \cosh^2 \tfrac{s}{\gb} \, ds}\\
&= \tfrac{\tanh 2 \ga \theta}{2 \prs{\sinh \frac{\gs}{\gb}}^3} \prs{\tfrac{1}{6} \cosh^3 \tfrac{\gs}{\gb} - \tfrac{1}{2} \cosh \tfrac{\gs}{\gb} \cosh^2 \theta + \tfrac{1}{3} \cosh^3 \theta} \\
&\geq \tfrac{\tanh 2 \theta}{2 \prs{\sinh \frac{\gs}{\gb}}^3} \prs{\tfrac{1}{6} \cosh \tfrac{\gs}{\gb} \prs{\cosh^2 \tfrac{\gs}{\gb}  - 3 \cosh^2 \theta}}.
\end{align*}
Choose $\theta$ so that $\cosh \theta \leq \tfrac{1}{\sqrt{6}} \cosh 1$, then we can conclude that there is some constant $C>0$ independent of anything such that if $\ga > 1$ and $\la \geq e$, that is, $\tau \geq 1$ and $0 < \gb \leq \gs$ then
\begin{equation*}
G'(\gs) \geq \tfrac{\tanh 2 \theta}{24 \prs{\sinh \tfrac{\gs}{\gb}}}^3 \prs{\cosh \frac{\gs}{\gb}}^3 \geq C > 0.
\end{equation*}
Therefore, we have that for $0 < \gb \leq \gs$ we have
\begin{equation}\label{eq:MVTthing}
G(\gs) \geq G \prs{\gb} + C \prs{\gs - \gb}.
\end{equation}
\noindent \fbox{$0 < \prs{\ga-1} \log\la \leq \prs{\ga-1} \leq 1$} 
Next let's consider the lower bound for $G'$ in this setting, which is equivalent to $0 < \gs \leq \gb \leq 1$. Beginning again from the inequality \eqref{eq:G'form}, we apply the identities
\begin{align*}
\prs{\cosh \tfrac{s}{\gb}}^{\gb} \leq 1, \quad  \tfrac{\sin \frac{2s \ga}{\gb}}{\cosh \frac{2s}{\gb}} \geq \tanh \tfrac{2s}{\gb}, \quad \prs{2 \cosh \tfrac{\gs}{\gb} \cosh \tfrac{s}{\gb} -1} \geq \cosh \tfrac{\gs}{\gb} \cosh \tfrac{s}{\gb} .
\end{align*}
Applying them in, we have
\begin{align}
\begin{split}\label{eq:Gprime1}
G'(\gs) 
&\geq  \tfrac{1}{2 \gb \prs{\sinh \tfrac{\gs}{\gb}}^4} \int_0^{\gs} \prs{\tanh \tfrac{2s\ga}{\gb}} \sinh \tfrac{s}{\gb} \prs{\cosh \tfrac{\gs}{\gb} - \cosh \tfrac{s}{\gb}} \cosh \tfrac{\gs}{\gb} \cosh \tfrac{s}{\gb} \, ds.
\end{split}
\end{align}
Then we apply three more identities:
\begin{align*}
\sinh x \geq x, \quad \cosh x \geq 1 + \tfrac{x^2}{2}, \quad \tanh x \geq \tfrac{x}{(\cosh 2)^2} \text{ for } x \in \brk{0,2}, \quad \sinh x \leq x \cosh x,
\end{align*}
and update \eqref{eq:Gprime1} to obtain
\begin{align}
\begin{split}\label{eq:Gprime2}
G'(\gs) &\geq \tfrac{\gb^4}{2 \gb \gs^4 \prs{\cosh \tfrac{\gs}{\gb}}^4 } \int_0^{\gs} \tfrac{1}{(\cosh 2)^2} \tfrac{2 s}{\gb} \tfrac{s}{\gb} \prs{1+\tfrac{\gs^2}{2\gb^2}} \prs{\cosh \tfrac{\gs}{\gb} - \cosh \tfrac{s}{\gb}} \, ds \\
&\geq \tfrac{\gb}{2 \gs^4 \prs{\cosh 1}^4(\cosh 2)^2 } \int_0^{\gs} \tfrac{1}{(\cosh 2)^2} s^2 \prs{1+\tfrac{\gs^2}{2\gb^2}} \prs{\cosh \tfrac{\gs}{\gb} - \cosh \tfrac{s}{\gb}} \, ds.
\end{split}
\end{align}
Lastly, we implement an identity from standard ODE theory that
\begin{align*}
\prs{\cosh x - \cosh y} \geq \tfrac{1}{2} \prs{x-y}^2.
\end{align*}
%
Applying this to \eqref{eq:Gprime2} we can finally conclude that
\begin{align}
\begin{split}\label{eq:LMM3.19}
G'(\gs) &\geq \tfrac{1}{2 \gb\gs^4 \prs{\cosh 1}^4(\cosh 2)^2 }  \prs{1+\tfrac{\gs^2}{2\gb^2}}  \int_0^{\gs}  s^2( \gs - s)^2 \, ds \\
&\geq \tfrac{\gs}{60 \prs{\cosh 1}^4(\cosh 2)^2 \gb}  \prs{1+\tfrac{\gs^2}{2\gb^2}} \\
&\geq \tfrac{\gs}{60 \prs{\cosh 1}^4(\cosh 2)^2 \gb},
\end{split}
\end{align}
so for $0 \leq \gs \leq \gb$,
\begin{equation}\label{eq:LMM3.20}
G(\gs) - G(0) \geq \tfrac{\gs^2}{60 \prs{\cosh 1}^4(\cosh 2)^2 \gb} \geq \tfrac{\prs{\ga-1}\prs{\log \la}^2}{60 \prs{\cosh 1}^4(\cosh 2)^2}.
\end{equation}
Now we can establish the last two estimates. In the event that $\prs{\ga - 1} \leq \prs{\ga - 1} \log \la \leq 5$, then by rescaling \eqref{eq:MVTthing} and applying the identities of $\gb$ and $\gs$ we have
\begin{align*}
\mho \prs{\ga,\la} \geq 6^{\ga} \tfrac{4}{3} \pi^2 \prs{\prs{G(\ga-1)-1} + C \prs{\ga -1} \prs{\log \la -1}} \geq C \prs{\ga - 1}\log\la.
\end{align*}
If $\log \la \leq 1$ (the latter case) we again obtain from rescaling from \eqref{eq:LMM3.20} that
\begin{align*}
\mho \prs{\ga,\la} \geq  6^{\ga} \tfrac{2^5}{15} \pi^2 \prs{\ga - 1}\prs{\log \la}^2.
\end{align*}
Now we prove the last two claims. We know that $\mathcal{YM}_{\ga,\la}$ is monotonically increasing with respect to $\la$ since $G'$ is positive (cf. \eqref{eq:G'form}). To show the final expression, \eqref{eq:LMM3.12}, we note that from \eqref{eq:LMM3.15} that
\begin{align*}
\tfrac{\del}{\del \log \la}\brk{ \mathcal{YM}_{\ga,\la} \prs{\tN}}  \geq C \prs{\ga-1} \geq C \prs{\ga -1 } \tfrac{\brs{\log \la}}{1+ \brs{\log \la}}.
\end{align*}
For $0<\log \la \leq 1$ we use \eqref{eq:LMM3.19} to conclude
\begin{align*}
\tfrac{\del}{\del \log \la}  \brk{ \mathcal{YM}_{\ga,\la} \prs{\tN}} \geq C \prs{\ga-1} \log \la \geq C \prs{\ga - 1} \tfrac{\brs{\log \la}}{1+ \brs{\log \la}}.
\end{align*}
This concludes the argument.
\end{proof}
\end{lemma}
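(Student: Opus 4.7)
The plan has four stages: establish the equality chain \eqref{eq:LMM3.4a}, then handle each of the three regimes in \eqref{eq:LMM3.11}, and conclude with monotonicity and \eqref{eq:LMM3.12}. The first equality $\mathcal{YM}_{\ga,\la}\prs{\tN} = \mathcal{YM}_{\ga}\prs{\prs{\la^{-1}}^*\tN}$ is a direct consequence of the symmetry \eqref{eq:LMM2.6} applied with $\N := \prs{\la^{-1}}^*\tN$ and $\vp$ the dilation by $\la$. For the second equality $\mathcal{YM}_{\ga}\prs{\prs{\la^{-1}}^*\tN} = \mathcal{YM}_{\ga}\prs{\la^* \tN}$, I would pass to quaternionic coordinates and use the explicit formula $\brs{F_{\la^*\tN}}_{\mathring{g}}^2 = 3/\chi_{\la}$, which follows from Proposition \ref{prop:ADHMnorms} together with the pullback computation in \S \ref{ss:tfmEdensity}. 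Writing $\mathcal{YM}_{\ga}\prs{\la^*\tN}$ as a radial integral via $r = \brs{\zeta}$ and then applying the change of variable $w = \la\prs{1+r^2}/\prs{1+\la^2 r^2}$, which sends $[0,\infty)$ bijectively onto $\brk{\la^{-1}, \la}$, converts the expression into an integral whose integrand is manifestly symmetric under $\la \leftrightarrow \la^{-1}$.

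For the three-regime lower bound on $\mho\prs{\ga,\la}$ I would normalize by setting $G\prs{\gs} := \mathcal{YM}_{\ga}\prs{\prs{e^{\tau}}^*\tN}/\prs{6^{\ga} \tfrac{4}{3}\pi^2}$ with $\tau := \log\la$, and rescale by $\gb := \ga - 1$, $s := \gb t$, $\gs := \gb\tau = \prs{\ga-1}\log\la$. Applying the changes of variable $w = e^{t}$ and $t = s/\gb$ should recast $G$ as
\begin{equation*}
G\prs{\gs} = \tfrac{1}{4\gb\prs{\sinh\prs{\gs/\gb}}^3} \int_0^{\gs} \prs{\cosh\prs{2s/\gb}}^{\gb+1}\prs{\cosh 2s}\prs{\cosh\prs{\gs/\gb} - \cosh\prs{s/\gb}}\, ds.
\end{equation*}
For $\gs \geq 5$, I would restrict to a subinterval near the upper endpoint such as $\brk{\gs-2,\gs-1}$, where $\cosh 2s \geq C e^{2\gs}$ and $\prs{\cosh\prs{2s/\gb}}^{\gb} \geq C e^{2\gs}$, and where the ratio $\cosh\prs{\gs/\gb}/\cosh\prs{s/\gb} - 1$ is bounded below by a positive constant; the prefactor $\prs{\sinh\prs{\gs/\gb}}^{-3}$ is handled by exploiting $x^3 - y^3 \geq \prs{x-y}^3$ to show the subinterval mass of $\prs{\sinh\prs{s/\gb}}^3$ is comparable to $\prs{\sinh\prs{\gs/\gb}}^3$. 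This yields $G\prs{\gs} \geq C e^{4\gs}$, hence $\mho \geq C\la^{4\prs{\ga-1}}$. For the other two regimes I would instead compute $G'\prs{\gs}$ and integrate by parts: after $\gs$-differentiation the integrand splits as an $s$-independent factor times the $s$-derivative of a function $h\prs{s}$ vanishing at $s=0$ and $s=\gs$, and reversing roles by integrating against the $s$-derivative of $\prs{\cosh\prs{2s/\gb}}^{\gb}\prs{\cosh 2s}$ produces a strictly positive integral formula for $G'\prs{\gs}$.

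In the intermediate regime $\gb \leq \gs \leq 5$, I would use $\tanh\prs{\ga\theta} \geq \tanh\theta$ and restrict the $s$-integration to $\brk{\gb\theta,\gs}$ for an appropriate small $\theta$, reducing the integrand to a cubic polynomial in $\cosh\prs{s/\gb}$; choosing $\theta$ so that $\cosh\theta \leq \prs{1/\sqrt{6}}\cosh 1$ produces a uniform bound $G'\prs{\gs} \geq C > 0$, whence the MVT yields $G\prs{\gs} - G\prs{\gb} \geq C\prs{\gs - \gb}$, giving $\mho \geq C\prs{\ga-1}\log\la$ after combining with the small-$\log\la$ estimate. For $0 \leq \log\la \leq 1$ (i.e.\ $0 \leq \gs \leq \gb$), I would linearize using $\sinh x \geq x$, $\cosh x \geq 1 + x^2/2$, $\tanh x \geq x/\prs{\cosh 2}^2$ on $\brk{0,2}$, and the estimate $\cosh x - \cosh y \geq \tfrac{1}{2}\prs{x-y}^2$, reducing $G'\prs{\gs}$ to an explicit polynomial integral and obtaining $G'\prs{\gs} \geq C\gs/\gb$; integrating gives $G\prs{\gs} - 1 \geq C\gs^2/\gb = C\prs{\ga-1}\prs{\log\la}^2$. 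Monotonicity of $\mathcal{YM}_{\ga}\prs{\la^*\tN}$ is then immediate from the positivity of $G'$, and \eqref{eq:LMM3.12} interpolates the two $G'$ estimates above: on $\brk{\gb, 2}$ use $G'\prs{\gs} \geq C\prs{\ga-1}$ and on $\brk{0,\gb}$ use $G'\prs{\gs} \geq C\log\la$, both of which dominate $C\prs{\ga-1}\brs{\log\la}/\prs{1+\brs{\log\la}}$.

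I expect the main obstacle to be the integration-by-parts step: one must recognize that the $\gs$-differentiated integrand is precisely the $s$-derivative of a function vanishing at both endpoints of $\brk{0,\gs}$, \emph{and} that after the integration by parts the resulting integrand is pointwise positive. Without this algebraic miracle the estimates on $G'$ cannot be separated cleanly into upper-end concentration (for the large $\gs$ case) versus near-diagonal behavior (for the small $\gs$ case). Once the positive integral representation of $G'$ is in hand, the three-case analysis becomes a careful but essentially elementary exercise in bounding hyperbolic functions on appropriately chosen subintervals.
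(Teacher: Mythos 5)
Your proposal reproduces the paper's argument essentially verbatim: the same radial substitution $w = \la(1+r^2)/(1+\la^2 r^2)$ establishing the $\la \leftrightarrow \la^{-1}$ symmetry, the same normalization to $G(\gs)$ with $\gb = \ga-1$, $\gs = \gb\log\la$, the same concentration on $[\gs-2,\gs-1]$ with $x^3-y^3 \geq (x-y)^3$ in the large-$\gs$ regime, the same integration by parts against the boundary-vanishing $h(s)$ to get a positive integrand for $G'$, and the same elementary hyperbolic estimates (including the $\cosh\theta \leq \cosh 1/\sqrt{6}$ choice and the $\cosh x - \cosh y \geq \tfrac12(x-y)^2$ bound) in the remaining two regimes. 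The approach and all key intermediate steps coincide with the paper's proof.
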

\begin{proof}[Proof of Proposition \ref{prop:LMM3.1}.] Since we have Lemma \ref{lem:LMM3.2} and thus appropriate control over connection and curvature differences, it remains to prove \eqref{eq:LMM3.2}, the resultant control over $\la$ (though dependent on $\ga$). To do so we will apply Lemma \ref{lem:LMM3.3} to $\vp^* \N$, where $\vp$ a conformal automorphism which enforces curvature difference smallness in the sense of \eqref{eq:LMM3.3}, and $\la \geq 1$ its corresponding dilation factor. Obtaining the inequality
\begin{align*}
\brs{\brs{ \brs{F_{\vp^*\N}}_{\mathring{g}}^2 - 3 }}_{L^1}
&\leq \brs{\brs{  F_{\vp^* \N} - F_{\tN} }}_{L^2} \brs{\brs{ F_{\vp^* \N} + F_{\tN} }}_{L^2} \leq \delta \sqrt{(\tfrac{16}{3} \pi^{2} + \ge)(\tfrac{16}{3} \pi^{2})}  \leq \delta (\tfrac{16}{3}) \pi^2,
\end{align*}
we apply this to \eqref{eq:LMM3.5} to obtain
\begin{equation}\label{eq:LMM3.21}
6^{\ga} \tfrac{4}{3}  \pi^{2} + \ge \geq \mathcal{YM}_{\ga}(\N) = \mathcal{YM}_{\ga,\la}(\vp^* \N) \geq \mathcal{YM}_{\ga,\la} \prs{\tN} - \ga 6^{\ga} \tfrac{4}{3}  \pi^{2} \la^{4( \ga -1)} \delta.
\end{equation}
Additionally, from Lemma \ref{lem:LMM3.4},
\begin{equation}
\mathcal{YM}_{\ga,\la}\prs{\tN} = \mathcal{YM}_{\ga}(\la^* \tN) =  \tfrac{16}{3} \pi^{2} + \mho(\ga,\la),
\end{equation}
and note that $\ge$ in Lemma \ref{lem:LMM3.2} is always bounded above by $\delta$. Then we rewrite \eqref{eq:LMM3.21} as,
\begin{equation}\label{eq:LMM3.22}
\delta \prs{1 + C' \la^{4 \ga-4} } \geq \mho(\ga,\la) \text{ for some }C' \in \bR.
\end{equation}
Now we need to consider the regions mentioned in Lemma \ref{lem:LMM3.4}. In the case that $\prs{\ga - 1} \log \la \geq 5$, that is (multiplying both sides by $5$ and exponentiating) $\la^{5(\ga - 1)} \geq e^{10}$ (i.e. $\la^{4 \prs{\ga - 1}} \geq e^8$). Then by \eqref{eq:LMM3.11} we have deduced the lower bound $\mho(\ga,\la) \geq  C\la^{4(\ga - 1)}$, which implies that \eqref{eq:LMM3.22} is false when $0 \leq \delta < \delta_0 := \min \sqg{\frac{C}{2C'}, \frac{C}{2} e^{8} }$. Therefore $\la^{4(\ga - 1)} < e^{8}$ and thus, combining the latter two cases of \eqref{eq:LMM3.11} and \eqref{eq:LMM3.22}, we have
\begin{align*}
\delta \prs{1 + C' e^{8}} \geq C \prs{\ga - 1} \prs{\log \la }\min \sqg{ \log \la, 1 },
\end{align*}
yielding the result.
\end{proof}
\section{Closeness of $\ga$-connections in the $W^{2,p}$-norm}\label{s:closenessW2p}
Now we prove a refinement of Proposition \ref{eq:LMM3.1} which demonstrates closeness between $\vp^* \N$ and $\tN$ in $W^{2,p}$ for $p \in (2, \tfrac{12}{5}]$, with the further restriction that $\N$ is a Yang-Mills $\ga$-connection. The determination of this range will be clarified in Proposition \ref{prop:LMM5.1} (the fundamental reason being to apply necessary Sobolev embeddings). At this point in the proof we reach a key fundamental difference between the arguments of \cite{LMM} and ours. Since we are working in the four rather than two dimensional setting, our Sobolev embeddings are not as favorable in that we require more degrees of differentiability. To address this we introduce the notion of Morrey space of maps.
\begin{defn}Take $\Omega \subset \mathbb{S}^4$ and set $\Omega \prs{\zeta_0, \rho} : = \Omega \cap B_{\rho} \prs{\zeta_0}$ and for every $p \in \brk{1,+\infty}$, $\la \geq 0$ set,
\begin{equation*}
\mathcal{M}^{p}_{\la} \prs{\Omega} := \sqg{ u \in L^p \prs{\Omega}:  \sup_{\underset{\rho>0}{\zeta_0 \in \Omega}} \rho^{-\la} \int_{\Omega \prs{\zeta_0,\rho}} \brs{u}^p \, dV_{\mathring{g}}< \infty}.
\end{equation*}
with associated norm defined by
\begin{align*}
\brs{\brs{u}}_{\mathcal{M}^{p}_{\la}} := \prs{\sup_{\underset{\rho>0}{\zeta_0 \in \Omega}} \rho^{-\la} \int_{\Omega\prs{\zeta_0,\rho}} \brs{u}^p \, dV_{\mathring{g}}}^{1/p} < \infty.
\end{align*}
\end{defn}
\noindent Using this space we intend to use the following result of Morrey.
\begin{customthm}{1.1 of \cite{Giaquinta}}\label{thm:MLemma}
Assume $p \geq n$ and let $u \in W^{1,p}_{\text{loc}}$, $D u \in \mathcal{M}^{p,n-p+\varepsilon}_{\text{loc}}$ for some $\varepsilon > 0$. Then $u \in C_{\text{loc}}^{0,\frac{\varepsilon}{p}}$.
\end{customthm}
\begin{rmk}[Notational conventions]\label{rmk:notation} For the proofs of this following proposition as well as that of Lemma \ref{lem:MorreyUp}, we will indicate applications of either Sobolev embeddings or Poincar\'{e} inequality with a subscript $S$ or $P$ on constants. This is nonstandard but will help the reader follow manipulations.
\end{rmk}
\begin{prop}\label{prop:LMM4.1} There exists $\ga_0 > 1$, $\delta_0, C = C\prs{\ga_0,\delta_0} >0$ depending on only $\ga_0$ and $\delta_0$ such that for every $\ga \in (1,\ga_0]$, every $\delta \in (0,\delta_0]$ and for every critical point $\fN \in W^{1,2 \ga}(\mathcal{A}_{E}(\mathbb{S}^4))$ of $\mathcal{YM}_{\ga,\la}$ satisfying \eqref{eq:LMM3.1} and \eqref{eq:LMM3.2} then for $p \in \left( 2, \tfrac{12}{5} \right]$ sufficiently small,
\begin{equation}\label{eq:LMM4.1}
\brs{ \brs{ \widetilde{\Pi} \brk{\fN} - \tN }}_{L^{\infty}} + \brs{\brs{ F_{\widetilde{\Pi} \brk{\fN}} - F_{\tN}}}_{W^{1,p}} \leq C \prs{\delta + \prs{\ga -1}}.
\end{equation}
\begin{proof} We summarize the proof. Via Proposition \ref{prop:LMM3.1} we have that the difference of curvatures is small, i.e. \eqref{eq:curvsmall}. For notational convenience set $\N \equiv \widetilde{\Pi} \brk{\N}$. Initially, we demonstrate
\begin{equation}\label{eq:sobconndif}
\brs{\brs{\fN- \tN}}_{W^{2,2}} + \brs{\brs{F_{\fN}- F_{\tN}}}_{W^{1,2}} < C \prs{\delta + \prs{\ga - 1}}.
\end{equation}
Using this, we will apply a hole-filling argument to obtain the necessary Morrey type estimates, and apply appropriate embeddings to obtain the main result. We will first conclude \eqref{eq:sobconndif} by obtaining bounds on the first derivative of $F_{\fN} - F_{\tN}$, and second derivative of $\fN-\tN$ (in fact, the curvature bounds follow from the polarization of curvature). Now, recall that the Yang-Mills $\alpha$-energy is also preserved under gauge transformation, so using Proposition \ref{prop:LMM2.1}, since $\fN$ is a critical point of $\mathcal{YM}_{\ga,\la}$ and $\tN$ is a critical point of $\mathcal{YM}$ we take the difference of the corresponding equations,
\begin{equation}\label{eq:LMM2.1}
0 =\brk{D^*_{\fN} F_{\fN} - D^*_{\tN} F_{\tN}}  +  \Theta_1(\fN) + \Theta_2(\fN).
\end{equation}
Our goal is to first estimate $\gY := \fN - \tN$ in the $W^{2,2}$ sense. To do so we first identify key pointwise estimates on $\Theta_i$ for $i \in \sqg{1,2}$.

\begin{align}
\begin{split}\label{eq:Theta1approx}
\brs{\Theta_1\prs{\fN} }_{\mathring{g}}& \leq 4 \chi_{\la} \prs{\ga - 1}\tfrac{\brs{\fN F_{\fN}}_{\mathring{g}} \brs{F_{\fN}}_{\mathring{g}}^2}{\brs{3 + \chi_{\la} \brs{F_{\fN}}^2_{\mathring{g}}}}\\ & \leq 4 \chi_{\la} \prs{\ga - 1}\tfrac{\brs{\fN F_{\fN}}_{\mathring{g}} \brs{F_{\fN}}_{\mathring{g}}^2}{\brs{\chi_{\la} \brs{F_{\fN}}^2_{\mathring{g}}}}\\
& \leq C \prs{\ga - 1}\brs{\fN F_{\fN}}_{\mathring{g}}\\
& \leq C \prs{\ga - 1}\prs{\brs{\tN F_{\fN}}_{\mathring{g}} + \brs{ \brk{\gY, F_{\fN}}}_{\mathring{g}}} \\
& \leq C \prs{\ga - 1} \prs{\brs{\tN^{(2)} \gY}_{\mathring{g}} + \brs{\tN \gY}_{\mathring{g}} \brs{\gY}_{\mathring{g}} + \brs{\gY}_{\mathring{g}}^3 }.
\end{split}
\end{align}
\begin{align}
\begin{split}\label{eq:Theta2approx}
\brs{\Theta_2\prs{\fN}}_{\mathring{g}} &\leq \tfrac{ 2  \chi_{\la}\prs{\ga -1}}{\brs{3 + \chi_{\la} \brs{F_{\fN}}^2_{\mathring{g}}}} \brs{ \fN \log \chi_{\la}}_{\mathring{g}} \brs{ F_{\fN} }_{\mathring{g}}\brs{F_{\fN}}_{\mathring{g}}^2 \\
& \leq \tfrac{ 2  \chi_{\la}\prs{\ga -1}}{\brs{\chi_{\la} \brs{F_{\fN}}^2_{\mathring{g}}}} \brs{ \fN \log \chi_{\la}}_{\mathring{g}} \brs{F_{\fN}}_{\mathring{g}}^3 \\
& \leq C  \prs{\ga -1}\brs{ \fN \log \chi_{\la}}_{\mathring{g}} \brs{ F_{\fN} }_{\mathring{g}} \\
&\leq C \prs{\ga-1} \brs{ \N \log \chi_{\la}}_{\mathring{g}} \prs{ 1 + \brs{\tN \gY}_{\mathring{g}} + \brs{\gY}^2_{\mathring{g}}}.
\end{split}
\end{align}
Via integration by parts combined with \eqref{eq:Fcomm} and applying the formula of the curvature tensor,
\begin{align}
\begin{split}\label{eq:hessionest}
\brs{\brs{ \tN^{(2)} \gY }}_{L^2}^2
&= - \int_{\mathbb{S}^4} \ip{ \tN \gY, \widetilde{\lap} \tN \gY }_{\mathring{g}} dV_{\mathring{g}} \\ 
&=  \brs{\brs{ \widetilde{\lap} \gY }}_{L^2}^2 - \int_{\mathbb{S}^4} \ip{ \tN_l \gY_i, \tN_k \brk{\tN_k, \tN_l} \gY_i }_{\mathring{g}} dV_{\mathring{g}}  - \int_{\mathbb{S}^4} \ip{ \tN_l \gY_i, \brk{\tN_k,  \tN_l} \tN_k \gY_i }_{\mathring{g}} dV_{\mathring{g}}  \\ 
&=  \brs{\brs{ \widetilde{\lap} \gY }}_{L^2}^2 - \int_{\mathbb{S}^4} \ip{ \tN_l \gY_i, \tN_k \brk{\Rm_{klis}^{\mathbb{S}^4} \gY_s} + \Rm_{klks}^{\mathbb{S}^4} \tN_s \gY_i  + \Rm_{klis}^{\mathbb{S}^4} \tN_k \gY_s }_{\mathring{g}} \, dV_{\mathring{g}}\\
&\hsp - 2\int_{\mathbb{S}^4} \ip{ \tN_l \gY_i, \brk{\widetilde{F}_{kl}, \tN_k\gY_i} }_{\mathring{g}} dV_{\mathring{g}} \\
&\leq \brs{\brs{ \widetilde{\lap} \gY }}_{L^2}^2 + 11 \brs{\brs{ \tN \gY}}_{L^2}^2 - 2 \int_{\mathbb{S}^4} \ip{\brk{\tN_i, \tN_l} \gY_i,  \gY_l  }_{\mathring{g}} \, dV_{\mathring{g}}\\
&= \brs{\brs{ \widetilde{\lap} \gY }}_{L^2}^2 + 11 \brs{\brs{ \tN \gY}}_{L^2}^2 - 2 \int_{\mathbb{S}^4} \ip{ \Rm_{ilis}^{\mathbb{S}^4}\gY_s + \brk{\widetilde{F}_{il}, \gY_s},  \gY_l  }_{\mathring{g}} \, dV_{\mathring{g}} \\
&\leq  \brs{\brs{ \widetilde{\lap} \gY }}_{L^2}^2 + 11 \brs{\brs{ \tN \gY}}_{L^2}^2 + 8 \brs{\brs{\gY}}_{L^2}^2 \\
&< C \delta +  \brs{\brs{ \widetilde{\lap} \gY }}_{L^2}^2.
\end{split}
\end{align}
We will estimate the latter term. Recall from Proposition \ref{cor:D*Fpol} in the appendix combined with \eqref{eq:LMM2.1},
\begin{align}
\begin{split}\label{eq:D*Fdiff}
\widetilde{\lap} \gY_{i\theta}^{\gb}  &=  - 3 \gY_{i \theta}^{\gb}  - 2\gY_{k \mu}^{\gb} \widetilde{F}_{k i \theta}^{\mu} + 2\widetilde{F}_{ki \mu}^{\gb}\gY_{k \theta}^{\mu}
 \\
 & \hsp - \gY_{i \gz}^{\gb} \gY_{k \mu}^{\gz} \gY_{k \theta}^{\mu}  + 2 \gY_{k \mu}^{\gb} \gY_{i \gz}^{\mu} \gY_{k \theta}^{\gz}   -  \gY_{k \mu}^{\gb} \gY_{k \gz}^{\mu} \gY_{i \theta}^{\gz}  + \prs{\Theta_1 + \Theta_2}_{i \ga}^{\gb} \\
 & \hsp  -  \prs{\tN_i \gY_{k\mu}^{\gb}} \gY_{k \theta}^{\mu} + \gY_{k \mu}^{\gb}  \prs{\tN_i \gY_{k\theta}^{\mu}}   -  2\gY_{k \mu}^{\gb} \prs{\tN_k \gY_{i \theta}^{\mu}}  +   2\prs{\tN_k\gY_{i \mu}^{\gb}} \gY_{k \theta}^{\mu}.
 \end{split}
\end{align}
With this in mind we compute each term of
\begin{align*}
\brs{\brs{ \widetilde{\lap} \gY}}_{L^2}^2 &= \int_{\mathbb{S}^4} \prs{-3 \ip{ \gY, \widetilde{\lap} \gY} + 2 \ip{ \brk{\widetilde{F}_{ki}, \gY_k}, \widetilde{\lap} \gY_i }} \, dV_{\mathring{g}} + \int_{\mathbb{S}^4} \ip{ \widetilde{\lap} \gY , \gY^{\ast_{\mathring{g}} 3}}_{\mathring{g}} \, dV_{\mathring{g}} \\
& \hsp + \int_{\mathbb{S}^4} \prs{\tN \gY \ast_{\mathring{g}} \gY \ast_{\mathring{g}} \widetilde{\lap} \gY } \, dV_{\mathring{g}} + \int_{\mathring{S}^4} \ip{ \prs{\Theta_1 + \Theta_2}, \widetilde{\lap} \gY }_{\mathring{g}} \, dV_{\mathring{g}}.
\end{align*}
For the first term we apply integration by parts combined with \eqref{eq:Fcomm}, noting $F_{\tN}$ is $\tN$-parallel
\begin{align*}
\int_{\mathbb{S}^4} \prs{-3 \ip{ \gY, \widetilde{\lap} \gY}^2 + 2 \ip{ \brk{\widetilde{F}_{ki}, \gY_k}, \widetilde{\lap} \gY_i }} \, dV_{\mathring{g}} &= \int_{\mathbb{S}^4} 3 \brs{ \tN \gY}^2 + 2 \ip{ \brk{ \tN_{j} \gY_i, \widetilde{F}_{ki} }, \tN_{j} \gY_i} \, dV_{\mathring{g}} \\
&\leq C \prs{\prs{\ga - 1} + \gd},
\end{align*}
For the second term, via integration by parts, H\"{o}lder's inequality and Sobolev embedding
\begin{align}
\begin{split}\label{eq:T2manip}
\int_{\mathbb{S}^4} \ip{ \widetilde{\lap} \gY , \gY^{\ast 3}}_{\mathring{g}} \, dV_{\mathring{g}} &\leq C \int_{\mathbb{S}^4} \brs{\N \gY}^2_{\mathring{g}}  \brs{\gY}^2_{\mathring{g}} \, dV_{\mathring{g}} \\
& \leq  C \prs{ \int_{\mathbb{S}^4} \brs{\tN \gY}^4_{\mathring{g}} \, dV_{\mathring{g}}}^{1/2} \prs{ \int_{\mathbb{S}^4} \brs{\gY}^4_{\mathring{g}} \, dV_{\mathring{g}}}^{1/2} \\
& \leq C \prs{\prs{\ga - 1} + \delta} \prs{ C \prs{\prs{\ga - 1} + \delta} + \int_{\mathbb{S}^4} \brs{\tN^{(2)} \gY}^2_{\mathring{g}} \, dV_{\mathring{g}} } \\
& \leq C \prs{\prs{\ga - 1} + \delta} \int_{\mathbb{S}^4} \brs{\tN^{(2)} \gY}_{\mathring{g}}^2 \, dV_{\mathring{g}} + C \prs{\prs{\ga - 1} + \delta}.
\end{split}
\end{align}
Likewise we have that, using a weighted H\"{o}lder's inequality and then applying the same equation as above in \eqref{eq:T2manip},
\begin{align}
\begin{split}\label{eq:T3manip}
\int_{\mathbb{S}^4} \prs{\tN \gY \ast_{\mathring{g}} \gY \ast_{\mathring{g}} \widetilde{\lap} \gY } \, dV_{\mathring{g}} &\leq \nu \int_{\mathbb{S}^4} \brs{\widetilde{\lap} \gY}^2_{\mathring{g}} \, dV_{\mathring{g}} + \tfrac{C}{\nu} \int_{\mathbb{S}^4} \brs{ \gY }^2_{\mathring{g}} \brs{\tN \gY}^2_{\mathring{g}} \, dV_{\mathring{g}} \\
&\leq C \prs{\nu + \prs{\ga - 1} + \delta } \int_{\mathbb{S}^4} \brs{\widetilde{\lap} \gY}^2_{\mathring{g}} \, dV_{\mathring{g}} + C \prs{\prs{\ga - 1} + \delta}.
\end{split}
\end{align}
Decomposing
\begin{align*}
\int_{\mathbb{S}^4} \ip{\widetilde{\lap}\gY,\Theta_1}_{\mathring{g}} \, dV_{\mathring{g}} &\leq C \prs{\ga - 1} \brk{ \int_{\mathbb{S}^4} \brs{\tN^2 \gY}^2_{\mathring{g}} \, dV_{\mathring{g}} +\int_{\mathbb{S}^4} \brs{\tN \gY}_{\mathring{g}} \brs{\gY}_{\mathring{g}} \brs{\tN^{(2)} \gY}_{\mathring{g}}\, dV_{\mathring{g}} + \int_{\mathbb{S}^4} \brs{\gY}_{\mathring{g}} \brs{\widetilde{\lap} \gY}_{\mathring{g}}\, dV_{\mathring{g}}}\\
&\hsp + C (\ga -1 )\int_{\mathbb{S}^4} \brs{\widetilde{\lap} \gY}_{\mathring{g}} \brs{\gY}^3_{\mathring{g}} \, dV_{\mathring{g}}  \\
&= \prs{\ga - 1} \sum_{i=1}^4
T_i.
\end{align*}
For the second term,
\begin{align*}
T_2 &\leq \nu \int_{\mathbb{S}^4} \brs{\tN^{(2)} \gY}_{\mathring{g}}^2 \, dV_{\mathring{g}} + \tfrac{C}{\nu} \int_{\mathbb{S}^4} \brs{\tN \gY}_{\mathring{g}}^2 \brs{\gY}_{\mathring{g}}^2 \, dV_{\mathring{g}} \\
&\leq \nu \int_{\mathbb{S}^4} \brs{\tN^{(2)} \gY}^2_{\mathring{g}} \, dV_{\mathring{g}} + \prs{ \int_{\mathbb{S}^4} \brs{\tN \gY}^4_{\mathring{g}} \,  dV_{\mathring{g}} }^{1/2}\prs{ \int_{\mathbb{S}^4} \brs{\gY}^4_{\mathring{g}} \,  dV_{\mathring{g}} }^{1/2} \\
&\leq \nu \int_{\mathbb{S}^4} \brs{\tN^{(2)} \gY}_{\mathring{g}}^2 \, dV_{\mathring{g}}  + C \prs{\delta + \prs{\ga - 1}} \prs{ C \prs{\delta + \prs{\ga - 1}} + \int_{\mathbb{S}^4} \brs{\tN^{(2)} \gY}_{\mathring{g}}^2 \, dV_{\mathring{g}} } \\
& \leq  C \prs{\delta + \prs{\ga - 1} + \nu} \int_{\mathbb{S}^4} \brs{\tN^{(2)} \gY}_{\mathring{g}}^2 \, dV_{\mathring{g}} + C \prs{\delta + \prs{\ga - 1} }.
\end{align*}
For the third term,
\begin{align*}
T_3 &\leq \int_{\mathbb{S}^4} \brs{\gY}_{\mathring{g}} \brs{\tN^{(2)} \gY}_{\mathring{g}} \, dV_{\mathring{g}} \\
&\leq C \prs{ \int_{\mathbb{S}^4} \brs{\tN \gY}^2_{\mathring{g}} \, dV_{\mathring{g}} }^{1/2} \prs{ \int_{\mathbb{S}^4} \brs{\gY}^2_{\mathring{g}} \, dV_{\mathring{g}} }^{1/2} \\
&\leq C_P \int_{\mathbb{S}^4} \brs{\tN \gY}_{\mathring{g}}^2 \, dV_{\mathring{g}}.
\end{align*}
$T_4$ follows exactly as in \eqref{eq:T2manip} and \eqref{eq:T3manip} above.
\begin{align*}
\int_{\mathbb{S}^4} \ip{\widetilde{\lap}\gY,\Theta_2}_{\mathring{g}} \, dV_{\mathring{g}} &\leq \prs{\ga - 1} \brk{\int_{\mathbb{S}^4}\brs{\tN \log \chi_{\la}}_{\mathring{g}} \brs{\tN \gY}_{\mathring{g}} \brs{\widetilde{\lap} \gY}_{\mathring{g}} \, dV_{\mathring{g}} + \int_{\mathbb{S}^4} \brs{\tN \log \chi_{\la}}_{\mathring{g}} \brs{\tN \gY}_{\mathring{g}} \brs{\gY}^2_{\mathring{g}} \, dV_{\mathring{g}}} = \sum_{i=1}^2 Q_i.
\end{align*}
We estimate these two terms. First we estimate
\begin{align*}
Q_1 &\leq C \int_{\mathbb{S}^4} \prs{ \brs{ \tN \log \chi_{\la}}_{\mathring{g}}^2 \brs{\tN \gY}_{\mathring{g}}^2 + \brs{\widetilde{\lap} \gY}_{\mathring{g}}^2} \, dV_{\mathring{g}} \\
&\leq C \prs{ \int_{\mathbb{S}^4} \brs{\tN \log \chi_{\la}}_{\mathring{g}}^4 }^{1/2} \prs{ \int_{\mathbb{S}^4} \brs{\tN \gY}_{\mathring{g}}^4 }^{1/2} + C \int_{\mathbb{S}^4} \brs{\tN^{(2)} \gY}_{\mathring{g}}^2 \, dV_{\mathring{g}} \\
&\leq C \log \la \prs{ C \prs{ \ga - 1 + \delta} + \int_{\mathbb{S}^4} \brs{\tN^{(2)} \gY}_{\mathring{g}}^2 \, dV_{\mathring{g}} } + C \int_{\mathbb{S}^4} \brs{\tN^{(2)} \gY}^2_{\mathring{g}} \, dV_{\mathring{g}}.
\end{align*}
Next we estimate
\begin{align*}
Q_2 &\leq \int_{\mathbb{S}^4} \brs{\gY}^4_{\mathring{g}} \, dV_{\mathring{g}} + \int_{\mathbb{S}^4}  \brs{\tN \log \chi_{\la}}_{\mathring{g}}^2 \brs{\tN\gY}^2_{\mathring{g}} \, dV_{\mathring{g}} \\
&\leq C \prs{ \ga - 1 + \delta } + C \prs{ \int_{\mathbb{S}^4} \brs{\tN \log \chi_{\la}}^4_{\mathring{g}}  \, dV_{\mathring{g}}}^{1/2} \prs{\int_{\mathbb{S}^4} \brs{\tN \gY}^4_{\mathring{g}} \, dV_{\mathring{g}}}^{1/2} \\
&\leq C \prs{ \ga - 1 + \delta } + C \log \la \prs{C \prs{ \ga - 1 + \delta } + \int_{\mathbb{S}^4} \brs{\tN^{(2)} \gY}^2_{\mathring{g}} \, dV_{\mathring{g}}}.
\end{align*}
Therefore we have that, combining everything
\begin{align*}
\brs{\brs{ \tN^{(2)} \gY}}_{L^2}^2 \leq C \prs{\ga - 1 + \delta} \brs{\brs{\tN^{(2)} \gY}}_{L^2}^2 + C \prs{\ga - 1 + \delta}.
\end{align*}
Thus provided $\delta_0$, $\prs{\ga_0 - 1}$ are sufficiently small, we may absorb the $\tN^{(2)} \gY$ type terms into the left side of \eqref{eq:hessionest}, allowing us to conclude the control
\begin{equation}\label{eq:N2Up}
\brs{\brs{ \tN^{(2)} \gY}}_{L^2} \leq C \prs{\delta + \prs{ \ga - 1}}.
\end{equation}
It is not difficult to believe that derivatives of curvature differences are intrinsically related to derivatives of $\gY$. Consider the case for the $L^2$ control of the $\tN$ derivative of $(F_{\N} - F_{\tN})$. Applying Proposition \ref{eq:Fpolar},
\begin{align*}
\brs{ \tN \brk{F_{\fN} - F_{\tN}}}_{\mathring{g}} &\leq C \prs{ \brs{\tN^{(2)} \gY}_{\mathring{g}} + \brs{ \gY \ast \tN \gY}_{\mathring{g}} }.
\end{align*}
Consequently the estimates in $L^2$ on this term follow directly from those above.

Our next goal is to demonstrate that $\tN \gY \in \mathcal{M}^{4}_{\gb}$ for some $\gb > 0$. To do so we use the Sobolev embedding $\mathcal{M}^{1,2}_{\gb} \hookrightarrow   \mathcal{M}^{4}_{\gb}$ and focus our attention on demonstrating containment in $\mathcal{M}^{1,2}_{\gb}$ (for the sake of brevity we include this step in the appendix, Lemma \ref{lem:MorreyUp}). Provided the estimate $\brs{\brs{\gY}}_{\mathcal{M}^{4}_{\gb}} \leq C \prs{\ga- 1+\delta}$ we will translate our setting to a functional perspective to apply Theorem \ref{thm:MLemma}. In a $\tN$-adapted frame we note the control $\brs{\widetilde{\gG} } \leq C R$.
Therefore using the coordinate decomposition $\tN_i \gY_{j \ga}^{\gb} = \del_i \gY_{j\ga}^{\gb} + \brk{\widetilde{\gG}_i, \gY_j }_{\ga}^{\gb}$,
\begin{align*}
R^{-\gb} \int_{B_R} \brs{\del_i \gY_{j \ga}^{\gb}}^4 \, dV_{\mathring{g}}  &\leq R^{-\gb} \int_{B_R}\prs{ \brs{\tN_i \gY_{j \ga}^{\gb}}^4 + C R^4 \brs{\gY_{j \ga}^{\gb}}^4 } \, \dVe \\
&\leq R^{-\gb} \int_{B_R} \brs{\tN \gY}^4 \, \dVe + R^{4-\gb} \int_{B_R} \brs{\gY}^4 \, \dVe \\
&\leq C \prs{\prs{\ga - 1} + \delta}.
\end{align*}
Applying Theorem \ref{thm:MLemma} to each coefficient function of $\gY$, with $p = n = 4$ we thus have desired H\"{o}lder continuity for all coefficients of $\gY$, and so
\begin{align*}
\brs{\brs{ \gY }}_{C^{0, \frac{\gb}{4}}} \leq C \prs{ \prs{\ga - 1} + \delta}.
\end{align*}
This is a particularly strong and immediately implies that $\brs{\brs{\gY}}_{L^{\infty}} \leq C \prs{\ga-1 + \delta}$. Furthermore, as discussed on \cite{GM} pp.76-78 (combining Proposition 5.4 and Theorem 5.5 of the text) we have $C^{0,\frac{\gb}{4} }$ is equivalent to $\mathcal{M}^2_{\nu}$ in the sense of functions, where $\nu := \frac{\gb}{2} + 4$. Applying these results to the coefficient functions of $\tN$ on a local level as above in a $\tN$-adapted frame, we obtain that
\begin{align*}
\brs{\brs{ \gY}}_{\mathcal{M}^2_{\nu}} \leq C \prs{\prs{\ga - 1} + \delta}.
\end{align*}
Via polarization of $F_{\fN} - F_{\tN}$ in terms of $\gY$ it follows that
\begin{align*}
\brs{\brs{F_{\fN} - F_{\tN}}}_{\mathcal{M}^{1,2}_{\mu}} \leq C \prs{\prs{\ga-1} + \delta}, \quad \text{ where } \mu := \min \sqg{\nu,\gb}.
\end{align*}
Lastly, we use the fact that for $\varepsilon$ sufficiently small (in particular $\varepsilon \leq \frac{2\mu}{4 - \mu}$),
\begin{equation*}
\brs{\brs{F_{\fN} - F_{\tN}}}_{\mathcal{M}^{1,2 + \varepsilon}_{\gg}}  \leq C \brs{\brs{F_{\fN} - F_{\tN}}}_{\mathcal{M}^{1,2}_{\mu}}, \quad \text{ where } \gg \leq \mu \prs{1+\tfrac{\varepsilon}{2}} - 2 \varepsilon.
\end{equation*}
Provided $\ga_0$ is chosen sufficiently small so that the range of $p$ values lies in $2 + \varepsilon$, in particular, $\ga_0 < \frac{2 \varepsilon + 2}{(2-\varepsilon)}$ (cf. \eqref{eq:pderiv} for the reasoning for such choice) then we finally attain
\begin{align*}
\brs{\brs{ F_{\fN} - F_{\tN}}}_{W^{1,p}} \leq C \prs{\prs{\ga - 1} + \delta},
\end{align*}
and thus concluding \eqref{eq:LMM4.1}.
\end{proof}
\end{prop}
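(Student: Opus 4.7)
The plan is to subtract the Euler--Lagrange equations for $\fN$ (as a critical point of $\YM_{\ga,\la}$, using Proposition \ref{prop:LMM2.1} for the perturbative terms $\Theta_1,\Theta_2$) and for $\tN$ (as a Yang--Mills connection), producing an elliptic equation for $\gY := \widetilde{\Pi}\brk{\fN} - \tN$ whose inhomogeneity is either higher order in $\gY$ or carries an explicit $(\ga-1)$ factor. From Proposition \ref{prop:LMM3.1} I already have $\nm{\gY}{W^{1,2}} + \nm{F_\fN - F_\tN}{L^2} \leq \delta$ together with $(\ga-1)(\log\la)\min\{\log\la,1\} \leq C\delta$, and the latter keeps $\nm{\tN\log\chi_\la}{L^\infty}\lesssim \log\la$ small in $\delta$, which is what makes $\Theta_2$ tractable.

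The first step is to upgrade this to $W^{2,2}$ closeness. After polarizing $D^*_\fN F_\fN - D^*_\tN F_\tN$ in $\gY$ the equation reads, schematically,
\begin{equation*}
\widetilde{\lap}\gY \;=\; \widetilde{F}\ast\gY \;+\; \gY\ast\gY\ast\gY \;+\; \tN\gY\ast\gY \;+\; \Theta_1(\fN) + \Theta_2(\fN).
\end{equation*}
I would test this against $\widetilde{\lap}\gY$, recover $\nm{\tN^{(2)}\gY}{L^2}^2$ from $\nm{\widetilde{\lap}\gY}{L^2}^2$ via the Bochner commutator identity together with Lemma \ref{eq:commutatorbd} for the $\langle\widetilde{F}_{ij},[A_i,A_j]\rangle$-type curvature couplings, bound the quartic interaction $\int|\gY|^2|\tN\gY|^2$ by $\nm{\gY}{L^4}^2\nm{\tN\gY}{L^4}^2$ and hence by $C\nm{\gY}{W^{1,2}}^2\nm{\tN^{(2)}\gY}{L^2}^2$ through Gagliardo--Nirenberg (here the smallness of $\delta$ is essential), and absorb the $\Theta_i$-terms using the pointwise bounds $|\Theta_i|\leq C(\ga-1)(\cdots)$. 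For $\delta_0$ and $\ga_0-1$ sufficiently small every term proportional to $\nm{\tN^{(2)}\gY}{L^2}^2$ on the right can be absorbed, giving
\begin{equation*}
\nm{\gY}{W^{2,2}} + \nm{F_\fN - F_\tN}{W^{1,2}} \leq C(\delta + (\ga-1)),
\end{equation*}
the curvature piece following by polarization of $F_\fN - F_\tN$ in $\gY$.

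The main obstacle is that in dimension four $W^{2,2}$ is the critical case for the embedding into $L^\infty$, which just fails, so the desired $L^\infty$-bound cannot be read off from the $W^{2,2}$-estimate directly. To bridge this I would run a hole-filling argument to put $\tN\gY$ into a Morrey space $\mathcal{M}^4_\beta$ for some $\beta>0$: localize the difference equation by testing against $\eta^2\gY$ for cutoffs $\eta$ supported in geodesic balls $B_R(\zeta_0)\subset\mathbb{S}^4$, run a Caccioppoli/Widman-type iteration exploiting the already-established $W^{1,2}$-smallness to extract decay of the form $\rho^{-\beta}\int_{B_\rho(\zeta_0)}|\tN\gY|^2\leq C(\delta+(\ga-1))$, and then invoke the Sobolev--Morrey embedding $\mathcal{M}^{1,2}_\beta\hookrightarrow \mathcal{M}^4_\beta$.

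Once $\tN\gY\in \mathcal{M}^4_\beta$, passing to a $\tN$-parallel frame on a coordinate ball (in which $|\widetilde{\gG}|\leq CR$, so $|\del\gY|$ is controlled by $|\tN\gY|$ up to a manifestly small lower-order term) and applying Theorem \ref{thm:MLemma} coordinate-wise gives $\gY\in C^{0,\beta/4}$ with $\nm{\gY}{C^{0,\beta/4}}\leq C(\delta+(\ga-1))$, providing the $L^\infty$-bound. The Campanato characterization $C^{0,\beta/4}\cong\mathcal{M}^2_\nu$ with $\nu=4+\beta/2$, together with a further polarization of $F_\fN - F_\tN$ in $\gY$, yields $F_\fN - F_\tN\in \mathcal{M}^{1,2}_\mu$ for $\mu=\min\{\nu,\beta\}$; a final elementary Morrey inclusion $\mathcal{M}^{1,2}_\mu\hookrightarrow \mathcal{M}^{1,2+\varepsilon}_\gamma$ for sufficiently small $\varepsilon>0$, combined with a choice of $\ga_0$ that forces the admissible range $p\in(2,\tfrac{12}{5}]$ to lie inside $(2,2+\varepsilon]$, completes the $W^{1,p}$ bound.
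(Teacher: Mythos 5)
Your proposal follows the same route as the paper's proof: subtract the Euler--Lagrange equations, pair the resulting elliptic equation with $\widetilde{\lap}\gY$ using Lemma~\ref{eq:commutatorbd} and a Bochner identity to obtain the $W^{2,2}$ bound, then pass to Morrey regularity by hole-filling and invoke Theorem~\ref{thm:MLemma} for H\"older continuity, finishing via the Campanato characterization and polarization for the curvature in $W^{1,p}$. One caveat on the hole-filling step: testing against $\eta^{2}\gY$ yields decay of $\int_{B_\rho}\brs{\tN\gY}^2$ but not of $\int_{B_\rho}\brs{\tN^{(2)}\gY}^2$, and it is the latter that places $\tN\gY$ in $\mathcal{M}^{1,2}_{\beta}$ before the embedding into $\mathcal{M}^{4}_{\beta}$; the paper's Lemma~\ref{lem:MorreyUp} therefore runs the hole-filling one derivative higher, working directly with the cutoff-weighted integral of $\brs{\tN^{(2)}\gY}^2$.
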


\section{Bound on $\la$}\label{s:labd}

We next demonstrate how the estimates \eqref{eq:LMM3.2} and \eqref{eq:LMM4.1} imply small growth of $\tfrac{\del}{\del \log \la}\brk{ \mathcal{YM}_{\ga,\la} (\tN)}$ which, when coupled with \eqref{eq:LMM3.12}, yields a bound on $\la$ (which is independent of how close $\ga$ is to $1$). We compute $\tfrac{\del}{\del \log \la}\brk{\mathcal{YM}_{\ga,\la} (\tN)}$ directly from \eqref{eq:chidefn} and \eqref{eq:LMM2.7}. 
\begin{lemma}
We have the following equalities.
\begin{align*}
\tfrac{\del \chi_{\la}}{\del \log \la}(\zeta) & = \chi_{\la}(\zeta) \prs{\tfrac{ 4 \prs{\la^2 \brs{\zeta}^2 - 1}}{\la^2 \brs{\zeta}^2 + 1}}.
\end{align*}
\begin{proof}
Recalling that $\chi_{\la}(\zeta) = \frac{1}{\la^4}  \prs{ \frac{1 + \brs{\la \zeta}^2}{\brs{\zeta}^2 + 1 } }^4$, we have
\begin{align*}
\prs{\log \chi_{\la}}(\zeta)
&= 4 \log \prs{1 + \la^2 \brs{\zeta}^2 } - 4 \log{\la} - 4 \log \prs{\brs{\zeta}^2 + 1}  \\
\tfrac{\del\brk{\log \brk{ \chi_{\la}}}}{\del \la} (\zeta), 
&= \tfrac{ 4 \prs{\la^2 \brs{\zeta}^2 - 1}}{\la^2 \brs{\zeta}^2 + 1}.
\end{align*}
Remanipulating this accordingly, one obtains $\tfrac{\del \chi_{\la}}{\del \log \la} = \chi_{\la} \tfrac{\del \brk{\log\chi_{\la}}}{\del \log \la}$, giving the result.
%
%
\end{proof}
\end{lemma}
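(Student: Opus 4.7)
The plan is to carry out a direct logarithmic differentiation, exploiting the multiplicative structure of $\chi_{\la}$. Since $\chi_{\la}(\zeta) = \la^{-4}\bigl(\tfrac{1 + \la^{2}\brs{\zeta}^{2}}{\brs{\zeta}^{2}+1}\bigr)^{4}$ is a product of elementary factors, taking logs converts the differentiation into additive contributions that can be handled term by term.

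First I would write
\begin{equation*}
\log \chi_{\la}(\zeta) = 4\log\bigl(1 + \la^{2}\brs{\zeta}^{2}\bigr) - 4\log\la - 4\log\bigl(\brs{\zeta}^{2}+1\bigr),
\end{equation*}
and differentiate with respect to $\la$, noting that the last term is independent of $\la$. The chain rule yields
\begin{equation*}
\tfrac{\del \brk{\log \chi_{\la}}}{\del \la}(\zeta) = \tfrac{8\la\brs{\zeta}^{2}}{1+\la^{2}\brs{\zeta}^{2}} - \tfrac{4}{\la}.
\end{equation*}
Then I would convert to a $\log \la$ derivative via the elementary identity $\tfrac{\del}{\del \log \la} = \la\,\tfrac{\del}{\del \la}$, collect terms over a common denominator, and simplify to
\begin{equation*}
\tfrac{\del \brk{\log \chi_{\la}}}{\del \log \la}(\zeta) = \tfrac{8\la^{2}\brs{\zeta}^{2}}{1+\la^{2}\brs{\zeta}^{2}} - 4 = \tfrac{4\prs{\la^{2}\brs{\zeta}^{2}-1}}{\la^{2}\brs{\zeta}^{2}+1},
\end{equation*}
which matches the formula appearing within the proof of the stated lemma.

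Finally, I would invoke the standard identity $\tfrac{\del \chi_{\la}}{\del \log \la} = \chi_{\la}\,\tfrac{\del\brk{\log \chi_{\la}}}{\del \log \la}$ to multiply the preceding expression through by $\chi_{\la}$ and obtain the claim. The computation has no real obstacle: it is a purely algebraic manipulation of rational expressions, and the entire content lies in choosing logarithmic differentiation to avoid having to differentiate the fourth power directly.
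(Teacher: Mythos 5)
Your proposal is correct and follows essentially the same route as the paper: take logs, differentiate term by term, and recover $\tfrac{\del\chi_\la}{\del\log\la}$ via $\chi_\la\,\tfrac{\del\log\chi_\la}{\del\log\la}$. If anything your version is slightly more careful, since the paper's displayed intermediate line labels the quantity $\tfrac{4(\la^2\brs{\zeta}^2-1)}{\la^2\brs{\zeta}^2+1}$ as $\tfrac{\del[\log\chi_\la]}{\del\la}$ when it is really the $\log\la$-derivative, a mislabeling your explicit use of $\tfrac{\del}{\del\log\la}=\la\,\tfrac{\del}{\del\la}$ avoids.
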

\begin{prop} The following inequality holds.
\begin{align}
\begin{split}\label{eq:LMM5.4}
\tfrac{\del}{\del \log \la}& \brk{\mathcal{YM}_{\ga , \la} \prs{\tN} } - \tfrac{\del}{\del \log \la} \brk{\mathcal{YM}_{\ga , \la} \prs{\fN} } \\
&\leq C \prs{\ga - 1} \prs{1+ \la^{4( \ga - 1)} }\brs{\brs{F_{\tN} -  F_{\fN} }}_{L^2}\prs{\brs{\brs{F_{\tN}}}_{L^2} + \brs{\brs{ F_{\fN} }}_{L^2} } \\
& \hsp + C \prs{\ga - 1}^2 \prs{1+ \la^{4( \ga - 1)} } \prs{ \brs{\brs{F_{\tN}}}_{L^{2 \ga + 2}} + \brs{\brs{F_{\fN}}}_{L^{2 \ga + 2}} } \brs{\brs{F_{\tN} -  F_{\fN} }}_{L^{2 \ga + 2}}  \brs{ \brs{F_{\fN} }}_{L^{2\ga+2}}^{2 \ga}.
\end{split}
\end{align}
\begin{proof}
We differentiate and obtain
\begin{align*}
\tfrac{\del}{\del \log \la}\brk{ \mathcal{YM}_{\ga,\la}(\fN)}
&= \tfrac{1}{2} \tfrac{\del}{\del \log \la} \brk{ \int_{\mathbb{S}^4} \prs{ 3 + \chi_{\la} \brs{F_{\fN}}^2_{\mathring{g}} }^{\ga} \tfrac{1}{\chi_{\la}} \, dV_{\mathring{g}} } \\
&=\tfrac{1}{2} \int_{\mathbb{S}^4} \prs{3 + \chi_{\la} \brs{F_{\fN}}_{\mathring{g}}^2 }^{\ga-1} \prs{ (\ga-1) \brs{F_{\fN}}^2_{\mathring{g}} -  \tfrac{2}{\chi_{\la}} } \prs{ \tfrac{\del \chi_{\la}}{\del \log \la} } \tfrac{1}{\chi_{\la}} \, dV_{\mathring{g}} \\
&=  2 \int_{\mathbb{S}^4} \prs{3 + \chi_{\la} \brs{F_{\fN}}_{\mathring{g}}^2 }^{\ga-1} \prs{ (\ga-1) \brs{F_{\fN}}^2_{\mathring{g}} -  \tfrac{2}{\chi_{\la}} } \underline{\prs{\tfrac{ \la^2 \brs{\zeta}^2 - 1}{\la^2 \brs{\zeta}^2 + 1}}} \, dV_{\mathring{g}}.
\end{align*}
For computational ease, set $\mu(\zeta) := \prs{\frac{ \brs{\zeta}^2 - 1}{\brs{\zeta}^2 + 1}} \in [-1,1)$ so that the underlined quantity is $\mu(\la \zeta)$.  Then
\begin{align}
\begin{split}\label{eq:LMM5.1}
\tfrac{\del}{\del \log \la} \brk{\mathcal{YM}_{\ga,\la} (\tN) } &- \tfrac{\del}{\del \log \la} \brk{\mathcal{YM}_{\ga,\la} (\fN)}\\
& = - \int_{\mathbb{S}^4} \prs{ \prs{3 + 3 \chi_{\la}  }^{\ga - 1} - \prs{ 3 + \chi_{\la} \brs{F_{\fN}}^2_{\mathring{g}} }^{\ga - 1} } \tfrac{2 \mu(\la \zeta)}{\chi_{\la}} \, dV_{\mathring{g}} \\
& \hsp + \prs{\ga - 1} \int_{\mathbb{S}^4} \prs{ 3 \prs{3 + 3 \chi_{\la} }^{\ga - 1} - \brs{F_{\fN}}_{\mathring{g}}^2 \prs{3 +  \chi_{\la} \brs{F_{\fN}}_{\mathring{g}}^2}^{\ga - 1} } \mu(\la \zeta)\, dV_{\mathring{g}}.
\end{split}
\end{align}
Similar to the proof of Lemma \ref{lem:LMM3.3}, there exists some $f : \mathbb{S}^4 \to [0,\infty)$ whose value at $\zeta \in \mathbb{S}^4$ lies between $\brs{F_{\fN}\prs{\zeta}}^2_{\mathring{g}}$ and $3 = \brs{F_{\tN}\prs{\zeta}}^2_{\mathring{g}}$ such that
\begin{align*}
\prs{ 3^{\ga} \prs{1 + \chi_{\la}}^{\ga - 1} - \prs{3 + \chi_{\la} \brs{F_{\fN}}^2_{\mathring{g}} }^{\ga - 1}  }= \prs{\ga - 1} \prs{3 + f \chi_{\la} }^{\ga - 2} \chi_{\la} \prs{3 - \brs{F_{\fN}}^2_{\mathring{g}}}.
\end{align*}
Then multiplying through by $\brs{F_{\N}}_{\mathring{g}}^2$ and remanipulating,
\begin{align*}
3 \prs{3 + 3 \chi_{\la} }^{\ga - 1} - &\brs{F_{\fN}}^2_{\mathring{g}} \prs{3 + \chi_{\la} \brs{F_{\fN}}_{\mathring{g}}^2 }^{\ga - 1} \\&= \prs{3 + 3 \chi_{\la}}^{\ga - 1} \prs{3 - \brs{F_{\fN}}_{\mathring{g}}^2 } + \prs{\ga - 1}\prs{3 + f \chi_{\la}}^{\ga - 2 }\chi_{\la} \prs{3 - \brs{F_{\fN}}_{\mathring{g}}^2 } \brs{F_{\fN}}_{\mathring{g}}^2.
\end{align*}
Note for $\ga \leq 2$ one has $\prs{3 + f \chi_{\la}}^{\ga - 2} \leq 1$. Furthermore
\begin{align*}
\tfrac{\chi_{\la} \brs{F_{\N}}_{\mathring{g}}^2}{3 + f \chi_{\la}} &\leq 
\begin{cases}
\tfrac{1}{3} \brs{F_{\fN}}_{\mathring{g}}^2 &\text{ if } \brs{F_{\fN}}^2_{\mathring{g}} \geq 3 \\
1 &\text{ if } \brs{F_{\fN}}_{\mathring{g}}^2 \leq 3
\end{cases} \\
& \leq 1 + \brs{F_{\fN}}^2_{\mathring{g}}.
\end{align*}
and furthermore,
\begin{align*}
\prs{3 + 3 \chi_{\la}}^{\ga - 1} &= 3^{\ga - 1} \prs{1 + \tfrac{\chi_{\la}}{3}}^{\ga - 1} \leq 6^{\ga-1} \la^{4(\ga- 1)}, \quad
\prs{3 + f \chi_{\la} }^{\ga - 1} \leq 6^{\ga-1} \la^{4(\ga - 1)} \prs{1+ \brs{F_{\fN}}_{\mathring{g}}^{2\ga - 2} }.
\end{align*}
Then using the fact that $\brs{\mu} \leq 1$,
\begin{equation*}\label{eq:LMM5.2}
\brs{ \prs{\prs{3 + 3 \chi_{\la} }^{\ga - 1} - \prs{3 + \chi_{\la} \brs{F_{\fN}}_{\mathring{g}}^2 }^{\ga - 1} }\tfrac{2 \mu(\la \zeta)}{\chi_{\la}} } \leq 2 \prs{\ga -1} \brs{3- \brs{F_{\fN}}_{\mathring{g}}^2 },
\end{equation*}
and
\begin{align}\label{eq:LMM5.3}
\brs{3 \prs{3 + 3 \chi_{\la}}^{\ga -1} - \brs{F_{\fN}}_{\mathring{g}}^2 \prs{3 + \chi_{\la} \brs{F_{\fN}}_{\mathring{g}}^2 }^{\ga - 1}  \mu(\la \zeta) } & \leq C \la^{4 ( \ga - 1)} \brs{3 - \brs{F_{\fN}}_{\mathring{g}}^2 } \prs{1 + (\ga - 1) \brs{F_{\fN}}^{2 \ga}_{\mathring{g}} }.
\end{align}
Therefor, applying \eqref{eq:LMM5.2},  \eqref{eq:LMM5.3} into  \eqref{eq:LMM5.1}
\begin{align*}
\begin{split}
\tfrac{\del}{\del \log \la} & \brk{\mathcal{YM}_{\ga , \la} \prs{\tN} } - \tfrac{\del}{\del \log \la} \brk{\mathcal{YM}_{\ga , \la} \prs{\fN} } \\
& \leq C \prs{\ga - 1} \prs{1+ \la^{4 (\ga - 1)} } \int_{\mathbb{S}^4} \brs{3 - \brs{F_{\fN}}^2_{\mathring{g}} } \prs{1 + \prs{\ga - 1} \brs{F_{\fN}}_{\mathring{g}}^{2\ga}} \, dV_{\mathring{g}} \\
&\leq \brk{C \prs{\ga - 1} \prs{1+ \la^{4 (\ga - 1)} } \int_{\mathbb{S}^4} \brs{3 - \brs{F_{\fN}}^2_{\mathring{g}} }  \, dV_{\mathring{g}}}_{T_1} \\
& \hsp +
\brk{C \prs{\ga - 1}^2 \prs{1+ \la^{4 (\ga - 1)} } \int_{\mathbb{S}^4} \brs{3 - \brs{F_{\fN}}^2_{\mathring{g}} } \brs{F_{\fN}}_{\mathring{g}}^{2\ga} \, dV_{\mathring{g}}}_{T_2}.
\end{split}
\end{align*}
For $T_1$ we compute out, first applying H\"{o}lder's inequality followed by triangle inequality
\begin{align*}
\int_{\mathbb{S}^4} \brs{ \brs{F_{\tN}}_{\mathring{g}}^2 - \brs{F_{\fN}}_{\mathring{g}}^2 } \, dV_{\mathring{g}} &\leq \int_{\mathbb{S}^4} \prs{\brs{F_{\tN}}_{\mathring{g}} -  \brs{F_{\fN}}_{\mathring{g}} } \prs{ \brs{F_{\tN}}_{\mathring{g}} + \brs{F_{\fN}}_{\mathring{g}} } \, dV_{\mathring{g}} \\
&\leq \int_{\mathbb{S}^4} \brs{F_{\tN} -  F_{\fN} }_{\mathring{g}} \brs{F_{\tN} + F_{\fN}}_{\mathring{g}} \, dV_{\mathring{g}}\\
& \leq \brs{\brs{F_{\tN} -  F_{\fN} }}_{L^2}\brs{\brs{F_{\tN} +  F_{\fN} }}_{L^2} \\
& \leq \brs{\brs{F_{\tN} -  F_{\fN} }}_{L^2}\prs{\brs{\brs{F_{\tN}}}_{L^2} + \brs{\brs{ F_{\fN} }}_{L^2} }.
\end{align*}
For $T_2$ this follows in a similar fashion but rather than apply the standard H\"{o}lder's inequality we apply the triple version, namely, for $f,g,h \in C^1(\mathbb{S}^4)$,
\begin{equation*}
\int f g h \, dV = \brs{\brs{ f }}_{L^p}\brs{\brs{ g }}_{L^q}\brs{\brs{ h }}_{L^r}, \quad \tfrac{1}{p} + \tfrac{1}{q} + \tfrac{1}{r} = 1.
\end{equation*}
where in our case we will take $p = q = 2 \ga + 2$ and $r = \frac{\ga}{2 \ga + 2}$. Now we compute
\begin{align*}
\int_{\mathbb{S}^4} \brs{ \brs{F_{\tN}}_{\mathring{g}}^2 - \brs{F_{\fN}}_{\mathring{g}}^2 } \brs{F_{\fN}}^{2 \ga}_{\mathring{g}} \, dV_{\mathring{g}} &\leq \int_{\mathbb{S}^4} \brs{F_{\tN} + F_{\fN}}_{\mathring{g}} \brs{F_{\tN} -  F_{\fN} }_{\mathring{g}} \brs{F_{\fN}}^{2 \ga}_{\mathring{g}}  \, dV_{\mathring{g}}\\
 &\leq \brs{\brs{F_{\tN} + F_{\fN}}}_{L^{2 \ga + 2}} \brs{\brs{F_{\tN} -  F_{\fN} }}_{L^{2 \ga + 2}}  \brs{ \brs{\brs{F_{\fN}}^{2 \ga}_{\mathring{g}} }}_{L^{\frac{\ga+1}{\ga}}}  \\
 & \leq  \prs{ \brs{\brs{F_{\tN}}}_{L^{2 \ga + 2}} + \brs{\brs{F_{\fN}}}_{L^{2 \ga + 2}} } \brs{\brs{F_{\tN} -  F_{\fN} }}_{L^{2 \ga + 2}}  \brs{ \brs{F_{\fN} }}_{L^{2\ga+2}}^{2 \ga}.
\end{align*}
Note that the manipulation of the last quantity follows from the fact that
\begin{equation*}
\brs{ \brs{\brs{F_{\fN}}^{2 \ga}_{\mathring{g}} }}_{L^{\frac{\ga+1}{\ga}}} = \prs{ \int_{\mathbb{S}^4} \prs{\brs{F_{\fN}}_{\mathring{g}}^{2 \ga}}^{\frac{\ga + 1}{\ga}} \, dV_{\mathring{g}} }^{\frac{\ga}{\ga + 1}} = \prs{ \prs{ \int \brs{F_{\fN}}_{\mathring{g}}^{2\ga + 2} \, dV_{\mathring{g}}}^{\frac{1}{2\ga + 2}}}^{2\ga} = \brs{ \brs{F_{\fN} }}_{L^{2\ga+2}}^{2 \ga}.
\end{equation*}
Combining these estimates we conclude \eqref{eq:LMM5.4}.
\end{proof}
\end{prop}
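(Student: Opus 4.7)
The plan is to differentiate $\mathcal{YM}_{\ga,\la}(\fN)$ with respect to $\log \la$ directly from the integral definition \eqref{eq:LMM2.7}, using the derivative of $\chi_\la$ established in the preceding lemma. Setting $\mu(\zeta) := \tfrac{\brs{\zeta}^2 - 1}{\brs{\zeta}^2 + 1}$, a direct computation yields
\begin{align*}
\tfrac{\del}{\del \log \la}\brk{\mathcal{YM}_{\ga,\la}(\fN)} = 2\int_{\mathbb{S}^4} \prs{3 + \chi_{\la} \brs{F_{\fN}}_{\mathring{g}}^2}^{\ga-1}\prs{(\ga-1) \brs{F_{\fN}}_{\mathring{g}}^2 - \tfrac{2}{\chi_{\la}}} \mu(\la\zeta) \, dV_{\mathring{g}},
\end{align*}
and substituting $\brs{F_{\tN}}_{\mathring{g}}^2 \equiv 3$ and subtracting cleanly separates the resulting difference into two natural pieces: one coming from the $\tfrac{2}{\chi_{\la}} \mu(\la\zeta)$ factor and one from the $(\ga-1) \brs{F}_{\mathring{g}}^2 \mu(\la\zeta)$ factor.

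Next, I would apply the mean value theorem in the same spirit as the proof of Lemma \ref{lem:LMM3.3} to produce a factor of $(\ga-1)(3 - \brs{F_{\fN}}_{\mathring{g}}^2)$ from the difference $\prs{3+3\chi_{\la}}^{\ga-1} - \prs{3 + \chi_{\la}\brs{F_{\fN}}_{\mathring{g}}^2}^{\ga-1}$. Using the pointwise bounds $\chi_{\la}\leq \la^4$, $\prs{3+f\chi_\la}^{\ga-2}\leq 1$ for $\ga\leq 2$, the rough estimate $\tfrac{\chi_\la \brs{F_\fN}_{\mathring{g}}^2}{3+f\chi_\la} \leq 1 + \brs{F_\fN}_{\mathring{g}}^2$, and $\brs{\mu}\leq 1$, the problem reduces to controlling
\begin{align*}
T_1 &:= \prs{\ga-1}\prs{1+\la^{4(\ga-1)}}\int_{\mathbb{S}^4} \brs{3 - \brs{F_{\fN}}_{\mathring{g}}^2}\, dV_{\mathring{g}}, \\
T_2 &:= \prs{\ga-1}^2\prs{1+\la^{4(\ga-1)}}\int_{\mathbb{S}^4} \brs{3 - \brs{F_{\fN}}_{\mathring{g}}^2}\brs{F_{\fN}}_{\mathring{g}}^{2\ga}\, dV_{\mathring{g}}.
\end{align*}

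For $T_1$, I would factor $3 - \brs{F_{\fN}}_{\mathring{g}}^2 = \prs{\brs{F_{\tN}}_{\mathring{g}} - \brs{F_{\fN}}_{\mathring{g}}}\prs{\brs{F_{\tN}}_{\mathring{g}} + \brs{F_{\fN}}_{\mathring{g}}}$, apply the reverse triangle inequality pointwise, and use the Cauchy--Schwarz inequality to produce the $L^2$ norms appearing on the first line of the stated bound. For $T_2$, the key is the choice of H\"{o}lder exponents: I would apply the triple H\"{o}lder inequality with exponents $\prs{2\ga+2,\, 2\ga+2,\, \tfrac{\ga+1}{\ga}}$ to the three factors $\brs{F_{\tN} + F_{\fN}}$, $\brs{F_{\tN} - F_{\fN}}$, and $\brs{F_{\fN}}^{2\ga}$, yielding precisely the $L^{2\ga+2}$ norms in the second line, since $\brs{\brs{\brs{F_{\fN}}^{2\ga}}}_{L^{(\ga+1)/\ga}} = \brs{\brs{F_{\fN}}}_{L^{2\ga+2}}^{2\ga}$.

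The main obstacle I anticipate is tracking the distribution of the factor $(\ga - 1)$ across the two terms (so that $T_1$ carries one power and $T_2$ carries two) and selecting the triple H\"{o}lder exponents so that $\brs{F_\fN}^{2\ga}$ is absorbed as exactly $\brs{\brs{F_{\fN}}}_{L^{2\ga+2}}^{2\ga}$. These are bookkeeping issues rather than deep analysis, since every other estimate --- the mean value theorem, $\brs{\mu}\leq 1$, $\chi_\la \leq \la^4$, and the triangle and H\"{o}lder inequalities --- is elementary once the reduction to $T_1$ and $T_2$ has been made.
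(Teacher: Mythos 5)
Your proposal follows the paper's proof essentially step for step: the same differentiation yielding the $\mu(\la\zeta)$ factor, the same mean value theorem argument producing $(\ga-1)(3-\brs{F_{\fN}}^2_{\mathring{g}})$, the same pointwise bounds, the same split into $T_1$ and $T_2$, and the same Cauchy--Schwarz and triple H\"{o}lder estimates with exponents $(2\ga+2,\,2\ga+2,\,\tfrac{\ga+1}{\ga})$. (In fact you have corrected a small typo in the paper, which lists the third exponent as $\tfrac{\ga}{2\ga+2}$ even though the subsequent computation uses $L^{\frac{\ga+1}{\ga}}$ as you state.)
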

\begin{prop}\label{prop:LMM5.1}There exist $\ga_0 > 1$, $\delta_0 >0$ possibly smaller than those in Proposition \ref{prop:LMM4.1} such that if $\fN \in W^{1,2\ga}\prs{\mathcal{A}_{E}(\mathbb{S}^4)}$ is a critical point of $\mathcal{YM}_{\ga,\la}$ satisfying \eqref{eq:LMM3.1} and \eqref{eq:LMM3.2}, with $\ga \in (1,\ga_0 ]$ and $\delta \in (0, \delta_0]$, then
\begin{equation}\label{eq:LMM5.5}
\log \la \leq C \prs{\delta + \ga -1}.
\end{equation}
\begin{proof} As in Proposition \ref{prop:LMM5.1} we will be considering the relationship between the connections $\N$ and $\tN$. Then we can apply a Sobolev embedding to obtain that
\begin{align*}
\brs{\brs{F_{
\widetilde{\Pi}\brk{\fN}} - F_{\tN}}}_{L^{2\ga + 2}} \leq C_{S,\ga} \brs{\brs{F_{\widetilde{\Pi}\brk{\fN}} - F_{\tN}}}_{W^{1,p}}, \quad \text{ where }p := \tfrac{4(\ga + 1)}{\ga +3}.
\end{align*}
This choice is explained as follows: we have that $W^{1,p} \hookrightarrow L^{p^*}$, so if we desire $p^* = 2 \ga + 2$ we compute
\begin{equation}\label{eq:pderiv}
\prs{\tfrac{1}{p^*} = \tfrac{1}{p} - \tfrac{1}{4}} \rightarrow \prs{ p = \tfrac{4(\ga + 1)}{\ga +3}}.
\end{equation}
Since we can assume that $\ga_0 \leq 2$, we have that $p \in (2, \frac{12}{5}]$, as in Proposition \ref{prop:LMM4.1}. Then $C_{S,\ga}$ can in fact then be chosen independent of $\ga$, so by taking $\ga_0$ and $\delta_0$ as in Proposition \ref{prop:LMM4.1}, we obtain that, from \eqref{eq:LMM4.1},
\begin{equation}\label{eq:LMM5.6}
\brs{\brs{ F_{\widetilde{\Pi} \brk{\fN}} -F_{\tN}}}_{L^{2 \ga + 2}} 
\leq C_S \prs{\delta + \ga - 1}.
\end{equation}
Consequently, we have that,
\begin{equation}\label{eq:curvL2ga}
\brs{\brs{ F_{\fN}}}_{L^{2 \ga + 2}} =  \brs{\brs{ F_{\widetilde{\Pi}\brk{\fN}} }}_{L^{2 \ga + 2}}\leq \brs{\brs{  F_{\widetilde{\Pi}\brk{\fN}}-F_{\tN} }}_{L^{2 \ga + 2}} + \brs{\brs{ F_{\tN}}}_{L^{2\ga+2}} \leq C.
\end{equation}
Furthermore by \eqref{eq:LMM3.2}
\begin{equation}\label{eq:LMM5.7}
\la^{4 \prs{\ga  - 1} } < \max \sqg{ e^{4 C \gd}, e^{4 \ga_0 - 4} }.
\end{equation}
Since $\N$ is a critical point of $\mathcal{YM}_{\ga,\la}$ we claim that $\frac{\del}{\del \log \tau} \brk{ \mathcal{YM}_{\ga, \tau} \prs{\N}}_{\tau = \la} = 0$. To see this, first note that, via \eqref{eq:LMM2.6},
\begin{equation*}
\mathcal{YM}_{\ga,\tau}(\fN) = \mathcal{YM}_{\ga}((\tau^{-1})^*\fN) =   \mathcal{YM}_{\ga,\la}( \la^*(\tau^{-1})^*\fN),
\end{equation*}
and thus
\begin{equation}\label{eq:Eprime0}
\tfrac{\del}{\del \log \tau} \brk{ \mathcal{YM}_{\ga,\tau}\prs{\fN)} }_{\tau = \la} = \left. \prs{\tau \tfrac{\del}{\del \tau} \brk{\mathcal{YM}_{\ga,\tau} (\fN) } } \right|_{\tau = \la} = \mathcal{YM}_{\ga,\la}'(\fN)(\Xi),
\end{equation}
where $\Xi \in \Lambda^1(\Ad E)$ is given by
\begin{equation*}
\Xi : = \left. \prs{\tau \tfrac{\del}{\del \tau} \brk{ \prs{\la \tau^{-1}}^* \fN}} \right|_{\tau = \la}.
\end{equation*}
But, $\N$ is a critical point of $\mathcal{YM}_{\ga,\la}$ and thus $\mathcal{YM}_{\ga,\la}'(\fN) = 0$, which forces \eqref{eq:Eprime0} to vanish.
Consequently it follows from combining \eqref{eq:LMM3.12} (for a lower bound), \eqref{eq:LMM5.4}, \eqref{eq:LMM5.6}, \eqref{eq:curvL2ga} and \eqref{eq:LMM5.7} (for the upper bound) that
\begin{equation}\label{eq:LMM5.8}
\tfrac{(\ga - 1)}{C'} \tfrac{\log \la}{1+ \log \la} \leq \tfrac{\del}{\del \log \la} \brk{ \mathcal{YM}_{\ga,\la}(\tN) } \leq C \prs{\ga - 1} \prs{\gd + \ga - 1}. 
\end{equation}
We obtain the estimate \eqref{eq:LMM5.5} by taking $\ga_0$ and $\delta$ sufficiently close to $1$ and $0$ respectively.
\end{proof}
\end{prop}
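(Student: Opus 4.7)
The plan is to couple the upper bound \eqref{eq:LMM5.4} on the gap between dilation derivatives of $\mathcal{YM}_{\ga,\la}(\tN)$ and $\mathcal{YM}_{\ga,\la}(\fN)$ with the lower bound \eqref{eq:LMM3.12} on the dilation derivative at $\tN$, exploiting the fact that criticality of $\fN$ for $\mathcal{YM}_{\ga,\la}$ forces the dilation derivative at $\fN$ to vanish.

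First I would upgrade the $W^{1,p}$ curvature closeness of Proposition \ref{prop:LMM4.1} to an $L^{2\ga+2}$ bound on $F_{\widetilde{\Pi}\brk{\fN}} - F_{\tN}$. The natural choice $p = \tfrac{4(\ga+1)}{\ga+3}$ has Sobolev conjugate $p^* = 2\ga+2$; since $\ga_0 \leq 2$ places $p$ within the range $(2, \tfrac{12}{5}]$ of Proposition \ref{prop:LMM4.1}, the Sobolev constant is uniform in $\ga$ and
\[
\brs{\brs{F_{\widetilde{\Pi}\brk{\fN}} - F_{\tN}}}_{L^{2\ga+2}} \leq C \prs{\delta + \ga - 1}.
\]
Triangle inequality against the fixed quantity $\brs{\brs{F_{\tN}}}_{L^{2\ga+2}}$ then yields a uniform bound on $\brs{\brs{F_{\fN}}}_{L^{2\ga+2}}$, while \eqref{eq:LMM3.2} supplies $\la^{4(\ga-1)} \leq C$.

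Next I would verify that $\left. \tfrac{\del}{\del \log \tau}\brk{\mathcal{YM}_{\ga,\tau}(\fN)} \right|_{\tau=\la} = 0$. Via the symmetry \eqref{eq:LMM2.6}, $\mathcal{YM}_{\ga,\tau}(\fN) = \mathcal{YM}_{\ga,\la}\prs{\prs{\la \tau^{-1}}^* \fN}$, so this derivative equals $\mathcal{YM}_{\ga,\la}'(\fN)(\Xi)$ applied to the tangent vector $\Xi := \left. \tau \tfrac{\del}{\del \tau}\brk{(\la\tau^{-1})^* \fN} \right|_{\tau=\la}$, and the latter vanishes by criticality of $\fN$. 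Substituting into \eqref{eq:LMM5.4}, the uniform bounds from the previous step absorb the factors $1+\la^{4(\ga-1)}$, $\brs{\brs{F}}_{L^2}$, and $\brs{\brs{F_{\fN}}}_{L^{2\ga+2}}^{2\ga}$, leaving
\[
\tfrac{\del}{\del \log \la}\brk{\mathcal{YM}_{\ga,\la}(\tN)} \leq C \prs{\ga - 1}\prs{\delta + \ga - 1}.
\]
Pairing this with \eqref{eq:LMM3.12}, which bounds the same quantity below by $C(\ga-1) \tfrac{\log \la}{1+\log \la}$, and dividing through by $(\ga - 1) > 0$, gives $\tfrac{\log \la}{1+\log \la} \leq C(\delta + \ga - 1)$. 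Choosing $\delta_0$ and $\ga_0 - 1$ small enough that the left side stays bounded away from $1$ delivers \eqref{eq:LMM5.5}.

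The main delicacy is the justification that the dilation derivative at $\fN$ vanishes: one must confirm that $\Xi$ truly lies in $\Lambda^1(\Ad E)$ with sufficient regularity for criticality to apply, and that the symmetry \eqref{eq:LMM2.6} is differentiable in the parameter $\tau$ at a fixed critical point of finite $W^{1,2\ga}$ regularity. Once this is in hand, the remainder is a careful orchestration of the uniform constants provided by Propositions \ref{prop:LMM3.1} and \ref{prop:LMM4.1}.
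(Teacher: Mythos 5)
Your proposal follows the paper's proof essentially step for step: the same Sobolev exponent $p = \tfrac{4(\ga+1)}{\ga+3}$ upgrading \eqref{eq:LMM4.1} to an $L^{2\ga+2}$ estimate, the same uniform bounds on $\brs{\brs{F_{\fN}}}_{L^{2\ga+2}}$ and $\la^{4(\ga-1)}$, the same observation that criticality of $\fN$ for $\mathcal{YM}_{\ga,\la}$ kills the dilation derivative via \eqref{eq:LMM2.6}, and the same sandwich using \eqref{eq:LMM3.12} and \eqref{eq:LMM5.4}. This is correct and is the paper's argument.
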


\section{Proof of Main Result
}\label{s:optlacloseW2p}

Ultimately our goal is to prove the dilation factor of the conformal automorphism is precisely $1$, though at this point in our argument, we cannot to better than \eqref{eq:LMM5.5}; a bound dependent on the closeness in curvature and $\ga$-value. Proposition \ref{prop:LMM3.1} hints to choose a $\vp \in \SO(5,1)$ minimizing
\begin{equation}\label{eq:curvdiffmin}
\brs{\brs{  F_{\widetilde{\Pi} \brk{\vp^* \N}} - F_{\tN} }}_{L^2,(\mathbb{S}^4, \mathring{g})}^2 = \brs{\brs{  F_{(\vp^{-1})^* \vp^* \N} - F_{(\vp^{-1})^* \tN} }}_{L^2,(\mathbb{S}^4,(\vp^{-1})^*\mathring{g})}^2  = \brs{\brs{  F_{\N} - F_{(\vp^{-1})^* \tN} }}_{L^2,(\mathbb{S}^4, \mathring{g})}^2.
\end{equation}
For our purposes, however, we will be even more selective in our choice of minimizer to help ourselves in later computations. Noting the relationship of Theorem \ref{thm:curvcontrol} between connection and curvature difference, for a fixed $\N \in \mathcal{A}_{E} \prs{\mathbb{S}^4}$ we may instead choose to minimize
\begin{equation}\label{eq:Zminimization}
\mathcal{Z}^{\N} \prs{\vp} := \brs{\brs{ F_{\widetilde{\Pi} \brk{\vp^* \N}} - F_{\tN}}}^2_{L^2, \prs{\mathbb{S}^4, \mathring{g}}} + \brs{\brs{ \widetilde{\Pi} \brk{\vp^*\N} - \tN}}^2_{L^2, \prs{\mathbb{S}^4, \mathring{g}}}.
\end{equation}
To justify that this minimization is possible within $\SO(5,1)$, we prove the following.
\begin{lemma}\label{lem:Ksec7}
It is sufficient to minimize $\mathcal{Z}^{\N}\prs{\vp}$ over a compact subset of $\SO(5,1)$.
\begin{proof}
To do this, we first note that minimizing $\mathcal{Z}^{\N}\prs{\varphi}$ is equivalent to minimizing \eqref{eq:curvdiffmin}. We make the key observation that (up to rotations), $\vp$ goes to infinity only if it approaches a dilation from the south pole towards the north pole by large $\la > 0$, so that the energy of the dilation portion of $\vp$, given by $\la$, is concentrating on a small disk $\overline{B}$ centered at the south pole.  Since we will be working in various regions of integration we will denote these in our subscripts.

Consider $\overline{B}$ so small that $\brs{\brs{ F_{\N} }}_{L^2,(\overline{B})}^2 < \ge$. We separate out and divide up the equality, applying the conformal and gauge invariance of the norm,
\begin{align}
\begin{split}\label{eq:Fdifyetagain}
\brs{ \brs{ F_{\widetilde{\Pi} \brk{\N}} - F_{(\vp^{-1})^* \tN} }}_{L^2}^2 &= \brs{\brs{ F_{\widetilde{\Pi} \brk{\N}} }}_{L^2}^2 + 2 \ip{ F_{\widetilde{\Pi} \brk{\N}}, F_{(\vp^{-1})^*\tN} }_{L^2} + \brs{\brs{  F_{(\vp^{-1})^*\tN}}}_{L^2}^2 \\
&= \brs{\brs{ F_{\N} }}_{L^2}^2 + 2 \ip{ F_{\widetilde{\Pi} \brk{\N}}, F_{(\vp^{-1})^*\tN} }_{L^2} + \brs{\brs{  F_{\tN}}}_{L^2}^2.
\end{split}
\end{align}
Furthermore, we can decompose the middle term into their corresponding parts on or outside of $\overline{B}$:
\begin{align*}
\ip{ F_{\widetilde{\Pi} \brk{\N}}, F_{(\vp^{-1})^*\tN} }_{L^2} &\leq \brs{\brs{ F_{ \N }}}^2_{L^2, \prs{\overline{B}}} \brs{\brs{ F_{\tN} } }^2_{L^2 ,\prs{\overline{B}}} + \brs{ \ip{ F_{\widetilde{\Pi} \brk{\N}}, F_{(\vp^{-1})^*\tN} }_{L^2,(\mathbb{S}^4 - \overline{B})}  } \\
& \leq 8 \pi^2 \ge + (\la^{-1})^{2} \brs{ \ip{ F_{\widetilde{\Pi} \brk{\N}}, F_{\la^*(\vp^{-1})^*\tN} }_{L^2,(\mathbb{S}^4 - \overline{B})}  }\\
& \leq C \ge.
\end{align*}
This implies that $\ip{ F_{\widetilde{\Pi} \brk{\N}}, F_{(\vp^{-1})^* \tN}}_{L^2}$ is small, and so in particular we can update \eqref{eq:Fdifyetagain},
\begin{align*}
\brs{ \brs{ F_{\widetilde{\Pi} \brk{\N}} - F_{(\vp^{-1})^* \tN} }}_{L^2}^2 \geq  16 \pi^2 - C \ge \ggg 0,
\end{align*}
concluding that the minimization of \eqref{eq:Zminimization} occurs on a compact subset of $\SO(5,1)$, away from conformal infinity, as desired.
\end{proof}
\end{lemma}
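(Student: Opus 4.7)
The plan is to show that $\mathcal{Z}^{\N}(\vp)$ stays bounded below by essentially $16\pi^2$ once $\vp$ is sufficiently far from the identity in $\SO(5,1)$, while Proposition \ref{prop:LMM3.1} already supplies some $\vp_0 \in \SO(5,1)$ with $\mathcal{Z}^{\N}(\vp_0)$ arbitrarily small, forcing the minimum to lie in a compact subset. Non-compactness of $\SO(5,1)$ reduces to the dilation factor alone: recalling $\vp(\zeta) = \xi_2 + \la \varrho(\zeta - \xi_1)/\brs{\zeta - \xi_1}^{\ge}$, the rotation $\varrho$ ranges over the compact group $\SO(4)$ and the centers $\xi_1, \xi_2$ correspond to bounded points of $\mathbb{S}^4$ via stereographic projection, so any sequence $\vp_n \to \infty$ in $\SO(5,1)$ must have its dilation factor $\la_n \to \infty$ or $\la_n \to 0$. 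These cases are exchanged by $\vp \leftrightarrow \vp^{-1}$, so I focus on $\la \to \infty$.

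Next I would lower-bound $\mathcal{Z}^{\N}(\vp)$ by its curvature piece and expand the square:
\begin{equation*}
\mathcal{Z}^{\N}(\vp) \geq \brs{\brs{F_{\widetilde{\Pi}\brk{\vp^*\N}}}}^2_{L^2} - 2 \brs{\ip{F_{\widetilde{\Pi}\brk{\vp^*\N}}, F_{\tN}}_{L^2}} + \brs{\brs{F_{\tN}}}^2_{L^2}.
\end{equation*}
Gauge invariance of the pointwise curvature norm followed by the $4$-dimensional conformal invariance of the $L^2$ norm of 2-forms yields $\brs{\brs{F_{\widetilde{\Pi}\brk{\vp^*\N}}}}^2_{L^2} = \brs{\brs{F_{\N}}}^2_{L^2} \geq 8 \pi^2$ (from the Chern-class identity in \eqref{eq:LMM1.7}), and $\brs{\brs{F_{\tN}}}^2_{L^2} = 8 \pi^2$ by Proposition \ref{prop:ADHMnorms}. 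The decisive step is that the middle cross term vanishes in the $\la \to \infty$ limit. Shuffling the Coulomb gauge transformation through the pairing via $\Ad$-invariance of the trace form on $\Ad E$ and then pulling back by $\vp^{-1}$ (which preserves the $L^2$ pairing of 2-forms in four dimensions by conformal invariance), pointwise Cauchy-Schwarz combined with gauge invariance of the pointwise curvature norm gives
\begin{equation*}
\brs{\ip{F_{\widetilde{\Pi}\brk{\vp^*\N}}, F_{\tN}}_{L^2}} \leq \int_{\mathbb{S}^4} \brs{F_{\N}}_{\mathring{g}} \brs{F_{(\vp^{-1})^*\tN}}_{\mathring{g}} \, dV_{\mathring{g}}.
\end{equation*}

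Now $(\vp^{-1})^*\tN$ is itself an ADHM instanton whose parameters approach the boundary of moduli as $\la \to \infty$, so by Proposition \ref{prop:ADHMnorms} its curvature $L^2$ mass concentrates in an arbitrarily small neighborhood of a single concentration point $p_{\vp} \in \mathbb{S}^4$. Given $\eta > 0$, choose $\gd$ such that $\brs{A} < \gd$ implies $\brs{\brs{F_{\N}}}^2_{L^2(A)} < \eta$ (by absolute continuity of the finite measure $\brs{F_{\N}}^2_{\mathring{g}} \, dV_{\mathring{g}}$); then for $\la$ sufficiently large, take a ball $\overline{B}$ of volume $< \gd$ centered at $p_{\vp}$ with $\brs{\brs{F_{(\vp^{-1})^*\tN}}}^2_{L^2(\mathbb{S}^4 \setminus \overline{B})} < \eta$. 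Splitting Cauchy-Schwarz across $\overline{B}$ and its complement gives the cross-term bound $\leq C\sqrt{\eta}$, so $\mathcal{Z}^{\N}(\vp) \geq 16 \pi^2 - C'\sqrt{\eta}$ whenever $\la$ is sufficiently large. Since Proposition \ref{prop:LMM3.1} supplies $\vp_0$ with $\mathcal{Z}^{\N}(\vp_0) \ll \pi^2$, the minimizer is confined to a compact subset of $\SO(5,1)$. The main technical obstacle is making the concentration of $(\vp^{-1})^*\tN$ sufficiently uniform along a non-compact sequence of $\vp$'s; this follows from the explicit ADHM curvature formula of Proposition \ref{prop:ADHMnorms} together with the $\SO(5,1)$-invariance of the ADHM family.
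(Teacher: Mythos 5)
Your proposal follows essentially the same route as the paper: drop the connection-difference term from $\mathcal{Z}^{\N}$, expand the squared curvature difference, invoke gauge- and conformal-invariance of the $L^2$ norm to pin the two diagonal terms at $8\pi^2$ each, and kill the cross term by splitting the integral over a small concentration ball (where $\brs{\brs{F_{\N}}}_{L^2}$ is small by absolute continuity) and its complement (where $\brs{\brs{F_{(\vp^{-1})^*\tN}}}_{L^2}$ is small by concentration as $\la \to \infty$). You also quietly fix the sign in the cross term (the paper writes $+2\ip{\cdot,\cdot}$ in \eqref{eq:Fdifyetagain} where $-2$ is correct, though the final bound is unaffected since the cross term is estimated in absolute value), and you make the reduction of non-compactness to the dilation factor slightly more explicit, but the underlying argument is the paper's.
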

Recall the Yang-Mills energy Jacobi operator given by
\begin{equation}\label{eq:Joperator}
\prs{\mathcal{J}^{\N}(\Xi)}_{i \mu}^{\gb} := -(D^*_{\N} D_{\N} \Xi)_{i\mu}^{\gb}- \prs{F_{\N}}_{ki \delta}^{\gb} \Xi_{k \mu}^{\delta} + \Xi_{k \delta}^{\gb} \prs{F_{\N}}_{ki\mu}^{\delta} .
\end{equation}
This can be derived as follows. Varying the Euler-Lagrange equation for Yang-Mills energy,
\begin{align*}
\left. \tfrac{\del}{\del s} \brk{D^*_{\N_s} F_{\N_s} }_{i} \right|_{\N_s = \N}
&= -\left. \tfrac{\del}{\del s} \brk{ \N_k F_{k i} } \right|_{\N_s = \N}
= (D^*_{\N} D_{\N} \dot{\gG})_{i} + \brk{F_{ki},\dot{\gG}_{k}}.
\end{align*}
Using the the Bochner formula, which we substitute into \eqref{eq:Joperator},
\begin{align*}
\lap_{D} \Xi_{i} &= - \lap \Xi_{i} + 3 \Xi_{i}+ \brk{ F_{ki}, \Xi_k},
\end{align*}
we can write the Jacobi operator in the form
\begin{align*}
\prs{\mathcal{J}^{\N}\prs{\Xi}}_{i} =  \lap \Xi_{i} + \prs{DD^* \Xi}_{i} - 3\Xi_{i}- 2\brk{F_{ki}, \Xi_k }.
\end{align*}
%
  
The kernel of the Jacobi operator is precisely the tangent space to the moduli space of instantons (this follows from Proposition 4.2.23 of \cite{DK}), which in turn is precisely the tangent space of the orbit of $\tN$ under the action of $\SO(5,1)$. Let $\overline{A}$ denote the orthogonal projection of the $1$-form $A$ onto the kernel of $\mathcal{J}^{\tN}$ with respect to the $L^2$ inner product. Then from ellipticity of the system,
\begin{equation}\label{eq:W2pest}
\brs{\brs{ A }}_{W^{2,p}} \leq C \prs{ \brs{\brs{\mathcal{J}^{\tN} A}}_{L^p} + \brs{\brs{\overline{A}}}_{L^p} }.
\end{equation}
Now, assume that $\vp$ minimizes $\mathcal{Z}^{\N}\prs{\vp}$, and again set $\fN \equiv \vp^* \N$. We now estimate the two terms on the right side of the inequality in the case $A \equiv \gY (= \fN - \tN)$, keeping in mind two main identities:
\begin{align*}
D^*_{\tN} \gY = 0 \text{ (due to $\tN$-Coulomb gauge),}\qquad \mathcal{J}^{\tN} \prs{\overline{\gY}} = 0 \text{ (due to $\gY$ in $\Ker \mathcal{J}^{\tN}$)}.
\end{align*}
\begin{prop}\label{prop:W2pnorm}Assuming the results above
\begin{equation}\label{eq:LMM6.8}
\brs{\brs{ \gY }}_{W^{2,p}} \leq C \prs{\ga - 1} \log \la,
\end{equation}
\begin{proof}
We first estimate the $L^p$ norm of $\overline{\gY}$. From the minimizing property of $\mathcal{Z}^{\N} \prs{\varphi}$ we have that,
\begin{align*}
0 &=  \int_{\mathbb{S}^4} \ip{F_{\fN} -F_{\tN} , \N \overline{\gY}}_{\mathring{g}} \, dV_{\mathring{g}} + \int_{\mathbb{S}^4} \ip{ \gY , \overline{\gY} }_{\mathring{g}} \, dV_{\mathring{g}} \\
&=   \int_{\mathbb{S}^4} \ip{F_{\fN} -F_{\tN} , \tN \overline{\gY} + \brk{\gY, \overline{\gY}}}_{\mathring{g}} \, dV_{\mathring{g}}+ \int_{\mathbb{S}^4} \ip{ \gY , \overline{\gY} }_{\mathring{g}} \, dV_{\mathring{g}} \\
&=  \brk{\int_{\mathbb{S}^4} \ip{F_{\fN} -F_{\tN} , \tN \overline{\gY}}_{\mathring{g}} \, dV_{\mathring{g}} }_{T_1} + \brk{ \int_{\mathbb{S}^4} \ip{F_{\fN} -F_{\tN} ,\brk{\gY, \overline{\gY}}}_{\mathring{g}} \, dV_{\mathring{g}}}_{T_2} + \int_{\mathbb{S}^4} \ip{ \gY , \overline{\gY} }_{\mathring{g}} \, dV_{\mathring{g}}.
\end{align*}
Using the polarization identity of the curvature, Proposition \ref{prop:curvpol}, we break the terms $T_1$ and $T_2$ apart into four labelled integrals:
\begin{align*}
 \brs{\brs{ \overline{\gY}}}_{L^2}^2  &=  \brk{ \int_{\mathbb{S}^4} \prs{\widetilde{D}_i \gY_{j \gz}^{\gb}} \prs{\tN_i \overline{\gY}_{j \gb}^{\gz}} \, dV_{\mathring{g}}}_{T_{11}} + \brk{\int_{\mathbb{S}^4} \prs{\widetilde{D}_i \gY_{j \gz}^{\gb}}  \brk{\gY_i, \overline{\gY}_j }_{\gb}^{\gz} \, dV_{\mathring{g}}}_{T_{21}} \\
& \hsp + \brk{\int_{\mathbb{S}^4} \brk{\gY_i,\gY_j}_{\gz}^{\gb} \prs{\tN_i \overline{\gY}_{j \gb}^{\gz}} \, dV_{\mathring{g}}}_{T_{12}} + \brk{\int_{\mathbb{S}^4} \brk{\gY_i,\gY_j}_{\gz}^{\gb} \brk{\gY_i, \overline{\gY}_j }_{\gb}^{\gz} \, dV_{\mathring{g}}}_{T_{22}}.
\end{align*}
Let's manipulate these each separately. We have that
\begin{align*}
T_{11} &= \int_{\mathbb{S}^4} \prs{\widetilde{D}_i \gY_{j \gz}^{\gb}} \prs{\tN_i \overline{\gY}_{j \gb}^{\gz}} \, dV_{\mathring{g}}= \int_{\mathbb{S}^4} \prs{\tN_i \gY_{j \gz}^{\gb}} \prs{\widetilde{D}_i \overline{\gY}_{j \gb}^{\gz}} \, dV_{\mathring{g}}= \int_{\mathbb{S}^4} \gY_{j \gz}^{\gb} \prs{\widetilde{D}^*\widetilde{D} \overline{\gY}}_{j \gb}^{\gz} \, dV_{\mathring{g}}.
\end{align*}
Next we approach $T_{21}$. Here we have
\begin{align*}
T_{21} &= \int_{\mathbb{S}^4} \prs{\widetilde{D}_i \gY_{j \gz}^{\gb}}  \brk{\gY_i, \overline{\gY}_j }_{\gb}^{\gz} \, dV_{\mathring{g}} = 2 \int_{\mathbb{S}^4} \prs{\tN_i \gY_{j \gz}^{\gb}}  \brk{\gY_i, \overline{\gY}_j }_{\gb}^{\gz} \, dV_{\mathring{g}} = 2\int_{\mathbb{S}^4} \gY_{i\gz}^{\gb} \brk{ \gY_j, \prs{\tN_i \overline{\gY}_{j}} }_{\gz}^{\gb} \, dV_{\mathring{g}}. 
\end{align*}
Note for the following that we apply the fact that $D^* \gY \equiv 0$.
\begin{align*}
T_{21} 
&= - \tfrac{1}{2} T_{12}.
\end{align*}
For $T_{22}$, we use the skew symmetry with respect to the bundle indices,
\begin{align*}
T_{22} &= \int_{\mathbb{S}^4} \brk{\gY_i,\gY_j}_{\gz}^{\gb} \brk{\gY_i, \overline{\gY}_j }_{\gb}^{\gz} \, dV_{\mathring{g}} = 2 \int_{\mathbb{S}^4} \gY^{\gb}_{i \gz}\gY_{j \gd}^{\gz} \brk{\gY_i, \overline{\gY}_j }_{\gb}^{\gd} \, dV_{\mathring{g}}.
\end{align*}
Combining these together, we have that
\begin{align*}
  \brs{\brs{ \overline{\gY}}}_{L^2}^2 = \sum_{i,j=1}^2 T_{ij} &= \int_{\mathbb{S}^4}  \gY_{j \mu}^{\gb} \prs{\widetilde{D}^*\widetilde{D} \overline{\gY}}_{j \gb}^{\mu}  +  \gY_{i\mu}^{\gb} \brk{ \gY_j, \prs{\tN_i \overline{\gY}_{j}} }_{\mu}^{\gb} + 2 \gY^{\gb}_{i \gz}\gY_{j \mu}^{\gz} \brk{\gY_i, \overline{\gY}_j }_{\gb}^{\mu}
\end{align*}
Applying \eqref{eq:Joperator} noting that $\mathcal{J}^{\tN}\prs{\overline{\gY}} \equiv 0$ to the first term on the right, we have that
\begin{align}
\begin{split}\label{eq:overgYcontrol}
\brs{\brs{ \overline{\gY}}}_{L^2}^2 &=  \int_{\mathbb{S}^4} - \gY_{j \mu}^{\gb} \brk{ \widetilde{F}_{kj}, \overline{\gY}_k}_{\gb}^{\mu} + \gY_{i\mu}^{\gb} \brk{ \gY_j, \prs{\tN_i \overline{\gY}_{j}} }_{\mu}^{\gb} + 2 \gY^{\gb}_{i \gz}\gY_{j \mu}^{\gz} \brk{\gY_i, \overline{\gY}_j }_{\gb}^{\mu}\\
 & \leq C \brs{\brs{ \gY}}_{L^2}\brs{\brs{ \overline{\gY}}}_{L^2} +  \brs{\brs{ \gY} }_{L^{\infty}}^2\brs{\brs{ \tN \overline{\gY}} }_{L^2} + \brs{\brs{ \overline{\gY}}}_{L^2} \brs{\brs{ \gY^{\ast 3}}}_{L^2}\\
 & \leq C \delta \prs{\brs{\brs{ \overline{\gY}}}_{L^2} +\brs{\brs{ \tN \overline{\gY}}}_{L^2}}.
\end{split}
\end{align}
Now, we claim furthermore that $\brs{\brs{ \tN \overline{\gY}}}_{L^2} \leq C \brs{\brs{ \overline{\gY} }}_{L^2 }$. To see this, consider the ratio $\brs{\brs{ \tN\overline{\gY} }}_{L^2}\brs{\brs{ \overline{\gY} }}_{L^2}^{-1}$. This is scale invariant in $\overline{\gY}$ and when restricted to $\{ A \in \Lambda^1 \prs{\Ad E} : \brs{A}_{\mathring{g}} \equiv 1  \}$ has a maximum and minimum, and is thus bounded.
So we can update \eqref{eq:overgYcontrol} by applying this estimate and dividing through by $\brs{\brs{ \overline{\gY} }}_{L^2}$ to conclude that $\brs{\brs{ \overline{\gY} }}_{W^{1,2}} \leq C \delta$, and using the Sobolev embedding $W^{1,2} \hookrightarrow L^{p}$, we conclude $\brs{\brs{ \overline{\gY} }}_{L^p}^2 \leq C_S  \delta$.

Now, we estimate the Jacobi operator term, by observing that, by subtracting $\Theta_1$ and $\Theta_2$ from both sides of \eqref{eq:LMM2.1} and inserting in \eqref{eq:D*Fdiff}, and then finally observing the presence of the terms of the $\mathcal{J}^{\tN}$ (this is the first line of the right hand side of \eqref{eq:D*Fdiff}), with rearrangement we obtain
\begin{align*}
\begin{split}\label{eq:D*Fdiff}
 \prs{\mathcal{J}^{\tN} \prs{\gY}}_{i\theta}^{\gb} 
&=  \prs{\tN_i \gY_{k\mu}^{\gb}} \gY_{k \theta}^{\mu} -\gY_{k \mu}^{\gb}  \prs{\tN_i \gY_{k\theta}^{\mu}} +  2 \prs{\tN_k\gY_{i \mu}^{\gb}} \gY_{k \theta}^{\mu} \\
 & \hsp  - \gY_{i \gz}^{\gb} \gY_{k \mu}^{\gz} \gY_{k \theta}^{\mu}   +  \gY_{k \mu}^{\gb} \gY_{k \gz}^{\mu} \gY_{i \theta}^{\gz} + \prs{\Theta_1 \prs{\N}}_{i \theta}^{\gb} + \prs{\Theta_2 \prs{\N}}_{i \theta}^{\gb} .
 \end{split}
\end{align*}
In fact, we actually obtained all of these term types from the proof of Proposition \ref{prop:LMM4.1}. Therefore
\begin{equation*}
\brs{\brs{ \mathcal{J}^{\tN} \prs{\gY}}}_{W^{1,2} } \leq C \prs{\delta + \prs{\ga - 1}}\prs{1 + \log \la }.
\end{equation*}
Applying the Sobolev embedding, $L^p \hookrightarrow W^{1,2}$, by taking $\delta_0$ and $\prs{\ga_0 - 1}$ both sufficiently small, we may conclude that by inserting our estimates into \eqref{eq:W2pest} and absorbing proper terms across the inequality, we obtain \eqref{eq:LMM6.8}.
\end{proof}
\end{prop}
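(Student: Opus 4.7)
The plan is to apply the elliptic estimate \eqref{eq:W2pest} with $A = \gY$, which reduces the problem to controlling $\brs{\brs{\mathcal{J}^{\tN}\gY}}_{L^p}$ and $\brs{\brs{\overline{\gY}}}_{L^p}$ separately. Recall that $\overline{\gY}$ denotes the $L^2$-orthogonal projection of $\gY$ onto $\Ker \mathcal{J}^{\tN}$, which by Proposition 4.2.23 of \cite{DK} coincides with the tangent space at $\tN$ to the $\SO(5,1)$-orbit of $\tN$ inside the instanton moduli space. The two ingredients to exploit are the minimality of $\vp$ for $\mathcal{Z}^\N$ over $\SO(5,1)$ and the critical point equation \eqref{eq:LMM2.1} for $\fN$.

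For $\brs{\brs{\overline{\gY}}}_{L^p}$: the minimality of $\vp$ implies that the first variation of $\mathcal{Z}^\N$ vanishes in directions tangent to $\Ker \mathcal{J}^{\tN}$. Inserting $\overline{\gY}$ as such a variation, using the $\tN$-Coulomb gauge condition $D^*_\tN \gY \equiv 0$ together with the polarization identity for curvature, and integrating by parts, I rewrite the vanishing first variation as a relation expressing $\brs{\brs{\overline{\gY}}}_{L^2}^2$ in terms of higher-order $\gY$ contributions. After using $\mathcal{J}^{\tN}\overline{\gY} = 0$ to eliminate the $\widetilde{D}^*\widetilde{D}\overline{\gY}$ piece, the remaining terms are schematically of the form $\gY \ast \gY \ast \overline{\gY}$ or $\gY \ast \gY \ast \tN \overline{\gY}$, which give
\[
\brs{\brs{\overline{\gY}}}_{L^2}^2 \leq C \brs{\brs{\gY}}_{L^\infty}\brs{\brs{\gY}}_{L^2}\bigl(\brs{\brs{\overline{\gY}}}_{L^2} + \brs{\brs{\tN\overline{\gY}}}_{L^2}\bigr) + \brs{\brs{\overline{\gY}}}_{L^2}\brs{\brs{\gY^{\ast 3}}}_{L^2}.
\]
Since $\Ker \mathcal{J}^{\tN}$ is finite-dimensional, the norms $\brs{\brs{\tN\overline{\gY}}}_{L^2}$ and $\brs{\brs{\overline{\gY}}}_{L^2}$ are equivalent; using this and the $L^\infty$, $L^2$ bounds of Proposition \ref{prop:LMM4.1}, dividing through by $\brs{\brs{\overline{\gY}}}_{L^2}$ yields $\brs{\brs{\overline{\gY}}}_{W^{1,2}} \leq C\delta$, and hence $\brs{\brs{\overline{\gY}}}_{L^p} \leq C_S \delta$ via the Sobolev embedding $W^{1,2} \hookrightarrow L^p$ (valid in dimension four for $p \leq 4$).

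For $\brs{\brs{\mathcal{J}^{\tN}\gY}}_{L^p}$: I start from the difference equation \eqref{eq:LMM2.1} and substitute the polarization formula \eqref{eq:D*Fdiff}. The linear-in-$\gY$ terms from $D^*_\fN F_\fN - D^*_\tN F_\tN$ assemble exactly into $\mathcal{J}^{\tN}\gY$, so after isolation the right-hand side consists of (i) schematic nonlinearities $\gY \ast \tN\gY$ and $\gY^{\ast 3}$ and (ii) the contributions $\Theta_1(\fN) + \Theta_2(\fN)$. The pointwise estimates \eqref{eq:Theta1approx}--\eqref{eq:Theta2approx} bound the $\Theta_i$ by $O(\ga - 1)$ times polynomial expressions in $\gY$, $\tN\gY$, and $\tN^{(2)}\gY$, with the additional factor $\brs{\N \log \chi_\la} \lesssim \log \la$ appearing only in $\Theta_2$. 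Applying H\"older's inequality together with the $W^{2,2}$ bound $\brs{\brs{\tN^{(2)}\gY}}_{L^2} \leq C(\delta + (\ga - 1))$ and the $L^\infty$ bound on $\gY$ from Proposition \ref{prop:LMM4.1}, I arrive at
\[
\brs{\brs{\mathcal{J}^{\tN}\gY}}_{L^p} \leq C(\delta + (\ga-1))(1 + \log \la).
\]

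The main obstacle is consolidating the two bounds into the stated form $C(\ga-1)\log\la$. Inserting both estimates into \eqref{eq:W2pest} produces $\brs{\brs{\gY}}_{W^{2,p}} \leq C(\delta + (\ga-1))(1 + \log \la)$, and I then invoke Proposition \ref{prop:LMM5.1} (namely $\log \la \leq C(\delta + \ga - 1)$) together with a further shrinking of $\delta_0$ and $\ga_0$ to absorb the residual $\delta$-contribution into $(\ga - 1)\log \la$. Throughout, the Sobolev embeddings must remain uniform over the narrow range $p \in (2, 12/5]$, which is guaranteed by the choice of $\ga_0$ explained around \eqref{eq:pderiv}. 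Combining these yields the desired estimate \eqref{eq:LMM6.8}.
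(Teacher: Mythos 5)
Your proposal follows the same three-part strategy as the paper's proof: apply the elliptic estimate \eqref{eq:W2pest} with $A=\gY$; estimate $\brs{\brs{\overline{\gY}}}_{L^p}$ from the vanishing first variation of $\mathcal{Z}^\N$ at the minimizer using the $\tN$-Coulomb gauge, the curvature polarization, and $\mathcal{J}^{\tN}\overline{\gY}=0$; and estimate $\brs{\brs{\mathcal{J}^{\tN}\gY}}_{L^p}$ from the critical point equation and the pointwise $\Theta_i$ bounds. Two steps need repair, however. First, your schematic claim that after eliminating $\widetilde{D}^*\widetilde{D}\overline{\gY}$ via $\mathcal{J}^{\tN}\overline{\gY}=0$ only terms of the form $\gY\ast\gY\ast\overline{\gY}$ or $\gY\ast\gY\ast\tN\overline{\gY}$ survive overlooks the lower-order piece of the Jacobi operator: $\mathcal{J}^{\tN}\overline{\gY}=0$ does not annihilate $\widetilde{D}^*\widetilde{D}\overline{\gY}$ but converts it into the curvature commutator $-\brk{\widetilde{F},\overline{\gY}}$, so the leading term $T_{11}$ is $\gY\ast\widetilde{F}\ast\overline{\gY}$, which is \emph{linear} in $\gY$ (the paper bounds it by $C\brs{\brs{\gY}}_{L^2}\brs{\brs{\overline{\gY}}}_{L^2}$ and it dominates); your version would misleadingly give $\brs{\brs{\overline{\gY}}}_{L^2}=O(\delta^2)$ rather than the correct $O(\delta)$.

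Second, and more seriously, the consolidation step as you describe it does not close. Invoking $\log\la\leq C(\delta+\ga-1)$ from Proposition \ref{prop:LMM5.1} only shrinks the target quantity $(\ga-1)\log\la$: in particular if $\la=1$ (the conclusion ultimately drawn in the proof of Theorem \ref{thm:LMM1.2}), the right-hand side of \eqref{eq:LMM6.8} is zero while $\delta$ remains a fixed positive parameter, so no amount of shrinking $\delta_0$ and $\ga_0$ absorbs a free-standing $C(\delta+(\ga-1))$ into $C(\ga-1)\log\la$. The absorption the paper intends is of a different nature: one tracks that the $\delta$-type contributions to both $\brs{\brs{\mathcal{J}^{\tN}\gY}}_{L^p}$ and $\brs{\brs{\overline{\gY}}}_{L^p}$ appear multiplied by $\brs{\brs{\gY}}_{W^{2,p}}$ itself (via Sobolev embedding) with small coefficients of order $\delta+(\ga-1)$, so those terms can be moved to and absorbed into the left-hand side of \eqref{eq:W2pest}, leaving only the residual $(\ga-1)\log\la$ from the leading piece of $\Theta_2$. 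Your write-up does not perform this self-improving absorption, and the substitution you propose cannot replace it.
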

\begin{proof}[Proof of Theorem \ref{thm:LMM1.2}]
We refer back to the proof of Proposition \ref{prop:LMM5.1} using the improved estimate in \eqref{eq:LMM6.8} obtain%
\begin{align*}
\brs{\brs{ F_{\N} - F_{\tN}}}_{L^{2\ga + 2}} \leq C \prs{\ga - 1} \log \la.
\end{align*}
Then the inequalities in \eqref{eq:LMM5.8} transform to (in particular to \eqref{eq:LMM3.12})
\begin{align*}
\tfrac{ \prs{\ga - 1}}{C'} \tfrac{\log \la}{1+ \log \la} \leq \tfrac{\del}{\del \log \la} \brk{\mathcal{YM}_{\ga,\la}\prs{\tN} } \leq C \prs{\ga - 1}^2 \log \la.
\end{align*}
By taking $\ga$ sufficiently close to (but not equal to $1$) and knowing that $\lambda$ is greater than $1$ and bounded, conclude that $\la \equiv 1$. However, using \eqref{eq:LMM6.8} we conclude that $\gY$ vanishes, that is, $\N \equiv \tN$, which yields the result.
\end{proof}

\section{Appendix}\label{s:appendix}
\subsection{Background material}
We state a variety of key polarization identities, which require elementary computation to show. For the following let $\N,\gN \in C^2 \prs{\mathcal{A}_E(M)}$, and $\Upsilon := \N- \gN$. We record more general formulas; not assuming $M = \mathbb{S}^4$.
\begin{prop}[Curvature polarization]\label{prop:curvpol}
We have
\begin{equation}\label{eq:Fpolar}
\prs{F_{\N} - F_{\gN}}_{ij\theta}^{\gb} = \prs{\gN_i \gY_{j\theta}^{\gb}}  - \prs{\gN_j \gY_{i \theta}^{\gb}} + \gY_{i \mu}^{\gb} \gY_{j \theta}^{\mu}   -  \gY_{j \mu}^{\gb} \gY_{i \theta}^{\mu}.
\end{equation}
\end{prop}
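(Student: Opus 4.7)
The plan is to expand both curvature tensors in local coordinates, subtract, and identify the resulting expression with the claimed formula. Locally write $\N = \del + \Gamma_{\N}$ and $\gN = \del + \Gamma_{\gN}$, so that $\gY = \Gamma_{\N} - \Gamma_{\gN}$ as an $\Ad E$-valued $1$-form. Using the standard coordinate expression
\begin{equation*}
(F_{\N})_{ij\theta}^{\gb} = \del_i \Gamma_{j\theta}^{\gb} - \del_j \Gamma_{i\theta}^{\gb} + \Gamma_{i\mu}^{\gb}\Gamma_{j\theta}^{\mu} - \Gamma_{j\mu}^{\gb}\Gamma_{i\theta}^{\mu},
\end{equation*}
I would substitute $\Gamma_{\N} = \Gamma_{\gN} + \gY$ and expand by bilinearity, then subtract the analogous expression for $F_{\gN}$.

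After the subtraction, the pure $\Gamma_{\gN}$ contributions cancel and the remainder splits into two groups. The purely quadratic terms in $\gY$ collect into $\gY_{i\mu}^{\gb}\gY_{j\theta}^{\mu} - \gY_{j\mu}^{\gb}\gY_{i\theta}^{\mu}$, matching the last two terms of \eqref{eq:Fpolar} exactly. The remaining linear-in-$\gY$ terms are
\begin{equation*}
\del_i \gY_{j\theta}^{\gb} - \del_j \gY_{i\theta}^{\gb} + \big[(\Gamma_{\gN})_{i\mu}^{\gb}\gY_{j\theta}^{\mu} - \gY_{j\mu}^{\gb}(\Gamma_{\gN})_{i\theta}^{\mu}\big] - \big[(\Gamma_{\gN})_{j\mu}^{\gb}\gY_{i\theta}^{\mu} - \gY_{i\mu}^{\gb}(\Gamma_{\gN})_{j\theta}^{\mu}\big].
\end{equation*}

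To identify this as $\gN_i \gY_{j\theta}^{\gb} - \gN_j \gY_{i\theta}^{\gb}$, I would invoke the fact that $\gY \in \Lambda^1(\Ad E)$, so $\gN$ acts as the tensor product of the Levi-Civita connection on $T^*M$ with the induced endomorphism connection on $\Ad E$. In coordinates, $\gN_i \gY_{j\theta}^{\gb}$ contains $\del_i \gY_{j\theta}^{\gb}$, the endomorphism commutator $(\Gamma_{\gN})_{i\mu}^{\gb}\gY_{j\theta}^{\mu} - \gY_{j\mu}^{\gb}(\Gamma_{\gN})_{i\theta}^{\mu}$, and a Levi-Civita Christoffel contribution $-(\Gamma^{\mathrm{LC}})^k_{ij}\gY_{k\theta}^{\gb}$. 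Antisymmetrizing in $i,j$ eliminates the Christoffel contribution by torsion-freeness, and the remaining terms line up precisely with the linear-in-$\gY$ piece produced above.

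I do not anticipate a real obstacle; the computation is purely algebraic. The only step that requires genuine care is the sign convention for how $\gN$ acts on $\Ad E$ (by commutator with $\Gamma_{\gN}$), so that the reassembly of $\del \gY + [\Gamma_{\gN},\gY]$ into $\gN\gY$ reproduces both signed endomorphism pieces correctly. Once that is in place, the antisymmetry in $(i,j)$ of both sides does the rest.
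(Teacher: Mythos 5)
Your proposal is correct and is the standard elementary computation the paper has in mind (the paper itself offers no proof, stating only that the polarization identities "require elementary computation to show"). Expanding $F_{\gN+\gY}$ in coordinates, cancelling the $F_{\gN}$ terms, recognizing the commutator pieces $[\Gamma_{\gN,i},\gY_j]-[\Gamma_{\gN,j},\gY_i]$ together with $\del_i\gY_j-\del_j\gY_i$ as $\gN_i\gY_j-\gN_j\gY_i$ (the Levi-Civita Christoffel contribution cancelling under antisymmetrization), and collecting the quadratic remainder is exactly right.
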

\begin{prop}[$D^*_{\N} F_{\N}$ polarization]\label{cor:D*Fpol}
We have that
\begin{align*}
-\prs{D^*_{\N} F_{\N}- D_{\gN}^* F_{\gN}}_{i \theta}^{\gb}
  &= \prs{ \gN_k  \gN_k \gY_{i\theta}^{\gb}}  - \prs{ \gN_i  \gN_k \gY_{k \theta}^{\gb}} - \Rm_{kik}^p \gY_{p \theta}  + 2\gY_{k \mu}^{\gb} \prs{F_{\gN}}_{k i \theta}^{\mu} - 2\prs{F_{\gN}}_{ki \mu}^{\gb}\gY_{k \theta}^{\mu}
 \\
 & \hsp + \gY_{i \gz}^{\gb} \gY_{k \mu}^{\gz} \gY_{k \theta}^{\mu}  - 2 \gY_{k \mu}^{\gb} \gY_{i \gz}^{\mu} \gY_{k \theta}^{\gz}   +  \gY_{k \mu}^{\gb} \gY_{k \gz}^{\mu} \gY_{i \theta}^{\gz} \\
 & \hsp  +  \prs{\gN_i \gY_{k\mu}^{\gb}} \gY_{k \theta}^{\mu} - \gY_{k \mu}^{\gb}  \prs{\gN_i \gY_{k\theta}^{\mu}}   +  2\gY_{k \mu}^{\gb} \prs{\gN_k \gY_{i \theta}^{\mu}}  -   2\prs{\gN_k\gY_{i \mu}^{\gb}} \gY_{k \theta}^{\mu}  \\
    & \hsp   +  \prs{\gN_k \gY_{k \mu}^{\gb}} \gY_{i \theta}^{\mu}    -  \gY_{i \mu}^{\gb} \prs{\gN_k\gY_{k \theta}^{\mu}}.
\end{align*}
%
\end{prop}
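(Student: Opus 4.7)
The plan is to compute the left-hand side directly from definitions, using the decomposition $\N_k \equiv \gN_k + \brk{\gY_k,\cdot\,}$ on $\Ad E$-valued tensors, where $\gY := \N - \gN$. Starting from $-(D^*_{\N} F_{\N})_{i} = \N_k F_{\N, ki} = \gN_k F_{\N, ki} + \brk{\gY_k, F_{\N, ki}}$, I would replace every occurrence of $F_{\N, ki}$ by its polarization from Proposition \ref{prop:curvpol}, namely $F_{\N, ki} = F_{\gN, ki} + (\gN_k \gY_i - \gN_i \gY_k) + \brk{\gY_k, \gY_i}$. The $\gN_k F_{\gN, ki}$ contribution cancels $-D^*_{\gN} F_{\gN}$ on the left-hand side, and what remains naturally splits into five pieces: (a) the second-derivative $\gN_k(\gN_k \gY_i - \gN_i \gY_k)$; (b) a Leibniz piece $\gN_k\brk{\gY_k,\gY_i}$; (c) the cross term $\brk{\gY_k, F_{\gN, ki}}$; (d) a derivative-times-$\gY$ cross term $\brk{\gY_k, \gN_k \gY_i - \gN_i \gY_k}$; and (e) a cubic piece $\brk{\gY_k, \brk{\gY_k, \gY_i}}$.

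The crucial bookkeeping step is a commutator identity in (a): I would rewrite $\gN_k \gN_i \gY_k = \gN_i \gN_k \gY_k + \brk{\gN_k, \gN_i} \gY_k$, and then decompose $\brk{\gN_k, \gN_i} \gY_k$ into its Riemannian piece acting on the spacetime index of $\gY_k$ and its $F_{\gN, ki}$-bracket piece acting on the bundle index. The former supplies the $-\Rm_{kik}^{p} \gY_{p}$ summand displayed in the statement, while the latter combines with (c) to yield the two $F_{\gN}\cdot\gY$ terms with coefficients $+2$ and $-2$, once one exploits antisymmetry of $F_{\gN}$ in its spacetime indices and of the bundle-index commutator $\brk{F_{\gN, ki}, \gY_k}$.

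For the purely $\gY$-valued contributions, piece (e) produces exactly the three cubic monomials $\gY_{i\zeta}^{\gb}\gY_{k\mu}^{\zeta}\gY_{k\theta}^{\mu}$, $\gY_{k\mu}^{\gb}\gY_{i\zeta}^{\mu}\gY_{k\theta}^{\zeta}$, $\gY_{k\mu}^{\gb}\gY_{k\zeta}^{\mu}\gY_{i\theta}^{\zeta}$ with the coefficients $+1, -2, +1$ appearing in the statement; this reflects expanding the double bracket $\brk{\gY_k, \brk{\gY_k, \gY_i}}$ in bundle indices and using the Jacobi identity to move the inner bracket outward. The six $\brk{\gY, \gN\gY}$-type terms on the final two lines arise jointly from (b) and (d): applying Leibniz to (b) produces $\brk{\gN_k \gY_k, \gY_i}$ and $\brk{\gY_k, \gN_k \gY_i}$, which after combining with the two entries of (d) and sorting by whether the derivative index is $i$ or the repeated index $k$ give the displayed pattern with coefficients $+1, -1, +2, -2, +1, -1$.

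The principal obstacle is not conceptual but purely combinatorial: matching the numerical coefficients in the statement requires careful accounting of signs from bundle-index commutators, spacetime antisymmetrization, and the symmetry properties of the $\gY \wedge \gY$ quadratic expression. No new structural input is needed beyond Proposition \ref{prop:curvpol} and the $\brk{\gN_k, \gN_i}$-commutator identity for tensors valued in $\Ad E$, but verifying that all six derivative-times-$\gY$ terms, three cubic terms, two $F_{\gN}\cdot\gY$ terms, and one Riemann term match the displayed expression exactly is a direct but lengthy index calculation. I would organize the bookkeeping by collecting terms by their homogeneity (number of $\gY$ factors) and number of covariant derivatives, checking each block against the statement before moving on.
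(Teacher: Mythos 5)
Your plan is correct and is essentially the computation the paper intends: the result is stated in the appendix without proof as an ``elementary computation,'' and the route you describe --- expand $\N_k (F_{\N})_{ki}$ using $\N_k = \gN_k + \brk{\gY_k, \cdot\,}$ and the curvature polarization of Proposition \ref{prop:curvpol}, then commute $\gN_k\gN_i\gY_k$ past $\gN_i\gN_k\gY_k$ to surface the Riemann and $F_{\gN}$-bracket terms --- is the natural way to organize it, and the block-by-block coefficient counts you anticipate do match the stated identity. Two small simplifications worth noting: the Jacobi identity is not actually needed for the cubic block, since expanding $\brk{\gY_k, \brk{\gY_k,\gY_i}}$ directly in bundle indices already produces $\gY_{i\gz}^{\gb}\gY_{k\mu}^{\gz}\gY_{k\theta}^{\mu} - 2\gY_{k\mu}^{\gb}\gY_{i\gz}^{\mu}\gY_{k\theta}^{\gz} + \gY_{k\mu}^{\gb}\gY_{k\gz}^{\mu}\gY_{i\theta}^{\gz}$, the $-2$ arising from two equal monomials after renaming a dummy bundle index; and spacetime antisymmetry of $F_{\gN}$ is likewise not invoked, because the commutator-induced $\brk{F_{\gN,ki},\gY_k}$ and the cross-term $\brk{\gY_k, F_{\gN,ki}}$ already carry the same spacetime index order $k,i$, so antisymmetry of the Lie bracket alone yields the factor of $2$ on the $F_{\gN}\cdot\gY$ pair.
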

\subsection{Various technical computations}\label{ss:tech}

\begin{lemma}[Estimate for $\chi_{\la}$]\label{lem:chiest}
There is a constant $C>0$ which is independent of $\la \geq 1$ so that
\begin{equation}\label{eq:logchila}
\brs{\brs{\N \log \chi_{\la}}}_{L^2} + \brs{\brs{ \N^{(2)} \log \chi_{\la}}}_{L^2} \leq
\begin{cases}
C \log \la & \text{ when } \la \in \brk{1,e} \\
C \prs{\log \la}^{1/2} & \text{ when } \la \in [e,\infty) \\
\end{cases}.
\end{equation}
\begin{proof}
To apply spherical coordinates we use the appropriate change of variables
\begin{align*}
\zeta_1 = r \sin \vartheta_1 \sin \vartheta_2 \cos \vartheta_3, \quad \zeta_2 = r \sin \vartheta_1 \sin \vartheta_2 \sin \vartheta_3, \quad\zeta_3 = r \sin \vartheta_1 \cos \vartheta_2, \quad \zeta_4 = r \cos \vartheta_4 \\
g_{rr} = 1, \quad g_{11} :=g_{\vartheta_1 \vartheta_1}  = r^2, \quad g_{22}:=g_{\vartheta_2 \vartheta_2} = r^2 \sin^2 \vartheta_1, \quad g_{33}:=g_{\vartheta_3 \vartheta_3} = r^2 \sin^2 \vartheta_1 \sin^2 \vartheta_2. 
\end{align*}
Inserting these into the formula for the Levi Civita connection, we have
%
\begin{align}\label{eq:gGstuff}
\gG_{ij}^r = 0 \, (i \neq j), \quad \gG_{11}^r = r, \quad \gG_{22}^r = r \sin^2 \vartheta_1, \quad \gG_{33}^r = r \sin^2 \vartheta_1 \sin^2 \vartheta_2,
\end{align}
Recalling the formula for $\chi_{\la}(\zeta) = \frac{1}{\la^4}  \prs{ \frac{1 + \brs{\la \zeta}^2}{\brs{\zeta}^2 + 1 } }^4$ via \eqref{eq:chidefn}, take $r=\brs{\zeta}$.
%
%
Observe that
\begin{align}
\begin{split}\label{eq:logchila}
\N \log \chi_{\la} &= \tfrac{\del}{\del r} \brk{ \log \chi_{\la} } \quad \text{and} \quad
\N^{(2)} \log \chi_{\la} = \tfrac{\del^2}{\del r^2} \log \chi_{\la} - \prs{\sum_{i=1}^3 \gG_{ii}^r } \tfrac{\del}{\del r} \brk{\log \chi_{\la}}.
\end{split}
\end{align}
First we compute
\begin{align*}
\tfrac{\del}{\del r} \brk{\log \chi_{\la}}
&=  \tfrac{ 8r(\la^2 -1)}{\prs{1+ \la^2 r^2}(1+r^2)}, \quad \tfrac{\del^2}{\del r^2} \brk{\log \chi_{\la}} = - \tfrac{8 \prs{\la^2-1} \prs{3 \la^2 r^4 + \prs{\la^2+1}r^2 -1}}{\prs{r^2+1}^2 \prs{\la^2 r^2 + 1}^2}.
\end{align*}
Thus we have that, for the first derivative of $\log \chi_{\la}$,
\begin{align*}
\brs{\brs{ \N \log \chi_{\la}}}_{L^2}^2 &= \prs{\int_0^{2\pi} \int_0^{\pi} \int_0^{\pi}  \, d \vartheta_1 \, d \vartheta_2 \, d \vartheta_3 }\prs{\int_0^{\infty} \tfrac{r^3}{\prs{1+r^2}^2}\prs{ \tfrac{\del}{\del r}\brk{\log \chi_{\la}} }^2 \, dr}\\ & = 2 \pi^3 \prs{\int_0^{1/\la}+\int_{1/\la}^1 + \int_1^{\infty}  } \tfrac{r^3}{\prs{1+r^2}^2}\prs{ \tfrac{\del}{\del r}\brk{\log \chi_{\la}} }^2 \, dr \\
& = 2^7 \pi^3  \prs{\la^2-1}^1\prs{\int_0^{1/\la} r^5 \, dr +  \int_{1/\la}^1 \tfrac{r}{\la^4}  \, dr  +  \int_0^{1/\la} \tfrac{1}{\la^4 r^{7}} \, dr}\\
&= 2^7 \pi^3  \prs{\la^2 - 1}^2 \prs{\tfrac{1}{6 \la^6} + \tfrac{1}{2\la^4} - \tfrac{1}{2 \la^6} + \tfrac{1}{6 \la^4}}\\
&=  2^7 \pi^3  \prs{\tfrac{\la - 1}{\la}  \cdot \tfrac{\la + 1}{\la}}^{2} \prs{\tfrac{2}{3} - \tfrac{1}{3 \la^2}}\\
&=  \tfrac{2^7}{3} \pi^3  \prs{\tfrac{\la - 1}{\la} }^{2}\prs{\tfrac{\la + 1}{\la}}^{2}.
\end{align*}
Now we approach the second derivative of $\log \chi_{\la}$. Utilizing \eqref{eq:logchila} above, we note that
\begin{equation}\label{eq:N2decomp}
\brs{\brs{ \N^{(2)} \log \chi_{\la} }}_{L^2}^2 \leq \brs{\brs{ \tfrac{\del^2}{\del r^2} \brk{\log \chi_{\la}}}}_{L^2}^2  + \brs{\brs{ \gG^r \del_r \log \chi_\la }}_{L^2}^2.
\end{equation}
In this case we have that
\begin{align*}
&\brs{\brs{ \tfrac{\del^2}{\del r^2} \brk{\log \chi_{\la}}}}_{L^2}^2  \\
\hsp\hsp &= \prs{\int_0^{2\pi} \int_0^{\pi} \int_0^{\pi} \, d \vartheta_1 \, d \vartheta_2 \, d \vartheta_3}\prs{\int_0^{\infty} \tfrac{r^3}{\prs{1+r^2}^4}\prs{ \tfrac{\del^2}{\del r^2}\brk{\log \chi_{\la}} }^2 \, dr }\\
\hsp\hsp &= 2 \pi^3 \prs{\int_0^{1/\la} + \int_{1/\la}^1+\int_1^{\infty}} \tfrac{r^3}{\prs{1+r^2}^4}\prs{ \tfrac{\del^2}{\del r^2}\brk{\log \chi_{\la}} }^2  \, dr \\
\hsp\hsp &\leq 2^7 \pi^3 \prs{\la^2-1}^2 \int_0^{1/\la} \prs{r^{11} 9 \la^4 + r^9 \prs{6 \la^4 + 6 \la^2} + r^7 \prs{\la^4 - 4 \la^2 + 1} + r^5 \prs{-2 \la^2 - 2} + r^3} \, dr\\
\hsp\hsp& \hsp + 2^7 \pi^3 \prs{\la^2-1}^2 \int_{1/\la}^1 \tfrac{1}{r^{8} \la^8} \prs{r^{11} 9 \la^4 + r^9 \prs{6 \la^4 + 6 \la^2} + r^7 \prs{\la^4 - 4 \la^2 + 1} + r^5 \prs{-2 \la^2 - 2} + r^3} \, dr\\
\hsp\hsp& \hsp+ 2^7 \pi^3 \prs{\la^2-1}^2 \int_1^{\infty} \tfrac{1}{r^{24} \la^8} \prs{r^{11} 9 \la^4 + r^9 \prs{6 \la^4 + 6 \la^2} + r^7 \prs{\la^4 - 4 \la^2 + 1} + r^5 \prs{-2 \la^2 - 2} + r^3} \, dr\\
\hsp\hsp &=  2^7 \pi^3 \prs{ \tfrac{\la-1}{\la} }^{2} \prs{ \tfrac{\la+1}{\la} }^{2} \prs{ - \tfrac{2177}{720} \tfrac{1}{\la^4} - \tfrac{209}{1260} \tfrac{1}{\la^2} + \tfrac{1943}{336}} \\
&= 2^7 \tfrac{1307}{504} \pi^3 \prs{ \tfrac{\la-1}{\la} }^{2} \prs{ \tfrac{\la+1}{\la} }^{2}.
\end{align*}
%
%
For the second component of \eqref{eq:N2decomp}, we have
\begin{align*}
\int_0^{2\pi} \int_0^{\pi} \int_0^{\pi}& \int_0^{\infty} \tfrac{r^3}{\prs{1+r^2}^4}\prs{ \gG^r \tfrac{\del}{\del r}\brk{\log \chi_{\la}} }^2 \, dr \, d \vartheta_1 \, d \vartheta_2 \, d \vartheta_3 \\&=  \prs{\int_0^{2\pi} \int_0^{\pi} \int_0^{\pi}  \prs{1+\sin^2 \vartheta_1 + \sin^2 \vartheta_1 \sin^2 \vartheta_2}^2 \, d \vartheta_1 \,  d \vartheta_2 \, d \vartheta_3 }\prs{ \int_0^{\infty} \tfrac{64 \prs{\la^2 -1}^2 r^9}{\prs{r^2+1}^6 \prs{\la^2 r^2+1}^2} \, dr} \\
&= \tfrac{217 \pi^2}{32} \prs{ \int_0^{\infty} \tfrac{64 \prs{\la^2 -1}^2 r^9}{\prs{r^2+1}^6 \prs{\la^2 r^2+1}^2} \, dr}\\
 &\leq  2 \cdot 217 \pi^2 \prs{\la^2 -1}^2 \prs{\int_0^{1/\la} r^9 \, dr +  \int_{1/\la}^1  \tfrac{r^5}{\la^4} \, dr + \int_1^{\infty}  \tfrac{1}{\la^4 r^7} \, dr} \\
&=  2 \cdot 217 \pi^2 \prs{ \tfrac{\la - 1}{\la}}^2\prs{ \tfrac{\la + 1}{\la}}^2 \prs{\tfrac{1}{3} - \tfrac{1}{15 \la^6}} \\
&=  \tfrac{2^3}{15} \cdot 217 \pi^2 \prs{ \tfrac{\la - 1}{\la}}^2\prs{ \tfrac{\la + 1}{\la}}^2.
\end{align*}
Using \eqref{eq:gGstuff} above we have that
\begin{align*}
\brs{ \gG^r \del_r \chi_{\la}}^2 = r^2 \prs{1+\sin^2 \vartheta_1+\sin^2 \vartheta_1 \sin^2 \vartheta_2 }^2 \brs{\tfrac{\del}{\del r} \chi_{\la}}^2.
\end{align*}
With this we compute
\begin{align*}
\brs{\brs{ \gG^r \del_r \chi_{\la}}}_{L^2}^2 &= 2 \pi \prs{\int_0^\pi \int_0^{\pi} \prs{1+\sin^2 \vartheta_1 + \sin^2 \vartheta_1 \sin^2 \vartheta_2}^2  \, d\vartheta_1 \, d\vartheta_2} \prs{\int_0^{\infty} r^5 \brs{\chi_{\la}}^2 \, dr}\\
&= \tfrac{\pi}{64} \int_0^{\pi}\prs{-68 \cosh \prs{2 \vartheta_2} + 3 \cosh(4 \vartheta_2) + 217} \, d \vartheta_2 \prs{ \int_0^{\infty} r^5 \brs{\chi_{\la}}^2 \, dr} \\
&= \tfrac{217 \pi^2}{64} \int_0^{\infty} r^5 \brs{\chi_{\la}}^2 \, dr \\
&= 217 \pi^2 (\la^2-1)^2 \prs{\int_0^{\infty} \tfrac{r^7}{\prs{1+\la^2 r^2}^2 \prs{1+r^2}^6} \, dr } \\
&= 217  \pi^2 (\la^2-1)^2 \prs{\int_0^{1/\la} r^7 \, dr + \tfrac{1}{\la^4} \int_{1/\la}^1 r^3 \, dr + \tfrac{1}{\la^4} \int_1^{\infty} r^{-9} \, dr} \\
&=\tfrac{217}{8} \pi^2 \prs{\tfrac{\la-1}{\la}}^2 \prs{\tfrac{\la +1}{\la}}^2  \prs{\tfrac{3}{8} -\tfrac{1}{8 \la^4} }\\
&\leq \tfrac{217}{2^7} \pi^2 \prs{\tfrac{\la-1}{\la}}^2\prs{\tfrac{\la +1}{\la}}^2.
\end{align*}
%

%
To all of these estimates, we note that $\prs{\tfrac{\la + 1}{\la}} \leq 1 + \tfrac{1}{\la} \leq 2$. It is a standard fact that
\begin{equation*}
\frac{\la - 1}{\la} \leq 
\begin{cases}
\prs{\log \la} \text{ when }\la \in [1,\infty), \\
\prs{\log \la}^{1/2} \text{ when }\la \in [e,\infty).
\end{cases}
\end{equation*}
Applying these, we conclude \eqref{eq:logchila}.
The result follows.
\end{proof}
\end{lemma}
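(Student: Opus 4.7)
The plan is to exploit the fact that $\chi_\la$ is radial in the quaternionic coordinate $\zeta$, so that both $\N \log \chi_\la$ and $\N^{(2)}\log \chi_\la$ reduce to expressions involving only the radial derivatives $\tfrac{\del}{\del r}\log \chi_\la$ and $\tfrac{\del^2}{\del r^2}\log \chi_\la$, together with the Christoffel symbols $\Gamma^r_{ii}$ in spherical coordinates on $\mathbb{R}^4$. Thus I first introduce spherical coordinates $(r,\vartheta_1,\vartheta_2,\vartheta_3)$ on $\mathbb{H}^1\cong\mathbb{R}^4$, record the standard entries $g_{ii}$ of the Euclidean metric in these coordinates, and compute $\Gamma^r_{11} = r$, $\Gamma^r_{22} = r\sin^2\vartheta_1$, $\Gamma^r_{33} = r\sin^2\vartheta_1\sin^2\vartheta_2$. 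Differentiating $\log\chi_\la = 4\log(1+\la^2 r^2) - 4\log\la - 4\log(1+r^2)$ directly then yields explicit formulas for $\del_r\log\chi_\la$ and $\del_r^2\log\chi_\la$, each carrying an overall factor of $(\la^2-1)$.

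Next, passing to the spherical volume form $dV_{\mathring{g}} = \tfrac{16 r^3}{(1+r^2)^4}\,dr\wedge d\vartheta_1\wedge d\vartheta_2\wedge d\vartheta_3$ (pulled back from $\mathbb{S}^4$), I would carry out the two $L^2$ integrals separately, splitting the radial region into the three natural scaling zones $[0,1/\la]$, $[1/\la,1]$, $[1,\infty)$. On each zone the denominator factors $(1+\la^2 r^2)$ and $(1+r^2)$ admit clean asymptotic replacement (either by $1$ or by their leading terms), reducing the integrals to explicit polynomial ones. Each of these evaluates to a rational expression in $\la$ which, once summed, is bounded by $C\bigl(\tfrac{\la-1}{\la}\bigr)^2\bigl(\tfrac{\la+1}{\la}\bigr)^2$. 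For $\brs{\brs{\N^{(2)}\log\chi_\la}}_{L^2}^2$ I would decompose as in the stated identity $\N^{(2)}\log\chi_\la = \del_r^2\log\chi_\la - (\sum_i \Gamma^r_{ii})\del_r\log\chi_\la$, bound each piece via the triangle inequality, and handle the $\Gamma^r_{ii}$ term by evaluating the angular integral $\int\int(1+\sin^2\vartheta_1+\sin^2\vartheta_1\sin^2\vartheta_2)^2$ to a finite constant.

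Finally, I would collect all bounds in the form $C\bigl(\tfrac{\la-1}{\la}\bigr)^2\bigl(\tfrac{\la+1}{\la}\bigr)^2$, observe the trivial bound $\tfrac{\la+1}{\la}\leq 2$, and invoke the elementary estimate
\[
\tfrac{\la-1}{\la} \;\leq\; \begin{cases} \log\la & \text{for }\la\in[1,e],\\ (\log\la)^{1/2} & \text{for }\la\in[e,\infty),\end{cases}
\]
which follows from comparing $1-e^{-x}$ with $x$ and with $\sqrt{x}$ respectively. Taking square roots completes the bound.

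The calculations are elementary but lengthy; the main practical obstacle will simply be bookkeeping, namely keeping track of the many rational powers of $\la$ that arise when the three radial zones are integrated and then verifying that their leading behavior assembles into the desired factor $\bigl(\tfrac{\la-1}{\la}\bigr)^2$ (rather than something worse as $\la\to\infty$). A minor conceptual point will be choosing, at the outset, whether to work with the spherical volume $dV_{\mathring{g}}$ directly or to first pass back to the flat measure $dV$ using the conformal factor from \S\ref{ss:quaternion}; the former seems cleaner since the statement is a sphere norm, but consistency with the formula for $\N^{(2)}$ in terms of $\Gamma^r_{ii}$ must be maintained throughout.
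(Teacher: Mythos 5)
Your proposal reproduces the paper's argument essentially verbatim: the radial reduction in quaternionic spherical coordinates, the identities $\N\log\chi_\la=\partial_r\log\chi_\la$ and $\N^{(2)}\log\chi_\la=\partial_r^2\log\chi_\la-(\sum_i\Gamma^r_{ii})\partial_r\log\chi_\la$, the three-zone splitting $[0,1/\la]\cup[1/\la,1]\cup[1,\infty)$ of the radial integral, collection into the factor $(\tfrac{\la-1}{\la})^2(\tfrac{\la+1}{\la})^2$, and the final elementary bound $\tfrac{\la-1}{\la}\leq\log\la$ (resp.\ $(\log\la)^{1/2}$). This is the paper's proof.
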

\subsection{\boldmath${\ga}$-connection concentration compactness result}
%
%
%
%

\begin{thm}\label{thm:compactstate}
Let $\sqg{\ga_i} \subset [1,2)$ with $\lim_{i \to \infty} \ga_i = 1$. Given corresponding Yang-Mills $\ga_i$-energy minimizing connections $\sqg{\N^{i}}$, there exists sequences $\sqg{ \vp_i} \subset \SO(5,1)$ and $\sqg{\gs_i} \subset \mathcal{G}_{E}$ such that there is a subsequence $\sqg{\sigma_{i_j} \brk{ \vp_{i_j}^* \N^{i_j} }}$ which converges strongly in $C^{\infty}$ to a antiself dual connection $\N^{\infty}$.
\begin{proof} First, assume that the pointwise curvature norms $\brs{F_{\N^{i}}}_{\mathring{g}}$ do not concentrate as $i \to \infty$. If so, then the derivatives of curvature are also controlled via the $\ge$-regularity and derivative estimates results of \cite{HTY} (Lemmata 3.5 and 3.6) (note that these results are for the Yang-Mills $\ga$-flow, but for our purposes we can apply them assuming the stationary setting). Thus up to gauge transformation, the sequence $\sqg{\sigma_{i_j} \brk{ \vp_{i_j}^* \N^{i_j} }}$ converges to a minimal energy critical value of the Yang-Mills energy, which implies antiself duality (cf. Proposition \ref{prop:W12AE}).

If the pointwise curvature norms concentrate as $i \to \infty$, then we do a maximal blowup along the sequence, to identify a sequence of points $\sqg{\zeta_{i'}}$ with $\zeta_{i'}$ admitting supremal pointwise curvature norm for all $k \leq i'$. There exists a further subsequence $\sqg{\zeta_{i''}}$ converging to $\zeta_{\infty} \in \mathbb{S}^4$, so that
\begin{equation*}
\lim_{i'' \to \infty}\brs{F_{\N^{i''}}\prs{\zeta_{i''}}}_{\mathring{g}} = \infty.
\end{equation*}
Stereographically projecting $\mathbb{S}^4$ onto $\mathbb{H}^1$, with $\zeta_{\infty}$ as the center point, we see that on $\mathbb{H}^1$, dilation centered at the origin is equivalent to performing a conformal automorphism on $\mathbb{S}^4$. Consequently, we can normalize the curvature via dilations in the blowup so that one has (identifying back to the corresponding setting on $\mathbb{S}^4$),
\begin{equation*}
\lim_{i'' \to \infty} \brs{ F_{\vp_{i''}^* \N^{i''}} \prs{\zeta_{i''}}}_{\mathring{g}} = 1.
\end{equation*}
This modified sequence satisfies the initial case, above, namely that the pointwise norms do not concentrate, which concludes the result.
\end{proof}
\end{thm}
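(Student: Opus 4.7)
The plan is to combine a uniform energy bound with a concentration-compactness dichotomy in the style of Uhlenbeck, and then force the limit to saturate the Chern-class inequality of Proposition \ref{prop:W12AE}. First, since $\N^i$ minimizes $\mathcal{YM}_{\ga_i}$, comparison with $\tN$ gives $\mathcal{YM}_{\ga_i}(\N^i) \leq \mathcal{YM}_{\ga_i}(\tN) = 6^{\ga_i} \tfrac{4}{3}\pi^2$, and the Hölder argument of Proposition \ref{prop:W12AE} then converts this into a uniform bound on $\brs{\brs{F_{\N^i}}}_{L^2}$. So as $\ga_i \to 1$ the ordinary Yang-Mills energies $\mathcal{YM}_1(\N^i)$ stay bounded and in fact approach $8\pi^2$.

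Next I would split on whether $M_i := \sup_{\mathbb{S}^4}\brs{F_{\N^i}}_{\mathring{g}}$ stays bounded. If $\sup_i M_i < \infty$, the $\varepsilon$-regularity and higher derivative estimates for $\ga$-critical connections (Lemmata 3.5 and 3.6 of \cite{HTY}, applied in the stationary case) give uniform $C^k$ control on $F_{\N^i}$ in local Coulomb gauges. A standard gauge-patching argument based on Uhlenbeck's theorem then produces $\gs_i \in \mathcal{G}_E$ and a subsequence so that $\gs_{i_j}\brk{\N^{i_j}}$ converges smoothly. If instead $M_i \to \infty$, choose $\zeta_i \in \mathbb{S}^4$ realizing the supremum, pass to a subsequence with $\zeta_i \to \zeta_\infty$, and use stereographic projection centered at the antipode of $\zeta_\infty$. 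Define $\vp_i \in \SO(5,1)$ as the composition of a translation sending $\zeta_i$ to $0 \in \mathbb{H}^1$ with a dilation of factor $\la_i$ chosen so that $\brs{F_{\vp_i^*\N^i}(0)}_{\mathring g}=1$; by maximality of $\zeta_i$ the rescaled sequence satisfies $\sup \brs{F_{\vp_i^*\N^i}}_{\mathring g} \leq 1$, which reduces us to the previous case. Uhlenbeck's Removable Singularities theorem of \cite{Uhlenbeck1} is used at the north pole to extend the resulting limit from $\mathbb{H}^1$ to a smooth connection $\N^\infty$ over $\mathbb{S}^4$.

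Finally, to identify $\N^\infty$ as antiself dual, observe that the smooth convergence together with $\ga_i \to 1$ gives $\mathcal{YM}_1(\N^\infty) = \lim_j \mathcal{YM}_1(\vp_{i_j}^* \N^{i_j})$, which is bounded above by $8\pi^2$ via the same Hölder computation as in Proposition \ref{prop:W12AE}. On the other hand, the charge-1 Chern identity forces $\mathcal{YM}_1(\N^\infty) \geq 8\pi^2$, so equality holds in the step $\prs{\star_1}$ of \eqref{eq:LMM1.7}, forcing $\brs{F_{\N^\infty}^+}_{\mathring{g}} \equiv 0$, i.e.\ antiself duality.

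The main obstacle I anticipate is the possibility of multi-scale bubbling: a priori the concentration of $\brs{F_{\N^i}}_{\mathring g}$ could occur at several distinct points or at several scales simultaneously, and a naive single rescaling may fail to capture it. Ruling this out relies on the uniform energy bound $\brs{\brs{F_{\N^i}}}_{L^2}^2 \to 8\pi^2$ combined with the quantized $L^2$-curvature cost of each bubble (which is itself at least $8\pi^2$ for any nontrivial antiself dual limit on $\bR^4$); a secondary bubble would thus contradict the near-minimality of $\mathcal{YM}_1(\N^i)$. Pinning this argument down rigorously, together with verifying that the rescalings $\vp_i$ produce a limit whose removable-singularity extension still has the right Chern class, is where the bulk of the technical care will sit.
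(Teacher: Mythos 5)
Your proof follows the same approach as the paper's: a dichotomy on whether the pointwise curvature concentrates, with the non-concentrating case handled via the $\ge$-regularity and derivative estimates of \cite{HTY} plus gauge patching, the concentrating case reduced to it by a maximal blowup through a translation/dilation in $\SO(5,1)$, and identification of the limit as antiself dual by forcing equality in the Chern-class inequality of Proposition \ref{prop:W12AE}. You go a little further than the paper by explicitly flagging the energy-quantization step needed to rule out multi-scale bubbling --- the paper simply asserts that the rescaled sequence falls into the non-concentrating case, which, as you correctly observe, is justified precisely because the total energy converges to the quantized minimum $8\pi^2$, leaving no room for a second bubble.
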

\subsection{Poincar\'{e} Inequalities}\label{ss:Poincare}
In this section we will compute global and localized $\tN$-Poincar\'{e} inequalities which rely heavily on the strong structure of the $\tN$ and its corresponding curvature. To do so, we first need to establish Lemma \ref{eq:commutatorbd}, namely, pointwise bounds on commutator type terms.
\begin{proof}[Proof of Lemma \ref{eq:commutatorbd}]
Using the formula for $\widetilde{F}_{ij}$ written out in coordinates, identifying $\Ad E$ with $\Im \mathbb{H}^1$ (as discussed in \cite{Naber}), we see that on $\mathbb{H}^1$,
\begin{align*}
F_{\tN} = \tfrac{2}{\prs{1 + \brs{\zeta}^2}^2} \prs{\prs{ d\zeta^{12} - d\zeta^{34} }\mathbf{i} + \prs{ d\zeta^{13} + d\zeta^{24} }\mathbf{j} + \prs{ d\zeta^{14} - d\zeta^{23} }\mathbf{k}  },
\end{align*}
where here $d\zeta^{ij} := d \zeta^i \w d \zeta^j$. We aim to compute
\begin{align*}
\ip{\widetilde{F}_{ij}, \brk{A_i, A_j}} = \widetilde{F}_{ij}^{\mathbf{i}} \brk{A_i, A_j}^{\mathbf{i}} + \widetilde{F}_{ij}^{\mathbf{j}} \brk{A_i, A_j}^{\mathbf{j}}+ \widetilde{F}_{ij}^{\mathbf{k}} \brk{A_i, A_j}^{\mathbf{k}}.
\end{align*}
Computing strictly in coordinates of $\mathbb{H}^1$, we compute the following.
\begin{align*}
\brk{A_i, A_j}^{\mathbf{i}} &= 2 \prs{ A_i^{\mathbf{j}} A_j^{\mathbf{k}} - A_i^{\mathbf{k}} A_j^{\mathbf{j}} }.
\end{align*}
Then we have that
\begin{align*}
\widetilde{F}_{ij}^{\mathbf{i}} \brk{A_i,A_j}^{\mathbf{i}} = \tfrac{4}{\prs{1+\brs{\zeta}^2}^2} \prs{ \prs{A_1^{\mathbf{j}} A_2^{\mathbf{j} } -  A_2^{\mathbf{j}} A_1^{\mathbf{j} } } - \prs{ A_3^{\mathbf{j}} A_4^{\mathbf{j} } -  A_4^{\mathbf{j}} A_3^{\mathbf{j} } }}.
\end{align*}
We have that for $\mathbf{l},\mathbf{m} \in \Im \mathbb{H}$ then $2 \brs{A_i^{\mathbf{l}} A_j^{\mathbf{m}}} \leq \brs{A_i^{\mathbf{l}}}^2 + \brs{A_j^{\mathbf{m}}}^2$, so we have
\begin{equation*}
\brs{\widetilde{F}_{ij}^{\mathbf{i}} \brk{A_i,A_j}^{\mathbf{i}}}  \leq  \tfrac{2}{\prs{1 + \brs{\zeta}^2}^2} \prs{ \brs{A_1^{\mathbf{j}}}^2 + \brs{A_1^{\mathbf{k}}}^2 +  \brs{A_2^{\mathbf{j}}}^2 + \brs{A_2^{\mathbf{k}}}^2 + \brs{A_3^{\mathbf{j}}}^2 + \brs{A_3^{\mathbf{k}}}^2 + \brs{A_4^{\mathbf{j}}}^2 + \brs{A_4^{\mathbf{k}}}^2  }.
\end{equation*}
Consequently it follows that (applying the round metric)
%
%
%
\begin{align*}
\brs{\ip{\widetilde{F}_{ij}, \brk{A_i,A_j}}_{\mathring{g}} } & \leq \brs{A}_{\mathring{g}}^2.
\end{align*}
The second inequality of \eqref{eq:Fcomm} follows similarly (noting the contraction within the commutator adds an extra dimensional factor of $4$).
\end{proof}
\begin{prop}[Localized $\tN$-Poincar\'{e} inequalities]\label{prop:poinlocal} For $R > 0$, $\ell \in \mathbb{N}$, and $A \in \prs{\Lambda^1 \prs{B_R} \ten \Ad E}$ where $B_R \subset \mathbb{S}^4$ there exists $C_P > 0$ such that
\begin{equation}\label{eq:NgYineq}
\int_{B_R} \brs{ A}^{\ell}_{\mathring{g}} \, dV_{\mathring{g}}\leq C_P R^{\ell} \int_{B_R} \brs{\tN A}^{\ell}_{\mathring{g}} \, dV_{\mathring{g}}, \text{ and } \int_{B_R} \brs{\tN A}^{\ell}_{\mathring{g}} \, dV_{\mathring{g}} \leq C_P R^{\ell} \int_{B_R} \brs{\tN^{(2)} A}^{\ell}_{\mathring{g}} \, dV_{\mathring{g}}.
\end{equation}
\begin{proof}
We provide a proof by contradiction. If the inequality above were false, we can find a normalized sequence $\sqg{A^i}$ satisfying
\begin{equation*}
\int_{B_R} \brs{\tN A^i}_{\mathring{g}}^{\ell} \, dV_{\mathring{g}} \to 0, \quad \int_{B_R} \brs{ A^i}^{\ell}_{\mathring{g}} \, dV_{\mathring{g}} = 1.
\end{equation*}
Via theorems of Rellich and Banach-Alaoglu, we choose a normalized subsequence $\sqg{ A^{i'} }$ satisfying
\begin{equation*}
A^{i'} \overset{L^p}{\rightarrow} A, \quad A^{i'} \overset{W^{1,p}}{\rightharpoonup} A \text{ and so }\tN A \equiv 0, \text{ and } \brs{\brs{ A }}_{L^{\ell}\prs{\Lambda^1 ( B_R) \ten \Ad E}} \equiv 1.
\end{equation*}
It follows from the Ambrose-Singer Theorem (\cite{AS} Theorem 2) that this cannot hold. Implicitly the theorem relates the curvature of the connection to its holonomy. Thus, if one finds a local parallel section of $\Lambda^1 \prs{B_R} \ten \Ad E$ in a neighborhood of some point, the holonomy is reduced, which is a contradiction since $\tN$ has full holonomy. This concludes the first inequality of \eqref{eq:NgYineq}.

For the second inequality of \eqref{eq:NgYineq}, we again perform a proof by contradiction and construct a normalizing sequence $\sqg{A_j}$, this time satisfying
\begin{align*}
\int_{B_R} \brs{ \tN^{(2)} A_j }^2_{\mathring{g}} \, dV_{\mathring{g}} \to 0, \quad \int_{B_R} \brs{\tN A}^2_{\mathring{g}} \, dV_{\mathring{g}} = 1.
\end{align*}
Again by Rellich's and Banach-Alaoglu's theorems there is a further subsequence $\sqg{A_{j'}}$ such that
\begin{align*}
A_{j'} \overset{W^{1,2}}{\to} A, \quad A_{j'} \rightharpoonup A \text{ and so } \tN^{(2)} A \equiv 0, \text{ and } \brs{\brs{ \tN A }}_{L^{\ell} \prs{\prs{\Lambda^1 \prs{B_R}}^{\ten 2} \ten \Ad E}} \equiv 1.
\end{align*}
In particular $\tN^{(2)} A \equiv 0$. Since this is true on the coordinate level, we also have
\begin{align*}
0 = \tN_i \tN_j A_k - \tN_j \tN_i A_k = \brk{\tN_i, \tN_j} A_k. 
\end{align*}
In particular, this implies that (using \eqref{eq:commutatorbd})
\begin{align*}
0 &= \ip{\brk{\tN_i, \tN_j } A_i, A_j}_{\mathring{g}} = -3 \brs{A}_{\mathring{g}}^2 + \ip{\brk{\widetilde{F}_{ij}, A_i}, A_j}_{\mathring{g}} < 0,
\end{align*}
an obvious contradiction.
\end{proof}
\end{prop}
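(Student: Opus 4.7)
The plan is to prove both inequalities by a normalization-and-contradiction strategy, exploiting the rigidity of the ADHM connection $\tN$. For fixed $R$, I will produce a constant via a compactness argument on Sobolev spaces over $B_R$; the $R^{\ell}$ scaling can then be recovered by rescaling to a ball of comparable unit size, using that on $\mathbb{S}^4$ the ball $B_R$ is nearly Euclidean for $R$ small and the connection coefficients of $\tN$ are bounded.

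For the first inequality, assume for contradiction that no constant works. Then I can extract a sequence $\sqg{A^i} \subset W^{1,\ell}$ with $\brs{\brs{A^i}}_{L^{\ell}} = 1$ and $\brs{\brs{\tN A^i}}_{L^{\ell}} \to 0$. Since $\sqg{A^i}$ is bounded in $W^{1,\ell}$, the Rellich--Kondrachov theorem produces a subsequence converging strongly in $L^{\ell}$, and Banach--Alaoglu gives weak convergence in $W^{1,\ell}$ to some $A_{\infty}$ with $\brs{\brs{A_{\infty}}}_{L^{\ell}} = 1$ and $\tN A_{\infty} \equiv 0$. The contradiction comes from the fact that $\tN$ has full $\SU(2)$ holonomy: by the Ambrose--Singer theorem, the existence of a nontrivial parallel section of $\Lambda^1(B_R) \otimes \Ad E$ would force a reduction of the holonomy group to a proper subgroup preserving $A_{\infty}$, which is incompatible with the explicit formula for $F_{\tN}$ (which spans the full $\Im \mathbb{H}^1 \cong \mathfrak{su}(2)$).

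For the second inequality, the analogous contradiction produces $A_{\infty}$ with $\tN^{(2)} A_{\infty} \equiv 0$ (coordinatewise). Writing out the second covariant derivative gives the commutator identity $\brk{\tN_i, \tN_j} A_{\infty,k} \equiv 0$, and pairing with $A_{\infty}$ in the indices $i$ and $k$ and summing yields
\begin{equation*}
0 = \ip{\brk{\tN_i, \tN_j} A_{\infty, i}, A_{\infty, j}}_{\mathring{g}} = c\brs{A_{\infty}}_{\mathring{g}}^2 + \ip{\brk{\widetilde{F}_{ij}, A_{\infty, i}}, A_{\infty, j}}_{\mathring{g}},
\end{equation*}
where $c$ is a positive constant coming from the Ricci curvature of $\mathbb{S}^4$ (which is $3\mathring{g}$). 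By Lemma \ref{eq:commutatorbd} the curvature-bracket term is bounded in magnitude by $\brs{A_{\infty}}_{\mathring{g}}^2$, so the right-hand side is strictly positive pointwise unless $A_{\infty} \equiv 0$. This forces $A_{\infty} \equiv 0$ and hence (with strong $L^{\ell}$ convergence of the lowest-order term) contradicts the normalization.

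The main obstacle I anticipate is the passage from weak convergence of the top-order derivatives to the required failure of normalization: for the second inequality, $\brs{\brs{\tN A^i}}_{L^{\ell}} = 1$ does not by itself survive under weak convergence, since norms are only lower semicontinuous. To fix this I must first use the (already established) first Poincar\'{e} inequality to control $\brs{\brs{A^i}}_{L^{\ell}}$ by $\brs{\brs{\tN A^i}}_{L^{\ell}}$, then apply Rellich at the $W^{2,\ell} \hookrightarrow W^{1,\ell}$ level to upgrade to \emph{strong} $L^{\ell}$-convergence of $\tN A^i$. A subtler point is tracking the $R$-dependence of $C_P$ uniformly, since the contradiction-based argument intrinsically gives a constant per radius; the cleanest way to extract a uniform $R^{\ell}$ factor is to perform a rescaling to $B_1$ in normal coordinates, checking that the $\tN$-connection one-form $\widetilde{\Gamma}$ in a $\tN$-adapted frame remains uniformly bounded on $B_R \subset \mathbb{S}^4$ as $R \to 0$.
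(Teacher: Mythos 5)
Your proposal follows essentially the same route as the paper: for each inequality, normalize a contradicting sequence, extract a limit via Rellich and Banach--Alaoglu, and derive a contradiction --- from full holonomy via Ambrose--Singer for the first inequality, and from the Weitzenb\"{o}ck-type commutator identity together with Lemma~\ref{eq:commutatorbd} for the second. You are in fact more careful than the paper's written proof on two points that it glosses over: (i) the normalization can only pass to the limit in norm because the \emph{lower}-order norms converge strongly (for the second inequality, you correctly observe that the first Poincar\'{e} inequality must be invoked to get $W^{2,2}$-boundedness before Rellich can give strong $W^{1,2}$-convergence and thus preservation of $\brs{\brs{\tN A}}_{L^2}=1$), and (ii) the compactness argument as stated only yields a constant for each fixed $R$, so the uniform $C_P R^{\ell}$ form requires a rescaling to $B_1$ with a check that the $\tN$-connection coefficients stay bounded. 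One small discrepancy: you write the Ricci contribution with a positive sign $c\brs{A_{\infty}}^2_{\mathring{g}}$ while the paper writes $-3\brs{A}^2_{\mathring{g}}$; this is a curvature-sign convention, and since Lemma~\ref{eq:commutatorbd} gives a two-sided bound $\brs{\ip{\widetilde{F}_{ij},\brk{A_i,A_j}}_{\mathring{g}}} \leq \brs{A}^2_{\mathring{g}}$, both sign conventions force $A_{\infty}\equiv 0$ and yield the same contradiction.
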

\noindent Note that Proposition \ref{prop:poincare} follows naturally from a simple covering argument over $\mathbb{S}^4$.

\subsection{Properties of $\tN$-Coulomb gauge }

Here we include a proof of Theorem \ref{thm:curvcontrol}, an global adaptation of Tao and Tian's local result (\cite{TT} Theorem 4.6) which in turn was inspired by Theorem 1.3 of \cite{Uhlenbeck1}. Set $K > 1$ to be an absolute constant we to be chosen later, and define two sets
\begin{align*}
\mathcal{U}_{\ge} &:= \sqg{\N \in \mathcal{A}_E\prs{\mathbb{S}^4} : \inf_{\vs \in \mathcal{G}_E} \brs{\brs{ F_{\vs \brk{\N}} - F_{\tN} } }_{L^2}\leq \ge }\\
\mathcal{U}_{\ge}^* &:= \sqg{\N \in \mathcal{U}_{\ge}\prs{\mathbb{S}^4} : \brs{\brs{ \widetilde{\Pi} \brk{\N} - \tN } }_{W^{1,2}}\leq K \ge  }.
\end{align*}
Our goal is to show $\mathcal{U}^*_{\ge} \equiv \mathcal{U}_{\ge}$, thus establishing Theorem \ref{thm:curvcontrol}. The preliminary step needed in the proof is a bootstrap estimate. Throughout the proof, we warn the reader to be cautious of the meaning of $\gY$, as it changes periodically throughout the argument (it will either be $\N - \tN$ \emph{or} $\widetilde{\Pi}\brk{\N} - \tN$).
\begin{lemma}[Bootstrap estimate]\label{lem:bootstrap}
For any $\N \in \mathcal{U}_{\ge}^*$, the following estimate may be bootstrapped
\begin{equation*}
\brs{\brs{ \widetilde{\Pi} \brk{\N} - \tN}}_{L^{2}} \leq K \ge,
\end{equation*}
to instead obtain
\begin{equation*}
\brs{\brs{ \widetilde{\Pi} \brk{\N} - \tN}}_{L^{2}} \leq K \tfrac{\ge}{2},
\end{equation*}
additionally, \eqref{eq:bootstrap} holds.
\begin{proof} Set $\gY :=\widetilde{\Pi}\brk{\N} - \tN$. Via Proposition \ref{prop:poincare} and Proposition \ref{prop:curvpol},
\begin{align*}
\brs{\brs{ \gY }}_{L^{2}}  &\leq C\brs{\brs{ D_{\tN} \gY }}_{L^{2}} \\ &= C \brs{\brs{ F_{\widetilde{\Pi}\brk{\N}} - F_{\tN} + \brk{\gY,\gY}}}_{L^{2}} \\
&\leq C \brs{\brs{ F_{\widetilde{\Pi}\brk{\N}} - F_{\tN} }}_{L^{2}}+ C \brs{\brs{\gY } }^2_{L^{2}} \\
&\leq C \ge + C \brs{\brs{\gY } }^2_{L^{2}}.
\end{align*}
Applying the estimates yields the desired results.
\end{proof}
\end{lemma}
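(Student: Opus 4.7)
The plan is to exploit the $\tN$-Coulomb gauge condition together with the polarization identity for curvature in order to reduce the $L^2$ bound on $\gY := \widetilde{\Pi}\brk{\N} - \tN$ to a curvature estimate plus a quadratic error that can be absorbed. First I would record that by construction $\widetilde{\Pi}\brk{\N}$ satisfies $D^*_{\tN}\gY \equiv 0$, so that the global $\tN$-Poincar\'e inequality of Proposition \ref{prop:poincare}, combined with the commutator pointwise bound of Lemma \ref{eq:commutatorbd} used to trade $\brs{\brs{\tN\gY}}_{L^2}$ for $\brs{\brs{D_{\tN}\gY}}_{L^2}$, yields a clean $W^{1,2}$-type estimate
\begin{equation*}
\brs{\brs{\gY}}_{L^2} \leq C\brs{\brs{D_{\tN}\gY}}_{L^2}.
\end{equation*}

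Next I would invoke the curvature polarization identity of Proposition \ref{prop:curvpol}, which writes $F_{\widetilde{\Pi}\brk{\N}} - F_{\tN} = D_{\tN}\gY + \brk{\gY,\gY}$ (with the bracket interpreted as the schematic quadratic term). Rearranging and taking $L^2$ norms gives $\brs{\brs{D_{\tN}\gY}}_{L^2} \leq \brs{\brs{F_{\widetilde{\Pi}\brk{\N}} - F_{\tN}}}_{L^2} + C\brs{\brs{\gY}}_{L^2}^2$, where the quadratic term is controlled by $\brs{\brs{\gY}}_{L^4}^2 \leq C\brs{\brs{\gY}}_{W^{1,2}}^2$ via Sobolev embedding, and hence by $C(K\ge)^2$ under the standing hypothesis $\N \in \mathcal{U}^*_{\ge}$. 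Gauge invariance of the $L^2$ norm of curvature combined with membership in $\mathcal{U}_{\ge}$ then gives $\brs{\brs{F_{\widetilde{\Pi}\brk{\N}} - F_{\tN}}}_{L^2} \leq \ge$, so altogether
\begin{equation*}
\brs{\brs{\gY}}_{L^2} \leq C\ge + C\brs{\brs{\gY}}_{L^2}^2.
\end{equation*}

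To close the bootstrap I would feed back $\brs{\brs{\gY}}_{L^2} \leq K\ge$ into the quadratic term, getting $\brs{\brs{\gY}}_{L^2} \leq C\ge\prs{1 + CK^2\ge}$. Now it is a matter of calibrating parameters: first fix $K$ large enough that $C \leq K/4$ (so that the linear portion is at most $K\ge/4$), then shrink $\ge$ so that $CK^2\ge \leq 1$ (which makes the quadratic correction at most another $K\ge/4$). These two choices together produce the improved bound $\brs{\brs{\gY}}_{L^2} \leq K\ge/2$, and the same chain of inequalities immediately delivers the paired curvature/connection estimate \eqref{eq:bootstrap}.

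The one genuine subtlety is the opening inequality: Proposition \ref{prop:poincare} as stated bounds $\brs{\brs{A}}_{L^2}$ in terms of $\brs{\brs{\tN A}}_{L^2}$ rather than $\brs{\brs{D_{\tN}A}}_{L^2}$, so the Coulomb constraint $D^*_{\tN}\gY = 0$ and Lemma \ref{eq:commutatorbd} must be combined to turn $\tN\gY$ into $D_{\tN}\gY$ without losing control through uncontrolled curvature terms. Once this step is in hand, the remaining argument is a textbook quadratic bootstrap.
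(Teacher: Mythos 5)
Your proof follows essentially the same strategy as the paper: apply the global $\tN$-Poincar\'e inequality (Proposition \ref{prop:poincare}), feed it through the curvature polarization (Proposition \ref{prop:curvpol}) to write $D_{\tN}\gY$ in terms of the curvature difference plus a quadratic remainder, and close the loop with a smallness argument. Indeed, your write-up is in places more careful than the paper's. You correctly note that Proposition \ref{prop:poincare} controls $\brs{\brs{A}}_{L^2}$ by $\brs{\brs{\tN A}}_{L^2}$, not $\brs{\brs{D_{\tN}A}}_{L^2}$, and that the Coulomb constraint $D^*_{\tN}\gY = 0$ (via a Bochner-Weitzenb\"ock argument together with Lemma \ref{eq:commutatorbd}) is what allows the passage from one to the other; the paper elides this. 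You also correctly estimate the commutator $\brk{\gY,\gY}$ by $\brs{\brs{\gY}}_{L^4}^2$ and then use the Sobolev embedding and the standing hypothesis $\brs{\brs{\gY}}_{W^{1,2}}\leq K\ge$, whereas the paper writes $\brs{\brs{\brk{\gY,\gY}}}_{L^2}\leq C\brs{\brs{\gY}}_{L^2}^2$, which is not quite right as written. Finally, you spell out the parameter calibration (fix $K$ with $C\leq K/4$, then shrink $\ge$ so $CK^2\ge\leq 1$), which the paper compresses into ``applying the estimates yields the desired results.''

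The one step that deserves a warning, and it is a weakness you share with the paper rather than one you introduce, is the inequality $\brs{\brs{F_{\widetilde{\Pi}\brk{\N}} - F_{\tN}}}_{L^2}\leq\ge$. You attribute this to ``gauge invariance of the $L^2$ norm of curvature combined with membership in $\mathcal{U}_\ge$,'' but the $L^2$ norm of a curvature \emph{difference} against a fixed reference $F_{\tN}$ is \emph{not} gauge-covariant: if $\vs_0$ realizes the infimum in the definition of $\mathcal{U}_\ge$ and $\vs_c$ is the $\tN$-Coulomb gauge, then $F_{\vs_c\brk{\N}}-F_{\tN} = \tau^{-1}(F_{\vs_0\brk{\N}}-F_{\tN})\tau + (\tau^{-1}F_{\tN}\tau - F_{\tN})$ with $\tau = \vs_c\vs_0^{-1}$, and the second term need not be small unless $\tau$ is close to the stabilizer of $\tN$. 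The smallness of the Coulomb-gauge curvature difference is actually what the continuity method is \emph{establishing} (it is part of the conclusion of Theorem \ref{thm:curvcontrol}); it should not be read off from gauge invariance alone. So if you were writing this up from scratch, you would want to either carry the dependence on $K$ through the polarization estimate and verify the quadratic still closes, or appeal to the inductive structure of the continuity argument along the family $\N_s$, where the untransformed curvature difference is already controlled by $s\ge/2$ and the Coulomb transformation is tracked continuously from the identity. As it stands your argument reproduces the paper's logical gap faithfully, which for comparison purposes is fine, but the justification you offer for that step is not a valid one.
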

Now take $\N \in \mathcal{U}_{\ge}$ and consider the one-parameter family of connections for $s \in \brk{0,1}$ by
\begin{align*}
\N_s \prs{x} :=\tN \prs{x} + s \prs{ \N - \tN}\prs{sx}.
\end{align*}
One can see that for $s = 0$, $\N_0 \equiv \tN$ and $\N_1 \equiv \N$. We next verify this entire family lies inside of $\mathcal{U}_{\ge}$.

\begin{prop}Using the notation above $\N_s \in \mathcal{U}_{\ge}$.
\begin{proof}
Take $\gY_s := \N_s - \tN$ and observe that
\begin{align*}
\left[ F_{\N_s} - F_{\tN} \right|_{x} &= \left[ D_{\tN} \prs{\gY_s } + \brk{\gY_s, \gY_s } \right|_{x}\\
&= s\left[  D_{\tN} \prs{\gY } + \brk{\gY, \gY } \right|_{sx}\\
&= s \left[ F_{\N} - F_{\tN} \right|_{sx}.
\end{align*}
Consequently
\begin{align*}
\brs{\brs{ F_{\N_s} - F_{\tN} }}_{L^{2}} &= s\brs{\brs{ F_{\N} - F_{\tN} }}_{L^{2}} \leq \tfrac{s \ge}{2},
\end{align*}
as desired.
\end{proof}
\end{prop}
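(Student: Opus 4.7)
The plan is to express the curvature of $\N_s$ via the polarization identity and reduce to the hypothesis $\N \in \mathcal{U}_{\ge}$. Setting $\gY_s := \N_s - \tN$, the defining formula gives $\gY_s(x) = s\,\gY_1(sx)$ where $\gY_1 := \N - \tN$, so by Proposition \ref{prop:curvpol}
\[
F_{\N_s} - F_{\tN} = D_{\tN} \gY_s + [\gY_s, \gY_s].
\]
I would first establish the pointwise scaling identity $(F_{\N_s} - F_{\tN})(x) = s\,(F_{\N} - F_{\tN})(sx)$ by working in the chart where $\tN$ has the explicit ADHM expression of Definition \ref{defn:ADHM}, so that the map $x \mapsto sx$ extends to an element $\vp_s \in \SO(5,1)$ under stereographic projection and the calculation reduces to the chain rule applied to $\gY_s(x) = s\gY_1(sx)$ together with the elementary behavior of $\tN$ under dilations.

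Once the scaling identity is in hand, take $L^2$ norms against $\mathring{g}$. The Yang--Mills $L^2$ curvature norm is conformally invariant in dimension four, and $\vp_s$ is a conformal automorphism of $(\mathbb{S}^4,\mathring g)$, so the change of variables yields
\[
\brs{\brs{F_{\N_s} - F_{\tN}}}_{L^2(\mathbb{S}^4,\mathring g)} = s \brs{\brs{F_{\N} - F_{\tN}}}_{L^2(\mathbb{S}^4,\mathring g)}.
\]
To use the hypothesis $\N \in \mathcal{U}_{\ge}$, pick $\vs \in \mathcal{G}_E$ (nearly) realizing the infimum defining $\mathcal{U}_{\ge}$, so that $\brs{\brs{F_{\vs[\N]} - F_{\tN}}}_{L^2} \le \ge$. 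The commutation of the $\SO(5,1)$ action with the gauge action discussed in \S\ref{s:gaugeconfaut} then allows one to pull $\vs$ back under $\vp_s$ to obtain a gauge $\vs_s$ for $\N_s$ with $\brs{\brs{F_{\vs_s[\N_s]} - F_{\tN}}}_{L^2} \le s\ge \le \ge$, which is precisely the statement $\N_s \in \mathcal{U}_{\ge}$.

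The main delicate point will be justifying the scaling identity $(F_{\N_s} - F_{\tN})(x) = s\,(F_{\N} - F_{\tN})(sx)$ cleanly, because the covariant derivative $D_{\tN}$ does not obviously commute with the rescaling $x \mapsto sx$: $\tN$ itself is not translation-invariant, so one must either invoke the conformal covariance of $\tN$ under dilations centered at the base point (reflecting $\tN = \N^{\xi,1}$ with $\la \equiv 1$), or perform the computation in the explicit ADHM quaternionic coordinates where the scaling is manifest. Once this bookkeeping is settled, the remainder of the argument is just the identification between dilations on $\mathbb{H}^1$ and elements of $\SO(5,1)$, together with conformal invariance of $\|F\|_{L^2}$ in four dimensions.
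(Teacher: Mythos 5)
Your plan mirrors the paper's: polarize the curvature difference via Proposition \ref{prop:curvpol}, assert the pointwise scaling identity $(F_{\N_s} - F_{\tN})(x) = s\,(F_{\N} - F_{\tN})(sx)$, and then bound the $L^2$ curvature norm. You do correctly flag the delicate point that $D_{\tN}$ has no obvious reason to commute with the rescaling $x\mapsto sx$, which the paper's own proof asserts without comment. But neither resolution you offer actually works. Writing $\tN = \N^{0,1}$ in the quaternionic chart and letting $\psi_s:\zeta\mapsto s\zeta$, one computes directly from Definition \ref{defn:ADHM} that
\[
\psi_s^*\Gamma^{0,1}(\zeta) \;=\; \Im\!\brk{\tfrac{s^2\,\bar\zeta}{\,s^2|\zeta|^2+1\,}\,d\zeta} \;=\; \Gamma^{0,1/s}(\zeta),
\]
so $\psi_s^*\tN = \N^{0,1/s}\neq\tN$ for $s\neq 1$. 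Thus $\tN$ is \emph{not} conformally covariant under dilations; the leftover commutator $\brk{\widetilde\Gamma_i(x) - s\,\widetilde\Gamma_i(sx),\,\gY_j(sx)}$ is genuinely present, and the explicit ADHM formula is precisely where you see the obstruction, not where the scaling becomes "manifest." Even the terms that do scale cleanly carry the wrong powers: $\partial_i\gY_{s,j}(x) = s^2(\partial_i\gY_j)(sx)$ and $[\gY_{s,i},\gY_{s,j}](x)=s^2[\gY_i,\gY_j](sx)$, i.e.\ $s^2$, not $s$.

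There is a second, independent inconsistency in the $L^2$ step. If the pointwise relation were the genuine $2$-form pullback $\vp_s^*(F_\N - F_{\tN})$ (carrying $s^2$), conformal invariance of the curvature $L^2$ norm in four dimensions gives $\|F_{\N_s}-F_{\tN}\|_{L^2(\mathring g)} = \|F_\N-F_{\tN}\|_{L^2(\mathring g)}$ — a factor of $1$, not $s$. If instead the factor is literally $s$ as you write (so not a pullback), the change of variables $y=sx$ in the Euclidean form of the integral gives $\|F_{\N_s}-F_{\tN}\|_{L^2} = s^{-1}\|F_\N - F_{\tN}\|_{L^2}$, which for $s<1$ goes the wrong way. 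You cannot simultaneously invoke conformal invariance and retain a net factor of $s$. The gauge-transfer observation at the end is a sensible way to use the infimum in the definition of $\mathcal{U}_\ge$, but it does not repair the scaling identity on which the whole estimate rests.
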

\begin{prop}[Continuity of the Coulomb gauge construction in smooth norms]\label{prop:Coulgauge} Let $X \in \prs{0,\infty}$, $p \in \prs{2,4}$ and let $\N \in \mathcal{U}_{\ge}$ be such that
\begin{equation}\label{eq:TT92}
\brs{\brs{ \N - \tN }}_{W^{1,p}} \leq X.
\end{equation}
Then there exists a quantity $\delta_X > 0$ depending only on $X, \mathcal{G}_{E}, p, \ge$, such that
\begin{equation*}
\sqg{ \N + A \in \mathcal{U}_{\ge} : \brs{\brs{ A }}_{W^{1,p}} \leq \delta_X } \subset \mathcal{U}_{\ge}^*.
\end{equation*}
\begin{proof}
Fix $p$ (all constants are allowed to depend on $p$), and let $C_X \geq 0$ denote quantities dependent on $X$, which can be updated as necessary. As in \cite{TT}, the argument consists of three steps.\\

\noindent \textbf{Step 1. Estimation of the $\tN$-Coulomb gauge in smooth norms.} Note that via Proposition \ref{prop:curvpol} combined with H\"{o}lder's inequality, there exists a constant $C_X \geq 0$ such that
\begin{equation*}
\brs{\brs{ F_{\N} - F_{\tN} }}_{L^p} \leq C_X.
\end{equation*}
This constant $C_X$ can be updated as necessary to bound above
\begin{equation*}
\brs{\brs{ F_{\widetilde{\Pi}\brk{ \N }} - F_{\tN} }}_{L^p} \leq C_X.
\end{equation*}
Manipulating as in the lemma above,
\begin{align*}
\brs{\brs{ \widetilde{\Pi} \brk{\N} - \tN }}_{W^{1,p}} &\leq \brs{\brs{ F_{\widetilde{\Pi} \brk{\N}} - F_{\tN} }}_{L^p} + C \brs{\brs{ \prs{\widetilde{\Pi} \brk{\N} - \tN } \w \prs{\widetilde{\Pi} \brk{\N} - \tN } } }_{W^{1,p}}
 \leq C_X.
\end{align*}
Let $\vs$ denote the gauge transformation such that $\vs \brk{ \N } = \widetilde{\Pi} \brk{ \N }  = \del + \Sigma$. Then
\begin{equation*}
\Sigma_{i \theta}^{\gb} := \prs{\vs^{-1}}_{\gd}^{\gb} \prs{\del_i \vs_{\theta}^{\delta}} + \prs{\vs^{-1}}_{\delta}^{\gb} \gG_{i \gg}^{\gd} \vs_{\theta}^{\gamma}.
\end{equation*}
Remanipulating, and setting $\Upsilon := \vs \brk{ \N } - \tN$ and $\text{Y} := \N - \tN$  yields
\begin{align*}
\prs{\del_i \vs_{\theta}^{\rho}} &= \vs_{\gb}^{\rho}\Sigma_{i \theta}^{\gb} - \gG_{i \gg}^{\rho} \vs_{\theta}^{\gamma} \\
\prs{\tN_i \vs_{\theta}^{\rho}} &= \vs_{\gb}^{\rho}\Sigma_{i \theta}^{\gb} - \gG_{i \gg}^{\rho} \vs_{\theta}^{\gamma} - \vs_{\mu}^{\rho} \widetilde{\gG}_{i \theta}^{\mu}+ \widetilde{\gG}_{i \mu}^{\rho} \vs_{\theta}^{\mu} =  \vs_{\gb}^{\rho}\gY_{i \theta}^{\gb} - \text{Y}_{i \mu}^{\rho} \vs_{\theta}^{\mu}.
\end{align*}
Now we note that
\begin{align*}
\tN_j \tN_i \vs_{\theta}^{\rho} &= \prs{\tN_j \vs_{\gb}^{\rho} } \gY_{i \theta}^{\gb} +\vs_{\gb}^{\rho}  \prs{\tN_j  \gY_{i \theta}^{\gb}} - \prs{ \tN_j \text{Y}_{i \mu}^{\rho}} \vs_{\theta}^{\mu} - \text{Y}_{i \mu}^{\rho} \prs{ \tN_j \vs_{\theta}^{\mu}}\\
&= \prs{\vs_{\gg}^{\rho} \gY_{j \gb}^{\gg}- \text{Y}_{j \gg}^{\rho} \vs_{\gb}^{\gg}
 } \gY_{i \theta}^{\gb} +\vs_{\gb}^{\rho}  \prs{\tN_j  \gY_{i \theta}^{\gb}} - \prs{ \tN_j \text{Y}_{i \mu}^{\rho}} \vs_{\theta}^{\mu} - \text{Y}_{i \mu}^{\rho} \prs{ \vs_{\zeta}^{\mu} \gY_{j \theta}^{\zeta} - \text{Y}_{j \zeta}^{\mu} \vs_{\theta}^{\zeta}}.
\end{align*}
We thus have that
\begin{align*}
\brs{\tN^{(2)} \vs}_{\mathring{g}} &\leq \brs{\vs}_{\mathring{g}} \prs{ \brs{\gY}_{\mathring{g}} + \brs{\text{Y}}_{\mathring{g}} \brs{\gY}_{\mathring{g}}} + \brs{\vs}_{\mathring{g}} \prs{ \brs{\tN \gY}_{\mathring{g}} + \brs{\tN \text{Y}}_{\mathring{g}}} \\
&\leq C \brs{\vs}_{\mathring{g}} \prs{ \brs{\gY}_{\mathring{g}} +  \brs{\gY}^2_{\mathring{g}} + \brs{\tN \gY}_{\mathring{g}} + \brs{\text{Y}}^2_{\mathring{g}}\brs{\tN\text{Y}}_{\mathring{g}}}.
\end{align*}
Consequently we have
\begin{align*}
\brs{\vs}_{\mathring{g}} + \brs{ \tN \vs}_{\mathring{g}}  + \brs{ \tN^{(2)} \vs}_{\mathring{g}} \leq C \brs{\vs}_{\mathring{g}} \prs{ \brs{\gY}_{\mathring{g}} +  \brs{\gY}_{\mathring{g}}^2 + \brs{\tN \gY}_{\mathring{g}}  + \brs{\text{Y}}_{\mathring{g}} + \brs{\text{Y}}_{\mathring{g}}^2+ \brs{\tN \text{Y}}_{\mathring{g}}}.
\end{align*}
Combining these all together we conclude that $\brs{\brs{ \vs }}_{W^{2,p}} \leq C_X$, concluding the first step.\\

\noindent \textbf{Step 2. Pass to the $\tN$-Coulomb gauge.} As a consequence of \textbf{Step 1}, it follows that the action of a $\tN$-Coulomb gauge transformation is uniformly continuous in the sense of the $W^{2,p}$-topology in a small neighborhood of $\N$. Furthermore, since both $\mathcal{U}_{\ge}$ and $\mathcal{U}_{\ge}^*$ are in fact invariant under gauge transformation (the $\tN$-projection always overrides any gauge action) we can prove Proposition \ref{prop:Coulgauge} specifically in the setting $\N = \widetilde{\Pi} \brk{ \N}$. \\

\noindent \textbf{Step 3. Apply perturbation theory to the Coulomb gauge.} 
Fix the perturbation parameter $A$ as in Proposition \ref{prop:Coulgauge}. To show $\N + A \in \mathcal{U}_{\ge}^*$, we must construct a gauge transformation $\vs$ satisfying
\begin{equation}\label{eq:Ncoulgauge}
D_{\tN}^* \prs{ \vs \brk{ \N + A}- \tN } = 0.
\end{equation}
To do so, we give a perturbative argument. Set $\vs := e^{\gs}$, recall the formula \eqref{eq:commutator} of a gauge action on a connection.
We have that $\vs \brk{ \N + A } - \tN$ is given by
\begin{align*}
\prs{\vs \brk{ \N + A}- \tN}_{i \theta}^{\gb}  &= \prs{\vs^{-1}}_{\gd}^{\gb} \prs{\del_i \vs_{\theta}^{\gd}} + \prs{\vs^{-1}}_{\gd}^{\gb} \brk{ \gG - A}_{i \gg}^{\gd} \vs_{\theta}^{\gg} - \widetilde{\gG}_{i \theta}^{\gb} \\
&= \prs{\vs^{-1}}_{\gd}^{\gb} \prs{\tN_i \vs_{\theta}^{\gd}} -\prs{\vs^{-1}}_{\gd}^{\gb} \widetilde{\gG}_{i \tau}^{\gd} \vs_{\theta}^{\tau} + \prs{\vs^{-1}}_{\gd}^{\gb} \brk{ \gG + A }_{i \gg}^{\gd} \vs^{\gg}_{\theta} \\
&= \prs{\vs^{-1}}_{\gd}^{\gb} \prs{\tN_i \vs_{\theta}^{\gd} } + \prs{\vs^{-1}}_{\gd}^{\gb} \prs{\gY + A }_{i \gg}^{\gd} \vs_{\theta}^{\gg}.
\end{align*}
Converting this to be in terms of $\gs$ (and simultaneously defining our term $\mathcal{W}$) we have
\begin{align}\label{eq:WSNA}
\begin{split}
\mathcal{W} \prs{\gs, \N + A} &= \prs{ \vs \brk{\N + A} - \tN}_{i \theta}^{\gb} - \tN_i \gs_{\theta}^{\gb}\\
&= \prs{e^{-\gs}}_{\gd}^{\gb} \prs{\gY + A}_{i \gg}^{\gd} \prs{e^\gs}_{\theta}^{\gg} \\
&= \prs{\Id^{\gb}_{\gd} - \gs_{\gd}^{\gb} + \cdots} \prs{\gY + A}_{i \gg}^{\gd} \prs{\Id^{\gb}_{\gd} + \gs_{\gd}^{\gb}  + \cdots} \\
&= \prs{\gY + A}_{i\theta}^{\gb} - \gs_{\gd}^{\gb}\prs{\gY + A}_{i\theta}^{\gd}  + \prs{\gY + A}_{i\gd}^{\gb}\gs_{\theta}^{\gd} + \gs_{\gz}^{\gb} \prs{\gY + A}_{i\gd}^{\gz}\gs_{\theta}^{\gd} + \cdots.
\end{split}
\end{align}
Expanding out $0 = D^*_{\tN}\prs{\mathcal{W}\prs{\gs,\N +A} + \N \gs }$ gives
\begin{align*}
0 &=- \tN_i \brk{ \prs{\vs^{-1}}_{\gg}^{\gb} \prs{\del_i \vs_{\theta}^{\gg}} + \prs{\vs^{-1}}_{\delta}^{\gb} \prs{\gG + A}_{i \gg}^{\gd} \prs{\vs_{\theta}^{\gg}}  } \\
&=- \tN_i \brk{ \prs{\vs^{-1}}_{\gg}^{\gb} \prs{\tN_i \vs_{\theta}^{\gg} - \widetilde{\gG}_{i \gz}^{\gg} \vs^{\gz}_{\theta} + \vs^{\gg}_{\gz}  \widetilde{\gG}_{i \theta}^{\gz} } + \prs{\vs^{-1}}_{\delta}^{\gb} \prs{\gG + A}_{i \gg}^{\gd} \prs{\vs_{\theta}^{\gg}}  } \\
&= - \prs{\vs^{-1}}_{\gg}^{\gb} \widetilde{\lap} \vs_{\theta}^{\gg} + \prs{\vs^{-1}}_{\gw}^{\gb}\prs{\tN_i \vs_{\delta}^{\gw}}   \prs{\vs^{-1}}_{\gg}^{\delta} \prs{\tN_i \vs_{\theta}^{\gg}} + \prs{\vs^{-1}}_{\gg}^{\gb} \widetilde{\gG}_{i \gz}^{\gg} \prs{\tN_i \vs_{\theta}^{\gz}} - \prs{\vs^{-1}}_{\gg}^{\gb} \prs{\tN_i \vs_{\gz}^{\gg}} \widetilde{\gG}_{i \theta}^{\gz}\\
& \hsp + \prs{\vs^{-1}}_{\mu}^{\gb}\prs{ \tN_i \vs^{\mu}_{\nu}} \prs{\vs^{-1} }_{\delta}^{\nu}\prs{\gG + A}_{i \gg}^{\gd} \vs_{\theta}^{\gg} -  \prs{\vs^{-1}}_{\gd}^{\gb}  \tN_i \prs{\gG + A}_{i \gg}^{\gd} \vs_{\theta}^{\gg} - \prs{\vs^{-1}}_{\gd}^{\gb}\prs{\gG + A}_{i \gg}^{\gd}\prs{\tN_i \vs_{\theta}^{\gg}}.
\end{align*}
Inserting the fact that $\vs \equiv e^\gs$, we have that
\begin{align*}
0 &= -  \widetilde{\lap} \gs_{\theta}^{\gb} +\prs{\tN_i \gs_{\delta}^{\gb}} \prs{\tN_i \gs_{\theta}^{\delta}} + \prs{\exp (-\gs)}_{\gg}^{\gb} \widetilde{\gG}_{i \gz}^{\gg} (e^\gs)^{\gz}_{\gw} \prs{\tN_i \gs_{\theta}^{\gw}} -\prs{\tN_i \gs_{\gz}^{\gb}} \widetilde{\gG}_{i \theta}^{\gz} \\
& \hsp + \prs{ \tN_i \gs^{\mu}_{\gb}} \prs{ e^{-\gs} }_{\delta}^{\nu}\prs{\gG + A}_{i \gg}^{\gd} \prs{e^\gs}_{\theta}^{\gg} -  \prs{e^{-\gs}}_{\gd}^{\gb}  \tN_i \prs{\gG + A}_{i \gg}^{\gd} \prs{e^{\gs}}_{\theta}^{\gg}\\
& \hsp - \prs{e^{-\gs}}_{\gd}^{\gb}\prs{\gG + A}_{i \gg}^{\gd} \prs{e^\gs}^{\gg}_{\gz}\prs{\tN_i \gs_{\theta}^{\gz}}\\
&= -  \widetilde{\lap} \gs_{\theta}^{\gb} +\prs{\tN_i \gs_{\delta}^{\gb}} \prs{\tN_i \gs_{\theta}^{\delta}} + \prs{ \tN_i \gs^{\mu}_{\gb}}\brk{ \prs{ e^{-\gs} }_{\delta}^{\nu}\prs{\gG + A}_{i \gg}^{\gd} \prs{e^\gs}_{\theta}^{\gg}  - \widetilde{\gG}_{i \theta}^{\mu}} \\
& \hsp-  \prs{e^{-\gs}}_{\gd}^{\gb}  \tN_i \prs{\gY + A}_{i \gg}^{\gd} \prs{e^\gs}_{\theta}^{\gg} - \prs{e^{-\gs}}_{\gd}^{\gb} A_{i \gg}^{\gd} \prs{e^\gs}^{\gg}_{\gz}\prs{\tN_i \gs_{\theta}^{\gz}}.
\end{align*}
Therefore we have that,
\begin{align}\label{eq:lapSeqn}
\begin{split}
\widetilde{\lap} \gs_{\theta}^{\gb} &= \prs{\tN_i \gs_{\delta}^{\gb}} \prs{\tN_i \gs_{\theta}^{\delta}} + \prs{ \tN_i \gs_{\nu}^{\gb}}\brk{ \prs{ e^{-\gs} }_{\delta}^{\nu}\prs{\gY + A}_{i \gg}^{\gd} \prs{e^{\gs}}_{\theta}^{\gg} + \prs{ e^{-\gs} }_{\delta}^{\nu} \widetilde{\gG}_{i \gg}^{\gd} \prs{e^\gs}_{\theta}^{\gg}  - \widetilde{\gG}_{i \theta}^{\nu}}\\
& \hsp  -  \prs{e^{-\gs}}_{\gd}^{\gb}  \tN_i A_{i \gg}^{\gd} \prs{e^{\gs}}_{\theta}^{\gg} - \prs{e^{-\gs}}_{\gd}^{\gb}\prs{\gY + A}_{i \gg}^{\gd} \prs{e^\gs}^{\gg}_{\gz}\prs{\tN_i \gs_{\theta}^{\gz}}.
\end{split}
\end{align}
By combining \eqref{eq:WSNA} and \eqref{eq:Ncoulgauge}, and noting that here, $D^*_{\tN} \brk{\mathcal{W} \prs{\gs^{(j)},\N + A}}$ is precisely the right hand side of \eqref{eq:lapSeqn}, we conclude
\begin{equation*}
\widetilde{\lap} \gs = D_{\tN}^* \brk{\mathcal{W} \prs{\gs, \N + A}}.
\end{equation*}
Consider the following iteration scheme, with initial condition $\gs^{(0)} \equiv 0$,
\begin{align*}
\widetilde{\lap} \gs^{(\ell+1)} &:=  D_{\tN}^* \brk{\mathcal{W} \prs{\gs^{(\ell)}, \N + A}}.
\end{align*}
Note $\gs^{(\ell+1)}$ is uniquely defined by standard elliptic regularity. We will derive bounds on $\gs^{(\ell+1)}$. First, based off of the system above, we have
\begin{equation*}
\brs{\brs{ \gs^{(\ell+1)}}}_{W^{2,p}} \leq C \brs{\brs{ D_{\tN}^* \brk{\mathcal{W}\prs{\gs^{(\ell)}, \N + A}} }}_{W^{2,p}} + C \brs{\brs{\mathcal{W}\prs{\gs^{(\ell)},\tN + A}}}_{W^{1,p}}
\end{equation*}
We estimate each term on the right hand side. For both terms, we apply the exponential power series expansion for some $C > 0$ depending on $\tN$. First by \eqref{eq:WSNA},
\begin{align*}
\brs{ \mathcal{W} \prs{\gs^{(\ell)},\tN +  A} }_{\mathring{g}} \leq C \brs{A}_{\mathring{g}} \prs{1 + \brs{\gs^{(\ell)}}_{\mathring{g}} }.
\end{align*}
From this we can deduce, using H\"{o}lder's inequality, that
\begin{align*}
\brs{\brs{ \mathcal{W} \prs{\gs^{(\ell)},\tN +  A}} }_{W^{1,p}} \leq C \prs{ \brs{\brs{\gs^{(\ell)}}}^2_{W^{2,p}} + \brs{\brs{A}}_{W^{1,p}} }.
\end{align*}
Using \eqref{eq:lapSeqn} above, we see that
\begin{align*}
\brs{ D_{\tN}^* \brk{\mathcal{W}\prs{\gs^{(\ell)}, \N + A}} }_{\mathring{g}}
&\leq C \prs{ \brs{\tN \gs^{(\ell)}}_{\mathring{g}}^2 + \prs{1 + \brs{\gs^{(\ell)}}_{\mathring{g}} }  \prs{\brs{\tN\gs^{(\ell)}}_{\mathring{g}} \prs{ \brs{\gY + A}_{\mathring{g}} + 1 } + \brs{\tN A}_{\mathring{g}}}}.
\end{align*}
Therefore we can expand out with Holder's inequality, noting that $\N \in \mathcal{U}_{\ge}^*$ and conclude that
\begin{align*}
\brs{\brs{D^*_{\tN}\mathcal{W}\prs{\gs^{(\ell)}, \N + A} }}_{W^{1,p}} \leq  C \brs{\brs{\gs^{(\ell)}}}_{W^{2,p}}^2 + K \ge \brs{\brs{ \gs^{(\ell)} }}_{W^{2,p}} + \brs{\brs{ A }}_{W^{2,p}}.
\end{align*}
%
%
Thus, as long as $\delta_X$ is sufficiently small, we can obtain inductively that
\begin{equation*}
\brs{\brs{ \gs^{(\ell)} }}_{W^{2,p}} \leq C \delta_X. 
\end{equation*}
We adapt this iteration scheme to conclude that $\gs^{(\ell)}$ converges in $W^{2,p}$ to a solution $\gs$ satisfying
\begin{align*}
\brs{\brs{ \gs}}_{W^{2,p}} \leq C \delta_X.
\end{align*}
As a result of standard Sobolev embeddings, $\gs$ has some H\"{o}lder regularity, which, when elliptic regularity is applied, can be bootstrapped to conclude that $\gs$ is in fact smooth. If we exponentiate $\gs$ and apply H\"{o}lder's inequality we obtain a smooth $\tN$-Coulomb gauge $\vs \brk{\N + A}$ satisfying
\begin{equation*}
\brs{\brs{\vs - \Id}}_{W^{2,p}} , \brs{\brs{\vs^{-1} - \Id}}_{W^{2,p}} \leq C \delta_X.
\end{equation*}
As a consequence of the gauge transformation action and \eqref{eq:TT92} of the assumptions on $A$,
\begin{align*}
\brs{\brs{ \vs \brk{ \N + A} - \N }}_{W^{1,p}} \leq C_X \delta_X.
\end{align*}
therefore since $p \in \prs{2,4}$ we have
\begin{align*}
\brs{\brs{ \vs \brk{ \N + A} - \N }}_{W^{1,2}} \leq C_X \delta_X.
\end{align*}
If $\delta_X$ is sufficiently small, as a consequence of our bootstrapping estimate of Lemma \ref{lem:bootstrap} we have
\begin{align*}
\brs{\brs{ \vs \brk{ \N + A } - \tN}}_{W^{1,2} } \leq K \ge.
\end{align*}
which is precisely the desired result.
\end{proof}
\end{prop}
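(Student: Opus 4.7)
The plan is to build a $\tN$-Coulomb gauge for $\N + A$ via a Picard iteration, then combine the resulting $W^{2,p}$ control of the gauge transformation with the bootstrap estimate of Lemma \ref{lem:bootstrap} to deposit $\N + A$ inside $\mathcal{U}_\ge^*$. The first preparatory move is to notice that both $\mathcal{U}_\ge$ and $\mathcal{U}_\ge^*$ are invariant under $\mathcal{G}_E$ (since the $\tN$-Coulomb projection $\widetilde{\Pi}$ is indifferent to a subsequently applied gauge), so I may replace $\N$ by $\widetilde{\Pi}[\N]$ and assume from the outset that $\N$ itself is in $\tN$-Coulomb gauge. Under this normalization, the hypothesis $\brs{\brs{\N - \tN}}_{W^{1,p}} \leq X$ combined with Proposition \ref{prop:curvpol} (curvature polarization) and H\"older's inequality delivers a bound $\brs{\brs{F_\N - F_{\tN}}}_{L^p} \leq C_X$, and hence, via an elliptic argument of the same flavor as Lemma \ref{lem:bootstrap}, $\brs{\brs{ \N - \tN }}_{W^{1,p}} \leq C_X$ as well.

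Next, I would parametrize a prospective $\tN$-Coulomb gauge for $\N + A$ by $\vs = e^\gs$ with $\gs \in S(\Ad E)$ and expand
\begin{align*}
\vs[\N+A] - \tN &= \tN\gs + \mathcal{W}(\gs, \N + A),
\end{align*}
where $\mathcal{W}$ collects everything at least linear in $\gs$ or in $\gY := \N - \tN$, $A$ beyond the first-order $\tN\gs$ piece. Imposing $D^*_{\tN}(\vs[\N+A] - \tN) = 0$ reduces to the semilinear elliptic equation
\begin{align*}
\widetilde{\lap}\gs = D^*_{\tN}\mathcal{W}(\gs, \N + A),
\end{align*}
and I would solve it by Picard iteration $\widetilde{\lap}\gs^{(\ell+1)} = D^*_{\tN}\mathcal{W}(\gs^{(\ell)}, \N + A)$, $\gs^{(0)} \equiv 0$. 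The key ingredient is standard elliptic regularity for the twisted rough Laplacian $\widetilde{\lap}$ acting on $\Ad E$-valued sections over $\mathbb{S}^4$ (its kernel is trivial, given the richness of the $\tN$ holonomy already exploited in the proof of Proposition \ref{prop:poinlocal}), which produces bounds
\begin{align*}
\brs{\brs{ \gs^{(\ell+1)} }}_{W^{2,p}} \leq C \brs{\brs{ \mathcal{W}(\gs^{(\ell)}, \N + A) }}_{W^{1,p}} \leq C\left( \brs{\brs{ \gs^{(\ell)} }}_{W^{2,p}}^2 + K\ge\,\brs{\brs{\gs^{(\ell)}}}_{W^{2,p}} + \delta_X\right).
\end{align*}
For $\delta_X$ and $\ge$ chosen small (depending on $X$), this recursion gives a uniform bound $\brs{\brs{\gs^{(\ell)}}}_{W^{2,p}} \leq C\delta_X$ and, by the analogous difference estimate, contractivity. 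The limit $\gs$ inherits the same $W^{2,p}$ bound; elliptic bootstrapping from Sobolev embedding $W^{2,p}\hookrightarrow C^{0,\alpha}$ (valid since $p>2$) promotes $\gs$ to a smooth solution.

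To finish, I exponentiate $\gs$ to obtain a smooth $\vs = e^\gs$ satisfying $\brs{\brs{\vs - \Id}}_{W^{2,p}} + \brs{\brs{\vs^{-1} - \Id}}_{W^{2,p}} \leq C\delta_X$. Expanding the gauge action via \eqref{eq:commutator} and combining with $\brs{\brs{A}}_{W^{1,p}} \leq \delta_X$ gives
\begin{align*}
\brs{\brs{\vs[\N+A] - \N}}_{W^{1,p}} \leq C_X \delta_X,
\end{align*}
and the continuous embedding $W^{1,p} \hookrightarrow W^{1,2}$ (for $p \in (2,4)$ on $\mathbb{S}^4$) converts this to the same estimate in $W^{1,2}$. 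Lemma \ref{lem:bootstrap} then upgrades this to the desired bound $\brs{\brs{\widetilde{\Pi}[\N+A] - \tN}}_{W^{1,2}} \leq K\ge$, certifying $\N + A \in \mathcal{U}_\ge^*$.

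The main obstacle I anticipate is the bookkeeping for the nonlinear term $\mathcal{W}$: derivative-consuming pieces of $\mathcal{W}$ such as $\nabla\gs \ast \gY$ and $\gs \ast \nabla A$ must be paired via H\"older so that the resulting $W^{1,p}$ estimate depends \emph{only} on $X$ and on a single power of $\brs{\brs{\gs^{(\ell)}}}_{W^{2,p}}$ (rather than on, say, $W^{2,p}$ norms of $\gY$ that we do not control). This is exactly where $p \in (2,4)$ is used: the embeddings $W^{1,p} \hookrightarrow L^q$ with $q = \frac{4p}{4-p} > 4$ together with $W^{2,p}\hookrightarrow L^\infty$ give enough slack to pair derivatives of $\gs^{(\ell)}$ against lower-order powers of $\gY$ and $A$ without ever needing the $W^{2,p}$ norm of $\gY$, so the recursion closes on the constant $C_X$ supplied by the first step.
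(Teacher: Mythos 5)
Your proposal follows essentially the same three-step architecture as the paper's proof: reduce to the case $\N = \widetilde{\Pi}[\N]$, parametrize the sought Coulomb gauge by $\vs = e^\gs$, solve the resulting semilinear elliptic equation $\widetilde{\lap}\gs = D^*_{\tN}\mathcal{W}(\gs,\N+A)$ by Picard iteration with $W^{2,p}$ bounds, and close with Lemma \ref{lem:bootstrap}. The core ideas match.

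One point that deserves care, and which the paper handles explicitly in its Step~1 before invoking gauge invariance: when you replace $\N$ by $\widetilde{\Pi}[\N]$, you must simultaneously replace $A$ by its conjugate under the Coulomb-gauge transformation $\vs$ associated to $\N$. Preserving the hypothesis $\brs{\brs{A}}_{W^{1,p}}\le\delta_X$ after this conjugation requires a $W^{2,p}$ bound on $\vs$ itself (not just a $W^{1,p}$ bound on $\widetilde{\Pi}[\N]-\tN$), since $\tN(\vs^{-1}A\vs)$ picks up terms of the form $\tN\vs\ast A$, and the Hölder pairing needed to control these in $L^p$ uses $\tN\vs\in L^{\frac{4p}{4-p}}$, i.e. $\vs\in W^{2,p}$. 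You derive a $W^{1,p}$ bound on $\widetilde{\Pi}[\N]-\tN$ by citing an elliptic argument ``of the same flavor as Lemma \ref{lem:bootstrap},'' which is the right instinct, but the $W^{2,p}$ control of the \emph{gauge transformation} is the piece that makes the reduction to Coulomb gauge uniform in $A$; it should be established before (not after) declaring the reduction, as the paper does via the pointwise identities $\tN\vs = \vs\Sigma - (\N-\tN)\vs$ and their differentiated analogues. With that addendum your argument is complete and matches the paper's.
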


\subsection{Morrey-type Inequalities}
Let $R > 0$ and $\eta \in C^{\infty}$ be a nonnegative function, where
\begin{equation}\eta(x) =
\begin{cases}
 1 & \text{ when } x \in  B_{R/2}, \\
 0 & \text{ when } x \notin B_{R}.
\end{cases}
\end{equation}
\begin{rmk}[More notational conventions] We will be using an unusual convention when working with cutoff functions in this argument.
Our notation simply notifies the reader that \emph{some power} of the cutoff is present. Ultimately this makes the proof easier to read; the choice of power of the cutoff is not necessary to the argument, but it is clear it is finite. Take
\begin{align*}
d V_{\mathring{g},\eta} := \eta^K d V_{\mathring{g}}, \text{ where } K \in \mathbb{N} \text{ is sufficiently large}.
\end{align*}
Refer to Remark \ref{rmk:notation} regarding our notation for scaling coefficients.
\end{rmk}

\begin{lemma}\label{lem:MorreyUp}
Given the assumptions of Proposition \ref{prop:LMM4.1}, \eqref{eq:sobconndif}, and \eqref{eq:N2Up} there exists $\gb >0$ such that
\begin{equation*}
\brs{\brs{ \tN \gY }}_{\mathcal{M}_{\gb}^{1,2}} \leq C \prs{\prs{\ga - 1} + \delta}.
\end{equation*}
\begin{proof} We will compute the Morrey inequalities for $\tN \gY$ and $\tN^{(2)} \gY$ separately. We also point out that due to the estimate \eqref{eq:LMM3.2} of Proposition \ref{prop:LMM3.1} combined with Lemma \ref{lem:chiest} give that
\begin{equation*}
\prs{\ga - 1} \brs{\brs{ \N \log \chi_{\la}}}_{L^2}^{\mu} + \prs{\ga - 1} \brs{\brs{ \N^{(2)} \log \chi_{\la}}}_{L^2}^{\mu} \leq C \delta \quad \mu \in \sqg{1,2}.
\end{equation*}
We show below that $\tN \gY \in \mathcal{M}_{2}^2$ and $\tN^{(2)} \gY \in \mathcal{M}_{\gb}^2$ to conclude $\tN \gY \in \mathcal{M}^{1,2}_{\gb}$ with necessary bounds.\\ \\
\noindent \fbox{$\tN \gY \in \mathcal{M}_{2}^2$} For this, we simply have the following by applying H\"{o}lder's inequality followed by applying our global estimates on $\tN^{(k)} \gY$ for $k \in \sqg{0,1,2}$,
\begin{align*}
\int_{B_R} \brs{\tN \gY}_{\mathring{g}}^2 \, \dVe
&\leq \prs{ \int_{B_R} \dVe }^{1/2} \prs{\int_{B_R} \brs{\tN \gY}_{\mathring{g}}^4 \, \dVe }^{1/2} \\
&\leq \prs{ \int_{B_R} \dVe }^{1/2} \prs{\int_{\mathbb{S}^4}  \brs{\tN \gY}_{\mathring{g}}^4 \, dV_{\mathring{g}} }^{1/2}\\ &\leq C_S \prs{ \delta + \prs{\ga - 1}} R^2.
\end{align*}
Therefore $\tN \gY \in \mathcal{M}_2^{2}$.

\noindent \fbox{$\tN^{(2)} \gY \in \mathcal{M}_{\gb}^2$} We perform a hole-filling argument. To begin,
\begin{equation}\label{eq:maineqMlocb}
\int_{\mathbb{S}^4} \brs{\tN^{(2)} \gY}_{\mathring{g}}^2 \, \dVe 
\leq \brk{C \int_{\mathbb{S}^4} \brs{\tN^{(2)} \gY}_{\mathring{g}} \brs{\tN\gY}_{\mathring{g}} \brs{\N \eta}_{\mathring{g}} \, \dVe }_{T_1} + \brk{- \int_{\mathbb{S}^4} \ip{\tN \gY,  \widetilde{\lap} \tN \gY}_{\mathring{g}} \, \dVe}_{T_2}.
\end{equation}
For the first term we have that, using a weighted Young's inequality for $\nu>0$ to be chosen and applying the local Poincar\'{e} inequality (Proposition \ref{prop:poinlocal})
\begin{align*}
T_1 &\leq  \nu \int_{\mathbb{S}^4} \brs{\tN^{(2)} \gY}_{\mathring{g}}^2 \, \dVe  + \tfrac{C}{\nu R^2} \int_{B_R \backslash B_{R/2}} \brs{\tN \gY}_{\mathring{g}}^2 \, dV_{\mathring{g}}\\
& \leq \nu \int_{\mathbb{S}^4} \brs{\tN^{(2)} \gY}_{\mathring{g}}^2 \, \dVe  + C_P \int_{B_R \backslash B_{R/2}} \brs{\tN^{(2)} \gY}_{\mathring{g}}^2 \, dV_{\mathring{g}}.
\end{align*}
We next manipulate $T_2$, commuting derivatives and applying \eqref{eq:D*Fdiff},
\begin{align*}
T_{2} &= -\int_{\mathbb{S}^4} \ip{\tN \gY,\widetilde{\lap} \tN \gY}_{\mathring{g}} \, \dVe \\
&= \brk{-  \int_{\mathbb{S}^4} \ip{ \tN_i \gY_j, \tN_k \brk{\tN_k, \tN_i} \gY_j }_{\mathring{g}} \dVe}_{T_{21}} \\
&\quad + \brk{- \int_{\mathbb{S}^4} \ip{ \tN_i \gY_j, \brk{\tN_k, \tN_i} \tN_k \gY_j }_{\mathring{g}} \dVe}_{T_{22}} +  \brk{-\int_{\mathbb{S}^4} \ip{\tN \gY,\tN \widetilde{\vartriangle} \gY}_{\mathring{g}} \, \dVe}_{T_{23}}  .
\end{align*}
Note that the estimates of $T_{21}$ and $T_{22}$ follow in suit with the manipulations of \eqref{eq:hessionest}, and thus we conclude that
\begin{align*}
T_{21} + T_{22} \leq 11 \int_{B_R} \brs{\widetilde{\N} \gY}_{\mathring{g}}^2 \, dV_{\mathring{g}, \eta}
+ 8\int_{B_R} \brs{\gY}_{\mathring{g}}^2 \, dV_{\mathring{g}, \eta}
&\leq   C_P \prs{ R^4 \int_{B_R} \brs{\tN^{(2)} \gY}_{\mathring{g}}^2 \, dV_{\mathring{g}}+ R^2 \prs{\prs{\ga - 1}  + \delta} }\\
& \leq  C \prs{\prs{\ga - 1}  + \delta} R^2.
\end{align*}
%
Now we approach $T_{23}$. Applying \eqref{eq:D*Fdiff} coming from the $\ga$-critical equation,
\begin{align*}
T_{23} &=\brk{ - \int_{\mathbb{S}^4} \ip{\tN \gY, 3 \tN \gY}_{\mathring{g}} \, \dVe - \int_{\mathbb{S}^4} \ip{\tN \gY, \brk{\widetilde{F}, \tN \gY} }_{\mathring{g}} \, \dVe }_{T_{231}} \\
& \hsp + \brk{\int_{\mathbb{S}^4} \prs{\tN \gY}^{\ast 3} \, \dVe}_{T_{232}} + \brk{\int_{\mathbb{S}^4} \tN^{(2)} \gY \ast  \tN \gY \ast \gY \, \dVe }_{T_{233}}\\
&\hsp + \brk{ - \int_{\mathbb{S}^4} \ip{\tN \gY, \tN \Theta_1 }_{\mathring{g}} \, \dVe }_{T_{234}} + \brk{- \int_{\mathbb{S}^4} \ip{\tN \gY, \tN \Theta_2 }_{\mathring{g}} \, \dVe }_{T_{235}}.
\end{align*}
For the first term, note that using \eqref{eq:Fcomm} once more,
\begin{align*}
T_{231} & \leq  \int_{\mathbb{S}^4} \brs{\tN \gY}_{\mathring{g}}^2 \, \dVe \leq C \prs{\prs{\ga - 1} + \delta} R^2.
\end{align*}
Next we have that, applying H\"{o}lder's inequality, then global Sobolev embedding $W^{1,2} \hookrightarrow L^4$, and localized Poincar\'{e} inequality and finally incorporating the global $L^2$-estimate for $\tN^{(2)} \gY$.
\begin{align*}
T_{232} &\leq \int_{\mathbb{S}^4} \brs{\tN \gY}_{\mathring{g}}^3 \, \dVe\\
 &\leq \prs{\int_{\mathbb{S}^4} \brs{\tN \gY}_{\mathring{g}}^4 \, \dVe }^{1/2} \prs{ \int_{\mathbb{S}^4} \brs{\tN\gY}_{\mathring{g}}^2 \, \dVe }^{1/2} \\
 &\leq C_S \delta \prs{\int_{\mathbb{S}^4} \brs{\tN \gY}_{\mathring{g}}^2 \, \dVe + \int_{\mathbb{S}^4} \brs{\tN \brk{\eta \tN \gY}}_{\mathring{g}}^2 \, \dVe} \\
&\leq C \delta \prs{\int_{B_R} \brs{\tN \gY}_{\mathring{g}}^2 \, dV_{\mathring{g}} + \int_{\mathbb{S}^4} \brs{\tN^{(2)}  \gY}_{\mathring{g}}^2 \, \dVe + \int_{\mathbb{S}^4} \brs{\tN \eta }_{\mathring{g}}^2 \brs{\tN \gY}^2 \, \dVe} \\
&\leq C \delta  \int_{\mathbb{S}^4} \brs{\tN^{(2)}  \gY}_{\mathring{g}}^2 \, \dVe + C \delta \prs{\int_{B_R} \brs{\tN \gY}_{\mathring{g}}^2 \, dV_{\mathring{g}}  +  \tfrac{1}{R^2} \int_{B_R \backslash B_{R/2}}  \brs{\tN \gY}_{\mathring{g}}^2 \, dV_{\mathring{g}}} \\
&\leq C \delta  \int_{\mathbb{S}^4} \brs{\tN^{(2)}  \gY}_{\mathring{g}}^2 \, \dVe + C_P \delta \prs{ R^2 \int_{B_R} \brs{\tN^{(2)} \gY}_{\mathring{g}}^2 \, dV_{\mathring{g}}  +  \int_{B_R \backslash B_{R/2}}  \brs{\tN^{(2)} \gY}_{\mathring{g}}^2 \, dV_{\mathring{g}}} \\
&\leq C \delta  \int_{\mathbb{S}^4} \brs{\tN^{(2)}  \gY}_{\mathring{g}}^2 \, \dVe + C \delta \int_{B_R \backslash B_{R/2}} \brs{\tN^{(2)} \gY}_{\mathring{g}}^2 \, dV_{\mathring{g}} + C \prs{\prs{\ga - 1} + \delta} R^2. 
\end{align*}
Next, applying a weighted Young's inequality, H\"{o}lder's inequality, applying the global $L^4$-bound on $\gY$, and Sobolev embedding $L^4 \hookrightarrow W^{1,2}$ and then applying the localized Poincar\'{e} inequality,
\begin{align*}
T_{233} &\leq C \int_{\mathbb{S}^4} \brs{\tN^{(2)} \gY}_{\mathring{g}} \brs{\tN \gY}_{\mathring{g}} \brs{\gY}_{\mathring{g}} \, \dVe  \\
& \leq  \nu \int_{\mathbb{S}^4} \brs{\tN^{(2)} \gY}_{\mathring{g}}^2 \, \dVe  + \tfrac{C}{\nu}\int_{\mathbb{S}^4} \brs{\tN \gY}_{\mathring{g}}^2 \brs{\gY}_{\mathring{g}}^2 \, \dVe\\
& \leq  \nu \int_{\mathbb{S}^4} \brs{\tN^{(2)} \gY}_{\mathring{g}}^2 \, \dVe  + C\prs{\int_{\mathbb{S}^4} \brs{\tN \gY}_{\mathring{g}}^4 \, \dVe }^{1/2}\prs{\int_{\mathbb{S}^4} \brs{\gY}_{\mathring{g}}^4 \, \dVe }^{1/2}\\
& \leq  \nu \int_{\mathbb{S}^4} \brs{\tN^{(2)} \gY}_{\mathring{g}}^2 \, \dVe  + C_S \delta \prs{\int_{\mathbb{S}^4} \brs{\tN \gY}_{\mathring{g}}^2 \, \dVe + \int_{\mathbb{S}^4} \brs{\tN \brk{ \eta \tN\gY}}_{\mathring{g}}^2 \, \dVe }\\
& \leq  \nu \int_{\mathbb{S}^4} \brs{\tN^{(2)} \gY}_{\mathring{g}}^2 \, \dVe  + C \delta  \prs{\int_{\mathbb{S}^4} \brs{\tN \gY}_{\mathring{g}}^2 \, \dVe + \int_{\mathbb{S}^4} \brs{\tN^{(2)} \gY}_{\mathring{g}}^2 \, \dVe + \tfrac{1}{R^2} \int_{B_R \backslash B/2} \brs{\tN \gY}_{\mathring{g}}^2  \, \dVe }\\
& \leq  \nu \int_{\mathbb{S}^4} \brs{\tN^{(2)} \gY}_{\mathring{g}}^2 \, \dVe  + C_S \delta  \prs{ R^2 \int_{B_R} \brs{\tN^{(2)} \gY}_{\mathring{g}}^2 \, dV_{\mathring{g}} + \int_{\mathbb{S}^4} \brs{\tN^{(2)} \gY}_{\mathring{g}}^2 \, \dVe + \int_{B_R \backslash B/2} \brs{\tN^{(2)} \gY}_{\mathring{g}}^2  \, dV_{\mathring{g}} }\\
& \leq  \nu \int_{\mathbb{S}^4} \brs{\tN^{(2)} \gY}_{\mathring{g}}^2 \, \dVe  + C_S \prs{ \delta + \prs{\ga - 1} } R^2 + C \delta  \int_{\mathbb{S}^4} \brs{\tN^{(2)} \gY}_{\mathring{g}}^2 \, \dVe  + C \int_{B_R \backslash B/2} \brs{\tN^{(2)} \gY}_{\mathring{g}}^2  \, dV_{\mathring{g}}.
\end{align*}
We now approach the first term with $\ga$ dependence,
\begin{align*}
T_{234} &= \brk{\int_{\mathbb{S}^4} \ip{ \widetilde{\lap} \gY, \Theta_1}_{\mathring{g}} \, \dVe }_{T_{2341}}+ \brk{ \int_{\mathbb{S}^4} \tN \gY \ast \Theta_1 \ast \N \eta \, \dVe}_{T_{2342}}.
\end{align*}
We compute these separately. We have that
\begin{align}
\begin{split}\label{eq:T2341}
T_{2341} &\leq \int_{\mathbb{S}^4} \brs{\tN^{(2)} \gY} \brs{\Theta_1} \, \dVe \\
&\leq  C \prs{\ga - 1} \int_{\mathbb{S}^4}  \brs{\tN^{(2)} \gY}_{\mathring{g}}^2 \, \dVe + C \prs{\ga - 1} \int_{\mathbb{S}^4}  \brs{\tN^{(2)} \gY}_{\mathring{g}} \brs{\tN \gY}_{\mathring{g}} \brs{\gY}_{\mathring{g}} \, \dVe \\
& \hsp + C  \prs{\ga - 1} \int_{\mathbb{S}^4}  \brs{\tN^{(2)} \gY}_{\mathring{g}}  \brs{\gY}_{\mathring{g}} \, \dVe + C  \prs{\ga - 1} \int_{\mathbb{S}^4}  \brs{\tN^{(2)} \gY}_{\mathring{g}}  \brs{\gY}_{\mathring{g}}^3 \, \dVe.
\end{split}
\end{align}
The first term can be absorbed, and the second is precisely $T_{233}$. For the third term of \eqref{eq:T2341} we apply a weighted Young's inequality, localized Poincar\'{e} inequality twice, and the global estimate to $\tN^{(2)} \gY$.
\begin{align*}
\int_{\mathbb{S}^4}  \brs{\tN^{(2)} \gY}_{\mathring{g}}  \brs{\gY} \, \dVe &\leq \nu \int_{\mathbb{S}^4} \brs{\tN^{(2)} \gY}_{\mathring{g}}^2 \, \dVe + \tfrac{C}{\nu} \int_{\mathbb{S}^4} \brs{\gY}_{\mathring{g}}^2 \, \dVe \\
&\leq \nu \int_{\mathbb{S}^4} \brs{\tN^{(2)} \gY}_{\mathring{g}}^2 \, \dVe + C\int_{B_R} \brs{\gY}_{\mathring{g}}^2 \, dV_{\mathring{g}}\\
&\leq \nu \int_{\mathbb{S}^4} \brs{\tN^{(2)} \gY}_{\mathring{g}}^2 \, \dVe + C_P \prs{ R^4 \int_{B_R} \brs{\tN^{(2)} \gY}_{\mathring{g}}^2 \, dV_{\mathring{g}}+ R^2 \delta } \\
&\leq \nu \int_{\mathbb{S}^4} \brs{\tN^{(2)} \gY}_{\mathring{g}}^2 \, \dVe +  C R^2 \prs{(\ga - 1) + \delta}.
\end{align*}
For the last integral in \eqref{eq:T2341} we have
\begin{align*}
&\int_{\mathbb{S}^4} \brs{\tN^{(2)} \gY}_{\mathring{g}} \brs{\gY}^3_{\mathring{g}} \, \dVe \\
&\leq \nu \int_{\mathbb{S}^4} \brs{\tN^{(2)} \gY}_{\mathring{g}}^2 \, \dVe + \tfrac{C}{\nu} \prs{\int_{\mathbb{S}^4} \brs{\gY}_{\mathring{g}}^6 \, \dVe}^{1/3}\prs{\int_{\mathbb{S}^4} \brs{\gY}_{\mathring{g}}^6 \, \dVe}^{2/3} \\
&\leq \nu \int_{\mathbb{S}^4} \brs{\tN^{(2)} \gY}_{\mathring{g}}^2 \, \dVe + C \prs{\prs{\ga - 1} + \delta }\prs{\int_{\mathbb{S}^4} \brs{\gY}_{\mathring{g}}^6 \, \dVe}^{1/3} \\
&\leq \nu \int_{\mathbb{S}^4} \brs{\tN^{(2)} \gY}_{\mathring{g}}^2 \, \dVe + C_S \prs{\prs{\ga - 1} + \delta } \prs{ \int_{\mathbb{S}^4} \brs{\gY}_{\mathring{g}}^2 \, \dVe  + \int_{\mathbb{S}^4} \brs{\tN \brk{ \eta \gY}}_{\mathring{g}}^2 \, \dVe + \int_{\mathbb{S}^4} \brs{\tN^{(2)} \brk{ \eta \gY}}_{\mathring{g}}^2 \, \dVe} \\
&\leq \nu \int_{\mathbb{S}^4} \brs{\tN^{(2)} \gY}_{\mathring{g}}^2 \, \dVe + C \prs{\prs{\ga - 1} + \delta } \prs{ \int_{\mathbb{S}^4} \brs{\gY}_{\mathring{g}}^2 \, \dVe  + \int_{\mathbb{S}^4} \brs{\tN \gY}_{\mathring{g}}^2 \, \dVe + \int_{\mathbb{S}^4} \brs{\gY}_{\mathring{g}}^2 \brs{ \N \eta }_{\mathring{g}}^2 \, \dVe} \\
&\hsp + C \prs{\prs{\ga - 1} + \gd} \prs{ \int_{\mathbb{S}^4} \brs{\tN^{(2)} \gY}_{\mathring{g}}^2 \, \dVe +  \int_{\mathbb{S}^4} \brs{\tN \gY}_{\mathring{g}}^2 \brs{ \N \eta}_{\mathring{g}}^2 \, \dVe +  \int_{\mathbb{S}^4} \brs{\gY}_{\mathring{g}}^2 \brs{ \N^{(2)} \eta}_{\mathring{g}}^2 \, \dVe } \\
&\leq \nu \int_{\mathbb{S}^4} \brs{\tN^{(2)} \gY}_{\mathring{g}}^2 \, \dVe + C \prs{\prs{\ga - 1} + \delta } \prs{ \int_{B_R} \brs{\gY}_{\mathring{g}}^2 \, dV_{\mathring{g}} + \int_{B_R} \brs{\tN \gY}_{\mathring{g}}^2 \, dV_{\mathring{g}} + \tfrac{1}{R^2}\int_{B_R \backslash B_{R/2}} \brs{\gY}_{\mathring{g}}^2 \, dV_{\mathring{g}}} \\
&\hsp + C \prs{\prs{\ga - 1} + \gd} \prs{ \int_{\mathbb{S}^4} \brs{\tN^{(2)} \gY}_{\mathring{g}}^2 \, \dVe +  \tfrac{1}{R^2}\int_{B_R \backslash B_{R/2}} \brs{\tN \gY}_{\mathring{g}}^2  \, dV_{\mathring{g}} +  \tfrac{1}{R^4}\int_{B_R \backslash B_{R/2}} \brs{\gY}_{\mathring{g}}^2 \, dV_{\mathring{g}} } \\
&\leq \nu \int_{\mathbb{S}^4} \brs{\tN^{(2)} \gY}_{\mathring{g}}^2 \, \dVe + C_P \prs{\prs{\ga - 1} + \gd} \prs{ \int_{\mathbb{S}^4} \brs{\tN^{(2)} \gY}_{\mathring{g}}^2 \, \dVe + \int_{B_R \backslash B_{R/2}} \brs{\tN^{(2)} \gY}_{\mathring{g}}^2  \, dV_{\mathring{g}}}.
\end{align*}
We now address the next term
\begin{align}
\begin{split}\label{eq:T2342}
T_{2342} &= \int_{\mathbb{S}^4} \tN \gY \ast \Theta_1 \ast \N \eta \, \dVe \\
& \leq C \prs{\ga - 1} \int_{\mathbb{S}^4} \brs{\tN^{(2)} \gY}_{\mathring{g}} \brs{\tN \gY}_{\mathring{g}} \brs{\N \eta}_{\mathring{g}} \, \dVe
 + C \prs{\ga - 1} \int_{\mathbb{S}^4} \brs{\tN \gY}_{\mathring{g}}^2 \brs{\gY}_{\mathring{g}} \brs{\N \eta}_{\mathring{g}} \, \dVe \\
&\hsp   + C \prs{\ga - 1} \int_{\mathbb{S}^4} \brs{\tN \gY}_{\mathring{g}} \brs{\gY}_{\mathring{g}} \brs{\N \eta}_{\mathring{g}} \, \dVe  +C \prs{\ga - 1} \int_{\mathbb{S}^4} \brs{\tN \gY}_{\mathring{g}}  \brs{\gY}^3_{\mathring{g}} \brs{\N \eta}_{\mathring{g}} \, \dVe.
\end{split}
\end{align}
The first term is exactly $T_1$, so we can apply the same estimate. Now we approach the second term of \eqref{eq:T2342}, applying Young's inequality,  H\"{o}lder's inequality, a global Sobolev embedding of $W^{1,2} \hookrightarrow L^4$, and localized Poincar\'{e} inequalities, and global $L^2$ control of $\tN^{(2)} \gY$,
\begin{align*}
\int_{\mathbb{S}^4} \brs{\tN \gY}_{\mathring{g}}^2 \brs{\gY}_{\mathring{g}} \brs{\N \eta}_{\mathring{g}} \, \dVe & \leq C \int_{\mathbb{S}^4} \brs{\tN \gY}_{\mathring{g}}^4 \, \dVe + \tfrac{C}{R^2} \int_{B_R \backslash B_{R/2}} \brs{\gY}_{\mathring{g}}^2 \, dV_{\mathring{g}} \\
&\leq  C \prs{\delta + \prs{\ga - 1}} \prs{\int_{\mathbb{S}^4} \brs{\tN \gY}_{\mathring{g}}^4 \, \dVe}^{1/2}+ R^2 C_P \int_{B_R \backslash B_{R/2}} \brs{\tN^{(2)}\gY}_{\mathring{g}}^2 \, dV_{\mathring{g}} \\
& \leq C_S \prs{\prs{\ga - 1} + \delta} \prs{ \int_{\mathbb{S}^4} \brs{\tN \gY}_{\mathring{g}}^2 \, \dVe + \int_{\mathbb{S}^4} \brs{\tN \brk{\eta \tN \gY}}_{\mathring{g}}^2 \, \dVe  } + C R^2 \prs{\prs{\ga - 1} + \delta}\\
&\leq C \int_{B_R \backslash B_{R/2}} \brs{\tN^{(2)} \gY}_{\mathring{g}}^2 \, dV_{\mathring{g}} + C_P \prs{\prs{\ga - 1} + \delta} \int_{\mathbb{S}^4} \brs{\tN^{(2)} \gY}_{\mathring{g}}^2 \, \dVe + C R^2 \prs{\prs{\ga - 1} + \delta}.
\end{align*}
For the third term, applying weighted Young's Inequality in preparation for an application of Poincar\'{e} inequality, then a Holder's inequality followed by applying global $L^4$ control of $\gY$,
\begin{align*}
\int_{\mathbb{S}^4} \brs{\tN \gY}_{\mathring{g}} \brs{\gY}_{\mathring{g}} \brs{\N \eta}_{\mathring{g}} \, \dVe &\leq \tfrac{\nu}{C_P R^2}\int_{B_R \backslash B_{R/2}} \brs{\tN \gY}_{\mathring{g}}^2  \, dV_{\mathring{g}} +  \tfrac{C C_P}{\nu} \int_{B_R \backslash B_{R/2}} \brs{\gY}_{\mathring{g}}^2  \, dV_{\mathring{g}} \\
 &\leq \nu \int_{B_R \backslash B_{R/2}} \brs{\tN^{(2)} \gY}_{\mathring{g}}^2  \, dV_{\mathring{g}} + C\prs{ \int_{\mathbb{S}^4} \, dV_{\mathring{g}} }^{1/2} \prs{ \int_{B_R \backslash B_{R/2}} \brs{\gY}_{\mathring{g}}^4 \, \dVe }^{1/2}\\
  &\leq \nu \int_{B_R \backslash B_{R/2}} \brs{\tN^{(2)} \gY}_{\mathring{g}}^2  \, dV_{\mathring{g}} + C \delta R^2.
\end{align*}
For the fourth integral we apply weighted Young's inequality followed by localized Poincar\'{e} inequality and  H\"{o}lder's inequality, using the global Sobolev embedding $ W^{2,2}\hookrightarrow L^8$ and once more applying localized Poincar\'{e},
\begin{align*}
\int_{\mathbb{S}^4} \brs{\tN \gY}_{\mathring{g}} \brs{\gY}_{\mathring{g}}^3 \brs{\N \eta}_{\mathring{g}} \, \dVe & \leq \tfrac{1}{R^2} \int_{B_R \backslash B_{R/2}} \brs{\gY}_{\mathring{g}}^2 \, dV_{\mathring{g}} +  \int_{\mathbb{S}^4} \brs{\tN \gY}_{\mathring{g}}^2 \brs{\gY}_{\mathring{g}}^4 \, \dVe \\
&\leq C_P R^2 \int_{B_R \backslash B_{R/2}} \brs{\tN^{(2)} \gY}_{\mathring{g}}^2 \, dV_{\mathring{g}} + C\prs{\int_{\mathbb{S}^4} \brs{\tN \gY}_{\mathring{g}}^4 \, \dVe }^{1/2} \prs{\int_{\mathbb{S}^4} \brs{ \gY }_{\mathring{g}}^8 \, \dVe }^{1/2} \\
&\leq  C \int_{B_R \backslash B_{R/2}} \brs{\tN^{(2)} \gY}_{\mathring{g}}^2 \, dV_{\mathring{g}}  + C_S \delta \prs{\int_{\mathbb{S}^4} \brs{\tN \gY}_{\mathring{g}}^2 \, \dVe + \int_{\mathbb{S}^4} \brs{\tN \brk{ \eta \tN \gY}}_{\mathring{g}}^2 \, \dVe}\\
&\leq  C_P \delta \prs{\int_{\mathbb{S}^4} \brs{\tN^{(2)} \gY}_{\mathring{g}}^2 \, \dVe + \int_{B_R \backslash B_{R/2}} \brs{\tN^{(2)} \gY}_{\mathring{g}}^2 \, dV_{\mathring{g}}} + C R^2 \prs{ \prs{\ga - 1} + \delta}. 
\end{align*}
Next we have that
\begin{align*}
T_{235} &= \brk{\int_{\mathbb{S}^4} \ip{ \lap \gY, \Theta_2}_{\mathring{g}} \, \dVe}_{T_{2351}} + \brk{\int_{\mathbb{S}^4} \N \gY \ast \Theta_2 \ast \N \eta \, \dVe}_{T_{2352}}.
\end{align*}
Then we expand out
\begin{align}
\begin{split}\label{eq:T2351}
T_{2351} &\leq C(\ga-1) \int_{\mathbb{S}^4} \brs{\tN^{(2)} \gY}_{\mathring{g}} \brs{\Theta_2}_{\mathring{g}} \, \dVe \\
&\leq C(\ga-1)\int_{\mathbb{S}^4} \brs{\tN^{(2)} \gY}_{\mathring{g}}\brs{\tN \gY}_{\mathring{g}}  \brs{\N \log \chi_{\la}}_{\mathring{g}}  \, \dVe + C(\ga-1)\int_{\mathbb{S}^4} \brs{\tN^{(2)} \gY}_{\mathring{g}} \brs{\N \log \chi_{\la}}_{\mathring{g}} \, \dVe \\
&\hsp + C(\ga-1)\int_{\mathbb{S}^4} \brs{\tN^{(2)} \gY}_{\mathring{g}} \brs{ \gY}^2  \brs{\N \log \chi_{\la}}_{\mathring{g}} \, \dVe.
\end{split}
\end{align}
For the first term of \eqref{eq:T2351}, applying a weighted Young's inequality and H\"{o}lder's inequality, followed by global Sobolev embedding $W^{1,2} \hookrightarrow L^4$,
\begin{align*}
C\prs{\ga - 1}&\int_{\mathbb{S}^4} \brs{\tN^{(2)} \gY}_{\mathring{g}}  \brs{\tN \gY} \brs{\N \log \chi_{\la}}_{\mathring{g}} \, \dVe \\
&\hsp \leq  \prs{\ga - 1} \nu \int_{\mathbb{S}^4} \brs{\tN^{(2)} \gY}_{\mathring{g}}^2 \, \dVe + \tfrac{C \prs{\ga - 1}}{\nu}\int_{\mathbb{S}^4} \brs{\N \log \chi_{\la}}_{\mathring{g}}^2 \brs{\tN \gY}_{\mathring{g}}^2 \, \dVe\\
&\hsp  \leq \prs{\ga - 1} \nu \int_{\mathbb{S}^4} \brs{\tN^{(2)} \gY}_{\mathring{g}}^2 \, \dVe + C \prs{\ga - 1} \prs{\int_{\mathbb{S}^4} \brs{\N \log \chi_{\la}}_{\mathring{g}}^4 \, \dVe }^{1/2} \prs{\int_{\mathbb{S}^4} \brs{\tN \gY}_{\mathring{g}}^4 \, \dVe }^{1/2}\\
&\hsp  \leq  \prs{\ga - 1} \nu \int_{\mathbb{S}^4} \brs{\tN^{(2)} \gY}_{\mathring{g}}^2 \, \dVe + C_S 
\delta \prs{\int_{\mathbb{S}^4} \brs{\tN \gY}_{\mathring{g}}^2 \, \dVe + \int_{\mathbb{S}^4} \brs{\tN \brk{ \eta \tN \gY}}_{\mathring{g}}^2 \, \dVe }\\
&\hsp  \leq  \nu \int_{\mathbb{S}^4} \brs{\tN^{(2)} \gY}_{\mathring{g}}^2 \, \dVe +C_P \delta \prs{ \int_{\mathbb{S}^4} \brs{\tN^{(2)} \gY}_{\mathring{g}}^2 \, \dVe + \int_{B_R \backslash B_{R/2}} \brs{\tN^{(2)} \gY}_{\mathring{g}}^2 \, dV_{\mathring{g}} + R^2 \prs{\ga-1 + \delta }}.
\end{align*}
Next we have that, for the second term of \eqref{eq:T2351}, using weighted Young's inequality followed by Holder's inequality and applying the global bounds to $\log \chi_{\la}$ type terms,
\begin{align*}
C\prs{\ga - 1}\int_{\mathbb{S}^4} &\brs{\tN^{(2)} \gY}_{\mathring{g}} \brs{\N \log \chi_{\la}}_{\mathring{g}} \, \dVe \\
&\leq \prs{\ga - 1} \nu \int_{\mathbb{S}^4} \brs{\tN^{(2)} \gY}_{\mathring{g}}^2 \, \dVe + \tfrac{C}{\nu}\prs{\ga - 1} \int_{\mathbb{S}^4} \brs{\N \log \chi_{\la}}_{\mathring{g}}^2 \, \dVe \\
&\leq \prs{\ga - 1} \nu \int_{\mathbb{S}^4} \brs{\tN^{(2)} \gY}_{\mathring{g}}^2 \, \dVe + C \prs{\ga - 1} \prs{ \int_{\mathbb{S}^4}\brs{\N \log \chi_{\la}}_{\mathring{g}}^4 \, \dVe }^{1/2}\prs{ \int_{\mathbb{S}^4}  \, \dVe}^{1/2}\\
&\leq \nu \int_{\mathbb{S}^4} \brs{\tN^{(2)} \gY}_{\mathring{g}}^2 \, \dVe + C R^2 \delta.
\end{align*}
Lastly for the third term of \eqref{eq:T2351}, we apply a weighted Young's inequality, H\"{o}lder's inequality, a global Sobolev embedding and then localized Poincar\'{e} to the remaining pieces,
\begin{align*}
C\prs{\ga - 1}\int_{\mathbb{S}^4} &\brs{\tN^{(2)} \gY}_{\mathring{g}} \brs{\gY}^2 \brs{\N \log \chi_{\la}}_{\mathring{g}} \, \dVe \\ &\leq \prs{\ga - 1} \nu \int_{\mathbb{S}^4} \brs{\tN^{(2)} \gY}_{\mathring{g}}^2 \, \dVe + \prs{\ga - 1} \tfrac{C}{\nu}\int_{\mathbb{S}^4} \brs{\gY}_{\mathring{g}}^4 \brs{ \N \log \chi_{\la} }_{\mathring{g}}^2 \, \dVe \\
&\leq \prs{\ga - 1} \nu \int_{\mathbb{S}^4} \brs{\tN^{(2)} \gY}_{\mathring{g}}^2 \, \dVe  + \prs{\ga - 1} C \prs{ \int_{\mathbb{S}^4} \brs{\gY}_{\mathring{g}}^8  \, \dVe}^{1/2}\prs{ \int_{\mathbb{S}^4} \brs{\N \log \chi_{\la}}_{\mathring{g}}^4  \, \dVe}^{1/2} \\
&\leq \prs{\ga - 1} \nu \int_{\mathbb{S}^4} \brs{\tN^{(2)} \gY}_{\mathring{g}}^2 \, \dVe  + C_S  \delta \prs{ \int_{\mathbb{S}^4} \brs{\gY}_{\mathring{g}}^2  \, \dVe + \int_{\mathbb{S}^4} \brs{\tN \brk{\eta \gY}}_{\mathring{g}}^2  \, \dVe + \int_{\mathbb{S}^4} \brs{\tN^{(2)} \brk{\eta \gY}}_{\mathring{g}}^2  \, \dVe}^2\\
&\leq \nu \int_{\mathbb{S}^4} \brs{\tN^{(2)} \gY}_{\mathring{g}}^2 \, \dVe  + C_P \delta^2 \prs{\int_{B_R \backslash B_{R/2}} \brs{\tN^{(2)} \gY}_{\mathring{g}}^2  \, dV_{\mathring{g}} + \int_{\mathbb{S}^4} \brs{\tN^{(2)} \gY}_{\mathring{g}}^2  \, \dVe + R^2 \prs{\ga - 1 + \delta}}.
\end{align*}
We next expand out
\begin{align}
\begin{split}\label{eq:T2352}
T_{2352} &\leq C  \int_{\mathbb{S}^4}  \brs{\tN \gY}_{\mathring{g}} \brs{\Theta_2}_{\mathring{g}} \brs{\N \eta}_{\mathring{g}} \, \dVe \\
&\leq C \prs{\ga-1}\int_{\mathbb{S}^4}  \brs{\tN \gY}_{\mathring{g}} \brs{\N \log \chi_{\la}}_{\mathring{g}} \brs{\N \eta}_{\mathring{g}} \, \dVe + C \prs{\ga-1}\int_{\mathbb{S}^4}  \brs{\tN \gY}_{\mathring{g}}^2 \brs{\N \log \chi_{\la}}_{\mathring{g}} \brs{\N \eta}_{\mathring{g}} \, \dVe\\
&\hsp + C \prs{\ga-1}\int_{\mathbb{S}^4}  \brs{\tN \gY}_{\mathring{g}} \brs{\gY}_{\mathring{g}}^2 \brs{\N \log \chi_{\la}}_{\mathring{g}} \brs{\N \eta}_{\mathring{g}} \, \dVe.
\end{split}
\end{align}
For the first term of \eqref{eq:T2352}, applying a weighted Young's inequality in preparation for a Poincar\'{e} inequality, then H\"{o}lder's inequality and applying the global bounds of $\log \chi_{\la}$ type terms,
\begin{align*}
C \prs{\ga-1}&\int_{\mathbb{S}^4}  \brs{\tN \gY}_{\mathring{g}} \brs{\N \log \chi_{\la}}_{\mathring{g}} \brs{\N \eta}_{\mathring{g}} \, \dVe \\
& \leq  \tfrac{\nu}{C_P R^2} \int_{B_R \backslash B_{R/2}} \brs{\tN \gY}_{\mathring{g}}^2  \, dV_{\mathring{g}} + \prs{\ga-1} \tfrac{C C_P}{\nu} \int_{\mathbb{S}^4} \brs{\N \log \chi_{\la}}_{\mathring{g}}^2 \, dV_{\mathring{g}} \\
 & \leq  \nu \int_{B_R \backslash B_{R/2}} \brs{\tN^{(2)} \gY}_{\mathring{g}}^2  \, dV_{\mathring{g}} + C\prs{\ga-1} \prs{ \int_{\mathbb{S}^4} \, \dVe}^{1/2} \prs{\int_{\mathbb{S}^4} \brs{\N \log \chi_{\la}}_{\mathring{g}}^4 \, \dVe}^{1/2} \\
  & \leq  \nu \int_{B_R \backslash B_{R/2}} \brs{\tN^{(2)} \gY}_{\mathring{g}}^2  \, dV_{\mathring{g}} + C_{S} \delta R^2.
\end{align*}
For the second term of \eqref{eq:T2352} we apply H\"{o}lder's inequality twice followed by global $L^4$ bounds of $\N \log \chi_{\la}$, a global Sobolev embedding $W^{1,2} \hookrightarrow L^4$ and the localized Poincar\'{e} inequality
\begin{align*}
C \prs{\ga-1}&\int_{\mathbb{S}^4} \brs{\tN \gY}_{\mathring{g}}^2 \brs{\N \log \chi_{\la}}_{\mathring{g}} \brs{\N \eta}_{\mathring{g}} \, \dVe \\
&\leq \tfrac{C}{R} \prs{\ga-1} \prs{ \int_{\mathbb{S}^4} \brs{\tN \gY}_{\mathring{g}}^4 \, \dVe}^{1/2} \prs{ \int_{\mathbb{S}^4} \brs{\N \log \chi_{\la}}_{\mathring{g}}^2 \, \dVe}^{1/2}\\
&\leq \tfrac{C}{R} \prs{\ga-1} \prs{ \int_{\mathbb{S}^4} \brs{\tN \gY}_{\mathring{g}}^4 \,\dVe}^{1/2} \prs{\int_{B_R} \, dV_{\mathring{g}} }^{1/4}\prs{ \int_{\mathbb{S}^4} \brs{\N \log \chi_{\la}}_{\mathring{g}}^4 \, \dVe}^{1/4}\\
&\leq C_S \delta \prs{ \int_{\mathbb{S}^4} \brs{\tN \gY}_{\mathring{g}}^2 \, \dVe +\int_{\mathbb{S}^4} \brs{\tN\brk{ \eta \gY}}_{\mathring{g}}^2 \, \dVe  }\\
&\leq C_P \prs{ \delta + \prs{\ga - 1}} R^2 + C_P \delta \int_{B_R \backslash B_{R/2}} \brs{\tN^{(2)} \gY}_{\mathring{g}}^2 \, dV_{\mathring{g}}.
\end{align*}
For the third term of \eqref{eq:T2352}, applying weighted Young's inequality (`preparing' for the application of the Poincar\'{e} inequality with our choice of weight), then Poincar\'{e} inequality and H\"{o}lder's inequality, then applying Sobolev embedding $W^{2,2} \hookrightarrow L^8$ and Poincar\'{e} inequalities once more
\begin{align*}
 C \prs{\ga - 1}\int_{\mathbb{S}^4} &\brs{\tN \gY}_{\mathring{g}} \brs{\gY}_{\mathring{g}}^2 \brs{\N \log \chi_{\la}}_{\mathring{g}} \brs{\N \eta}_{\mathring{g}}\, \dVe\\
&\leq \tfrac{\nu}{C_P R^2} \int_{B_R \backslash B_{R/2}} \brs{\tN \gY}_{\mathring{g}}^2 \, \dVe + \tfrac{C \prs{\ga - 1} C_P}{\nu}\int_{\mathbb{S}^4} \brs{\gY}_{\mathring{g}}^4 \brs{ \N \log \chi_{\la} }_{\mathring{g}}^2 \, \dVe \\
&\leq \nu \int_{\mathbb{S}^4} \brs{\tN^{(2)} \gY}_{\mathring{g}}^2 \, \dVe  + C \prs{\ga - 1}\prs{ \int_{\mathbb{S}^4} \brs{\gY}_{\mathring{g}}^8  \, \dVe}^{1/2}\prs{ \int_{\mathbb{S}^4} \brs{\N \log \chi_{\la}}_{\mathring{g}}^4  \, \dVe}^{1/2} \\
&\leq \nu \int_{\mathbb{S}^4} \brs{\tN^{(2)} \gY}_{\mathring{g}}^2 \, \dVe  + C_S \delta \prs{ \int_{\mathbb{S}^4} \brs{\gY}_{\mathring{g}}^2  \, \dVe + \int_{\mathbb{S}^4} \brs{\tN \brk{\eta \gY}}_{\mathring{g}}^2  \, \dVe + \int_{\mathbb{S}^4} \brs{\tN^{(2)} \brk{\eta \gY}}_{\mathring{g}}^2  \, \dVe}^2\\
&\leq \nu \int_{\mathbb{S}^4} \brs{\tN^{(2)} \gY}_{\mathring{g}}^2 \, \dVe  + C_P \delta \prs{\int_{B_R \backslash B_{R/2}} \brs{\tN^{(2)} \gY}_{\mathring{g}}^2  \, dV_{\mathring{g}} + \int_{\mathbb{S}^4} \brs{\tN^{(2)} \gY}_{\mathring{g}}^2  \, \dVe + R^2 \prs{\prs{\ga - 1} + \gd}}.
\end{align*}
Take $\nu \leq \tfrac{1}{10}$ so that the terms scaled by $\nu$ may be absorbed into the left hand side of \eqref{eq:maineqMlocb}. Furthermore, choose $\delta$, $\ga$ sufficiently small so that remaining terms may be absorbed over,
\begin{equation}\label{eq:holefilling}
\int_{\frac{R}{2}} \brs{\tN^{(2)} \gY}_{\mathring{g}}^2 \, dV_{\mathring{g}} \leq C  \prs{\int_{B_R \backslash B_{\frac{R}{2}}} \brs{\tN^{(2)} \gY}_{\mathring{g}}^2 \, dV_{\mathring{g}} + R^2 \prs{\prs{\ga - 1} + \gd}}.
\end{equation}
Define
\begin{align*}
f (R) := \int_{B_R \backslash B_{\frac{R}{2}}} \brs{\tN^{(2)} \gY}_{\mathring{g}}^2 \, dV_{\mathring{g}} + R^2 \prs{\prs{\ga - 1} + \gd}.
\end{align*}
Remanipulating \eqref{eq:holefilling} above, we have that $f \prs{\tfrac{R}{2}}\leq \tfrac{C}{C + 1} f(R)$, and therefore for all $k\geq 1$,
\begin{align*}
f\prs{\tfrac{R}{2^k}}\leq \prs{\tfrac{C}{C+1}}^k \prs{ \int_{B_R} \brs{\tN \gY}_{\mathring{g}}^2 \, dV_{\mathring{g}} + R^2 \prs{\prs{\ga - 1} + \gd }}\leq \prs{\tfrac{C}{C+1}}^k C \prs{\prs{\ga-1} + \delta}.
\end{align*}
This implies that there exists some $\gb > 0$ such that
\begin{align*}
\int_{B_R} \brs{\tN \gY}_{\mathring{g}}^2 \, dV_{\mathring{g}} \leq C \prs{\ga - 1 + \delta} R^{2 \gb},
\end{align*}
which yields the desired Morrey bound.
\end{proof}
\end{lemma}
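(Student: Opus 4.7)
The plan is to decompose the proof into two Morrey-type containments: first $\tN \gY \in \mathcal{M}^{2}_{2}$ and second $\tN^{(2)} \gY \in \mathcal{M}^{2}_{\gb}$ for some $\gb > 0$, and then combine them (together with the trivial embedding of $L^2$ into a Morrey space on $\mathbb{S}^4$) to deduce $\tN \gY \in \mathcal{M}^{1,2}_{\gb}$ with norm controlled by $C((\ga - 1) + \delta)$. Before starting, I would record the key auxiliary fact that, by combining \eqref{eq:LMM3.2} with Lemma \ref{lem:chiest}, the product $(\ga-1)\brs{\brs{\N \log \chi_{\la}}}_{L^4}^{\mu}$ is controlled by $C\delta$ for $\mu \in \{1,2\}$; this is essential to convert the $\Theta_2$-terms (which grow in $\la$) into clean $\delta$-type smallness.

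The first step is the cheap direction. For any ball $B_R \subset \mathbb{S}^4$, Hölder's inequality and the global $L^4$-bound $\brs{\brs{\tN \gY}}_{L^4} \leq C((\ga-1)+\delta)^{1/2}$ coming from \eqref{eq:sobconndif} and Sobolev embedding give
\begin{equation*}
\int_{B_R} \brs{\tN\gY}_{\mathring{g}}^2 \, dV_{\mathring{g}} \leq \Vol(B_R)^{1/2} \brs{\brs{\tN\gY}}_{L^4}^2 \leq C((\ga-1)+\delta)\, R^2,
\end{equation*}
which is the desired $\mathcal{M}^{2}_{2}$-bound.

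The main work is the hole-filling step for $\tN^{(2)} \gY$. I would introduce a smooth cutoff $\eta$ with $\eta \equiv 1$ on $B_{R/2}$ and $\eta \equiv 0$ outside $B_R$, and $|\N \eta| \lesssim R^{-1}$, and consider $\int \brs{\tN^{(2)}\gY}_{\mathring{g}}^2\, \eta^K dV_{\mathring{g}}$. Integrating by parts in one derivative converts this to a boundary-type term supported in the annulus $B_R \setminus B_{R/2}$ (which becomes the "holed" part) plus an interior term of the form $-\int \langle \tN \gY, \widetilde{\lap} \tN \gY\rangle \eta^K dV_{\mathring{g}}$. Commuting derivatives as in \eqref{eq:hessionest} and using \eqref{eq:Fcomm} produces Riemann curvature and $\widetilde{F}$-commutator terms which are absorbed by $\brs{\brs{\gY}}_{L^2}^2 + \brs{\brs{\tN \gY}}_{L^2}^2 \leq C((\ga-1)+\delta)$. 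The remaining genuine term $-\int \langle \tN \gY, \tN \widetilde{\lap} \gY\rangle \eta^K dV_{\mathring{g}}$ is manipulated by substituting the $\ga$-critical equation in the form \eqref{eq:D*Fdiff}, which produces cubic terms in $\gY$ and its derivative, plus the two $\Theta_i$-terms.

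Each of the resulting integrals is estimated by a standard sequence of weighted Young, Hölder, global Sobolev ($W^{1,2}\hookrightarrow L^4$ and $W^{2,2}\hookrightarrow L^8$), and localized Poincaré (Proposition \ref{prop:poinlocal}) maneuvers. The key point, repeated throughout, is that each time a bad gradient $|\N\eta|^2$ or $|\N\log \chi_\la|^2$ appears, it is paired either with $|\tN\gY|^2$ in a ball and then transferred via Poincaré to a $|\tN^{(2)}\gY|^2$ integral on the annulus $B_R\setminus B_{R/2}$ (producing the "hole" on the right hand side), or it is paired with a power of $|\gY|$ which has already been controlled globally. The main obstacle is organizing the $\Theta_2$ contributions: since $\brs{\brs{\N\log\chi_\la}}_{L^4}$ grows like $\sqrt{\log\la}$, closing the estimate requires the multiplicative factor $(\ga-1)$ from $\Theta_2$ and the $\lambda$-bound \eqref{eq:LMM3.2} to conspire (as recorded in the auxiliary fact) so that any term of the form $(\ga-1)\brs{\brs{\N\log\chi_\la}}_{L^4}^{\mu}$ is bounded by $C\delta$. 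With care, every factor multiplying an $L^2$ norm of a second derivative can be made either absorbable into the LHS or bounded by a small multiple of $\delta + (\ga-1)$.

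Combining everything, and choosing $\delta_0$, $\ga_0 - 1$ and the weight $\nu$ small enough to absorb the interior $\tN^{(2)}\gY$ terms, yields a hole-filling inequality of shape
\begin{equation*}
\int_{B_{R/2}} \brs{\tN^{(2)} \gY}_{\mathring{g}}^2\, dV_{\mathring{g}} \leq C\left(\int_{B_R\setminus B_{R/2}} \brs{\tN^{(2)}\gY}_{\mathring{g}}^2\, dV_{\mathring{g}} + R^2((\ga-1)+\delta)\right).
\end{equation*}
Setting $f(R) := \int_{B_R\setminus B_{R/2}} \brs{\tN^{(2)}\gY}_{\mathring{g}}^2 dV_{\mathring{g}} + R^2((\ga-1)+\delta)$, one obtains $f(R/2) \leq \tfrac{C}{C+1} f(R)$ by a standard hole-filling rearrangement, and dyadic iteration yields $\int_{B_R} \brs{\tN^{(2)}\gY}_{\mathring{g}}^2\, dV_{\mathring{g}} \leq C((\ga-1)+\delta) R^{2\gb}$ for some $\gb > 0$ depending on $C$. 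Adding this to the $\mathcal{M}^{2}_{2}$-bound on $\tN \gY$ proves the asserted $\mathcal{M}^{1,2}_{\gb}$ estimate.
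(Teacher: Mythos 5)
Your proposal is correct and follows essentially the same route as the paper's proof: the $\mathcal{M}^2_2$ bound on $\tN\gY$ by Hölder and the global $L^4$-estimate; the hole-filling for $\tN^{(2)}\gY$ via a cutoff, integration by parts, commuting derivatives, and substituting the $\ga$-critical equation in the form of Proposition \ref{cor:D*Fpol}; controlling the $\Theta_2$-terms through the interplay of the $(\ga-1)$ factor, Lemma \ref{lem:chiest}, and \eqref{eq:LMM3.2}; and the dyadic iteration of the resulting inequality. The one place where the proposal (and, in fact, the paper itself) is a touch loose is the assertion $f(R/2)\leq\tfrac{C}{C+1}f(R)$ with $f$ built from the annular integral plus $R^2((\ga-1)+\delta)$; the correct bookkeeping uses $g(\rho)=\int_{B_\rho}\brs{\tN^{(2)}\gY}^2\,dV_{\mathring g}+\rho^2((\ga-1)+\delta)$ (or invokes the standard iteration lemma directly), but this does not affect the Morrey-decay conclusion.
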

%
%
%
\bibliography{sources.bib}{}
\bibliographystyle{alpha}
\end{document}